\numberwithin{equation}{section}
\DeclareMathOperator{\tr}{Tr}
\DeclareMathOperator{\var}{Var}
\newcommand{\R}{\mathbb{R}} 
\newcommand{\E}{\mathbb{E}} 
\newcommand{\N}{\mathbb{N}}  
\theoremstyle{plain}
\newtheorem{thm}{Theorem}[section]
\newtheorem{lem}[thm]{Lemma}
\newtheorem{prop}[thm]{Proposition}
\theoremstyle{definition}
\newtheorem{defn}[thm]{Definition}
\newtheorem{ex}[thm]{Example}
\newtheorem{hyp}[thm]{Assumption}
\theoremstyle{remark}
\newtheorem{rmk}[thm]{Remark}
\renewcommand\subsection{\@startsection{subsection}{2}%
  \z@{-.5\linespacing\@plus-.7\linespacing}{.5\linespacing}%
  {\normalfont\scshape}}
\renewcommand\subsubsection{\@startsection{subsubsection}{3}%
  \z@{.5\linespacing\@plus.7\linespacing}{-.5em}%
  {\normalfont\scshape}}
\newcommand\reallywidehat[1]{%
\savestack{\tmpbox}{\stretchto{%
  \scaleto{%
    \scalerel*[\widthof{\ensuremath{#1}}]{\kern.1pt\mathchar"0362\kern.1pt}%
    {\rule{0ex}{\textheight}}
  }{\textheight}% 
}{2.4ex}}%
\stackon[-6.9pt]{#1}{\tmpbox}%
}
\newsavebox\myboxA
\newsavebox\myboxB
\newlength\mylenA
\newcommand*\xoverline[2][0.75]{%
    \sbox{\myboxA}{$\m@th#2$}%
    \setbox\myboxB\null% Phantom box
    \ht\myboxB=\ht\myboxA%
    \dp\myboxB=\dp\myboxA%
    \wd\myboxB=#1\wd\myboxA% Scale phantom
    \sbox\myboxB{$\m@th\overline{\copy\myboxB}$}%  Overlined phantom
    \setlength\mylenA{\the\wd\myboxA}%   calc width diff
    \addtolength\mylenA{-\the\wd\myboxB}%
    \ifdim\wd\myboxB<\wd\myboxA%
       \rlap{\hskip 0.5\mylenA\usebox\myboxB}{\usebox\myboxA}%
    \else
        \hskip -0.5\mylenA\rlap{\usebox\myboxA}{\hskip 0.5\mylenA\usebox\myboxB}%
    \fi}
\newcommand\shortitle{Global law of conjugate kernel random matrices with heavy-tailed weights}
\newcommand\name{Alice Guionnet and Vanessa Piccolo}
\begin{document}

\title{Global law of conjugate kernel random matrices with heavy-tailed weights}
\author{Alice Guionnet} 
\author{Vanessa Piccolo}
\address{A.G.\ and V.P.\ - Unit\'{e} de Math\'{e}matiques Pures et Appliqu\'{e}es (UMPA), ENS Lyon}
\email{alice.guionnet@ens-lyon.fr} 
\email{vanessa.piccolo@ens-lyon.fr}

\subjclass[2020]{60B20, 15B52, 68T07} 
\keywords{Two-layer conjugate kernel matrices, nonlinear random matrices, heavy-tailed weights, light-tailed data, moment method, traffic probability}
\date{\today}

\maketitle

%%%%%%%%%%%%%%%%%%%%%%%%%%%%%%%%%%%%%%%%%%%%%%%%%%%%%%%%%%%%%%%%%%%%%%%%%%
\begin{abstract}
We study the asymptotic spectral distribution of the conjugate kernel random matrix \(YY^\top\), where \(Y= f(WX)\) arises from a two-layer neural network model. We consider the setting where \(W\) and \(X\) are random rectangular matrices with i.i.d.\ entries, where the entries of \(W\) follow a heavy-tailed distribution, while those of \(X\) have light tails. Our assumptions on \(W\) include a broad class of heavy-tailed distributions, such as symmetric \(\alpha\)-stable laws with \(\alpha \in ]0,2[\) and sparse matrices with \(\mathcal{O}(1)\) nonzero entries per row. The activation function \(f\), applied entrywise, is bounded, smooth, odd, and nonlinear. We compute the limiting eigenvalue distribution of \(YY^\top\) through its moments and show that heavy-tailed weights induce strong correlations between the entries of \(Y\), resulting in richer and fundamentally different spectral behavior compared to the light-tailed case.
\end{abstract}
%%%%%%%%%%%%%%%%%%%%%%%%%%%%%%%%%%%%%%%%%%%%%%%%%%%%%%%%%%%%%%%%%%%%%%%%%%
%%%%%%%%%%%%%%%%%%%%%%%%%%%%%%%%%%%%%%%%%%%%%%%%%%%%%%%%%%%%%%%%%%%%%%%%%%
{
\hypersetup{linkcolor=black}
\tableofcontents
}

\section{Introduction}

We study the asymptotic behavior of the eigenvalues of conjugate kernel random matrices \(Y Y^\top\), where \(Y = f(WX)\) arises from a two-layer feed-forward neural network. Here, \(W\) and \(X\) are random rectangular matrices with i.i.d.\ entries, representing the weight and data matrices, respectively, and \(f\) is a smooth nonlinear activation applied entrywise. The spectral properties of such models were first studied under Gaussian assumptions by Pennington and Worah~\cite{pennington2017}, and later extended to light-tailed distributions by Benigni and P\'{e}ch\'{e}~\cite{benigni2021}. Further generalizations have since been explored in~\cite{liao2018, adlam, schroder2021, pastur2022, dabo2024, speicher2024}.

In this work, we extend these results to the case where \(W\) follows a heavy-tailed distribution. Specifically, we consider settings in which the entries of \(W\) can take very large values and may lack finite second moments, while the entries of \(X\) remain light-tailed. This framework is motivated by empirical observations from overparameterized neural networks, where strongly correlated weights frequently emerge, defying standard Gaussian assumptions~\cite{mahoney2021, mahoney2021bis, wang2023}. Heavy-tailed distributions may thus provide a more realistic framework for capturing the complex structure of these correlations. From a mathematical perspective, random features matrices \(Y = f(WX)\) with heavy-tailed weights exhibit entirely new properties, as their entries happen to be much more correlated than in the light-tailed setting. For instance, consider the case where \(W\) is the adjacency matrix of an Erd\"os-R\'enyi graph, where each edge is drawn independently at random with probability \(q\) over the dimension. In this case, the entry \(Y_{ij} = f(W_i \cdot X_j)\) is strongly correlated with entries \(Y_{i'j}\) whenever \(i\) and \(i'\) share a common neighbor \(k\). This arises because both \(W_i \cdot X_j\) and \(W_{i'} \cdot X_j\) share the term \(X_{kj}\), which does not vanish as the dimension grows. As a result, in each column of \(Y\) there are approximately \(q\) randomly chosen entries that are strongly correlated. These strong dependencies introduce new analytical challenges in understanding the spectral behavior of \(YY^\top\). In this work, we prove convergence of the moments of the empirical eigenvalue distribution for a broad class of conjugate kernel matrices with heavy-tailed weights and light-tailed features.

\subsection{Model and main results}

We begin by introducing the random features model which is defined by a random weight matrix and a random data matrix.
Let \(W \in \R^{p \times n}\) be a random weight matrix with i.i.d.\ entries drawn from a distribution \(\nu_w\), and let \(X \in \R^{n \times m}\) be a random data matrix with i.i.d.\ entries drawn from a distribution \(\nu_x\). We impose the following assumptions on \(\nu_w\) and \(\nu_x\).

\begin{hyp}[Distributions \(\nu_w\) and \(\nu_x\)] \label{hyp1}
\leavevmode
\begin{enumerate}
\item[(a)] The distribution \(\nu_w\) is symmetric, and the random variables \(W_{ij}\) have a characteristic function satisfying, for every \(\lambda \in \R\), 
\[
\Phi_n(\lambda) \coloneqq n \log \E_W \left [\exp (i \lambda W_{ij})\right ]  \to \Phi(\lambda), \quad \text{as} \, n \to \infty,
\]
for some limiting function \(\Phi\). The convergence is assumed to be uniform in \(\lambda\). Since \(\nu_w\) is symmetric, its characteristic function is real-valued and even, and therefore both \(\Phi_n\) and \(\Phi\) are even functions.
\item[(b)] The distribution \(\nu_x\) is symmetric and has finite moments of all orders. We denote its variance by \(\E_X \left [X_{ij}^2 \right ] = \sigma_x^2\).
\end{enumerate}
\end{hyp}

The class of random matrices covered by Assumption~\ref{hyp1}(a) includes both heavy-tailed matrices in the classical probabilistic sense, such as Lévy matrices, and matrices with light-tailed entries but sparse structure, such as sparse Wigner matrices. First, consider a \emph{Lévy matrix}, for which the entries \(W_{ij}\) are independent, \emph{symmetric \(\alpha\)-stable} random variables with  \(\alpha \in ]0,2[\). Specifically, 
\begin{equation}\label{levy}
W_{ij} = \frac{1}{n^{1/\alpha}} A_{ij},
\end{equation}
where \((A_{ij})\) are i.i.d.\ symmetric \(\alpha\)-stable random variables with characteristic function \(\E_A \left [ \exp(i \lambda A_{ij})\right ]  = \exp (-\sigma^\alpha |\lambda|^\alpha)\) for all \(\lambda \in \R\), with scale parameter \(\sigma > 0\). Note that the cases \(\alpha=1\) and \(\alpha=2\) correspond to the Cauchy and Gaussian distributions, respectively. In this case,
\[
\E_W \left [ \exp(i \lambda W_{ij})\right ] = \E_A   \left [ \exp(i \lambda n^{-1/\alpha}A_{ij})\right ]  = \exp (- n^{-1}\sigma^\alpha |\lambda|^\alpha), 
\]
so that \(\Phi_n (\lambda) = \Phi (\lambda) = -\sigma^\alpha |\lambda|^\alpha \). Second, consider a \emph{sparse Wigner matrix}, for which the entries \(W_{ij}\) are defined by
\begin{equation}\label{hada}
W_{ij} = B_{ij} Z_{ij},
\end{equation}
where \((B_{ij})\) are independent Bernoulli\( (q/n) \) random variables with \(q \in ]0,1[\), and \((Z_{ij})\) are i.i.d.\ symmetric random variables, independent of the dimension and independent of the family \((B_{k \ell})\). In this case, a direct computation yields $ \Phi_n (\lambda) = n\log ( 1+\frac{q}{n} \left (  \E_Z \left [e^{i \lambda Z_{ij}} \right ] -1\right))$ and \(\Phi(\lambda) = q \left (  \E_Z \left [e^{i \lambda Z_{ij}} \right] -1\right)\). \\

Let \(f \colon \R \to \R\) be a function satisfying the following.
\begin{hyp}[Activation function]\label{hyp2}
The activation function \(f \colon \R \to \R\) is odd, belongs to \(\mathcal{C}^\infty(\R)\), and satisfies \(f^{(k)} \in L^1(\R)\) for all integers \(k \ge 0\).
\end{hyp}
This assumption implies that \(f\) is bounded and uniformly continuous. Typical examples include smooth, rapidly decaying odd functions such as \(f(x)=xe^{-x^2}\) or \(f(x) = \sin(x)e^{-x^2}\), for which all derivatives are integrable. Although many standard activation functions (e.g., \(\tanh\), \(\arctan\), or \(\sin\)) do not satisfy the \(L^1(\R)\) requirement, one can enforce this condition by multiplying them by a smooth even cutoff or mollifier, thereby obtaining an exponentially or polynomially decaying variant. Such modifications preserve smoothness and oddness and affect only a finite-rank component of the random features matrix defined below in~\eqref{eq: matrix Y}, leaving the limiting spectral distribution invariant.\\

We consider the two-layer conjugate kernel random matrix
\[
M \coloneqq Y_m Y_m^\top \in \R^{p \times p}, 
\]
where \(Y_m = (Y_{ij})_{i \in [p], j \in [m]}\) is defined by
\begin{equation} \label{eq: matrix Y}
Y_{ij} \coloneqq \frac{1}{\sqrt{m}} f( W_i \cdot X_j) =  \frac{1}{\sqrt{m}} f \left ( \sum_{k=1}^n W_{ik} X_{kj} \right).
\end{equation}
By Assumptions~\ref{hyp1} and~\ref{hyp2}, both \(\nu_x\) and \(\nu_w\) are symmetric and \(f\) is odd; hence, the entries \(Y_{ij}\) are symmetric random variables. \\

Our goal is to study the empirical spectral distribution of \(M\) in the high-dimensional proportional-growth regime.

\begin{hyp}[Linear-width regime] \label{hyp3}
We assume that
\[
\frac{n}{m} \to \phi \quad \text{and} \quad \frac{n}{p} \to \psi \quad \text{as} \enspace m,p,n \to \infty,
\]
where \(\phi\) and \(\psi\) are two positive constants.
\end{hyp}

Our first main result concerns the convergence of the moments of the empirical spectral measure \(\hat{\mu}_M \coloneqq \frac{1}{p} \sum_{i=1}^p \delta_{\lambda_i}\), where \(\lambda_1, \ldots, \lambda_p\) denote the eigenvalues of \(M\).

\begin{thm}[Convergence of matrix moments] \label{main1}
Under Assumptions~\ref{hyp1}-\ref{hyp3}, for every integer \(k \in \N\), there exists a real number \(m_k\), depending only on  \(\phi,\psi,f, \Phi\), and \(\nu_x\), such that 
\[
\lim_{m, p, n \rightarrow \infty }\frac{1}{p} \tr M^k = m_k,
\]
where the convergence holds both in expectation and in probability. The limiting moment \(m_k\) is given explicitly in Proposition~\ref{prop: main cycle}.
\end{thm}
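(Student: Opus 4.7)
The plan is to proceed by the method of moments, combining a Fourier representation of the activation function with Assumption~\ref{hyp1}(a) on the characteristic function of $W_{ij}$ to integrate out the heavy-tailed weights exactly at finite $n$. Expanding
\[
\tr M^k \;=\; \sum_{\mathbf{i} \in [p]^k,\, \mathbf{j} \in [m]^k}\ \prod_{\ell=1}^{k} Y_{i_\ell j_\ell}\, Y_{i_{\ell+1} j_\ell}, \qquad i_{k+1}:=i_1,
\]
each configuration $(\mathbf{i},\mathbf{j})$ is encoded by a bipartite multigraph $G$ whose row-side vertices are the distinct values of $\mathbf{i}$, column-side vertices are the distinct values of $\mathbf{j}$, and whose $2k$ edges form a closed alternating walk. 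The task reduces to (i) evaluating $\E[\prod Y_{i_\ell j_\ell} Y_{i_{\ell+1} j_\ell}]$ for each configuration, (ii) summing over configurations of a given graph type, and (iii) passing to the limit.

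For (i), write $f(x) = \frac{1}{2\pi}\int \hat f(\xi)\, e^{i\xi x}\,d\xi$ (suitably regularized since $f \in L^2 \cap L^\infty \cap C^\infty$). Applying this to all $2k$ factors and using independence of the $W_{ij}$'s together with Assumption~\ref{hyp1}(a),
\[
\E_W\Bigl[\exp\bigl(i\textstyle\sum_\ell \xi_\ell (WX)_{a_\ell b_\ell}\bigr)\Bigr] \;=\; \exp\Bigl(\tfrac{1}{n}\sum_{a \in I(\mathbf{i})}\sum_{k'=1}^{n} \Phi_n(\eta_{a,k'})\Bigr),
\]
where $\eta_{a,k'} := \sum_{\ell: a_\ell = a}\xi_\ell\, X_{k', b_\ell}$. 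Since the rows of $X$ are i.i.d., taking $\E_X$ factors the exponential as an $n$-fold power, and the uniform convergence $\Phi_n \to \Phi$ combined with $(\E[e^{g/n}])^n \to e^{\E[g]}$ (justified via the finite moments of $X$ to control the Taylor remainder) yields the pointwise-in-$\xi$ limit $\exp\bigl(\sum_{a \in I(\mathbf{i})} \E_X[\Phi(\eta_{a,1})]\bigr)$. Integrating this against $\prod_\ell \hat f(\xi_\ell)/(2\pi)$ defines a quantity $\tau(G)$ depending only on the isomorphism class of $G$ and on $\Phi$, $\nu_x$, $f$.

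For (ii)--(iii), group walks by the isomorphism class of $G$. A walk of type $G$ with $|I|$ row- and $|J|$ column-vertices is realized by $p^{|I|} m^{|J|}(1+o(1))$ labelings, so the contribution of this type to $\frac{1}{p}\E[\tr M^k]$ is $p^{|I|-1} m^{|J|-k}\,\tau(G)$, which by Assumption~\ref{hyp3} scales as $n^{|I|+|J|-k-1}\psi^{1-|I|}\phi^{k-|J|}\tau(G)$. Only graph types with $|I|+|J|=k+1$ contribute in the limit---these are precisely the bipartite trees in which each edge is traversed exactly twice by the walk---and $m_k$ arises as a finite sum over such trees. Although the set of contributing graphs is the same as in the light-tailed setting, the weight $\tau(G)$ is fundamentally different: for the Gaussian limit $\Phi(\lambda) = -\tfrac{\sigma_w^2}{2}\lambda^2$, the quantity $\E_X[\Phi(\eta_a)]$ factorizes over the edges adjacent to $a$ in $G$; for a non-quadratic $\Phi$ (e.g.\ $|\lambda|^\alpha$ or the sparse-Wigner expression), it couples the $\xi_\ell$'s attached to the same row-vertex in a nonlinear way, producing the richer structure highlighted in the abstract.

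Convergence in probability then follows from a variance bound: expanding $\frac{1}{p^2}\E[(\tr M^k)^2]$ in the same fashion yields pairs of bipartite walks, and any pair that shares at least one vertex picks up an extra factor of $1/p$ or $1/m$, while disconnected pairs factorize to reproduce $(\E[\tfrac{1}{p}\tr M^k])^2$. The main obstacle is twofold: the careful combinatorial bookkeeping of the graph sum, and the justification of the interchange of the $\xi$-integral with the limit $n\to\infty$ when $\hat f$ is only in $L^2$ and $\Phi$ may fail to be smooth (as in the $\alpha$-stable case $\Phi(\lambda) = -\sigma^\alpha |\lambda|^\alpha$). This is precisely where the traffic probability framework indicated by the paper's keywords provides the right organizing language, with $\tau(G)$ appearing as an injective traffic moment of the limiting ``noise'' encoded by $\Phi$.
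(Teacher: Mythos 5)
The general framework you set up---Fourier inversion of $f$, using Assumption~\ref{hyp1}(a) to integrate out $W$ and obtain $(\E_X[e^{S_G^n(\boldsymbol{\gamma})/n}])^n$, and organizing the trace expansion by bipartite multigraphs in the language of traffic probability---is the same as the paper's (Section~\ref{section: convergence traffic}), and your variance bound via vertex-sharing in paired walks matches Section~\ref{sec:cov}. However, there is a genuine gap at the heart of the argument, and it is precisely where the heavy-tailed case departs from the light-tailed one.

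You pass from $(\E_X[e^{S_G^n/n}])^n$ to its pointwise limit $e^{\E_X[S_G]}$ and then conclude that ``only graph types with $|I|+|J|=k+1$ contribute \ldots these are precisely the bipartite trees in which each edge is traversed exactly twice,'' adding that ``the set of contributing graphs is the same as in the light-tailed setting.'' This is false, and it is the central point of the paper. The pointwise limit $e^{\E_X[S_G]}$ is correct, but the Taylor/cumulant corrections to $(\E_X[e^{S_G^n/n}])^n$ are of order $n^{-j}$ for $j\ge1$, and these corrections compete against the combinatorial prefactor $n^{\rho(G)}$ with $\rho(G)=|W|+|V|-|E|/2-1$ appearing in~\eqref{eq: traffic development}. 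For a double tree $\rho(G)=0$ and the leading term $e^{\E_X[S_G^n]}$ survives; that part you have. But for admissible graphs in the sense of Definition~\ref{def: admissible graph} (simple bipartite cycles, cactus graphs, and more generally block trees whose blocks contain several $W$-vertices), the leading term $e^{\E_X[S_G^n]}$ integrates to zero against $\prod\hat f(\gamma_e^i)$ by parity, while the cumulant correction of order $n^{-\rho(G)}$ in $\Lambda_G^n$ is exactly compensated by $n^{\rho(G)}$, yielding a nonzero contribution. This is the content of Lemma~\ref{lem: main estimate} and Proposition~\ref{main3}, supported by the combinatorial estimates of Section~\ref{section: combinatorics}. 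Concretely, for a simple bipartite $2k$-cycle ($k\ge2$) with all vertices distinct one has $\rho(G)=1>0$, and yet $\tau^0_G=\psi^{1-k}C_W(f)$ is nonzero via the $k$-th $X$-cumulant of $S_G^n$. These non-tree contributions are absent in the Gaussian/light-tailed case (where $\Phi$ is quadratic and the $X$-cumulants of $S_G$ beyond first order degenerate), which is precisely why the heavy-tailed spectrum is ``richer and fundamentally different.''

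In short: your step $(\E_X[e^{g/n}])^n\to e^{\E_X[g]}$ must be refined to the full cumulant expansion $\exp\bigl(\sum_{i\ge1}\kappa_i^G(\boldsymbol{\gamma})/(i!\,n^{i-1})\bigr)$ as in~\eqref{eq: Lambda}, and one must then track which products of cumulants give order exactly $n^{-\rho(G)}$. That bookkeeping produces the admissible decompositions $\mathcal{A}_K(W_B)$ and the parameters $C_{(W_i)_{i=1}^K}(f)$ of~\eqref{eq: C_{W_i}}, without which $m_k$ is incorrectly identified as the Gaussian-equivalent moment with modified edge weights. Once these contributions are included, the explicit $m_k$ of Proposition~\ref{prop: main cycle} is obtained by summing over noncrossing partitions of $V$, partitions of $W$, and further merging partitions, rather than over double trees alone.
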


Theorem~\ref{main1} shows that the limiting moments \(m_k\) also depend on the distribution of the input variables \(\nu_x\). Indeed, according to Proposition~\ref{prop: main cycle}, the coefficients appearing in the expression of \(m_k\) involve expectations of the form \(\E_X \left [ \Phi \left ( \sum_i a_i X_i \right) \right ]\), for i.i.d.\ random variables \(X_i \sim \nu_x\) (see Definition~\ref{def: C_W (f)}). Whenever the function \(\Phi\) is non-quadratic (that is, beyond the Gaussian case), this expectation depends on higher-order moments of \(\nu_x\), not only on its variance \(\sigma_x^2\). This leads to a \emph{non-universal} global spectral behavior of \(M\), even for light-tailed input distributions. This contrasts with the universality result of~\cite{benigni2021}, where the limiting moments \(m_k\) depend on the light-tailed distributions \(\nu_w\) and \(\nu_x\) only through their second moments \(\sigma_w^2\) and \(\sigma_x^2\). In Lemma~\ref{lem: moments alpha=2}, we show that the limiting moments \(m_k\) given in Proposition~\ref{prop: main cycle} coincide exactly with those obtained in~\cite[Theorem 3.5]{benigni2021} in the special case where \(\nu_w\) is Gaussian and \(\nu_x\) is light-tailed, as specified in Assumption~\ref{hyp1}(b). This non-universality of the global spectral behavior is however also true for Wigner matrices with heavy tails~\cite{zakarevich2006,BGM,benarous2008}.

From Theorem~\ref{main1}, we can deduce the weak convergence of the empirical spectral measure \(\hat{\mu}_M\), provided the moments \(m_k\) grow sufficiently slowly to uniquely determine a probability measure. We establish this convergence result under the following additional assumption.
\begin{hyp} \label{hyp4}
We assume that \(\Phi\) satisfies one of the following:
\begin{itemize}
\item[(a)] \(\Phi\) is bounded on \(\R\);
\item[(b)]\(\Phi (\lambda) = - \sigma^\alpha |\lambda|^\alpha\) with \(\alpha \in ]0,2[\) and \(\sigma >0\), and \(\nu_x\) is the centered normal distribution with variance \(\sigma_x^2\) .
\end{itemize}
\end{hyp}
The first case in Assumption~\ref{hyp4} includes, for instance, sparse Wigner matrices~\eqref{hada}, where \(|\Phi (\lambda) | \le 2q\) for all \(\lambda \in \R\), while the second case corresponds to L\'{e}vy matrices~\eqref{levy}.

\begin{thm}[Global law] \label{main1bis}
Under Assumptions~\ref{hyp1}-\ref{hyp3} and assuming either (a) or (b) in Assumption~\ref{hyp4}, there exists a unique probability measure \(\mu\), depending on \(\phi,\psi,f, \Phi\), and \(\nu_x\), supported on the non-negative real line, such that for every integer \(k \in \N\), 
\[
m_k =\int x^k \textnormal{d} \mu (x). 
\]
Furthermore, the empirical spectral measure \(\hat{\mu}_M\) converges weakly almost surely to \(\mu\).
\end{thm}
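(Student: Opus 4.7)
The plan is to apply the method of moments. Theorem~\ref{main1} already provides the convergence $\frac{1}{p}\tr M^k \to m_k$ in probability for every $k \in \N$. Two additional points remain to conclude weak almost sure convergence of $\hat{\mu}_M$ to a probability measure $\mu$: (I) the sequence $\{m_k\}$ must uniquely determine a probability measure on $[0,\infty)$, and (II) the moment convergence must be upgraded to almost sure. Non-negativity of the support of $\mu$ is automatic from $M=YY^\top\succeq 0$, and the identity $m_k=\int x^k\dif\mu(x)$ then follows from weak convergence together with the moment bounds of step (I).

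Step (I) is the main obstacle. The aim is a bound of the form $m_k\le C^k\,(k!)^\gamma$ for some $\gamma\ge 0$, which is sufficient for Carleman's criterion $\sum_{k} m_k^{-1/(2k)}=\infty$ on $[0,\infty)$ and thus for uniqueness. Starting from the explicit expression announced in Proposition~\ref{prop: main cycle}, each term in $m_k$ factorizes as a product of contributions parametrized by a combinatorial object of size $O(k)$, and a Catalan-type enumeration bounds the number of such objects by $C^k$. The task thus reduces to a uniform bound on each summand.

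Under condition (ii), where $\|\Phi\|_\infty<\infty$, this is immediate: the heavy-tail factors contribute at most $\|\Phi\|_\infty^{O(k)}$, the activation contributes $\|f\|_\infty^{2k}$, and only finitely many low-order moments of $\nu_x$ are involved. This yields $m_k\le C^k$, so $\mu$ is in fact compactly supported. Case (i) is more delicate since $\Phi(\lambda)=-\sigma^\alpha|\lambda|^\alpha$ is unbounded. Here I would exploit the stability of the $\alpha$-stable law: conditionally on $X$, each $W_i\cdot X_j$ is $\alpha$-stable with random scale $n^{-1/\alpha}\sigma\bigl(\sum_k|X_{kj}|^\alpha\bigr)^{1/\alpha}$, while $|Y_{ij}|\le\|f\|_\infty/\sqrt m$ uniformly. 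Substituting this pointwise bound together with the Gaussian moment estimates $\E[X_{11}^{2q}]=(2q-1)!!$ into the traffic expansion should yield $m_k\le C^k\,k!$, which is again Carleman-sufficient.

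Step (II) is more standard: a Doob martingale obtained by successively resampling rows of $W$ (or columns of $X$), together with the bound $|Y_{ij}|\le\|f\|_\infty/\sqrt m$ on each increment, gives $\var\bigl(\frac{1}{p}\tr M^k\bigr)=O(1/p)$. Borel--Cantelli along a quadratic subsequence $p_\ell=\ell^2$ then yields almost sure convergence, and intermediate dimensions are handled by a routine coupling argument. Combining (I) and (II), the classical method of moments (e.g.\ Billingsley's theorem) yields $\hat\mu_M\to\mu$ weakly almost surely, where $\mu$ is the unique probability measure on $[0,\infty)$ with moments $m_k$.
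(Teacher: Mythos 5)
Your proposal correctly identifies the two tasks (Carleman determinacy and upgrading to almost sure convergence), but both steps contain genuine gaps.

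\textbf{Step (I), condition (ii).} You claim that the bounded-$\Phi$ case gives $m_k\le C^k$, so that $\mu$ is compactly supported. This overlooks the combinatorial multiplicity in Proposition~\ref{prop: main cycle}: the formula for $m_k$ involves a sum over noncrossing partitions of $V$ (Catalan), and for each such partition a further sum over \emph{all} partitions $\mu_i\in\mathcal P(W_i^\pi)$ and set partitions $P_i\in\mathcal P(R_{\mu_i})$, which are Bell-number--many. Even when every summand is $O(C^k)$, the number of summands grows superexponentially; the paper's bound (Lemma~\ref{lem: bound parameter} plus the subsequent lemma) is $|m_k|\le C^k k!\,C_k\,a(k)\sim(\tilde Ck)^{2k}$, which is precisely the borderline case for the Stieltjes Carleman criterion $\sum m_k^{-1/(2k)}=\infty$. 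The paper explicitly conjectures the limit law is \emph{not} compactly supported. Your bound $m_k\le C^k k!$ for case~(i) is likewise asserted rather than derived; the argument that controls the combinatorial explosion (showing $R_\ell\le\ell$ by induction, including the delicate treatment of crossing partitions) is the substance of the paper's proof and cannot be dispensed with.

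\textbf{Step (II).} The Doob-martingale bound you sketch for $\var\bigl(\tfrac1p\tr M^k\bigr)$ does not work as stated. Resampling one row $W_i$ produces a rank-two perturbation of $M$, but the resulting change in $\tr M^k$ is controlled by $\|M\|^{k-1}\|M'-M\|$, and $\|M\|$ is of order $p\|f\|_\infty^2$ (the operator norm grows with $p$; only $\tfrac1p\tr M$ is uniformly bounded). So the increment is not $O(1/p)$ for $k\ge 2$. The paper's variance bound $O(1/p)$ in Section~\ref{sec:cov} is obtained by an entirely different mechanism, namely a traffic-trace expansion of the covariance. Moreover, even granted the variance bound, the Borel--Cantelli-along-a-subsequence strategy requires a nontrivial interpolation between $M_{p_\ell}$ and $M_p$ for $p_\ell<p<p_{\ell+1}$; these matrices are not naturally nested since $p,m,n$ all vary, and this is not routine here. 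The paper sidesteps both issues by proving an Azuma--Hoeffding exponential concentration inequality (Lemma~\ref{concen}) for observables $h$ such that $x\mapsto h(x^2)$ has bounded total variation---for which the rank-two Weyl interlacing argument \emph{does} give $O(1/p)$ martingale increments---and then applying Borel--Cantelli directly, without a subsequence.

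In short: the skeleton is right, but the key estimate controlling the combinatorial counting in the moment bound is missing, the compactly-supported conclusion under (ii) is wrong, and the a.s.\ upgrade uses the wrong observable in the martingale argument.
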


When the limiting measure \(\mu\) exists (in particular, under Assumption~\ref{hyp4}), it exhibits a light-tailed behavior since \(\int_\R x \text{d} \mu(x)\le \|f\|_\infty^2\), and all moments \(m_k\) are finite. This stands in sharp contrast to classical heavy-tailed models and is mainly a consequence of the boundedness of the activation function \(f\). If \(f\) is unbounded, the entries of \(Y_m\) may inherit heavy tails from the weight distribution, and the matrix moments may diverge. In such cases, the method of moments can not be used and our approach fails. Moreover, the asymptotic behavior then depends on the growth rate of \(f\) relative to the tail index of the weight distribution. For instance, if \(f\) grows at least linearly and the weights follow a symmetric \(\alpha\)-stable law with \(\alpha<2\), the resulting spectrum is expected to exhibit heavy-tailed features and may require a different normalization or truncation. Conversely, if \(f\) grows sublinearly, the corresponding moments remain finite, and a light-tailed global law may still be expected. A rigorous analysis of these regimes would require analytical tools distinct from the Fourier and cumulant expansions employed here (see Subsection~\ref{subsection: outline proof}). Figure~\ref{fig:1} illustrates the empirical spectral distribution of the conjugate kernel matrix \(M\) under condition (a) of Assumption~\ref{hyp4}, using the activation function \(f(x) = \arctan(x)\), for different values of the parameter \(\alpha\). As \(\alpha\) increases, the eigenvalue distribution transitions from a heavy-tailed regime with widely spread eigenvalues to a more concentrated spectrum, highlighting the effect of the weight distribution on spectral behavior.

\begin{figure}[h] 
\centering
\includegraphics[scale=0.6]{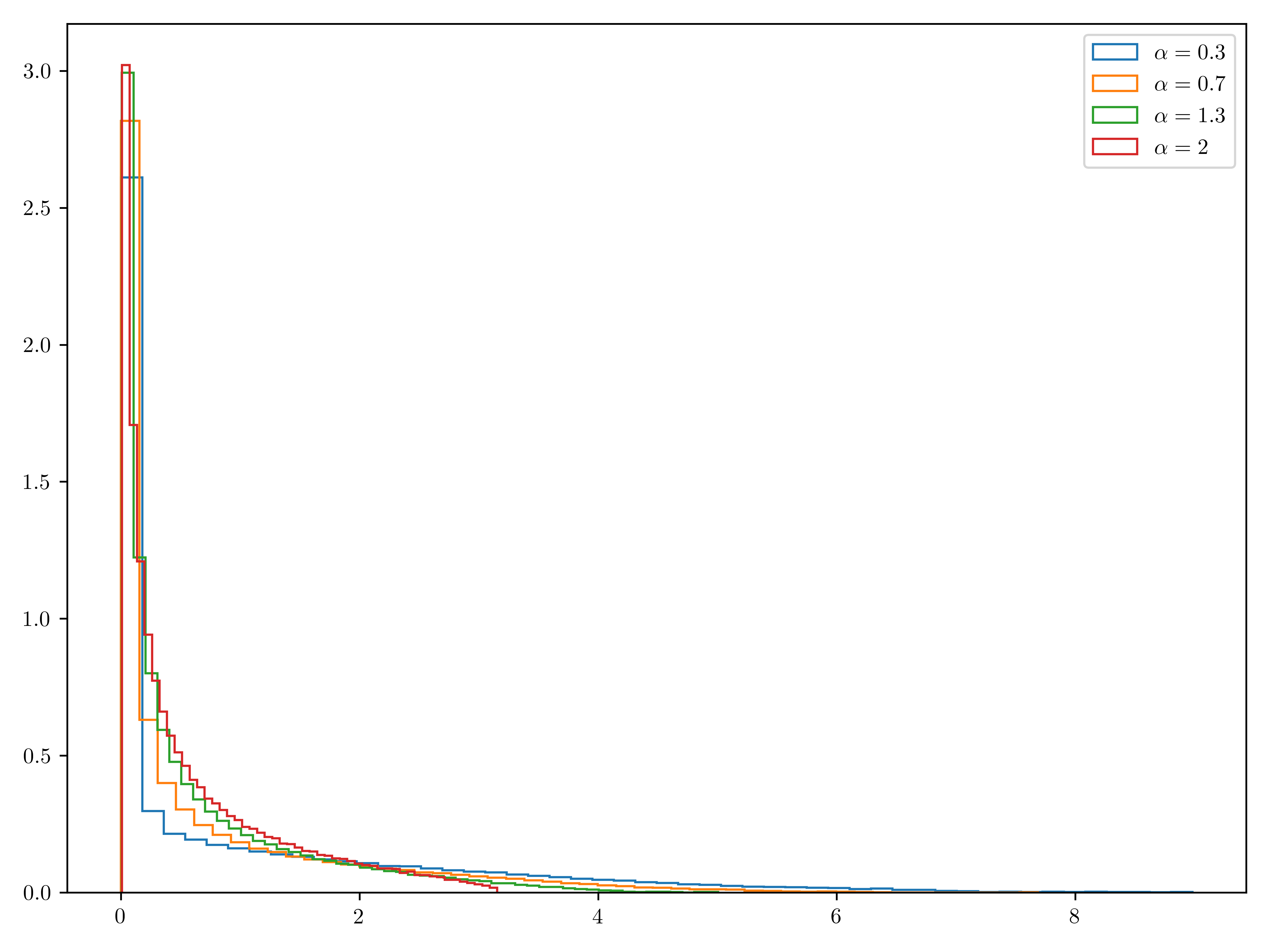}
\caption{Eigenvalue histogram of \(M=Y_mY_m^\top\) for the activation function \(f(x) = \arctan(x)\). The weight distribution \(\nu_w\) follows a symmetric \(\alpha\)-stable distribution with \(\sigma = 1\) and different values of \(\alpha \in ]0,2]\), while \(\nu_x\) is the standard normal distribution. Numerical experiments were conducted with \(m=n=10000\) and \(p=6500\).}
\label{fig:1}
\end{figure}

\subsection{Related work}

The study of random matrices with nonlinear dependencies was initiated by El Karoui~\cite{elkaroui} and Cheng and Singer~\cite{cheng} in the context of \emph{random inner-product kernel matrices}, where the nonlinearity is applied to the sample covariance matrix---formally, \( f (X^\top X)\), with \(X\) being a rectangular matrix with i.i.d.\ entries. In the case of Gaussian \(X\) and in the linear-width regime, the bulk eigenvalues asymptotically follow the free convolution of the semicircle and Marchenko-Pastur distributions~\cite{cheng,fan2019}. More recently,~\cite{lu2023} extended these results to the polynomial growth regime, and~\cite{dubova2023} further generalized them to settings where \(X\) has i.i.d.\ entries with finite moments, identifying the bulk as a (weighted) free convolution of semicircle and Marchenko-Pastur laws~\cite{dubova2023} and demonstrating the universality of this phenomenon. In the quadratic regime,~\cite{pandit2024} analyzed inner-product kernel matrices for data with general covariance structure (under a Gaussian moment-matching assumption), showing that their spectra and kernel ridge regression performance admit precise asymptotic descriptions.

Nonlinear random matrices also arise from two-layer feed-forward neural networks, where they play a central role in understanding training dynamics and generalization. In this context, rather than applying a nonlinearity to the sample covariance matrix, one instead considers the sample covariance matrix of a nonlinearity. This is the focus of the present work, which considers random matrices of the form \(YY^\top\), where \(Y = f(WX)\) denotes the so-called \emph{random features matrix}. The spectral properties of such matrices in high-dimensional settings are closely tied to the expected training loss and generalization error of the associated neural networks. From a mathematical perspective, characterizing the asymptotic spectrum of the random matrix \(YY^\top\) is challenging due to the nonlinear dependencies introduced by the activation function. These dependencies make the analysis significantly more complex than that of classical linear random matrix ensembles. The global law for such \emph{conjugate kernel} matrices was first studied by Pennington and Worah~\cite{pennington2017} in the setting where \(W\) and \(X\) have i.i.d.\ centered Gaussian entries. Their result was later extended by Benigni and P\'{e}ch\'{e}~\cite{benigni2021} to matrices with sub-Gaussian tails and real analytic activation functions. P\'{e}ch\'{e}~\cite{peche2019} further showed that the nonlinear random matrix \(YY^\top\) is asymptotically equivalent to a \emph{Gaussian linear model}, where the asymptotic effect of the nonlinearity is captured by a linear combination of the involved matrices and an additional independent Gaussian matrix. Building on this line of work, the second author in collaboration with Schr\"{o}der~\cite{schroder2021} analyzed the practically important case of the biased random features model \(Y = f(WX+B)\), where \(B\) is an independent rank-one Gaussian matrix. They used the resolvent method and cumulant expansion---rather than the moment method employed in earlier works~\cite{pennington2017,benigni2021}---to study the spectral behavior of this model. Speicher and Wendel~\cite{speicher2024} recently extended this perspective by computing cumulants for a broader class of nonlinear random matrices, where the nonlinearity is applied to symmetric orthogonally invariant random matrices, and showed that a Gaussian equivalence principle holds. In parallel, Dabo and Male~\cite{dabo2024} further generalized the model by considering random matrices with variance profiles, namely matrices where the variance of the entries varies from one variable to another. They showed that such models are asymptotically \emph{traffic-equivalent} to an information-plus-noise type sample covariance matrix, consistent with the Gaussian equivalence principle of~\cite{peche2019}. Independently of this line of work, Louart, Liao, and Couillet~\cite{couillet2018} initiated a complementary approach to analyzing conjugate kernel random matrices, focusing on the case where \(X\) is deterministic, \(W\) is a random (with entries given by functions of standard Gaussian random variables), and \(f\) is a Lipschitz activation function. Using concentration inequalities, they derived a deterministic equivalent for the expectation of the resolvent and showed that the eigenvalue distribution aligns with that of a standard sample covariance matrix. A key work in this direction is by Fan and Wang~\cite{fanwang2020}, who established a global law for the conjugate and neural tangent kernel matrices of multi-layer networks with Gaussian weights and analytic activations, showing that the limiting spectral distribution can be expressed via free probability convolutions. Wang and Zhou~\cite{wang2024} further analyzed the regime where the network width is much larger than the sample size, confirming the universality of these spectral phenomena. More recently, Chouard~\cite{chouard2023} strengthened these results by deriving quantitative global and local deterministic equivalents, allowing non-differentiable activations and bias terms.

In this article, we study conjugate kernel random matrices with light-tailed inputs and heavy-tailed weights. Linear models of symmetric matrices with independent heavy-tailed entries have been extensively analyzed in~\cite{zakarevich2006,benarous2008,bordenave2011,bordenave2011bis}. These matrices fall outside the Wigner universality class. Specifically, while the empirical measure of their eigenvalues converges, the limiting distribution is not the semicircular law. Instead, it is a probability measure with unbounded support. Depending on the model, this limit can exhibit atoms \cite{salez2015}, as in the case of adjacency matrices of Erd\"os-R\'enyi graphs, or have a smooth density, such as when the entries follow an \(\alpha\)-stable distribution \cite{Belinschi2009}. The eigenvalue fluctuations resemble those of independent random variables~\cite{BGM}. However, the local spectral fluctuations remain largely unknown, except in the case of \(\alpha\)-stable entries, where certain regimes exhibit fluctuations similar to the Gaussian Orthogonal Ensemble~\cite{bordenave2013,bordenave2017,aggarwal2021}. In contrast, the behavior of conjugate kernel matrices with heavy-tailed weights is even less understood. In this paper, the empirical spectral measure of these models has light tails, in fact all finite moments, although we conjecture that the limiting distribution is not compactly supported. For the eigenvalue fluctuations, we conjecture that they follow the usual scaling of the central limit theorem which agrees with our rough bounds on the covariance derived in Section~\ref{sec:cov}.

\subsection{Outline of proofs} \label{subsection: outline proof}

Our approach to proving the weak convergence of the empirical spectral measure of \(M=Y_m Y_m^\top\) is based on the classical method of moments. Specifically, for every integer \(k \geq 1\), we aim to compute the normalized trace of the \(k\)th power of \(M\):
\begin{equation} \label{eq: moments expansion}
\E \left [\frac{1}{p} \tr M^k \right ]  = \frac{1}{p} \sum_{1 \leq i_1,\ldots, i_k \leq p} \sum_{1 \leq j_1,\ldots, j_k \leq m} \E \left [\prod_{\ell=1}^k Y_{i_\ell j_\ell} Y_{i_{\ell+1} j_\ell}\right ],
\end{equation}
with the convention that \(i_{k+1}=i_1\). The right-hand side of~\eqref{eq: moments expansion} admits a natural graphical interpretation as a sum over closed walks of length \(2k\) on a bipartite graph: one set of vertices indexed by \(\{i_1, i_2, \ldots, i_k\}\) and the other by \(\{j_1, j_2, \ldots, j_k\}\), with edges connecting \(i_\ell\) to \(j_\ell\) and \(j_\ell\) to \(i_{\ell +1}\) for every \(1 \leq \ell \leq k\). Unlike the light-tailed regime, where most terms in this expansion vanish asymptotically, the present framework gives rise to a substantially larger number of non-negligible contributions. To account for these systematically, we employ a refined version of the moment method inspired by \emph{traffic probability theory}~\cite{male2020}. This approach extends classical moment computations by considering more general functionals of the matrix entries, referred to as \emph{injective moments}. The traffic probability approach was also recently used by Dabo and Male~\cite{dabo2024} to compute the traffic trace of \(M\) for profiled matrices \(W\) and \(X\) with light-tailed distributions. To formalize our approach, we begin by recalling the definition of \emph{traffic traces} introduced by Male~\cite{male2020}.

\begin{defn} [Traffic trace~\cite{male2020}] \label{def: traffic}
\leavevmode
\begin{enumerate}
\item[(a)] A \emph{test graph of matrices} consists of a triple \(T = (V, E, \mathcal{A})\), where \(G = (V,E)\) is a finite, oriented graph (possibly with multiple edges and loops) and \(\mathcal{A} = (A_e)_{e \in E}\) is a collection of \(N \times N\) matrices labeling each edge \(e \in E\) in \(G\). 
\item[(b)] For every test graph \(T = (V, E, \mathcal{A})\), the \emph{traffic trace} is defined by
\[
\tau_N [T] \coloneqq  \E \left [\frac{1}{N^c} \sum_{\phi \colon V \to [N]} \prod_{e = (u,v) \in E} A_e(\phi(u), \phi(v)) \right ],
\]
where \(c\) denotes the number of connected components of the graph \(G = (V,E)\).
\item[(c)] For every test graph \(T = (V, E, \mathcal{A})\), the \emph{mean injective trace} is defined by
\[
\tau_N^0 [T] \coloneqq  \E \left [\frac{1}{N^c} \sum_{\phi \colon V \to [N] \atop \textnormal{s.t.} \: \phi \: \textnormal{is injective}} \prod_{e = (u,v) \in E} A_e(\phi(u), \phi(v)) \right ].
\]
The \emph{traffic trace} is then recovered via:
\[
\tau_N [T]  = \sum_{\pi \in \mathcal{P}(V)} \tau^0_N [T^\pi],
\]
where the sum runs over all partitions of \(V\) and \(T^\pi\) denotes the test graph obtained from \(T\) by identifying the vertices within each block of \(\pi\). 
\item[(d)] We say that a collection \(\mathcal{A}\) of \(N \times N\) matrices \emph{converges in traffic distribution} if, for any test graph \(T = (V, E, \mathcal{A})\), the traffic trace \(\tau_N [T]\) converges as \(N \to \infty\). 
\end{enumerate}
\end{defn} 

In our setting, it suffices to consider non-oriented bipartite multigraphs, as the matrix \(M=Y_m Y_m^\top\) is self-adjoint. According to Definition~\ref{def: traffic}, for a non-oriented, finite, connected bipartite test graph \(T =(W \cup V, E,Y_m)\), with edges running from \(W\) to \(V\) and labeled by the random matrix \(Y_m\), the mean injective trace is given by
\begin{equation} \label{eq: tau0}
\tau^0_{p,m,n} \left [T  \right ] = \E \left [ \frac{1}{p} \sum_{\phi_W \colon W \to [p] \atop \phi_W  \textnormal{injective}}  \sum_{\phi_V \colon V \to [m] \atop \phi_V \textnormal{injective}} \prod_{e=(w,v)\in E} \left ( Y_m(\phi_{W}(w),\phi_{V}(v))\right )^{m(e)} \right],
\end{equation}
where \(m(e)\) denotes the multiplicity of edge \(e \in E\). By item (c) of Definition~\ref{def: traffic}, the normalized tracial moments of \(Y_m Y_m^\top\) can then be written as
\begin{equation} \label{traffic cycle}
\E \left [\frac{1}{p} \tr M^k \right] = \tau_{p, m, n} \left [T_{\text{cycle}} \right ] = \sum_{\pi\in \mathcal{P} (W\cup V)} \tau^0_{p,m,n} \left [T_{\text{cycle}}^\pi \right ],
\end{equation}
where \(T_{\text{cycle}} = (G, Y_m)\) is the test graph with \(G = (W \cup V, E)\) being the simple bipartite cycle of length \(2k\). Our strategy for proving the convergence of matrix moments---and thereby proving item (a) of Theorem~\ref{main1}---is to prove the convergence in traffic distribution of \(Y_m\). Due to the invariance property described in (c) of Definition~\ref{def: traffic}, this is equivalent to proving the convergence of the mean injective trace.

\begin{thm}[Convergence in traffic distribution]  \label{main2} 
Under Assumptions~\ref{hyp1}-\ref{hyp3}, the random matrix \(Y_m\) converges in traffic distribution. Specifically, for every finite, connected bipartite test graph \(T= (W \cup V, E , Y_m)\), there exists a real number \(\tau^0_G\), depending only on $G=(V\cup W, E)$ and \(\phi, \psi, f, \Phi, \nu_x\), such that 
\[
\lim_{m,p,n \rightarrow \infty} \tau^0_{p,m,n} \left [ T \right] = \tau^0_G.
\]
Moreover, the limiting injective trace \(\tau^0_G\) is defined explicitly in Proposition~\ref{main3}.
\end{thm}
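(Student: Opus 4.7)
The plan is to apply the Fourier method and take expectations with respect to $W$ and $X$ sequentially, exploiting Assumption~\ref{hyp1} and the i.i.d.\ structure of $X$ across the common index $k \in [n]$. For each edge $e \in E$ with multiplicity $m(e)$, write
\[
f(x)^{m(e)} = \int_{\R} \widehat{f^{m(e)}}(\lambda_e)\, e^{i \lambda_e x}\, \frac{d\lambda_e}{2\pi},
\]
using smoothness, boundedness, and square-integrability of $f$ (possibly after approximation by Schwartz functions). Substituting into~\eqref{eq: tau0} and reorganizing the exponent as $\sum_{i,k} W_{ik}\, u_{i,k}$ with $u_{i,k} = \sum_{e: \phi_W(w_e) = i} \lambda_e X_{k, \phi_V(v_e)}$, Assumption~\ref{hyp1}(a) together with the independence of the $W_{ik}$'s gives
\[
\E_W\!\left[\exp\!\left(i \sum_{i,k} W_{ik}\, u_{i,k}\right)\right] = \exp\!\left(\frac{1}{n} \sum_{i,k} \Phi_n(u_{i,k})\right).
\]

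Define $F_n(\lambda, X) = \sum_{w \in W} \Phi_n\!\left(\sum_{e: w_e = w} \lambda_e X_{\phi_V(v_e)}\right)$, where $X = (X_v)_{v \in V}$ is a vector of i.i.d.\ copies with law $\nu_x$. By injectivity of $\phi_V$, the rows $(X_{k,\cdot})_{k \in [n]}$ of $X$ are i.i.d., so
\[
\E_X\!\left[\exp\!\left(\frac{1}{n}\sum_{k=1}^n F_n(\lambda, X_{k,\cdot})\right)\right] = \left(\E_X[e^{F_n(\lambda, X)/n}]\right)^n \xrightarrow{n \to \infty} \exp\!\left(\E_X[F_\infty(\lambda, X)]\right),
\]
where $F_\infty$ replaces $\Phi_n$ with $\Phi$, by uniform $\Phi_n \to \Phi$ and the expansion $n \log(1 + a/n + O(1/n^2)) \to a$. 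Crucially, the limiting integrand depends only on the graph $G$, so the sum over injective $\phi_W, \phi_V$ contributes $p^{|W|} m^{|V|}(1+o(1))$. Combining with the $1/p$ normalization, the prefactor $m^{-N/2}$ where $N = \sum_{e \in E} m(e)$ is the total edge multiplicity, and Assumption~\ref{hyp3} yields
\[
\tau^0_{p,m,n}[T] \sim n^{|W|+|V|-1-N/2}\, \psi^{1-|W|}\, \phi^{N/2 - |V|}\, J(G),
\]
where $J(G)$ is an $n$-independent Fourier integral depending only on $G, \Phi, \nu_x, f$.

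It remains to show the limit is finite. When $N \ge 2(|W|+|V|-1)$, the combinatorial exponent is nonpositive and $\tau^0_G$ is immediately finite. Otherwise, I exploit parity: if some vertex has odd total incident multiplicity, then flipping the sign of the corresponding row of $X$ (or column of $W$) combined with oddness of $f$ shows $\tau^0_{p,m,n}[T] \equiv 0$ at every finite $n$, by the symmetry of $\nu_x$ (resp.\ $\nu_w$). The residual case---every vertex of even total multiplicity but some edge of odd multiplicity---is handled by noting that the leading $J(G)$ itself vanishes by parity in $\lambda_e$ for any odd-multiplicity edge (since $\widehat{f^{m(e)}}$ is odd while $e^{\E_X[F_\infty]}$ is even in each $\lambda_e$, using the evenness of $\Phi$ and symmetry of $\nu_x$), so the limit is picked up by the first non-vanishing term in the $1/n$ expansion of $\left(\E_X[e^{F_n/n}]\right)^n$, whose net scaling matches the combinatorial prefactor. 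The main technical obstacles will be: (i) justifying the Fourier inversion for $f^{m(e)}$ under only $f \in \mathcal{C}^\infty \cap L^2$ bounded, likely requiring approximation by Schwartz functions and dominated convergence; (ii) controlling $\left(\E_X[e^{F_n(\lambda)/n}]\right)^n$ uniformly in $\lambda$ over the integration range, which is delicate in the $\alpha$-stable case where $\Phi(\lambda) = -\sigma^\alpha|\lambda|^\alpha$ is unbounded and the growth of $F_n$ must be dominated by the decay of $\widehat{f^{m(e)}}$ ensured by smoothness of $f$; (iii) the finer combinatorial matching of higher-order $1/n$ corrections with the combinatorial prefactor in the intermediate case.
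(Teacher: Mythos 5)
Your proposal follows essentially the same route as the paper: Fourier inversion, integration over $W$ via Assumption~\ref{hyp1}(a), factorization of the $X$-expectation over the shared index $k \in [n]$, an expansion in powers of $1/n$, and parity arguments to kill the leading term when odd-multiplicity edges are present. One cosmetic difference is that you Fourier-invert $f^{m(e)}$ with a single variable $\lambda_e$ per edge, whereas the paper uses one variable $\gamma_e^i$ per appearance of $f$ (so $m(e)$ variables per edge). By the convolution theorem the two are equivalent, and the parity count $(-1)^{\sum_{e\in E_0} m(e)}$ is the same either way, though the paper's convention makes the later combinatorial bookkeeping in Section~\ref{section: combinatorics} somewhat cleaner.

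The real issue is that the proposal stops exactly at the point where the theorem has content. After the symmetry reductions, you are left with the graphs in which every vertex has even degree but some edge has odd multiplicity; for these $\rho(G)=|W|+|V|-|E|/2-1>0$, the combinatorial prefactor diverges like $n^{\rho(G)}$, and the leading Fourier integral $J(G)$ vanishes by parity. You write that ``the limit is picked up by the first non-vanishing term in the $1/n$ expansion\ldots whose net scaling matches the combinatorial prefactor,'' and flag this as obstacle~(iii) --- but this matching \emph{is} the theorem. The paper's Lemma~\ref{lem: main estimate} shows, via a careful cumulant expansion of $\Lambda_G^n$ in which each cumulant $\kappa_\ell^G$ contributes with weight $n^{-(\ell-1)}$, that the first non-vanishing term after the parity cancellations is precisely $n^{-\rho(G)}$ times a limit depending only on $G,\phi,\psi,f,\Phi,\nu_x$, and Proposition~\ref{prop: combinatorics} (built on Lemmas~\ref{lem: necessary cond}, \ref{lem: card disjoint case}, \ref{lem: card not disjoint case}) identifies exactly which products of cumulants achieve the sharp exponent $\sum_i (|W_i|-1)=\rho(G)$ and shows all others contribute lower order or vanish by parity. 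Without an analogue of this entire combinatorial apparatus, your argument establishes convergence only for double trees ($\rho(G)=0$) and shows the limit is $0$ when $\rho(G)<0$ or some vertex has odd degree, but does not establish existence of the limit for admissible graphs, which is where the heavy-tailed structure manifests. A second, smaller gap is the uniform truncation $\|\lambda\|_\infty \le n^\epsilon$ that justifies both the cumulant expansion and the replacement $\Phi_n\to\Phi$ inside the integral; you flag this as obstacle~(ii) but it must be carried out explicitly (the paper does this via the tail estimate~\eqref{thehypf2} on $\hat f$).
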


To compute the injective trace \(\tau^0_{p,m,n} \left [ T \right] \), we begin by expanding~\eqref{eq: tau0} using the Fourier inversion theorem, which states that \(f(x)=\frac{1}{2\pi}\int_{\R} \hat{f}(t) e^{itx} \textnormal{d} t\). Specifically, we obtain  
\[
\tau^0_{p,m,n} \left [T  \right ]  =  \frac{1}{p m^{|E|/2}} \sum_{\phi_W \colon W \to [p] \atop \phi_W  \textnormal{injective}}  \sum_{\phi_V \colon V \to [m] \atop \phi_V \textnormal{injective}} \frac{1}{(2\pi)^{|E|}} \int_{\R^{|E|}} \prod_{e \in E} \prod_{i=1}^{m(e)} \text{d} \gamma_e^i \hat f (\gamma_e^i) \Lambda_G^n (\boldsymbol{\gamma}) ,
\]
where \(|E| = \sum_{e \in E} m(e)\), \(\boldsymbol{\gamma} = ( \gamma_e^1,\ldots, \gamma_e^{m(e)})_{e \in E}\), and 
\[
\Lambda_G^n (\boldsymbol{\gamma}) \coloneqq \E \left [ \exp \left ( i \sum_{e=(w,v)\in E}  (\gamma_e^1 + \ldots+ \gamma_e^{m(e)}) W_{\phi_W(w)} \cdot X_{\phi_V(v)}  \right) \right].
\]
The use of the Fourier inversion theorem and the interchange of integration and expectation via Fubini’s theorem is justified by Assumption~\ref{hyp2}, which ensures that \(f, \hat{f} \in L^1(\R)\) and that \(\hat{f}\) decays faster than any polynomial. To identify the leading-order contributions, we expand \(\Lambda_G^n\) by taking the expectation with respect to \(W\) and using the function \(\Phi\) from item (a) of Assumption~\ref{hyp1}. This is carried out in detail in Subsection~\ref{subsection: proof main prop}. Alternatively, integrating first with respect to \(X\) provides a complementary perspective on the computation. If, for instance, \(\nu_x\) is the centered normal distribution, then
\[
\Lambda_G^n (\boldsymbol{\gamma}) = \E \left [ e^{i \tr (X E_G(\boldsymbol{\gamma})^\top W)}\right ]  = \E_W \left [ e^{ - \frac{1}{2} \sigma_x^2 \tr \left ( E_G(\boldsymbol{\gamma}) W W^\top E_G(\boldsymbol{\gamma})^\top \right)} \right ],
\]
where \(E_G(\boldsymbol{\gamma}) \in \R^{p \times m}\) is the matrix with entries given by \((E_G(\boldsymbol{\gamma}))_{\phi_W(w), \phi_V(v)} = (\gamma_e^1 + \cdots + \gamma_e^{m(e)}) \mathbf{1}_{e = (w,v) \in E}\). This approach shows that the random features matrix $Y_{m}$ can be viewed as a matrix with random covariance structure of the form $E_{G}(\boldsymbol{\gamma})^\top E_{G}(\boldsymbol{\gamma})$. While this point of view highlights the correlation structure among matrix entries, we were not able to exploit it to obtain a closed-form description of the limiting spectral distribution.

\subsection{Overview}
In Section~\ref{section: convergence traffic}, we present the proof of Theorem~\ref{main2}. In particular, we identify the connected bipartite graphs that contribute to the limiting injective trace \(\tau_G^0\), relying on key combinatorial estimates from Section~\ref{section: combinatorics}. This result is then applied in Section~\ref{section: cycle} to compute the asymptotics of the normalized tracial moments of \(M=Y_mY_m^\top\), by studying the traffic trace \(\tau_{p,m,n} [T_{\text{cycle}}]\) associated with the simple bipartite cycle test graph \(T_{\text{cycle}}\) (see in~\eqref{traffic cycle}). More precisely, Subsection~\ref{subsection: matrix moments} provides the proof of the convergence in expectation stated in Theorem~\ref{main1}. The convergence in probability follows by Chebyshev's inequality, using a variance estimate obtained in Subsection~\ref{sec:cov}. Finally, Section~\ref{sec:concen} proves the almost sure weak convergence of the empirical spectral measure. \\

\textbf{Acknowledgements.}\ We thank G\'{e}rard Ben Arous and Camille Male for invaluable discussions throughout the project. This work was supported by the ERC Advanced Grant LDRaM (No.\ 884584).

%%%%%%%%%%%%%%%%%%%%%%%%%%%%%%%%%%%%%%%%%%%%%%%%%%%%%%%%%%%%%%%%%%%%%%%%%%
%%%%%%%%%%%%%%%%%%%%%%%%%%%%%%%%%%%%%%%%%%%%%%%%%%%%%%%%%%%%%%%%%%%%%%%%%%
\section{Convergence in traffic distribution} \label{section: convergence traffic}

The goal of this section is to compute the limiting injective trace \(\tau_G^0\) from Theorem~\ref{main2}, thereby proving convergence in traﬃc distribution. To this end, we identify the connected bipartite test graphs \(T = (G,Y_m)\) for which the limiting injective trace \(\tau_G^0\) does not vanish. 

\subsection{Preliminaries}

For certain test graphs, the mean injective trace vanishes due to the symmetry of the distributions of the entries $W_{ij}$ and $X_{ij}$. In general, however, the limit of the injective trace is more intricate, especially due to the correlations among the entries \(Y_{ij}\). To handle these dependencies, we now introduce several definitions that describe the limiting behavior. 

Let \(G=(W \cup V,E)\) be a finite, connected bipartite multigraph, where \(E\) is a multiset of edges and \(m \colon E \to \N\) assigns a multiplicity to each edge \(e\in E\). The total number of edges is then given by \(|E| = \sum_{e \in E} m(e)\), and the degree of a vertex \(x \in W \cup V\) is defined by \(\deg(x)=\sum_{y \colon y \sim x} m((x,y))\), where \(w \sim v\) stands for \((w,v) \in E\). 

\begin{defn}[Induced subgraphs]  \label{def: subgraphs}
Let \(G = (W \cup V, E)\) be a connected bipartite multigraph. For some positive integer \(K\), consider subsets \(W_1, \ldots, W_K\) of \(W\) such that \(\cup_{i=1}^K W_i \subseteq W\), which may have a nontrivial intersection. For each subset \(W_i\), we define the bipartite subgraph \(G_i = G(W_i)\) induced by \(W_i\), where 
\[
V_i= \{ v \in V \colon \exists \, w \in W_i \enspace \textnormal{such that} \enspace w \sim v\},
\]
and
\[
E_i =\{e = (w,v) \in E \colon w \in W_i, v \in V_i\}.
\] 
The degree of a vertex \(x \in W_i \cup V_i\) within the subgraph \(G_i\) is denoted by \(\deg_{G_i}(x)\) and its degree in the entire graph \(G\) by \(\deg(x)\). By construction, for every \(w \in W_i\), the set \(E_i\) includes all edges in \(G\) that are incident to \(w\), so that \(\deg(w) = \deg_{G_i}(w)\). For every \(2 \leq k \leq K\) and every \(1 \leq \ell_1 < \cdots < \ell_k \leq K\), we define the sets of common vertices in \(W\) and \(V\), respectively, as follows:
\[
W_{G_{\ell_1}, \ldots, G_{\ell_k}} \coloneqq  \left \{ w \in W \colon \exists \, 1 \leq i < j \leq k \enspace \text{such that} \enspace w \in W_{\ell_i} \cap W_{\ell_j} \right \},
\]
and
\[
V_{G_{\ell_1}, \ldots, G_{\ell_k}} \coloneqq \left \{ v \in V \colon \exists \, 1 \leq i < j \leq k \enspace \text{such that} \enspace v \in V_{\ell_i} \cap V_{\ell_j} \right \},
\]
respectively.
\end{defn}

\begin{ex} \label{ex1}
Figure~\ref{fig1} provides an example of a connected bipartite graph \(G = (W \cup V, E)\), along with three different collections of subsets \(W_1, \ldots, W_K\) of \(W\) (with \(K \in \{2,3\}\)) and the corresponding induced subgraphs \(G(W_1), \ldots, G(W_K)\). 
\end{ex}

\begin{figure} 
\centering

\begin{subfigure}[]{0.8\textwidth} 
\centering

\begin{tikzpicture}

\draw (0,0) circle [radius=1];
\def\nodesA{8}
\foreach \i in {1,...,\nodesA} {
    \coordinate (P\i) at ({360/\nodesA * (\i - 1)}:1); 
    \ifodd\i 
        \fill (P\i) circle [radius=2pt]; 
    \else        
        \draw[fill=white] (P\i) circle [radius=2pt]; 
    \fi
}

\node[right] at (P1) {\tiny $v_1$};
\node[above] at (P2) {\tiny $w_1$};
\node[above] at (P3) {\tiny $v_4$};
\node[left] at (P4) {\tiny $w_4$};
\node[left] at (P5) {\tiny $v_3$};
\node[below] at (P6) {\tiny $w_3$};
\node[below] at (P7) {\tiny $v_2$};
\node[below] at (P8) {\tiny $w_2$};

\draw (2.2,0) circle [radius=1.2];
\def\nodesB{12}

\foreach \i in {1, ..., \nodesB} {
        
\pgfmathsetmacro\angle{360/\nodesB * \i}
        
\pgfmathsetmacro\xpos{2.2 + 1.2 * cos(\angle)}
\pgfmathsetmacro\ypos{0 + 1.2 * sin(\angle)}
        
\coordinate (Q\i) at (\xpos, \ypos);
       
\ifodd\i
\draw[fill=white] (\xpos, \ypos) circle [radius=2pt];  
        \else
            \fill[black] (\xpos, \ypos) circle [radius=2pt];  
        \fi
    }
    
\node[right] at (Q1) {\tiny $w_5$};
\node[above] at (Q2) {\tiny $v_{10}$};
\node[above] at (Q3) {\tiny $w_{10}$};
\node[above] at (Q4) {\tiny $v_9$};
\node[right] at (Q5) {\tiny $w_9$};
\node[right] at (Q7) {\tiny $w_8$};
\node[below] at (Q8) {\tiny $v_7$};
\node[below] at (Q9) {\tiny $w_7$};
\node[below] at (Q10) {\tiny $v_6$};
\node[right] at (Q11) {\tiny $w_6$};
\node[right] at (Q12) {\tiny $v_5$};
    
\pgfmathsetmacro\xcoord{cos(30)} 
\pgfmathsetmacro\ycoord{sin(30)}

\draw (2.2 + 2 * \xcoord, 2 * \ycoord)  circle [radius=0.8];    
\def\nodesC{6}

\foreach \i in {1, ..., \nodesC} {
        
        \pgfmathsetmacro\angle{30 + 360/\nodesC * \i}
        
        \pgfmathsetmacro\xpos{2.2 + 2 * \xcoord + 0.8 * cos(\angle)}
        \pgfmathsetmacro\ypos{0 + 2 * \ycoord + 0.8 * sin(\angle)}
        
         \coordinate (R\i) at (\xpos, \ypos);
         
        \ifodd\i
             \draw[fill=white] (\xpos, \ypos) circle [radius=2pt];  
        \else
	 \fill[black] (\xpos, \ypos) circle [radius=2pt];  
        \fi
    }

\node[above] at (R1) {\tiny $w_{11}$};
\node[right] at (R2) {\tiny $v_{13}$};
\node[above] at (R4) {\tiny $v_{12}$};
\node[right] at (R5) {\tiny $w_{12}$};    
\node[right] at (R6) {\tiny $v_{11}$};
 
\end{tikzpicture}
\caption{A connected bipartite graph \(G = (W \cup V, E)\), where \(W = \{w_1, \ldots, w_{12}\}\) and \(V = \{v_1, \ldots, v_{12}\}\).}
\label{subfig1}
\end{subfigure}

\hfill

\begin{subfigure}[]{0.8\textwidth}
\centering

\begin{tikzpicture}

\draw (-1.5,0) circle [radius=1];
\def\nodesA{8}

\foreach \i in {1, ..., \nodesA} {
        
\pgfmathsetmacro\angle{360/\nodesA * \i}
\pgfmathsetmacro\xpos{-1.5 + cos(\angle)}
\pgfmathsetmacro\ypos{0 + sin(\angle)}
  
 \coordinate (P\i) at (\xpos, \ypos);      
  \ifodd\i
           \draw[fill=white] (\xpos, \ypos) circle [radius=2pt];  
        \else
            \fill[black] (\xpos, \ypos) circle [radius=2pt];  
        \fi
    }

\node[right] at (P1) {\tiny $w_1$};
\node[above] at (P2) {\tiny $v_4$};
\node[left] at (P3) {\tiny $w_4$};
\node[left] at (P4) {\tiny $v_3$};
\node[left] at (P5) {\tiny $w_3$};
\node[below] at (P6) {\tiny $v_2$};
\node[right] at (P7) {\tiny $w_2$};
\fill[red] (P8)   circle [radius=2pt];  
\node[right] at (P8) {\tiny $v_1$};

\draw (2.2,0) circle [radius=1.2];
\def\nodesB{12}

\draw (2.2,0) circle [radius=1.2];
\def\nodesB{12}

\foreach \i in {1, ..., \nodesB} {
        
\pgfmathsetmacro\angle{360/\nodesB * \i}
        
 \pgfmathsetmacro\xpos{2.2 + 1.2 * cos(\angle)}
 \pgfmathsetmacro\ypos{0 + 1.2 * sin(\angle)}
        
         \coordinate (Q\i) at (\xpos, \ypos);  
         
        \ifodd\i
           \draw[fill=white] (\xpos, \ypos) circle [radius=2pt];  
        \else
            \fill[black] (\xpos, \ypos) circle [radius=2pt];  
        \fi
    }
    
\node[right] at (Q1) {\tiny $w_5$};
\node[above] at (Q2) {\tiny $v_{10}$};
\node[above] at (Q3) {\tiny $w_{10}$};
\node[above] at (Q4) {\tiny $v_9$};
\node[left] at (Q5) {\tiny $w_9$};
\node[left] at (Q6) {\tiny $v_1$};
\fill[red] (Q6) circle [radius=2pt]; 
\node[left] at (Q7) {\tiny $w_8$};
\node[below] at (Q8) {\tiny $v_7$};
\node[below] at (Q9) {\tiny $w_7$};
\node[below] at (Q10) {\tiny $v_6$};
\node[right] at (Q11) {\tiny $w_6$};
\node[right] at (Q12) {\tiny $v_5$};

\pgfmathsetmacro\xcoord{cos(30)} 
\pgfmathsetmacro\ycoord{sin(30)} 

\draw (2.2 + 2 * \xcoord, 2 * \ycoord)  circle [radius=0.8];    
\def\nodesC{6}

\foreach \i in {1, ..., \nodesC} {
        
        \pgfmathsetmacro\angle{30 + 360/\nodesC * \i}
        
        \pgfmathsetmacro\xpos{2.2 + 2 * \xcoord + 0.8 * cos(\angle)}
        \pgfmathsetmacro\ypos{0 + 2 * \ycoord + 0.8 * sin(\angle)}
        
         \coordinate (R\i) at (\xpos, \ypos); 
         
        \ifodd\i
             \draw[fill=white] (\xpos, \ypos) circle [radius=2pt];  
        \else
	 \fill[black] (\xpos, \ypos) circle [radius=2pt];  
        \fi
    }
    
\node[above] at (R1) {\tiny $w_{11}$};
\node[right] at (R2) {\tiny $v_{13}$};
\node[above] at (R4) {\tiny $v_{12}$};
\node[right] at (R5) {\tiny $w_{12}$};    
\node[right] at (R6) {\tiny $v_{11}$};    
 
\end{tikzpicture}
\caption{The two connected subgraphs \(G_1=G(W_1)\) and \(G_2=G(W_2)\) obtained from the disjoint subsets \(W_1 = \{w_1, \ldots, w_4\}\) and \(W_2 = \{w_5, \ldots, w_{12}\}\), respectively, according to Definition~\ref{def: subgraphs}. Moreover, \(W_{G_1,G_2} = \emptyset\) and \(V_{G_1, G_2} = \{v_1\}\).}
\label{subfig2}
\end{subfigure}

\hfill

\begin{subfigure}[]{0.8\textwidth}

\centering

\begin{tikzpicture}

\draw (-1.5,0) circle [radius=1];
\def\nodesA{8}

\foreach \i in {1, ..., \nodesA} {
        
        \pgfmathsetmacro\angle{360/\nodesA * \i}
        
        \pgfmathsetmacro\xpos{-1.5 + cos(\angle)}
        \pgfmathsetmacro\ypos{0 + sin(\angle)}
        
        \coordinate (P\i) at (\xpos, \ypos); 
        
        \ifodd\i
           \draw[fill=white] (\xpos, \ypos) circle [radius=2pt];  
        \else
            \fill[black] (\xpos, \ypos) circle [radius=2pt];  
        \fi
    }

\node[right] at (P1) {\tiny $w_1$};
\node[above] at (P2) {\tiny $v_4$};
\node[left] at (P3) {\tiny $w_4$};
\node[left] at (P4) {\tiny $v_3$};
\node[left] at (P5) {\tiny $w_3$};
\node[below] at (P6) {\tiny $v_2$};
\node[right] at (P7) {\tiny $w_2$};
\node[right] at (P8) {\tiny $v_1$};  
\fill[red] (P8) circle [radius=2pt];  
    
\draw (2.2,0) circle [radius=1.2];
\def\nodesB{12}

\draw (2.2,0) circle [radius=1.2];
\def\nodesB{12}

\foreach \i in {1, ..., \nodesB} {
        
        \pgfmathsetmacro\angle{360/\nodesB * \i}
        
        \pgfmathsetmacro\xpos{2.2 + 1.2 * cos(\angle)}
        \pgfmathsetmacro\ypos{0 + 1.2 * sin(\angle)}
        
        \coordinate (Q\i) at (\xpos, \ypos); 
        
        \ifodd\i
           \draw[fill=white] (\xpos, \ypos) circle [radius=2pt];  
        \else
            \fill[black] (\xpos, \ypos) circle [radius=2pt];  
        \fi
    }
 
\node[right] at (Q1) {\tiny $w_5$};
\node[above] at (Q2) {\tiny $v_{10}$};
\node[above] at (Q3) {\tiny $w_{10}$};
\node[above] at (Q4) {\tiny $v_9$};
\node[left] at (Q5) {\tiny $w_9$};
\node[left] at (Q6) {\tiny $v_1$};
\fill[red] (Q6) circle [radius=2pt]; 
\node[left] at (Q7) {\tiny $w_8$};
\node[below] at (Q8) {\tiny $v_7$};
\node[below] at (Q9) {\tiny $w_7$};
\node[below] at (Q10) {\tiny $v_6$};
\node[right] at (Q11) {\tiny $w_6$};
\node[right] at (Q12) {\tiny $v_5$};

\pgfmathsetmacro\xcoord{cos(30)} 
\pgfmathsetmacro\ycoord{sin(30)} 

\pgfmathsetmacro\xangle{cos(150)} 
\pgfmathsetmacro\yangle{sin(150)} 

\pgfmathsetmacro\xanglee{cos(270)} 
\pgfmathsetmacro\yanglee{sin(270)} 

\coordinate (A) at (2.2 + 1.2 * \xcoord, 1.2 * \ycoord);
\coordinate (B) at (2.2 + 2 * \xcoord + 0.8 * \xangle,  2 * \ycoord + 0.8 * \yangle);
\coordinate (C) at (2.2 + 2 * \xcoord + 0.8 * \xanglee,  2 * \ycoord + 0.8 * \yanglee);

%\node[right] at (B) {\tiny $v_{13}$}  ;
%\node[right] at (C) {\tiny $v_{12}$}  ;

 \draw (A) to[bend left = 20] (B);
 \draw (A) to[bend right = 20] (C);

\draw[fill=white] (A) circle [radius=2pt]; 
\fill[magenta] (B) circle [radius=2pt];   
\fill[teal] (C) circle [radius=2pt];
\node[right] at (B) {\tiny $v_{13}$};
\node[right] at (C) {\tiny $v_{12}$};

\pgfmathsetmacro\xcord{cos(-30)} 
\pgfmathsetmacro\ycord{sin(-30)} 
\pgfmathsetmacro\xpi{cos(180)} 
\pgfmathsetmacro\ypi{sin(180)} 

\coordinate (D) at (6+ 0.8 * \xpi, 0.8 * \ypi);
\coordinate (E) at (6 + 2 * \xpi + 1.2 * \xcoord, 1.2 * \ycoord);
\coordinate (F) at (6 + 2 * \xpi + 1.2 * \xcord, 1.2 * \ycord);

\node[left] at (D) {\tiny $w_5$}  ;
\node[left] at (E) {\tiny $v_{10}$};
\node[left] at (F) {\tiny $v_5$};
%\node[below] at (Q2) {\tiny $v_{10}$};
%\node[left] at (Q12) {\tiny $v_5$};

 \draw (E) to[bend left = 20] (D);
 \draw (F) to[bend right = 20] (D);

\fill[blue] (E) circle [radius=2pt];   
\fill[green] (F) circle [radius=2pt];
\fill[blue] (Q2) circle [radius=2pt];   
\fill[green] (Q12) circle [radius=2pt];

\draw (6,0)  circle [radius=0.8];    
\def\nodesC{6}

\foreach \i in {1, ..., \nodesC} {
        
        \pgfmathsetmacro\angle{360/\nodesC * \i}
        
        \pgfmathsetmacro\xpos{6 + 0.8 * cos(\angle)}
        \pgfmathsetmacro\ypos{0 + 0.8 *sin(\angle)}
        
         \coordinate (R\i) at (\xpos, \ypos); 
         
        \ifodd\i
             \draw[fill=white] (\xpos, \ypos) circle [radius=2pt];  
        \else
	 \fill[black] (\xpos, \ypos) circle [radius=2pt];  
        \fi
    }

\fill[magenta] (R2) circle [radius=2pt];   
\fill[teal] (R4) circle [radius=2pt];
\node[above] at (R2) {\tiny $v_{13}$};
\node[below] at (R4) {\tiny $v_{12}$};

\node[above] at (R1) {\tiny $w_{11}$};
%\node[above] at (R2) {\tiny $v_{13}$};
%\node[below] at (R4) {\tiny $v_{12}$};
\node[right] at (R5) {\tiny $w_{12}$};    
\node[right] at (R6) {\tiny $v_{11}$}; 
   
\end{tikzpicture}
\caption{The three connected subgraphs \(G_1=G(W_1), G_3=G(W_3)\) and \(G_4=G(W_4)\) obtained from the subsets \(W_1 = \{w_1, w_2,w_3, w_4\}\), \(W_3 = \{w_5,w_6, w_7, w_8,w_9, w_{10}\}\), and \(W_4 = \{w_5, w_{11}, w_{12}\}\), respectively, according to Definition~\ref{def: subgraphs}. Moreover, \(W_{G_1, G_3} = \emptyset, W_{G_1, G_4} = \emptyset, W_{G_3, G_4} = \{w_5\}, W_{G_1, G_3, G_4} = \{w_5\}\) and \(V_{G_1, G_3} = \{v_1\}, V_{G_1, G_4} = \emptyset, V_{G_3, G_4} = \{v_2, v_3, v_4, v_5\}, V_{G_1, G_3, G_4} = \{v_1, v_2, v_3, v_4, v_5\}\).}
\label{subfig3}
\end{subfigure}

\hfill

\begin{subfigure}[]{0.8\textwidth}
\centering

\begin{tikzpicture}

\pgfmathsetmacro\xa{cos(45)} 
\pgfmathsetmacro\ya{sin(45)} 
\pgfmathsetmacro\xb{cos(90)} 
\pgfmathsetmacro\yb{sin(90)} 
\pgfmathsetmacro\xc{cos(135)} 
\pgfmathsetmacro\yc{sin(135)} 
\pgfmathsetmacro\xd{cos(180)} 
\pgfmathsetmacro\yd{sin(180)} 
\pgfmathsetmacro\xe{cos(225)} 
\pgfmathsetmacro\ye{sin(225)} 
\pgfmathsetmacro\xf{cos(270)} 
\pgfmathsetmacro\yf{sin(270)} 
\pgfmathsetmacro\xg{cos(315)} 
\pgfmathsetmacro\yg{sin(315)} 
\pgfmathsetmacro\xh{cos(360)} 
\pgfmathsetmacro\yh{sin(360)} 

\coordinate (1) at (-1.5 + \xa, \ya);
\coordinate (2) at (-1.5 + \xb, \yb);
\coordinate (3) at (-1.5 + \xc, \yc);
\coordinate (4) at (-1.5 + \xd, \yd);
\coordinate (6) at (-1.5 + \xf, \yf);
\coordinate (7) at (-1.5 + \xg, \yg);
\coordinate (8) at (-1.5 + \xh, \yh);

\draw (1) to[bend right=20] (2);
\draw (2) to[bend right=20]  (3);
\draw (3) to[bend right=20] (4);
\draw (6) to[bend right=20] (7);
\draw (7) to[bend right=20]  (8);
\draw (8) to[bend right=20] (1);

\draw[fill=white] (1) circle [radius=2pt]; 
 \fill[violet] (2) circle [radius=2pt];    
\draw[fill=white] (3) circle [radius=2pt];    
 \fill[cyan] (4) circle [radius=2pt];    
 \fill[orange] (6) circle [radius=2pt];    
\draw[fill=white] (7) circle [radius=2pt];  
 \fill[red] (8) circle [radius=2pt];      

\node[right] at (1) {\tiny  $w_1$}  ;
\node[above] at (2) {\tiny  $v_4$}  ;
\node[left] at (3) {\tiny  $w_4$}  ;
\node[left] at (4) {\tiny  $v_3$}  ;
\node[below] at (6) {\tiny  $v_2$}  ;
\node[right] at (7) {\tiny  $w_2$}  ;
\node[right] at (8) {\tiny  $v_1$}  ;

\coordinate (9) at (-3.5 + \xc, \yc);
\coordinate (10) at (-3.5 + \xd, \yd);
\coordinate (11) at (-3.5 + \xe, \ye);
\coordinate (12) at (-3.5 + \xf, \yf);
\coordinate (13) at (-3.5 + \xb, \yb);

\draw (13) to[bend right=20] (9);
\draw (9) to[bend right=20] (10);
\draw (10) to[bend right=20] (11);
\draw (11) to[bend right=20] (12);

 \fill[violet] (13) circle [radius=2pt]; 
\draw[fill=white] (9) circle [radius=2pt]; 
 \fill[cyan] (10) circle [radius=2pt];    
\draw[fill=white] (11) circle [radius=2pt];    
 \fill[orange] (12) circle [radius=2pt];  

\node[above] at (13) {\tiny  $v_4$}  ;
\node[below] at (12) {\tiny  $v_2$}  ;
\node[below] at (11) {\tiny  $w_3$}  ;
\node[left] at (10) {\tiny  $v_3$}  ;
\node[left] at (9) {\tiny  $w_4$}  ;

\draw (2.2,0) circle [radius=1.2];
\def\nodesB{12}

\draw (2.2,0) circle [radius=1.2];
\def\nodesB{12}

\foreach \i in {1, ..., \nodesB} {
        
\pgfmathsetmacro\angle{360/\nodesB * \i}
        
 \pgfmathsetmacro\xpos{2.2 + 1.2 * cos(\angle)}
 \pgfmathsetmacro\ypos{0 + 1.2 * sin(\angle)}
        
         \coordinate (Q\i) at (\xpos, \ypos);  
         
        \ifodd\i
           \draw[fill=white] (\xpos, \ypos) circle [radius=2pt];  
        \else
            \fill[black] (\xpos, \ypos) circle [radius=2pt];  
        \fi
    }
    
\node[right] at (Q1) {\tiny $w_5$};
\node[above] at (Q2) {\tiny $v_{10}$};
\node[above] at (Q3) {\tiny $w_{10}$};
\node[above] at (Q4) {\tiny $v_9$};
\node[left] at (Q5) {\tiny $w_9$};
\node[left] at (Q6) {\tiny $v_1$};
\fill[red] (Q6) circle [radius=2pt]; 
\node[left] at (Q7) {\tiny $w_8$};
\node[below] at (Q8) {\tiny $v_7$};
\node[below] at (Q9) {\tiny $w_7$};
\node[below] at (Q10) {\tiny $v_6$};
\node[right] at (Q11) {\tiny $w_6$};
\node[right] at (Q12) {\tiny $v_5$};

\pgfmathsetmacro\xcoord{cos(30)} 
\pgfmathsetmacro\ycoord{sin(30)} 

\draw (2.2 + 2 * \xcoord, 2 * \ycoord)  circle [radius=0.8];    
\def\nodesC{6}

\foreach \i in {1, ..., \nodesC} {
        
        \pgfmathsetmacro\angle{30 + 360/\nodesC * \i}
        
        \pgfmathsetmacro\xpos{2.2 + 2 * \xcoord + 0.8 * cos(\angle)}
        \pgfmathsetmacro\ypos{0 + 2 * \ycoord + 0.8 * sin(\angle)}
        
         \coordinate (R\i) at (\xpos, \ypos); 
         
        \ifodd\i
             \draw[fill=white] (\xpos, \ypos) circle [radius=2pt];  
        \else
	 \fill[black] (\xpos, \ypos) circle [radius=2pt];  
        \fi
    }
    
\node[above] at (R1) {\tiny $w_{11}$};
\node[right] at (R2) {\tiny $v_{13}$};
\node[right] at (R4) {\tiny $v_{12}$};
\node[right] at (R5) {\tiny $w_{12}$};    
\node[right] at (R6) {\tiny $v_{11}$};    

\end{tikzpicture}

\caption{The three connected subgraphs  \(G_5=G(W_5), G_6=G(W_6)\) and \(G_2=G(W_2)\) obtained from the subsets \(W_5 = \{w_1, w_2, w_4\}\), \(W_6 = \{w_3, w_4\}\), and \(W_2 = \{w_5, \ldots, w_{12}\}\), respectively, according to Definition~\ref{def: subgraphs}. Moreover, \(W_{G_5, G_6} = \{w_4\}, W_{G_5, G_2} = W_{G_6, G_2}=\emptyset\) and \(V_{G_5, G_6} = \{v_2, v_3, v_4\}, V_{G_5, G_2} = \emptyset, V_{G_6, G_2} = \{v_1\}, V_{G_5, G_6, G_2} = \{v_1, v_2, v_3, v_4\}\). }
\label{subfig4}
\end{subfigure}

\caption{An example of a connected bipartite graph \(G = (W \cup V, E)\), together with three distinct collections of subgraphs obtained by choosing subsets \(W_1, \ldots, W_K\) of \(W\) for \(K \in \{2,3\}\), as described in Definition~\ref{def: subgraphs}.}
\label{fig1}
\end{figure}

\begin{defn}[Block structure of connected bipartite multigraphs] \label{def: block structure}
Let \(G = (W \cup V, E)\) be a connected bipartite multigraph.
\begin{itemize}
\item[(a)] A vertex \(v \in V\) is called a \emph{separating vertex} if \(G\) can be decomposed into connected subgraphs \(G(W_1), \ldots, G(W_K)\), where \(W_1, \ldots, W_K \subseteq W\) and \(E_1, \ldots, E_K \subseteq E\) are disjoint, and \(v\) is the only vertex common to \(V_1, \ldots, V_K\).
\item[(b)] A \emph{block} \(B\) of \(G\) is a maximal connected subgraph \(B = (W_B \cup V_B, E_B)\) containing no separating vertices in \(V_B\). We write \(B = G(W_B)\) for the subgraph of \(G\) induced by the set \(W_B\) of vertices in \(W\) belonging to \(B\), as in Definition~\ref{def: subgraphs}.
\item[(c)] A graph \(G\) with \(N\) blocks \(B_1, \ldots, B_N\) is called a \emph{block tree} if, for every \(2 \le k \le N\) and every collection of distinct indices \(\ell_1, \ldots, \ell_k \in [N]\), one has \(|V_{B_{\ell_1}, \ldots, B_{\ell_k}}| \le k-1\).
\end{itemize}
\end{defn}

\begin{rmk}
The term ``block tree'' reflects the fact that the blocks of \(G\) form a tree-like structure: any random walk in \(G\) starting from a block \(B_1\), traversing \(k\) blocks, and returning to \(B_1\) must revisit at least one separating vertex. Otherwise, such a random walk would start from \(B_1\), pass through the blocks \(B_2, \ldots, B_k\), and return to \(B_1\) without revisiting any separating vertex---effectively forming a cycle. In such a case, \(|V_{B_1, \ldots, B_k}| =k\), contradicting the defining property of a block tree.
\end{rmk}

Examples of block trees include \emph{cactus graphs} and \emph{double trees}, defined as follows.

\begin{defn}[Cactus graph and double tree]
Let \(G = (W \cup V, E)\) be a connected bipartite graph. We say that \(G\) is a bipartite \emph{cactus graph} if it is a block tree (with blocks defined via separating vertices in \(V\)) such that each block is either a simple cycle or a union of simple cycles glued together at common vertices in \(W\). We call \(G\) a bipartite \emph{double tree} if it is a cactus graph in which every simple cycle in every block has length two. 
\end{defn}

\begin{ex}[Example~\ref{ex1} continued]
Consider the bipartite graph \(G\) shown in Figure~\ref{subfig1}. The vertex \(v_1 \in V\) is a separating vertex, and the subgraphs \(G_1\) and \(G_2\) in Figure~\ref{subfig2} are the two blocks of \(G\). Consequently, \(G\) is a block tree. More precisely, \(G\) is a cactus graph: \(G_1\) is a simple cycle, while \(G_2\) consists of two simple cycles connected at the vertex \(w_5 \in W\).
\end{ex}

In our setting, we restrict attention to a subclass of block trees, which we now define precisely.

\begin{defn}[Admissible graph] \label{def: admissible graph}
A connected bipartite multigraph \(G = (W \cup V, E)\) is said to be \emph{admissible} if it is a block tree with separating vertices in \(V\), such that within each block, vertices in \(W\) have even degree while vertices in \(V\) have degree \(2\).
\end{defn}

By item (b) of Definition~\ref{def: block structure}, we write \(B_i = G(W_{B_i})\) for each block of an admissible graph \(G\). Let $R$ denote the number of blocks $B_1,\ldots,B_R$ such that
$|W_{B_i}|\ge 2$ for each $1\le i\le R$, with the convention $R=0$ if every block contains exactly one vertex of $W$. Since separating vertices occur only in \(V\), the sets \(W_{B_1}, \ldots, W_{B_R}\) are disjoint. We then define
\[
S \coloneqq |W| - |\cup_{i=1}^R W_{B_i}| = |W| - \sum_{i=1}^R |W_{B_i}|,
\]
so that \(S\) is the number of blocks containing exactly one vertex from \(W\). In each of these \(S\) blocks, all edges have multiplicity \(2\), and each such block is therefore a double tree. Consequently, an admissible graph with \(R=0\) is itself a double tree. More generally, we note that every edge of an admissible graph has multiplicity either \(1\) or \(2\).

\begin{rmk}
Let \(B = G(W_B)\) be a block of an admissible graph with \(|W_B| \ge 2\). If \(\deg_B(w)=2\) for every \(w \in W_B\), then \(B\) is a simple cycle. More generally, \(B\) is a cactus graph if it consists of simple cycles that are connected only through vertices in \(W_B\). Therefore, an admissible graph \(G\) is a cactus graphs whenever all its \(R\) blocks containing more than one vertex from \(W\) are either simple cycles or cactus graphs.
\end{rmk}

For an admissible graph \(G = (W \cup V, E)\) and a block \(B = G(W_{B})\) with \(|W_B| \ge 2\), we introduce the notion of an admissible decomposition of \(W_B\).

\begin{defn}[Admissible decomposition of a block] \label{def: admissible decomposition}
Let \(B = G(W_B)\) be a block of an admissible graph \(G = (W \cup V, E)\) with \(|W_B| \ge 2\). A collection of subsets \(W_1, \ldots, W_K \subseteq W_B\) with \(K \ge 1\) is called an \emph{admissible decomposition} of \(W_B\) if 
\begin{enumerate}
\item[(a)] \(W_B = \cup_{i=1}^K W_i\);
\item[(b)] each \(W_i\) intersects at least one other \(W_j\), i.e., for all \(i \in [K]\) there exists \(j \neq i\) such that \(W_i \cap W_j \neq \emptyset\);
\item[(c)] \(|W_i| \ge 2\) for every \(i \in [K]\);
\item[(d)] the induced subgraph \(G_i = G(W_i)\) is connected for every \(i\);
\item[(e)] for every \(v \in V_B\), there exists at least one \(V_i\) such that \(\deg_{G_i}(v) \ge 2\),
\item[(f)] for every \(2 \le k \le K\) and distinct \(\ell_1, \ldots, \ell_k \in [K]\), \(|W_{G_{\ell_1}, \ldots, G_{\ell_k}}| \le k-1\).
\end{enumerate}
The set of admissible decompositions of \(W_B\) into \(K\) subsets is denoted by \(\mathcal{A}_K(W_B)\). Note that \(\{W_B\} \in \mathcal{A}_1(W_B)\).
\end{defn}

\begin{ex}[Example~\ref{ex1} continued]
The connected bipartite graph shown in Figure~\ref{subfig1} is an admissible graph, with blocks given by the subgraphs \(B_1 = G(W_1)\) and \(B_2 = G(W_2)\), displayed in Figure~\ref{subfig2}. There is only one admissible decomposition of \(W_1\), namely \(\{W_1\} \in \mathcal{A}_1(W_1)\). For instance, the decomposition of \(W_1\) into the subsets \(W_5\) and \(W_6\), illustrated in Figure~\ref{subfig4}, is not admissible. Indeed, condition (e) is violated: in the induced subgraphs \(G(W_5)\) and \(G(W_6)\), the vertex \(v_2\) has degree \(1\) in each subgraph, whereas condition (e) requires that \(\deg_{G_i}(v_2) \ge 2\) for at least one subgraph \(G_i = G(W_i)\). On the other hand, there are two admissible decompositions of \(W_2\): \(\{W_2\} \in \mathcal{A}_1(W_2)\) and \(\{W_3, W_4\} \in \mathcal{A}_2(W_2)\), where the subsets \(W_3\) and \(W_4\) are illustrated in Figure~\ref{subfig3}.
\end{ex}

\begin{ex} \label{ex2}
The connected bipartite graph shown in Figure~\ref{fig2} is an admissible graph with three blocks: \(B_1 = G(W_{B_1})\), \(B_2 = G(W_{B_2})\), and \(B_3 = G(W_{B_3})\). These blocks are defined by the subsets \(W_{B_1} = \{w_1\}, W_{B_2} = \{w_2, w_3, w_4, w_5, w_6\}\), and \(W_{B_3} = \{w_7\}\). There are two admissible decompositions of \(W_{B_2}\): \(\{W_{B_2}\} \in \mathcal{A}_1(W_{B_2})\) and \(\{W_1, W_2\}  \in \mathcal{A}_2(W_{B_2})\), where \(W_1 = \{w_2, w_3, w_4, w_5\}\) and \(W_2 = \{w_3, w_6\}\).
\end{ex}

\begin{figure}
\begin{tikzpicture}[scale=1.2]

\draw (0,0) circle [radius=1];
\def\nodesA{8}

\foreach \i in {1, ..., \nodesA} {
        
\pgfmathsetmacro\angle{360/\nodesA * \i}
\pgfmathsetmacro\xpos{0 + cos(\angle)}
\pgfmathsetmacro\ypos{0 + sin(\angle)}
     \coordinate (P\i) at (\xpos, \ypos); 
    
  \ifodd\i
           \fill[black] (\xpos, \ypos) circle [radius=2pt];  
        \else
           \draw[fill=white]  (\xpos, \ypos) circle [radius=2pt];  
        \fi
    }

\node[above] at (P2) {\tiny $w_2$}  ;
\node[left] at (P4) {\tiny $w_5$}  ;
\node[below] at (P6) {\tiny $w_4$}  ;
\node[left] at (P8) {\tiny $w_3$}  ;
\node[above] at (P1) {\tiny $v_2$}  ;
\node[right] at (P3) {\tiny $v_1$}  ;
\node[below] at (P5) {\tiny $v_4$}  ;
\node[right] at (P7) {\tiny $v_3$}  ;

\pgfmathsetmacro\xangle{cos(135)} 
\pgfmathsetmacro\yangle{sin(135)}
 
\draw (-1.1,1.1)  to[bend right = 40] (\xangle, \yangle);    
\draw (-1.1,1.1)  to[bend left = 40] (\xangle, \yangle);    
\draw[fill=white]  (-1.1,1.1) circle [radius=2pt]; 
\node[above] at  (-1.1,1.1)  {\tiny $w_1$}  ;

\draw (1.5,0) circle [radius=0.5];
\def\nodesB{4}

\foreach \i in {1, ..., \nodesB} {
        
\pgfmathsetmacro\angle{360/\nodesB * \i}
\pgfmathsetmacro\xpos{1.5 + 0.5 * cos(\angle)}
\pgfmathsetmacro\ypos{0.5* sin(\angle)}
         \coordinate (Q\i) at (\xpos, \ypos); 
  \ifodd\i
           \fill[black] (\xpos, \ypos) circle [radius=2pt];  
        \else
           \draw[fill=white]  (\xpos, \ypos) circle [radius=2pt];  
        \fi
    }

\node[right] at (Q4) {\tiny $w_6$}  ;
\node[below] at (Q1) {\tiny $v_5$}  ;
\node[below] at (Q3) {\tiny $v_6$}  ;
       
\draw (Q1) to[bend right = 40] (1.5,1);    
\draw (Q1) to[bend left = 40] (1.5,1);    
\draw (1.85,1.35)  to[bend right = 40] (1.5,1);    
\draw (1.85,1.35) to[bend left = 40] (1.5,1); 
\draw (1.15,1.35)  to[bend right = 40] (1.5,1);    
\draw (1.15,1.35) to[bend left = 40] (1.5,1);     

\draw[fill=white]  (1.5,1) circle [radius=2pt]; 
\fill[black]  (1.85,1.35) circle [radius=2pt]; 
\fill[black]  (1.15,1.35) circle [radius=2pt]; 
\node[right] at (1.5,0.9)  {\tiny $w_7$}  ;
\node[left] at  (1.15,1.35)  {\tiny $v_7$}  ;
\node[right] at  (1.85,1.35)  {\tiny $v_8$}  ;

\end{tikzpicture}
\caption{An example of an admissible graph \(G = (W \cup V, E)\) with three blocks. Block \(B_1 = G(\{w_1\})\) is a simple cycle of length \(2\); block \(B_2 = G(\{w_2, w_3, w_4, w_5, w_6\})\) is a cactus graph consisting of two simple cycles connected at the vertex \(w_3\); and block \(B_3 = G(\{w_7\})\) is a double tree. The separating vertices are \(v_1\) and \(v_5\). In particular, \(G\) is a cactus graph. Block \(B_2\) admits two admissible decompositions: \(\{\{w_2, w_3, w_4, w_5, w_6\}\} \in \mathcal{A}_1(\{w_2, w_3, w_4, w_5, w_6\})\) and \(\{\{w_2, w_3, w_4, w_5\} , \{w_3, w_6\} \} \in \mathcal{A}_2(\{w_2, w_3, w_4, w_5, w_6\})\).}
\label{fig2}
\end{figure}

\subsection{The limiting injective trace}

We are now ready to present the asymptotics of the mean injective trace \(\tau^0_{p,m,n}[T]\) for any bipartite test graph \(T = (W \cup V, E, Y_m)\). To begin, we introduce two key parameters. 

\begin{defn} \label{def: C_deg (f)} 
For every even integer \(d\), we define the parameter \(C_d(f)\) by
\begin{equation} \label{eq: C_deg (f)} 
C_d(f) = \frac{1}{(2\pi)^d} \int_{\R^d} \prod_{i=1}^{d/2} \textnormal{d} \gamma_{i}^1 \textnormal{d} \gamma_{i}^2 \hat f(\gamma_{i}^1) \hat f(\gamma_{i}^2)  e^{\E_X \left [ \Phi \left ( \sum_{i=1 }^{d/2} (\gamma_i^1 + \gamma_i^2) X_i \right )  \right ]},
\end{equation}
where \(X_1, \ldots, X_{d/2}\) are i.i.d.\ random variables distributed according to \(\nu_x\). 
\end{defn}
\begin{defn}\label{def: C_W (f)} 
For every subsets \(W_1, \ldots, W_K \subseteq W\) with \(|W_k| \ge 2\) and such that the induced subgraphs \(G(W_k)\) are connected, we define the parameter \(C_{(W_k)_{k=1}^K}(f)\) by
\begin{equation} \label{eq: C_{W_i}} 
C_{(W_k)_{k=1}^K}(f) = \frac{1}{(2\pi)^{|\cup_k^K E_k|}} \int_{\R^{|\cup_{k=1}^K E_k|}} \prod_{e \in \cup_{k=1}^K E_k} \prod_{i=1}^{m(e)} \textnormal{d} \gamma_e^i \hat{f}(\gamma_e^i) e^{\sum_{w \in \cup_{k=1}^K W_k} \E_X \left [Z_w(\boldsymbol{\gamma})\right ]} \prod_{k=1}^K \E_X \left [ \prod_{w \in W_k} Z_w(\boldsymbol{\gamma}) \right ],
\end{equation}
where, for every \(w \in W\), the random variable \(Z_w(\boldsymbol{\gamma})\) is defined by
\begin{equation} \label{eq: Z_w}
Z_w(\boldsymbol{\gamma}) = \Phi \left (  \sum_{v \in V \colon v \sim w } (\gamma_{(w,v)}^1 + \cdots + \gamma_{(w,v)}^{m((w,v))}) X_v \right ), 
\end{equation}
with \((X_v)_{v \in V}\) being i.i.d.\ random variables distributed according to \(\nu_x\). Note that \(\E_X \left [ \prod_{w \in W_k} Z_w(\boldsymbol{\gamma}) \right ]\) does not factorize since \(G(W_k)\) is connected.
\end{defn}

We now present a more detailed version of Theorem~\ref{main2}, formulated as Proposition~\ref{main3}.

\begin{prop}[Theorem~\ref{main2} continued] \label{main3}
Under Assumptions~\ref{hyp1}-\ref{hyp3}, for every connected bipartite test graph \(T = (W \cup V, E , Y_m)\), 
\[
\lim_{p,m,n \to \infty} \tau^0_{p,m,n} \left [T \right ] = \tau_G^0,
\]
where \( \tau_G^0 \) depends on \(G = (W \cup V , E)\), \(\phi, \psi, f, \Phi\), and \(\nu_x\). In particular, the limiting injective trace \(\tau_G^0\) vanishes unless \(G\) is an admissible graph, in which case it is given by
\[
\tau_G^0 =
\frac{\phi^{\frac{|E|}{2} - |V|}}{\psi^{|W|-1}} \prod_{w \in W \backslash \cup_{i=1}^R W_{B_i}} C_{\deg(w)} (f)  \prod_{i=1}^R \left ( \sum_{K \geq 1} \sum_{\{W_1,\ldots, W_K\} \in \mathcal{A}_K(W_{B_i})} C_{(W_k)_{k=1}^K}(f)  \right )  ,
\]
where \(\{B_i = G(W_{B_i})\}_{i=1}^R\) are the blocks of \(G\) with \(|W_{B_i}| \ge 2\). In particular, when \(G\) is a double tree (i.e., \(R = 0\)), this reduces to
\[
\tau_G^0 = \frac{\phi^{\frac{|E|}{2} - |V|}}{\psi^{|W|-1}} \prod_{w \in W} C_{\deg(w)}(f).
\]
\end{prop} 

\begin{rmk}
A \emph{fat tree} is a graph that becomes a tree when the multiplicity of the edges is forgotten. In particular, a double tree is a fat tree in which every edge has multiplicity two. From Proposition~\ref{main3}, if \(G\) is a fat tree containing edges of multiplicity greater than two, the limiting injective trace \( \tau_G^0 \) vanishes. This contrasts with the behavior of general heavy-tailed random matrices, for which fat trees with edges of even multiplicity yield a nonvanishing contribution (see~\cite{male2017heavy}).
\end{rmk}

From Proposition~\ref{main3}, we observe that the limiting injective trace \(\tau_G^0\) generally depends on the distributions \(\nu_w\) and \(\nu_x\) through the function \(\Phi\) and higher-order moments of \(\nu_x\), and is therefore not universal, even when $\nu_x$ is light-tailed. We now focus more closely on the two coefficients appearing in Proposition~\ref{main3} in the symmetric $\alpha$-stable case. Consider first the coefficient \(C_d(f)\) for even integers \(d\). Using elementary properties of the Fourier transform,~\eqref{eq: C_deg (f)} can be rewritten as
\begin{equation} \label{eq: C_deg} 
C_d (f) = \frac{1}{(2\pi)^{d/2}} \int_{\R^{d/2}} \prod_{i=1}^{d/2} \textnormal{d} x_i f^2(x_i) \hat{\varphi}(\boldsymbol{x}),
\end{equation}
where \(\varphi(\boldsymbol{t}) = e^{\E_X \left [ \Phi  \left ( \sum_{i=1 }^{d/2} t_i X_i \right )\right ]}\). If the weights $W_{ij}$ are symmetric \(\alpha\)-stable random variables (i.e., \(\Phi(\lambda) = - \sigma^\alpha |\lambda|^\alpha\)) and \(d=2\), then
\[
\varphi(t) = e^{- \sigma^\alpha |t|^\alpha \E_X \left [ |X|^\alpha \right ]}
\]
which corresponds to the characteristic function of \(S \sim S_\alpha(\sigma \E_X \left [ |X|^\alpha \right ]^{1/\alpha})\) with \(X \sim \nu_x\). Hence, if $f_S$ denotes the density of $S$, we obtain
\[
C_2(f) = \int_{\R} f^2(x) f_S(x) \textnormal{d} x = \E_S \left [ f^2(S)\right ].
\]
For \(d > 2\) and Gaussian inputs \(X_{ij} \sim \mathcal{N}(0,\sigma_x^2)\), we have
\[
\varphi(\boldsymbol{t}) = e^{- \sigma^\alpha \E_X \left [ \left |\sum_{i=1 }^{d/2} t_i X_i \right |^\alpha \right ]} =  e^{- \sigma^\alpha \left ( \sum_{i=1}^{d/2} t_i^2\right)^{\alpha/2} \E_X \left [ |X|^\alpha \right ] } = e^{- \sigma^\alpha  \E_X \left [ |X|^\alpha \right ]  |\boldsymbol{t}|^\alpha},
\]
where \(X \sim \mathcal{N}(0,\sigma_x^2)\). This corresponds to the joint characteristic function of the isotropic multivariate stable distribution, see e.g.~\cite{Nolan}. If \(\boldsymbol{S} = (S_1, \ldots, S_{d/2}) \in \R^{d/2}\) is a random vector having the isotropic multivariate stable distribution and \( f_{\boldsymbol{S}}(\boldsymbol{x})\) denotes its joint probability density function, then
\[
C_d (f) = \int_{\R^{d/2}} \prod_{i=1}^{d/2} \textnormal{d} x_i f^2(x_i) f_{\boldsymbol{S}}(\boldsymbol{x}) .
\]

The Gaussian case $\alpha=2$ leads to a particularly tractable situation.

\begin{rmk} \label{rmk: C_deg(w) alpha=2}
Assume \(W\) has i.i.d.\ centered Gaussian entries with variance \(\sigma_w^2\), i.e., \(\alpha = 2\) and \(\Phi (\lambda) = - \sigma_w^2 \lambda^2 /2\). Suppose \(X\) has i.i.d.\ centered entries with variance \(\sigma_x^2\) as in Assumption~\ref{hyp1}(b). From the previous computation~\eqref{eq: C_deg}, the Fourier transform of the Gaussian function \(\varphi(\boldsymbol{t}) = e^{- \sigma_w^2 \sigma_x^2 |\boldsymbol{t}|^2 / 2}\) is given by 
\[
\hat{\varphi}(\boldsymbol{x}) = \left ( \frac{\sqrt{2 \pi}}{\sigma_w \sigma_x} \right )^{d/2} e^{- |\boldsymbol{x}|^2 / (2 \sigma_w^2 \sigma_x^2)}, 
\]
leading to
\[
C_d (f)  = \left (  \frac{1}{\sqrt{2 \pi \sigma_w^2 \sigma_x^2} }  \int_\R f^2(x) e^{- x^2 / (2 \sigma_w^2 \sigma_x^2)} \textnormal{d}x \right)^{d/2} = \left ( \E_{Z \sim \mathcal{N}(0, \sigma_w^2 \sigma_x^2)} \left [ f^2(Z) \right ] \right )^{d/2}.
\]
Consequently, if \(G\) is a double tree, Proposition~\ref{main3} implies that
\[
\tau_G^0    = \frac{\phi^{\frac{|E|}{2}-|V|}}{\psi^{|W|-1}} \theta_1(f)^{\sum_{w \in W} \deg(w)/2} =\frac{\phi^{\frac{|E|}{2}-|V|}}{\psi^{|W|-1}}\theta_1(f)^{|W|},
\]
where \(\theta_1 (f) \coloneqq \E_{Z \sim \mathcal{N}(0, \sigma_w^2 \sigma_x^2)} \left [ f^2(Z) \right ] \), so \(\tau_G^0\) is universal with respect to \(\nu_x\) and \(\nu_w\) (it depends on them only through \(\sigma_w^2\) and \(\sigma_x^2\)).
\end{rmk} 

We now turn to the coefficient \(C_{(W_k)_{k=1}^K}(f)\) appearing in Proposition~\ref{main3}. When \(\Phi(\lambda) = - \sigma^\alpha |\lambda|^\alpha\) and \(X_v \stackrel{\mathrm{i.i.d.}}{\sim} \mathcal{N}(0,\sigma_x^2)\), for any \(w \in \cup_{k=1}^KW_k\) we have
\[
\E_X \left [ Z_w(\boldsymbol{\gamma}) \right] = - \sigma^\alpha \E_X \left [ \left |  \sum_{v \colon v\sim w} \left ( \sum_{i=1}^{m((w,v))} \gamma_{(w,v)}^i\right) X_v \right|^\alpha\right] = -\sigma^\alpha \E_X \left [ |X|^\alpha \right] \left ( \sum_{v \sim w} \left ( \sum_{i=1}^{m((w,v))} \gamma_{(w,v)}^i \right)^2 \right)^{\alpha/2},
\]
where \(X \sim \mathcal{N}(0,\sigma_x^2)\). Therefore, the exponential factor in~\eqref{eq: C_{W_i}} depends on \(\nu_x\) through \( \E_X \left [ |X|^\alpha \right ] \). In contrast, the product term
\[
\E_X \left [ \prod_{w \in W_k} Z_w (\boldsymbol{\gamma}) \right ] = (-\sigma^\alpha)^{|W_k|}  \E_X \left [\prod_{w \in W_k} \left | \sum_{v \colon v \sim w} \left (\sum_{i=1}^{m((w,v))} \gamma_{(w,v)}^i \right) X_v \right|^\alpha \right ]
\]
depends on \(\nu_x\) through absolute moments of order \(\alpha |W_k|\). For \(\alpha=2\) and Gaussian inputs, Wick’s formula implies that this expectation depends only on the second moment, and hence the parameter \(C_{(W_k)_{k=1}^K}(f)\) depends on \(\nu_w\) and \(\nu_x\) only through \(\sigma_w^2\) and \(\sigma_x^2\). For \(\alpha=2\) and light-tailed inputs, this expectation can still exhibit universality for specific graph structures such as the simple bipartite cycle. For general graphs \(G\), however, high-order moments of \(\nu_x\) typically contribute.

\subsection{Proof of convergence of the injective trace} \label{subsection: proof main prop}

In this section, we prove Proposition~\ref{main3}. We consider a connected bipartite test graph \(T = (W \cup V, E, Y_m)\), where \(E\) is a multiset of edges, and set \(G = (W \cup V, E)\) for the underlying bipartite multigraph. We begin by showing that if \(T\) contains a vertex in either \(W\) or \(V\) with an odd degree, then the mean injective trace \(\tau^0_{p,m,n} \left [T \right]\) vanishes.

\begin{lem} \label{lem: even degree} 
Let \(T = (W \cup V, E, Y_m)\) be a finite, connected bipartite test graph with at least one vertex in either \(W\) or \(V\) having odd degree. Then, \(\tau^0_{p,m,n} \left [T \right]=0\) for every \(p,m,n \in \N\).
\end{lem}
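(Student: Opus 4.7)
The plan is to exploit the symmetry assumptions on $\nu_w$, $\nu_x$, and $f$ (oddness) to construct a sign-flipping transformation that leaves the distribution of $(W,X)$ unchanged while multiplying the relevant product of entries by $-1$. Injectivity of the labeling maps $\phi_W, \phi_V$ will be crucial to ensure that the sign flip affects only the edges incident to a chosen vertex.

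More concretely, fix injective maps $\phi_W \colon W \to [p]$ and $\phi_V \colon V \to [m]$; since $\tau^0_{p,m,n}[T]$ is a sum of such expectations, it suffices to show that each individual expectation
\[
\E\Biggl[\prod_{e=(w,v) \in E} Y_m(\phi_W(w),\phi_V(v))^{m(e)}\Biggr]
\]
vanishes. Suppose first that some vertex $w_0 \in W$ has odd degree, and set $i_0 = \phi_W(w_0)$. Define $\widetilde W$ by flipping the sign of the $i_0$-th row of $W$, i.e. $\widetilde W_{i_0 k} = -W_{i_0 k}$ and $\widetilde W_{ik} = W_{ik}$ for $i \neq i_0$. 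Since $\nu_w$ is symmetric and the entries of $W$ are i.i.d., one has $\widetilde W \stackrel{d}{=} W$, and consequently, denoting by $\widetilde Y_m$ the matrix built from $(\widetilde W, X)$,
\[
\widetilde Y_m(i_0, j) = \tfrac{1}{\sqrt m}\, f(-W_{i_0}\cdot X_j) = -Y_m(i_0,j), \qquad \widetilde Y_m(i,j)=Y_m(i,j) \text{ for } i\neq i_0,
\]
where we used that $f$ is odd. Because $\phi_W$ is injective, $w_0$ is the only vertex of $W$ mapped to $i_0$, so exactly the $\deg(w_0)$ edge factors incident to $w_0$ pick up a sign. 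Therefore
\[
\E\Biggl[\prod_{e=(w,v)\in E} \widetilde Y_m(\phi_W(w),\phi_V(v))^{m(e)}\Biggr] = (-1)^{\deg(w_0)} \E\Biggl[\prod_{e} Y_m(\phi_W(w),\phi_V(v))^{m(e)}\Biggr],
\]
while the left-hand side equals the right-hand expectation without the sign, by $\widetilde W \stackrel{d}{=} W$. If $\deg(w_0)$ is odd, the expectation equals its own negative and hence vanishes.

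The case of a vertex $v_0 \in V$ with odd degree is handled symmetrically: one flips the sign of the column $\phi_V(v_0)$ of $X$. Invariance of the distribution of $X$ follows from symmetry of $\nu_x$, and the argument $W_i \cdot (-X_{j_0}) = -(W_i\cdot X_{j_0})$ combined with oddness of $f$ again flips the signs of exactly the $\deg(v_0)$ edge factors incident to $v_0$, yielding the same cancellation. I do not expect any genuine obstacle here; the only point requiring care is to invoke injectivity of $\phi_W$ (resp.\ $\phi_V$) to make sure that the sign flip does not accidentally alter other edge factors, which is precisely why the statement is about the injective trace.
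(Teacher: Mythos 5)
Your proof is correct and follows essentially the same approach as the paper: both exploit the symmetry of $\nu_w$ (resp.\ $\nu_x$) together with the oddness of $f$ to produce a sign flip of $(-1)^{\deg}$ on the relevant product, with injectivity of $\phi_W$ (resp.\ $\phi_V$) guaranteeing the flip touches exactly the incident edge factors. The paper phrases this as conditioning on the $\sigma$-algebra generated by all entries except $X_{\phi_V(v_0)}$ and showing the inner conditional expectation equals its own negative, whereas you phrase it as an equality in distribution of the full matrix after a global row/column sign flip, but the substance is identical.
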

\begin{proof} 
We first assume that there exists \(v_0 \in V\) having an odd degree. For every injective maps \(\phi_W \colon W \to [p]\) and \(\phi_V \colon V \to [m]\), according to~\eqref{eq: tau0} we have 
\[
\begin{split}
& \E \left[ \prod_{(w,v) \in E} Y_m(\phi_W(w),\phi_V(v)) \right] \\
& = \E \left[ \E \left [\prod_{(w,v_0) \in E} Y_m (\phi_W(w), \phi_V(v_0)) \, | \, \mathcal F_{\phi_V(v_0)} \right ] \prod_{(w,v)\in E, v \in V \backslash \{v_0\}} Y_m(\phi_W(w),\phi_V(v))\right]
\end{split}
\]
where \(\mathcal F_{\phi_V(v_0)}\) denotes the \(\sigma\)-algebra generated by \(\{(W_i)_{i \in [p]}, (X_j)_{j \neq \phi_V(v_0)}\}\). Now, we recall from~\eqref{eq: matrix Y} that \(Y_m(\phi_W(w),\phi_V(v))\) is given by
\[
Y_m (\phi_W(w),\phi_V(v)) = \frac{1}{\sqrt{m}} f \left ( W_{\phi_W(w)} \cdot X_{\phi_V(v)} \right) =  - \frac{1}{\sqrt{m}} f \left (  W_{\phi_W(w)} \cdot (-X_{\phi_V(v)})  \right),
\]
where we used the fact that \(f\) is an odd function. Since the law of the random vector \(X_{\phi_V(v)}\) is symmetric by Assumption~\ref{hyp1}, we obtain 
\[
\begin{split}
& \E \left [\prod_{(w,v_0) \in E} Y_m (\phi_W(w), \phi_V(v_0)) \, | \, \mathcal F_{\phi_V(v_0)} \right ] \\
&= (-1)^{\deg(v_0)} \E \left [\prod_{(w,v_0) \in E} \frac{1}{\sqrt{m}} f \left (  W_{\phi_W(w)} \cdot (-X_{\phi_V(v)})  \right)  \, | \, \mathcal F_{\phi_V(v_0)} \right ] \\
&=- \E \left [\prod_{(w,v_0) \in E} Y_m (\phi_W(w), \phi_V(v_0)) \, | \, \mathcal F_{\phi_V(v_0)} \right ],
\end{split}
\]
where we used that \(\deg(v_0)\) is an odd integer. This shows that \(\E \left [\prod_{(w,v_0) \in E} Y_m (\phi_W(w), \phi_V(v_0)) \, | \, \mathcal F_{\phi_V(v_0)}\right ]\) vanishes and so does \(\tau^0_{p,m,n} \left [T \right]\). The same argument applies to any vertex \(w_0 \in W\) with an odd degree, as the law of the entries of \(W\) is also symmetric by Assumption~\ref{hyp1}.
\end{proof}

By Lemma~\ref{lem: even degree}, we may assume that all vertices in \(W \cup V\) have even degree. To compute the mean injective trace \(\tau_{p,m,n}^0 \left[T \right]\), we start from~\eqref{eq: tau0} and expand it as
\begin{equation} \label{eq: tau0 2}
\begin{split}
\tau^0_{p,m,n} \left[T \right] & =   \frac{1}{p} \sum_{\phi_W \colon W \to [p] \atop \phi_W \textnormal{injective}}  \sum_{\phi_V \colon  V \to [m] \atop \phi_V \textnormal{injective}} 
\E \left [\prod_{e = (w,v)\in E} \left (Y_m (\phi_W(w),\phi_V(v)) \right )^{m(e)} \right] \\
& =  \frac{m^{|V| }p^{ |W|}}{p m^{|E|/2}} (1 + \mathcal{O}(m^{-1})) (1 + \mathcal{O}(p^{-1})) \, \E \left [\prod_{e=(w,v) \in E} \left ( f \left ( W_{\phi_W(w)} \cdot  X_{\phi_V(v)}\right )\right )^{m(e)} \right ].
\end{split}
\end{equation}
Since \(f \in C^\infty (\R) \cap L^1(\R)\) and \(f^{(k)} \in L^1(\R)\) for all \(k \ge 1\) by Assumption~\ref{hyp2}, repeated integration by parts implies that the Fourier transform
\[
\hat{f}(t) = \int_\R f(x) e^{-itx} \mathrm{d}x
\]
is continuous and for every integer number \(\ell\), there exists a finite constant \(C_\ell >0\) such that for every \(t \in \R\),
\begin{equation}\label{thehypf}
|\hat f(t) |\le \frac{C_\ell}{(1+|t|)^\ell}.
\end{equation}
In particular, \(\hat{f} \in L^1(\R)\) and decays faster than any polynomial. Therefore, the Fourier inversion theorem applies, yielding
\begin{equation} \label{eq: invFourier}
f(x)=\frac{1}{2\pi}\int_{\R} \hat{f}(t) e^{itx} \textnormal{d} t.
\end{equation}
Furthermore, for every real number $a \ge 0$ and every integer number $\ell$,
\begin{equation}\label{thehypf2}
\int_{[-a,a]^{c}}|\hat f(t) | \mathrm{d} t\le C_\ell\int_{[-a,a]^{c}}\frac{1}{(1+|t|)^\ell} \mathrm{d} t \le C_\ell \frac{1}{1+ a^{\ell-2}}\int  \frac{1}{(1+|t|)^2} \mathrm{d} t \eqqcolon \frac{C'_{\ell}}{ 1+ a^{\ell-2}},
\end{equation}
where \(C_\ell'>0\) is a finite constant independent of \(a\). Finally, we note that, since \(f\) is odd by Assumption~\ref{hyp2}, its Fourier transform \(\hat{f}\) is also odd. 

Combining~\eqref{eq: tau0 2} and~\eqref{eq: invFourier} and using Assumption~\ref{hyp3}, we obtain the following expression for the mean injective trace: 
\begin{equation} \label{eq: traffic development}
\tau^0_{p,m,n} \left [T \right ] = n^{\rho(G)}  (1 + \mathcal{O}(n^{-1}))  \frac{\phi^{\frac{|E|}{2} - |V|}}{\psi^{|W|-1}} \frac{1}{(2\pi)^{|E|}}\int_{\R^{|E|}} \prod_{e \in E} \prod_{i=1}^{m(e)} \textnormal{d} \gamma_e^i \hat f(\gamma_e^i) \Lambda_G^n (\boldsymbol{\gamma}),
\end{equation}
where 
\begin{equation} \label{eq: rho(G)}
\rho(G) \coloneqq |W| + |V| - \frac{|E|}{2} -1, 
\end{equation}
and for \(\boldsymbol{\gamma} = (\gamma_e^1,\ldots, \gamma_e^{m(e)})_{e \in E}\), 
\[
\begin{split}
\Lambda_G^n(\boldsymbol{\gamma}) & = \E_{W,X} \left[\exp \left ( i \sum_{e = (w,v) \in E} (\gamma_e^1 + \cdots + \gamma_e^{m(e)}) \,W_w \cdot X_v \right ) \right]  \\
& = \E_{W,X} \left[\exp \left ( i \sum_{w \in W} \sum_{k=1}^n W_{wk} \left ( \sum_{v \in V \colon v \sim w } ( \gamma_{(w,v)}^1 + \cdots + \gamma_{(w,v)}^{m((w,v))}) X_{kv} \right ) \right ) \right] .
\end{split}
\]
The main challenge in the proof of Proposition~\ref{main3} lies in estimating \(\Lambda_G^n(\boldsymbol{\gamma})\). To address this, we take the expectation with respect to \(W\), yielding
\begin{equation} \label{eq: lambda}
\Lambda_G^n(\boldsymbol{\gamma}) = \left (\E_X \left [ e^{ n^{-1}S_G^n} \right ] \right)^n,
\end{equation}
where 
\[
\begin{split}
S_G^n (\boldsymbol{\gamma}) & \coloneqq \sum_{w \in W} Z_w^n(\boldsymbol{\gamma}),\\
Z_w^n(\boldsymbol{\gamma}) &\coloneqq n \log  \E_W \left [ e^{i W_w \left( \sum_{v \in V \colon v \sim w} (\gamma_{(w,v)}^1 + \cdots + \gamma_{(w,v)}^{m((w,v))}) X_v\right)  }\right ] .
\end{split}
\]
Here, \((X_v)_{v \in V}\) are i.i.d.\ with distribution \(\nu_x\), and \((W_w)_{w \in W}\) are i.i.d.\ with distribution \(\nu_w\). From Assumption~\ref{hyp1}, it follows that \(S_G^n (\boldsymbol{\gamma})\) converges to \(S_G  (\boldsymbol{\gamma}) = \sum_{w \in W} Z_w (\boldsymbol{\gamma})\) as \(n \to \infty\), where 
\[
Z_w  (\boldsymbol{\gamma}) = \Phi \left (  \sum_{v \in V \colon v \sim w} (\gamma_{(w,v)}^1 + \cdots + \gamma_{(w,v)}^{m((w,v))}) X_v \right ).
\]

To analyze \(\Lambda_G^n(\boldsymbol{\gamma})\) further, we now proceed to expand the right-hand side of~\eqref{eq: lambda} by means of a cumulant expansion. Without loss of generality, we may assume that the \(\gamma_e^i\)'s are bounded in absolute value by \(n^{\epsilon}\) for some \(\epsilon>0\). Indeed, for the integral in~\eqref{eq: traffic development}, define
\begin{equation} \label{largegamma}
R_n^{\epsilon}= \int (\mathbf{1}_{\R^{|E|}} - \mathbf{1}_{[-n^{\epsilon},n^{\epsilon}]^{|E|}}) \prod_{e \in E} \textnormal{d} \gamma_e^1 \cdots \textnormal{d} \gamma_e^{m(e)}\hat f(\gamma_e^1) \cdots \hat f(\gamma_e^{m(e)})  \Lambda_G^n(\boldsymbol{\gamma}),
\end{equation} 
which can be bounded easily since \(S_G^n(\boldsymbol{\gamma})\) has a nonpositive real part (i.e., \(\Re (S_G^n(\boldsymbol{\gamma})) \le 0\)), implying \(|\Lambda_G^n(\boldsymbol{\gamma})| \le 1\). Moreover, by~\eqref{thehypf} and~\eqref{thehypf2}, for every integer \(\ell \ge 2\), there exist constants \(C_2', C_\ell'>0\) such that
\[
\begin{split}
|R_n^\epsilon | & \leq \sum_{k=1}^{|E|} {|E| \choose k} \left(\int_{[-n^{\epsilon},n^{\epsilon}]^\textnormal{c}} |\hat{f}(t)| \textnormal{d} t\right)^k \left(\int_\R  |\hat{f}(t)| \textnormal{d} t \right)^{|E|-k} \\
&\le \sum_{k=1}^{|E|} {|E| \choose k} (C_{\ell}')^k (1 + n^{\epsilon (\ell-2)})^{-k} (C_2')^{|E|-k}  \\
&= (C_2')^{|E|}\left( \left(1+\frac{C_{\ell}'}{C_2' (1 + n^{\epsilon (\ell-2)})} \right)^{|E|} -1\right).
\end{split}
\]
This quantity can be made arbitrarily small---indeed negligible compared with \(n^{-\rho(G)}\)---by choosing \(\ell\) and \(n\) sufficiently large. Hence, the asymptotic behavior of the integral in~\eqref{eq: traffic development} is entirely determined by the region \(|\gamma_e^i| \le n^\epsilon\) where the argument of the exponential in \(\Lambda_G^n(\boldsymbol{\gamma})\) is small. In this region the cumulant expansion of \(\Lambda_G^n(\boldsymbol{\gamma})\) is valid, since \(\Lambda_G^n(\boldsymbol{\gamma})^{1/n} = \E_X \left [\exp \left ( n^{-1} S_G^n(\boldsymbol{\gamma}) \right ) \right ]\) is the Fourier transform of random variables with finite exponential moments, evaluated in a neighborhood of zero.

The \(\ell\)th cumulant \(\kappa_\ell^G\) of \(S_G^n(\boldsymbol{\gamma})\) is defined by
\begin{equation} \label{eq: cumulant}
\kappa_\ell^G (\boldsymbol{\gamma}) = \sum_{\pi \in \mathcal{P} ([\ell])} \mu(\pi) \prod_{i=1}^{|\pi|} \E_X \left [ \left (S_G^n(\boldsymbol{\gamma})\right)^{|B_i|} \right ],
\end{equation}
where the sum runs over the set of partitions \(\mathcal{P}([\ell])\) of the set \([\ell] =\{1, \ldots, \ell\}\) (see Definition~\ref{def: partition}), and the product runs over the blocks \(B_1, \ldots, B_{|\pi|}\) of the partition \(\pi\). Here, \(\mu(\pi) = (-1)^{|\pi|-1} (|\pi| -1)!\), \(|B_i|\) denotes the cardinality of block \(B_i\), and \(|\pi|\) the number of blocks in \(\pi\). Since the cumulants \(\kappa_\ell^G(\boldsymbol{\gamma})\) are the coefficients in the power series expansion of the cumulant generating function, we have for all \(t \in \R\),
\[
\E_X \left [ e^{t S_G^n(\boldsymbol{\gamma})} \right ] = \exp \left ( \sum_{i = 1}^\infty \kappa_i^G (\boldsymbol{\gamma}) \frac{t^i}{i!} \right ),
\]
so that from~\eqref{eq: lambda} it follows that
\begin{equation} \label{eq: Lambda}
\begin{split}
\Lambda_G^n(\boldsymbol{\gamma}) &  = e^{\kappa_1^G (\boldsymbol{\gamma})} \exp \left \{ \sum_{i \geq 2} \frac{1}{i ! n^{i -1 }} \kappa_i^G(\boldsymbol{\gamma}) \right \}= e^{\E_X \left [ S_G^n(\boldsymbol{\gamma}) \right]} \sum_{m_2, m_3, \ldots \geq 0} \prod_{\ell \ge 2} \frac{1}{m_\ell!} \left ( \frac{1}{\ell ! n^{\ell -1 }} \kappa_\ell^G(\boldsymbol{\gamma}) \right ) ^{m_\ell}.
\end{split}
\end{equation}
For \(|\gamma_e^i| \le n^\epsilon\), the sum \(S_G^n (\boldsymbol{\gamma})\) has finite exponential moments uniformly in \(\boldsymbol{\gamma}\), and \(n^{-1} S_G^n (\boldsymbol{\gamma})\) is of order \(n^{\epsilon-1}\). Hence the Taylor expansion of the logarithm in~\eqref{eq: Lambda} is valid, and the remainder after including cumulants up to order \(i\) is \(\mathcal{O} ((n^{\epsilon-1})^i)\). Such terms are negligible whenever \(i(1-\epsilon)>\rho(G)\). The second expansion in~\eqref{eq: Lambda}, obtained by expanding the exponential function, may likewise be truncated, meaning that we keep only those terms with total order \(\sum_{\ell \geq 2} m_{\ell}(\ell-1)\le \rho(G)\). Terms beyond this bound contribute at order \(\mathcal{O}(n^{-\rho(G)})\) or smaller and can thus be ignored. In the sequel, we keep the series formally infinite, with the understanding that it may be truncated at this finite order without affecting the asymptotic expansion up to \(n^{-\rho(G)}\).

We now present our expansion for \(\Lambda_G^n (\boldsymbol{\gamma})\). In the following lemma, for any subset \(E_0 \subseteq E\), let \(T_{E_0}\) denote the map on \(\R^{|E|}\) that changes \(\gamma_e^i\) into \(-\gamma_e^i\) for every \(e\in E_0\) and every \(i\in \{1,\ldots,m(e)\}\), while leaving the other entries unchanged. 

\begin{lem} \label{lem: main estimate}
Let \(G = (W \cup V, E)\) be a connected bipartite graph in which all vertices have even degree. Then, for every \(\epsilon>0\), 
\[
\Lambda_G^n(\boldsymbol{\gamma}) = e^{\E_X \left [ S_G^n(\boldsymbol{\gamma})\right ]}  \left ( 1 + h_G^n(\boldsymbol{\gamma}) + \frac{1}{n^{\rho(G)}} g_G^n(\boldsymbol{\gamma})  \mathbf{1}_{\{G \, \textnormal{is admissible with} \, R \ge 1\}} + o \left (\frac{1}{n^{\rho(G)}} \right)  \right),
\]
where \(o \left (\frac{1}{n^{\rho(G)}} \right)\) tends to zero uniformly on \(\{\|\gamma\|_{\infty}\le n^{\epsilon}\}\), \(h_G^n(\boldsymbol{\gamma})\) is a (finite) sum of functions which are invariant under \(T_{E_0}\) for some subset \(E_0 \subseteq E\) with odd cardinality \(|E_0|\), and \(g_G^n(\boldsymbol{\gamma}) \) is given by
\begin{equation} \label{eq: g_G^n}
g_G^n(\boldsymbol{\gamma}) = \prod_{i=1}^R \left ( \sum_{K \geq 1} \sum_{\{W_1,\ldots, W_K\} \in \mathcal{A}_K (W_{B_i})} \prod_{k =1}^K \E_X \left [ \prod_{w \in W_k} Z_w^n(\boldsymbol{\gamma})\right ] \right).
\end{equation}
\end{lem}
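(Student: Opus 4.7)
The plan is to analyze the cumulant expansion~\eqref{eq: Lambda} of $\Lambda_G^n/e^{\E_X[S_G^n]}$ term by term, grouping contributions by their order in $1/n$ and by their parity under the edge sign-flip maps $T_{E_0}$. First, I would expand each cumulant $\kappa_\ell^G(\boldsymbol{\gamma})$ via multilinearity as
\[
\kappa_\ell^G(\boldsymbol{\gamma}) = \sum_{(w_1, \ldots, w_\ell) \in W^\ell} \kappa\bigl(Z_{w_1}^n(\boldsymbol{\gamma}), \ldots, Z_{w_\ell}^n(\boldsymbol{\gamma})\bigr),
\]
and invoke the standard vanishing of joint cumulants on independent sub-collections: since $Z_w^n$ depends only on $\{X_v : v \sim w\}$, the joint cumulant vanishes whenever the underlying set of vertices splits into groups with disjoint $V$-neighborhoods. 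Only multi-sets in $W$ whose induced subgraph is connected contribute nontrivially.

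Next, I would invert the cumulant-moment relation to re-express each surviving joint cumulant as a signed sum of joint moments $\E_X[\prod_{w \in W'} Z_w^n]$, and re-sum~\eqref{eq: Lambda} so that each term takes the form of a product $\prod_k \E_X[\prod_{w \in W_k} Z_w^n]$ over a collection of possibly overlapping subsets $W_1, \ldots, W_K \subseteq W$, weighted by $1/n$ raised to an exponent tracking the combinatorial deficit $\sum_k (|W_k|-1)$. The product structure over blocks in~\eqref{eq: g_G^n} should then emerge from the observation that separating vertices in $V$ factorize the expansion: any contribution spanning several blocks can be split by conditioning on the $X_v$'s at separating vertices, which reduces the cross-block contribution to the product of single-block contributions at the relevant leading order.

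The key counting step is to match the $n$-exponent with $\rho(G) = |W| + |V| - |E|/2 - 1$. Evaluating $\rho$ block-by-block and using an Euler-type identity within each block, I would show that the scaling $1/n^{\rho(G)}$ is saturated exactly when, within each block $B$ with $|W_B| \ge 2$, the collection $(W_1, \ldots, W_K)$ forms an admissible decomposition in the sense of Definition~\ref{def: admissible decomposition}: conditions (a)--(d) there are precisely the combinatorial constraints (two vertices per cluster, connectedness of induced subgraphs, each $v \in V_B$ having degree at least $2$ in some cluster, and tree-like overlap pattern) that make the exponent-counting tight. Blocks with a single $W$-vertex contribute only through the trivial decomposition, whose moment $\E_X[Z_w^n]$ is absorbed into the prefactor $e^{\E_X[S_G^n]}$ and hence does not appear in~\eqref{eq: g_G^n}. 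For non-admissible $G$, the saturating configuration does not exist, strictly reducing the exponent and producing the indicator $\mathbf{1}_{\{G \text{ admissible}\}}$.

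Finally, the contributions of order $1/n^{\rho(G)}$ that are not of the form~\eqref{eq: g_G^n} are exactly those in which some vertex receives an odd contribution from the edge labels; these are invariant under some $T_{E_0}$ with $|E_0|$ odd, and I would collect them into $h_G^n$, noting that they vanish after the outer integration against $\prod_e \hat f(\gamma_e^i)$ in~\eqref{eq: traffic development} since $\hat f$ is odd. The $o(1/n^{\rho(G)})$ error, uniform on $\|\boldsymbol{\gamma}\|_\infty \le n^\epsilon$, will come from truncating the cumulant-and-moment series at a sufficiently high order, using the Taylor-remainder bounds on the cumulant generating function of $S_G^n$ discussed just before the lemma. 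The main obstacle I anticipate is the combinatorial accounting in the third step: proving that conditions (a)--(d) of Definition~\ref{def: admissible decomposition} are exactly the saturating constraints on the $n$-exponent within each block, and that any violation of admissibility costs at least one power of $1/n$, so that only admissible $G$ can produce a nonvanishing term at order $1/n^{\rho(G)}$.
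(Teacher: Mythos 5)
Your plan follows the same overall strategy as the paper's proof: start from the cumulant expansion~\eqref{eq: Lambda}, reduce to connected contributions, track the $n$-exponent, and split the remaining terms into an odd-parity part $h_G^n$ and the admissible part $g_G^n$. The main stylistic difference is your use of multilinearity to write $\kappa_\ell^G=\sum_{(w_1,\ldots,w_\ell)}\kappa(Z_{w_1}^n,\ldots,Z_{w_\ell}^n)$, whereas the paper expands $\E_X[(S_G^n)^{|B_i|}]$ multinomially over vertices and multiplicities $\eta$; these are equivalent bookkeepings, though the paper's is arguably cleaner because it never needs to invert joint cumulants back to moments. Your step ``re-express each surviving joint cumulant as a signed sum of joint moments and re-sum so that each term takes the form of a product $\prod_k\E_X[\prod_{w\in W_k}Z_w^n]$'' glosses over a real issue: the M\"obius signs in the cumulant--moment inversion must be shown to cancel at leading order so that only the positive moment survives with coefficient one. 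The paper handles this by proving (in the notation of its Proposition~\ref{prop: combinatorics}) that saturating the exponent $\rho(G)=\sum_{i}(|\tilde W_i|-1)$ forces the partitions $\pi_k$ to be trivial and $\eta\equiv 1$, so the signed subleading pieces never compete.

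The genuine gap you yourself flag is the substance of the proof. The claim that conditions (a)--(d) of Definition~\ref{def: admissible decomposition} ``are exactly the saturating constraints on the $n$-exponent'' is not established by an ``Euler-type identity within each block''; it requires the full combinatorial machinery of Section~\ref{section: combinatorics}: the parity characterization in Lemma~\ref{lem: necessary cond} (which determines exactly which collections are invariant under some odd $T_{E_0}$ and hence belong to $h_G^n$ --- this is not captured by cumulant vanishing on independent sub-collections, since that handles disconnectedness but not the evenness conditions (A)--(D)); the exponent inequalities and their saturation conditions in Lemmas~\ref{lem: card disjoint case} and~\ref{lem: card not disjoint case}; and the bijection of Lemma~\ref{lem: equiv def} between the class $\mathcal{W}_K$ and block-by-block admissible decompositions, which is what produces the product over blocks in~\eqref{eq: g_G^n} (your separating-vertex heuristic is the right intuition but does not by itself prove the factorization). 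Without these lemmas the argument is a correct outline, not a proof, so the proposal would need Section~\ref{section: combinatorics} essentially verbatim to be complete.
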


Having Lemma~\ref{lem: main estimate} at hand, we now prove Proposition~\ref{main3}. 

\begin{proof}[\textbf{Proof of Proposition~\ref{main3}}]
From~\eqref{eq: traffic development} and~\eqref{largegamma}, it follows that
\[
\tau^0_{p,m,n} \left [ T \right ]= \frac{ \phi^{\frac{|E|}{2} - |V|}}{\psi^{|W|-1}} \frac{1}{(2\pi)^{|E|}}\left(n^{\rho(G)} \int_{[-n^{\epsilon},n^{\epsilon}]^{|E|}} \prod_{e \in E} \prod_{i=1}^{m(e)} \textnormal{d} \gamma_e^i \hat f(\gamma_e^i) \Lambda_G^n(\boldsymbol{\gamma}) + o(1)\right).
\]
Plugging the expansion of \(\Lambda_G^n\) from Lemma~\ref{lem: main estimate} into this expression yields
\[
\tau^0_{p,m,n} \left [ T \right ] =  I^n_1 (G) +  I^n_2 (G) + I^n_3(G) + o(1),
\]
where
\[
\begin{split}
I^n_1 (G) & = \frac{ \phi^{\frac{|E|}{2} - |V|}}{\psi^{|W|-1}} \frac{1}{(2\pi)^{|E|}} n^{\rho(G)} \int_{[-n^{\epsilon},n^{\epsilon}]^{|E|}} \prod_{e \in E} \prod_{i=1}^{m(e)} \textnormal{d} \gamma_e^i \hat f(\gamma_e^i)  e^{\E_X \left [ S_G^n(\boldsymbol{\gamma})\right ]},  \\
I^n_2 (G) &= \frac{ \phi^{\frac{|E|}{2} - |V|}}{\psi^{|W|-1}} \frac{1}{(2\pi)^{|E|}} n^{\rho(G)} \int_{[-n^{\epsilon},n^{\epsilon}]^{|E|}} \prod_{e \in E} \prod_{i=1}^{m(e)} \textnormal{d} \gamma_e^i \hat f(\gamma_e^i)  e^{\E_X \left [ S_G^n(\boldsymbol{\gamma})\right ]} h_G^n (\boldsymbol{\gamma}),\\
I^n_3 (G) &= \frac{ \phi^{\frac{|E|}{2} - |V|}}{\psi^{|W|-1}} \frac{1}{(2\pi)^{|E|}} \int_{[-n^{\epsilon},n^{\epsilon}]^{|E|}} \prod_{e \in E} \prod_{i=1}^{m(e)} \textnormal{d} \gamma_e^i \hat f(\gamma_e^i)  e^{\E_X \left [ S_G^n(\boldsymbol{\gamma})\right ]} g_G^n (\boldsymbol{\gamma})   \mathbf{1}_{\{G \, \textnormal{is admissible with} \, R \ge 1\}} .
\end{split}
\]
The integrand is uniformly bounded since
\[
\left|\int_{[-n^{\epsilon},n^{\epsilon}]^{|E|}} \prod_{e \in E} \prod_{i=1}^{m(e)} \textnormal{d} \gamma_e^i \hat f(\gamma_e^i) e^{\E_X \left [ S_G^n (\boldsymbol{\gamma}) \right ]} \right| \le \left(\int_{[-n^{\epsilon},n^{\epsilon}]}|\hat f(\gamma_e)| \textnormal{d} \gamma_{e}\right)^{|E|} < \infty,
\]
where we used the fact that \(|e^{\E_X \left [ S_G^n (\boldsymbol{\gamma})\right ]}| \le 1\) and \(\hat{f} \in L^1(\R)\). Hence, the error terms in \(\Lambda_G^n(\boldsymbol{\gamma})\) become error terms in \(\tau^0_{p,m,n} \left [T \right ]\). 

We first claim that \(I^n_2 (G)\) vanishes for all integers \(n\). Since the law of \(X_v\) is symmetric, \(\E_X \left [S_G^n(\boldsymbol{\gamma}) \right ]\) is invariant under any sign change \(T_e\) of the form \(\gamma^i_e \mapsto -\gamma^i_e\) for every \(e \in E\) and \(1 \le i \le m(e)\), i.e.,
\[
\E_X \left [ S_G^n(\boldsymbol{\gamma}) \right ]=\E_X \left [S_G^n(T_e \boldsymbol{\gamma}) \right ].
\]
Each summand \(h(\boldsymbol{\gamma})\) of \(h_G^n (\boldsymbol{\gamma})\) is invariant under a transformation \(T_{E_0}\) for some subset \(E_0\subseteq E\) of odd cardinality. Applying this change of variables and using the fact that \(\hat{f}\) is odd by Assumption~\ref{hyp2} yields
\[
\begin{split}
& \int_{[-n^{\epsilon},n^{\epsilon}]^{|E|}} \prod_{e \in E} \prod_{i=1}^{m(e)} \textnormal{d} \gamma_e^i \hat f(\gamma_e^i) e^{\E_X \left [ S_G^n (\boldsymbol{\gamma}) \right ]}  h(\boldsymbol{\gamma})\\
& \quad = (-1)^{|E_{0}|}\int_{[-n^{\epsilon},n^{\epsilon}]^{|E|}} \prod_{e \in E}  \prod_{i=1}^{m(e)} \textnormal{d} \gamma_e^i \hat f(\gamma_e^i) e^{ \E_X \left [ S_G^n (\boldsymbol{\gamma})\right ]}  h(\boldsymbol{\gamma}).
\end{split}
\]
Because \(|E_0|\) is odd, the integral equals its negative and therefore vanishes. The same reasoning applies to every summand of \(h_G^n\), hence \(I^n_2 (G) = 0\).

We now analyze \(I^n_1 (G)\). We first note that if there exists an edge \(e_0 \in E\) with odd multiplicity \(m(e_0)\), then the integral 
\[
\int_{[-n^\epsilon, n^\epsilon]^{|E|}} \prod_{e \in E} \prod_{i=1}^{m(e)} \textnormal{d} \gamma_e^i \hat f(\gamma_e^i) e^{\E_X \left [ S_G^n (\boldsymbol{\gamma}) \right ]}
\]
vanishes for all \(n\), by the same sign-change argument as above using the transformation \(T_{e_0}\). Therefore, \(I^n_1(G)\) is nonvanishing only if every edge \(e \in E\) has even multiplicity. In this case, let \(\widetilde{G} = (W \cup V, \widetilde{E})\) denote the simple graph obtained from \(G\) by forgetting edge multiplicities. Since \(G\) is connected, so is \(\widetilde{G}\), and it satisfies 
\[
|W| + |V| \leq |\widetilde{E}| + 1, 
\]
with equality if and only if \(\widetilde{G}\) is a tree. Because every edge in \(G\) has even multiplicity, we have \(|E| \geq 2 |\widetilde{E}|\), which implies 
\[
\rho(G) = |W| + |V| - |E|/2 - 1 \leq 0.
\]
Equality in this bound holds precisely when every edge in \(G\) has multiplicity \(2\) and \(\widetilde{G}\) is a tree, that is, when \(G\) is a double tree. If \(\rho(G)<0\), the integral \(I_1^n(G)\) tends to zero as \(n \to \infty\), since the integral is uniformly bounded. If \(\rho(G)=0\) (i.e., \(G\) is a double tree), the tail outside \([-n^{\epsilon},n^{\epsilon}]^{|E|}\) contributed only \(o(1)\) by~\eqref{largegamma}. Moreover, since \(|e^{\E_X \left [ S_G^n (\boldsymbol{\gamma}) \right ]}| \le 1\), \(\prod_{e,i} |\hat{f}(\gamma_e^i)| \in L^1 (\R^{|E|})\), and \(S_G^n (\boldsymbol{\gamma}) \to S_G(\boldsymbol{\gamma})\) by Assumption~\ref{hyp1}, the dominated convergence theorem yields
\[
I^n_1(G) =
\frac{ \phi^{\frac{|E|}{2} - |V|}}{\psi^{|W|-1}} \frac{1}{(2\pi)^{|E|}} \int_{\R^{|E|}} \prod_{e \in E} \prod_{i=1}^2 \textnormal{d} \gamma_e^i \hat f(\gamma_e^i)  e^{ \sum_{w \in W} \E_X \left [ Z_w(\boldsymbol{\gamma}) \right ]} + o(1), 
\]
where \(Z_w (\boldsymbol{\gamma}) = \Phi \left ( \sum_{v \in V \colon v \sim w} \left ( \gamma_{(w,v)}^1 +  \gamma_{(w,v)}^2 \right) X_v \right )\).

It remains to estimate \(I^n_3(G)\), which vanishes unless \(G\) is an admissible graph with \(R \ge 1\) blocks, each containing more than one vertex of \(W\). In this case, 
\[
I^n_3(G) = \frac{ \phi^{\frac{|E|}{2} - |V|}}{\psi^{|W|-1}} \frac{1}{(2\pi)^{|E|}} \int_{[-n^\epsilon, n^\epsilon]^{|E|}} \prod_{e \in E}  \prod_{i=1}^{m(e)} \textnormal{d} \gamma_e^i \hat f(\gamma_e^i)  e^{\E_X \left [ S_G^n(\boldsymbol{\gamma})\right ]} g_G^n(\boldsymbol{\gamma}).
\]
By Assumption~\ref{hyp1}, \(S_G^n (\boldsymbol{\gamma}) = \sum_{w \in W} \E_X \left [ Z_w^n (\boldsymbol{\gamma})\right ]\) converges to \(S_G(\boldsymbol{\gamma}) = \sum_{w \in W} \E_X \left [ Z_w (\boldsymbol{\gamma})\right ]\), where \(Z_w(\boldsymbol{\gamma})\) is defined in~\eqref{eq: Z_w}, and \(g_G^n (\boldsymbol{\gamma})\) converges to
\begin{equation} \label{eq: function g_G}
g_G (\boldsymbol{\gamma}) \coloneqq \prod_{i=1}^R \left ( \sum_{K \geq 1} \sum_{\{W_1,\ldots, W_K\} \in \mathcal{A}_K (W_{B_i})} \prod_{k =1}^K \E_X \left [ \prod_{w \in W_k} Z_w(\boldsymbol{\gamma})\right ] \right).
\end{equation}
Since \(|e^{\E_X \left [ S_G^n (\boldsymbol{\gamma})\right ]}| \le 1\) and \(g_G^n (\boldsymbol{\gamma})\) grows at most polynomially in \(\boldsymbol{\gamma}\), while \(\hat{f}\) decays faster than any polynomial by~\eqref{thehypf}, the integrand in \(I_3^n(G)\) is dominated on \(\R^{|E|}\) by the integrable function \(\prod_{e,i} |\hat{f}(\gamma_e^i)|\). Moreover, as in~\eqref{largegamma}, the contribution of the tail outside \([-n^\epsilon, n^\epsilon]^{|E|}\) is \(o(1)\). Hence, by the dominated convergence theorem,
\[
I^n_3(G)  = \frac{ \phi^{\frac{|E|}{2} - |V|}}{\psi^{|W|-1}} \frac{1}{(2\pi)^{|E|}} \int_{\R^{|E|}} \prod_{e \in E}  \prod_{i=1}^{m(e)} \textnormal{d} \gamma_e^i \hat f(\gamma_e^i)  e^{\E_X \left [ S_G(\boldsymbol{\gamma})\right ]} g_G(\boldsymbol{\gamma})  + o(1) .
\]
Let \(E_{B_i}\) denote the edge set of block \(B_i = G(W_{B_i})\). Then, the above integrand splits across the disjoint edge sets \(E \backslash \cup_{i=1}^R E_{B_i}\) and \(E_{B_1}, \ldots, E_{B_R}\), yielding
\[
\begin{split}
I^n_3(G) & =  \frac{ \phi^{\frac{|E|}{2} - |V|}}{\psi^{|W|-1}} \frac{1}{(2 \pi)^{|E \backslash \cup_{i=1}^R E_i|}} \int_{\R^{|E \backslash \cup_{i=1}^R E_i|}} \prod_{e \in E \backslash \cup_{i=1}^R E_i} \prod_{i=1}^2 \textnormal{d} \gamma_e^i  \hat f(\gamma_e^i) e^{\sum_{w \in W \backslash \cup_{i=1}^R W_{B_i}} \E_X \left [ Z_w (\boldsymbol{\gamma}) \right ]} \\
& \quad \times  \prod_{i=1}^R  \frac{1}{(2\pi)^{|E_{B_i}|}} \int_{\R^{|E_{B_i}|}} \prod_{e \in E_{B_i}}  \prod_{i=1}^{m(e)} \textnormal{d} \gamma_e^i \hat f(\gamma_e^i)  e^{\sum_{w \in W_{B_i}} \E_X \left [ Z_w (\boldsymbol{\gamma})\right ]}   \\
& \qquad \qquad \qquad \qquad  \times  \left ( \sum_{K \ge 1} \sum_{\{W_1, \ldots, W_K\} \in \mathcal{A}_K(W_{B_i})} \prod_{k=1}^K \E_X \left [ \prod_{w \in W_i} Z_w (\boldsymbol{\gamma})  \right] \right) + o(1) \\
& = \frac{ \phi^{\frac{|E|}{2} - |V|}}{\psi^{|W|-1}}  \prod_{w \in W \backslash \cup_{i=1}^R W_{B_i}} C_{\deg(w)} (f) \prod_{i=1}^R \left ( \sum_{K \ge 1} \sum_{\{W_1, \ldots, W_K\} \in \mathcal{A}_K(W_{B_i})}  C_{(W_k)_{k=1}^K}(f)  \right ) + o(1),
\end{split}
\]
as desired.
\end{proof}

It remains to prove the crucial expansion of \(\Lambda_G^n\) stated in Lemma~\ref{lem: main estimate}. To prove this result, we will use the combinatorial estimates from Section~\ref{section: combinatorics}. 

\begin{proof}[\textbf{Proof of Lemma~\ref{lem: main estimate}}]
We again write series formally, as we have already discussed how to truncate them and control the remainder terms. According to~\eqref{eq: Lambda}, \(\Lambda_G^n\) is given by
\begin{equation}   \label{eq: Lambda 1}
\Lambda_G^n(\boldsymbol{\gamma})=  e^{\E_X \left [ S_G^n(\boldsymbol{\gamma}) \right ]} \left ( 1 + \sum_{m_2 + m_3 + \cdots \geq 1} \prod_{\ell \geq 2} \frac{1}{m_\ell!}\left ( \frac{\kappa_\ell^G(\boldsymbol{\gamma})}{\ell! n^{\ell-1}} \right )^{m_\ell} \right ).
\end{equation}
The sum of products of cumulants in the above display can be rewritten as 
\begin{equation} \label{eq: prod cumulants}
\sum_{m_2 + m_3 + \cdots \geq 1} \prod_{\ell \geq 2} \frac{1}{m_\ell!}\left ( \frac{\kappa_\ell^G(\boldsymbol{\gamma})}{\ell! n^{\ell-1}} \right )^{m_\ell} = \sum_{K \ge 1} \frac{1}{K!} \sum_{\ell_1, \ldots, \ell_K \ge 2} \prod_{k=1}^K  \left ( \frac{\kappa_{\ell_k}^G(\boldsymbol{\gamma}) }{\ell_k! n^{\ell_k-1}}\right ),
\end{equation}
where we use the correspondence \(K = \sum_{\ell \geq 2} m_\ell \geq 1\), \(\sum_{k=1}^K \ell_k = \sum_{\ell \geq 2} \ell m_\ell\), and define the sequence \((\ell_k)_{k=1}^K\) by \(\ell_k = 2\) for \(k \in \{1, \ldots, m_2\}\), \(\ell_k = 3\) for \(k \in \{m_2+1, \ldots, m_2+m_3\}\), and so on, i.e., 
\[
\ell_k = \ell \quad \textnormal{for} \enspace k \in \left \{ \sum_{i=2}^{\ell-1} m_i +1, \ldots, \sum_{i=2}^\ell m_i \right \}.
\]
We therefore need to estimate the product \(\prod_{k=1}^K \kappa_{\ell_k}^G(\boldsymbol{\gamma})\) appearing in~\eqref{eq: prod cumulants}. According to~\eqref{eq: cumulant} and the decomposition \(S_G^n (\boldsymbol{\gamma}) = \sum_{w \in W} Z_w^n(\boldsymbol{\gamma})\), we can expand \(\kappa_\ell^G (\boldsymbol{\gamma})\) as
\begin{equation} \label{eq: expansion cumulant}
\kappa_\ell^G (\boldsymbol{\gamma}) = \sum_{\pi \in \mathcal{P} ([\ell])} \mu(\pi) \prod_{i =1}^{|\pi|} \sum_{w_1^i, \ldots, w_{|B_i|}^i \in W} \E_X \left [ \prod_{j=1}^{|B_i|} Z_{w_j^i}^{n}(\boldsymbol{\gamma})\right ],
\end{equation}
where we recall that \(\mathcal{P}([\ell])\) denotes the set of partitions of \([\ell]\) and \(B_1, \ldots, B_{|\pi|}\) are the blocks of \(\pi\). For each block \(B_i\), we regroup the sum by collecting all tuples \((w_1^i,\ldots,w_{|B_i|}^i)\) that contain the same multiset of indices. Equivalently,
\begin{equation} \label{eq: expansion cumulant 2}
\sum_{w^i_1, \ldots, w^i_{|B_i|} \in W}  \E_X \left [\prod_{j=1}^{|B_i|}Z_{w^i_j}^{n}(\boldsymbol{\gamma}) \right ] = \sum_{W_i} \sum_{\boldsymbol{\eta}_{W_i}} \frac{|B_i| ! }{\prod_{w \in W_i} \eta_{W_i}(w)! } \E_X \left [\prod_{w \in W_i} \left(Z_w^n(\boldsymbol{\gamma})\right)^{\eta_{W_i}(w)} \right ],
\end{equation}
where the first sum runs over all subsets \(W_i \subseteq W\) with \(|W_i| \le |B_i|\), and the second runs over integer vectors \(\boldsymbol{\eta}_{W_i} = (\eta_{W_i}(w))_{w \in W_i} \in \mathbb{N}^{|W_i|}\) satisfying
\[
\eta_{W_i}(w) \ge 1, \qquad \sum_{w \in W_i}\eta_{W_i}(w) = |B_i|.
\]
In particular, \(|W_i| = |B_i|\) if and only if \(\eta_{W_i}(w) = 1\) for all \(w \in W_i\). Combining~\eqref{eq: expansion cumulant} and~\eqref{eq: expansion cumulant 2} yields
\[
\kappa_\ell^G (\boldsymbol{\gamma}) =  \sum_{\pi \in \mathcal{P} ([\ell])} \mu(\pi) \sum_{W_1, \ldots, W_{|\pi|}} \sum_{\boldsymbol{\eta}_{W_1},\ldots, \boldsymbol{\eta}_{W_{|\pi|}}}  \prod_{i=1}^{|\pi|} \frac{|B_i| ! }{\prod_{w \in W_i} \eta_{W_i}(w)! } \E_X \left [\prod_{w \in W_i} \left (Z_w^n(\boldsymbol{\gamma})\right)^{\eta_{W_i}(w)} \right ],
\]
where different subsets \(W_i\) (associated with distinct blocks \(B_i\)) may have a nontrivial intersection. As a result, we obtain
\begin{equation} \label{eq: prod cumulant}
\prod_{k=1}^K \kappa_{\ell_k}^G(\boldsymbol{\gamma}) = \sum_{\pi_1, \ldots, \pi_K} \mu(\pi_1) \cdots \mu(\pi_K) \sum_{\boldsymbol{W}^1, \ldots, \boldsymbol{W}^K} \sum_{\boldsymbol{\eta}^1, \ldots, \boldsymbol{\eta}^K} P_{(\pi_k, \boldsymbol{W}^k, \boldsymbol{\eta}^k)_{k=1}^K}(\boldsymbol{\gamma}),
\end{equation}
where
\begin{itemize}
\item[(i)] each \(\pi_k \in \mathcal{P} ( [\ell_k])\) is a partition of \([\ell_k]\) with blocks \(B_1^k,\ldots, B_{|\pi_k|}^k\);
\item[(ii)] \(\boldsymbol{W}^k= \{W^k_i, 1 \leq i \leq |\pi_k|\}\) denotes a collection of subsets \(W_i^k \subseteq W\) satisfying \(|W_i^k| \leq  |B_i^k|\), with equality if and only if 
\(\eta_{W_i^k}(w) = 1\) for all \(w \in W_i^k\);
\item[(iii)] \(\boldsymbol{\eta}^k= \{ \boldsymbol{\eta}_{W^k_i}, 1 \leq i \leq |\pi_k|\}\) denotes a collection of integer vectors \(\boldsymbol{\eta}_{W^k_i} = (\eta_{W_i^k}(w))_{w \in W_i^k}  \in \N^{|W_i^k|}\) satisfying \(\eta_{W_i^k}(w) \ge 1\) for all \(w \in W_i^k\) and \(\sum_{w \in W^k_i} \eta_{W_i^k}(w)  = |B_i^k|\);
\item[(iv)] the term \(P_{(\pi_k, \boldsymbol{W}^k, \boldsymbol{\eta}^k)_{k=1}^K}(\boldsymbol{\gamma})\) is given by
\begin{equation} \label{eq: product}
P_{(\pi_k, \boldsymbol{W}^k, \boldsymbol{\eta}^k)_{k=1}^K}(\boldsymbol{\gamma}) = \prod_{k=1}^K \prod_{i=1}^{|\pi_k|} \frac{|B_i^k| ! }{\prod_{w \in W_i^k} \eta_{W_i^k}(w)! }  \E_X \left [ \prod_{w \in W^k_i} \left (Z_w^n(\boldsymbol{\gamma}) \right)^{\eta_{W_i^k}(w)} \right ].
\end{equation}
\end{itemize}
Since the subgraph \(G(W_i^k)\) associated with \(W_i^k\) is not necessarily connected, the expectation in~\eqref{eq: product} may factorize. We therefore decompose each subset \(W_i^k\) into the vertex sets of the connected components of the induced subgraph \(G(W_i^k)\), i.e., 
\[
W_i^k = \sqcup_{j=1}^{c_i^k} \widetilde{W}_{i,j}^k,
\]
so that \(G(\widetilde{W}_{i,j}^k)\) is connected. We define \(r(\pi_k) \coloneqq \sum_{i=1}^{|\pi_k|} c_i^k\), so that \(\boldsymbol{W}^k\) decomposes into \(r(\pi_k) \ge |\pi_k|\) subsets \(\left \{\widetilde{W}_{i,j}^k, 1 \le i \le |\pi_k|, 1 \le j \le c_i^k \right\}\), each corresponding to a connected subgraph \(G(\widetilde{W}_{i,j}^k)\). For each component \( \widetilde{W}_{i,j}^k\), we define the restricted multiplicity 
\[
\widetilde{\eta}_{i,j}^k (w) \coloneqq \begin{cases} \eta_{W_i^k}(w) & \enspace \text{if} \: w \in \widetilde{W}_{i,j}^k,\\
0 & \enspace \text{otherwise}. 
\end{cases}
\]
By independence of disjoint connected subgraphs, we have
\[
\E_X \left [ \prod_{w \in W^k_i} \left (Z_w^n(\boldsymbol{\gamma}) \right)^{\eta_{W_i^k}(w)} \right ] = \prod_{j=1}^{c_i^k} \E_X \left [ \prod_{w \in \tilde{W}^k_{i,j}} \left (Z_w^n(\boldsymbol{\gamma}) \right)^{\tilde{\eta}^k_{i,j}(w)} \right ] .
\]
In the sequel, set \(r \coloneqq \sum_{k=1}^K r(\pi_k)\) and enumerate all connected components as \(\{\widetilde{W}_\ell\}_{\ell=1}^r = \{\widetilde{W}_{i,j}^k, 1 \le k \le K, 1 \le i \le |\pi_k|, 1 \le j \le c_i^k\}\). For each \(\widetilde{W}_\ell\), define \(\tilde{\boldsymbol{\eta}}_\ell = \big(\tilde{\eta}_\ell(w)\big)_{w \in \widetilde{W}_\ell}\) by
\[
\tilde{\eta}_\ell(w) \coloneqq 
\begin{cases}
\eta_{W_i^k}(w) & \text{if} \enspace w \in \widetilde{W}_\ell \text{ for the unique }(k,i,j)\text{ with }\widetilde{W}_\ell=\widetilde{W}_{i,j}^k,\\
0 & \text{otherwise}.
\end{cases}
\]
Then, substituting the previous identity into~\eqref{eq: product} yields
\begin{equation}\label{eq: product connected}
P_{(\pi_k, \boldsymbol{W}^k, \boldsymbol{\eta}^k)_{k=1}^K}(\boldsymbol{\gamma})  = \left ( \prod_{k=1}^K \prod_{i=1}^{|\pi_k|} \frac{|B_i^k|!}{\prod_{w \in W_i^k} \eta_{W_i^k}(w)!} \right) P_{(\widetilde{W}_\ell, \tilde{\boldsymbol{\eta}}_\ell)_{\ell=1}^r} (\boldsymbol{\gamma}),
\end{equation}
where
\[
P_{(\widetilde{W}_\ell, \tilde{\boldsymbol{\eta}}_\ell)_{\ell=1}^r}(\boldsymbol{\gamma}) = \prod_{\ell=1}^r \E_X \left [ \prod_{w \in \widetilde{W}_\ell} \left (Z_w^n(\boldsymbol{\gamma}) \right)^{\tilde{\eta}_\ell (w)}  \right ].
\]
Finally, combining~\eqref{eq: prod cumulant} and~\eqref{eq: product connected}, we obtain the following expansion for the product of cumulants:
\begin{equation}\label{eq: final product cumulant}
\prod_{k=1}^K \kappa_{\ell_k}^G(\boldsymbol{\gamma}) = \sum_{\pi_1,\ldots,\pi_K} \mu(\pi_1)\cdots \mu(\pi_K) \sum_{\boldsymbol{W}^1,\ldots,\boldsymbol{W}^K} \sum_{\boldsymbol{\eta}^1,\ldots,\boldsymbol{\eta}^K} \left(\prod_{k=1}^K \prod_{i=1}^{|\pi_k|} \frac{|B_i^k|!}{\prod_{w\in W_i^k}\eta_{W_i^k}(w)!} \right) P_{(\widetilde{W}_\ell, \tilde{\boldsymbol{\eta}}_\ell)_{\ell=1}^r} (\boldsymbol{\gamma}).
\end{equation}

We next study the right-hand side of~\eqref{eq: final product cumulant} using the results from Section~\ref{section: combinatorics}. Because Section~\ref{section: combinatorics} deals with connected subgraphs of \(G\), its results apply to the subsets \(\widetilde{W}_1,\ldots, \widetilde{W}_r\), since by definition the associated subgraphs \(G(\widetilde{W}_1),\ldots, G(\widetilde{W}_r)\) are connected. According to Proposition~\ref{prop: combinatorics}, if \(\{\widetilde{W}_1, \ldots, \widetilde{W}_r\} \notin \mathcal{W}_r\), then either \(P_{(\widetilde{W}_\ell, \tilde{\boldsymbol{\eta}}_\ell)_{\ell=1}^r}(\boldsymbol{\gamma})\) is invariant under \(T_{E_0}\) for some subset \(E_0 \subset E\) with odd cardinality \(|E_0|\)---and can thus be absorbed into \(h_G^n (\boldsymbol{\gamma})\)---or 
\[
\rho(G) = |W| + |V| - \frac{|E|}{2} - 1 < \sum_{\ell =1}^r (|\widetilde{W}_\ell|-1).
\]
Since by definition \(\sum_{\ell=1}^r |\widetilde{W}_\ell| =  \sum_{k=1}^K \sum_{i=1}^{|\pi_k|} \sum_{j=1}^{c_i^k} |\widetilde{W}_{i,j}^k| =  \sum_{k=1}^K \sum_{i=1}^{|\pi_k|} |W_i^k|\) and \(r = \sum_{k=1}^K r(\pi_k)\), we obtain
\[
\sum_{\ell=1}^r (|\widetilde{W}_\ell|-1) = \sum_{k=1}^K \left( \sum_{i=1}^{|\pi_k|} |W_i^k| - r(\pi_k) \right ) \leq \sum_{k=1}^K \left( \sum_{i=1}^{|\pi_k|} |B_i^k| - |\pi_k| \right ) \leq \sum_{k=1}^{K} \left( \ell_k-1 \right),
\]
where the first inequality follows from \(|W_i^k| \leq |B_i^k|\) (item (ii)) and \(r(\pi_k) \ge |\pi_k|\), and the second from \(\ell_k = \sum_{i=1}^{|\pi_k|} |B_i^k| \) (item (i)) together with \(|\pi_k| \geq 1\). As a result, if \(\rho(G) < \sum_{\ell =1}^r (|\widetilde{W}_\ell|-1)\), then \(\rho(G) < \sum_{k=1}^{K} \left( \ell_k-1 \right)\), and thus, by~\eqref{eq: prod cumulants}, the corresponding contribution is absorbed into the error term \(g_G^n\). If instead \(\{\widetilde{W}_1, \ldots, \widetilde{W}_r\} \in \mathcal{W}_r\), then Proposition~\ref{prop: combinatorics} gives \(\rho(G) = \sum_{\ell=1}^r (|\widetilde{W}_\ell|-1)\), and the argument above implies that
\[
\rho(G) = \sum_{\ell=1}^r (|\widetilde{W}_\ell|-1) \leq \sum_{k=1}^K (\ell_k-1),
\] 
with equality when \(|W_i^k| = |B_i^k|\) (that is, \(\eta_{W_i^k}(w)=1\) for every \(w \in W_i^k\) by (ii)) and \(r(\pi_k) = |\pi_k| =1\). In this case, the sum in~\eqref{eq: prod cumulant} simplifies drastically: only the trivial partition \(\pi_k = \{\{1, \ldots, \ell_k\}\}\) for all \(k \in [K]\) contribute at leading order. Therefore, we only sum over subsets \(W_1^1, \ldots, W_1^K\) with \(|W_1^k| = |B_1^k| = \ell_k \ge 2\) for every \(k \in [K]\). For convenience, we denote \(W_1^k = W_k\). The main contribution to \(\Lambda_G^n\) therefore comes from the family of subsets \(\{W_1,\ldots, W_K\} \in \mathcal{W}_K\) with \(|W_k| = \ell_k \ge 2\). From~\eqref{eq: final product cumulant} we have
\[
\begin{split}
& \sum_{K \geq 1} \frac{1}{K!}  \sum_{\ell_1, \ldots, \ell_K \ge 2} \prod_{k=1}^K  \left ( \frac{\kappa_{\ell_k}^G(\boldsymbol{\gamma})}{\ell_k! n^{\ell_k-1}}  \right ) \\
& \quad = h_G^n(\boldsymbol{\gamma}) + \frac{1}{n^{\rho(G)}} \sum_{K\ge 1} \sum_{\{W_1,\ldots, W_K\} \in \mathcal{W}_K \atop |W_k| \ge 2} \prod_{k=1}^K \E_X \left [ \prod_{w \in W_k} Z_w^n(\boldsymbol{\gamma}) \right ] + o\left ( \frac{1}{n^{\rho(G)}} \right ), 
\end{split}
\]
where the error \(o\left ( n^{-\rho(G)} \right )\) is uniform for \(\|\gamma\|_{\infty}\le n^\epsilon\) and \(h_G^n (\boldsymbol{\gamma})\) is a sum of functions which are invariant under \(T_{E_0}\) for some subset \(E_0 \subset E\) such that \(|E_0|\) is odd. The normalizing factor \(1/K!\) disappears because the class \(\mathcal{W}_K\) consists of unordered families of distinct subsets of \(W\) (subsets may intersect), so each configuration is counted only once. Combining this estimate with~\eqref{eq: Lambda 1} and~\eqref{eq: prod cumulants}, we obtain 
\[
\begin{split}
\Lambda_G^n(\boldsymbol{\gamma}) & = e^{\E_X \left [ S_G^n(\boldsymbol{\gamma}) \right ]} \left ( 1 + h_G^n (\boldsymbol{\gamma}) +  \frac{1}{n^{\rho(G)}} \sum_{K\geq 1}  \sum_{\{W_1,\ldots, W_K\} \in \mathcal{W}_K \atop |W_k| \ge 2} P_{(W_k)_{k=1}^K}(\boldsymbol{\gamma}) + o\left ( \frac{1}{n^{\rho(G) }} \right ) \right ),
\end{split}
\]
where \(P_{(W_k)_{k=1}^K}(\boldsymbol{\gamma})\) is given by \(P_{(W_k)_{k=1}^K}(\boldsymbol{\gamma})= \prod_{k=1}^K \E_X \left [ \prod_{w \in W_k} Z_w^n(\boldsymbol{\gamma}) \right ]\). 

By Lemma~\ref{lem: equiv def} (and the bijection constructed in its proof), for any connected bipartite graph \(G=(W \cup V,E)\) in which all vertices have even degree, there is a one-to-one correspondence between families \(\{W_1, \ldots, W_K\} \in \mathcal{W}_K\) (for some integer \(K \ge 1\)) satisfying \(|W_k| \ge 2\) for all \(k \in [K]\), and admissible graphs with \(R \ge 1\) blocks \(B_i = G(W_{B_i})\), \(|W_{B_i}| \ge 2\), together with admissible decompositions \(\{ W_\ell \colon \ell \in I_i \} \in \mathcal{A}_{|I_i|}(W_{B_i})\), where the index sets \(I_1, \ldots, I_R \subset [K]\) are disjoint and satisfy \(K = \sum_{i=1}^R |I_i|\). Setting \(K_i \coloneqq |I_i|\) and writing \(\{W_1^i, \ldots, W_{K_i}^i\} \coloneqq \{W_\ell \colon \ell \in I_i\}\), the sum over \(\mathcal{W}_K\) appearing in the expansion of \(\Lambda_G^n(\boldsymbol{\gamma})\) can therefore be rewritten in a blockwise form as
\[
\begin{split}
& \sum_{K\geq 1}  \sum_{\{W_1,\ldots, W_K\} \in \mathcal{W}_K \atop |W_k| \ge 2} P_{(W_k)_{k=1}^K}(\boldsymbol{\gamma})  \\
& = \mathbf{1}_{\{G \, \textnormal{is admissible with} \, R \ge 1\}}  \sum_{K_1, \ldots, K_R \geq 1}  \prod_{i=1}^R \left(\sum_{\{W_1^i,\ldots, W_{K_i}^i\} \in \mathcal{A}_{K_i} (W_{B_i})}   P_{(W_k^i)_{k=1}^{K_i}}(\boldsymbol{\gamma})\right) \\
& = \mathbf{1}_{\{G \, \textnormal{is admissible with} \, R \ge 1\}} \prod_{i=1}^R \left ( \sum_{K \ge 1} \sum_{\{W_1, \ldots, W_K\} \in \mathcal{A}_K (W_{B_i})} P_{(W_k)_{k=1}^K} (\boldsymbol{\gamma}) \right ).
\end{split}
\]
This completes the proof of Lemma~\ref{lem: main estimate}.
\end{proof}

%%%%%%%%%%%%%%%%%%%%%%%%%%%%%%%%%%%%%%%%%%%%%%%%%%%%%%%%%%%%%%%%%%%%%%%%%%
%%%%%%%%%%%%%%%%%%%%%%%%%%%%%%%%%%%%%%%%%%%%%%%%%%%%%%%%%%%%%%%%%%%%%%%%%%%%%%%%%%%%%%%%%%%%%%%%%%%%%%%%%%%%%%%%%%%%%%%%%%%%%%%%%%%%%%%%%%%%%%%%%%%%%%%%%%%%%%%%%%%%%%%%%%%%%%%%%%%%%%%%%%%%%%%%%%%%%%%%%%%%%%%%%%%%%%%%%%%%%%%%%%%%%%%%%%%%%%%%%%%%%%%%%%%%%%%%%%%%%%%%%%%%%%%%%%%%%%%%%%%%%%%%%%%%%%%%
%%%%%%%%%%%%%%%%%%%%%%%%%%%%%%%%%%%%%%%%%%%%%%%%%%%%%%%%%%%%%%%%%%%%%%%%%%
%%%%%%%%%%%%%%%%%%%%%%%%%%%%%%%%%%%%%%%%%%%%%%%%%%%%%%%%%%%%%%%%%%%%%%%%%%
\section{Combinatorial estimates} \label{section: combinatorics}

Let \(G=(W \cup V,E)\) be a finite, connected bipartite multigraph, where \(E\) is a multiset of edges and \(m \colon E \to \N\) assigns to each edge \(e\in E\) its multiplicity. For a fixed positive integer \(r\), consider subsets \(W_1,\ldots, W_r\) of \(W\), which may have a nontrivial intersection, and sequences \(\boldsymbol{\eta}_1, \ldots, \boldsymbol{\eta}_r\) with \(\boldsymbol{\eta}_i = \{\eta_i(w), w \in W_i\} \in \N^{|W_i|}\). For every integer \(n \ge 1\), consider the integral
\begin{equation} \label{eq: integral}
\int \prod_{e \in E} \prod_{i=1}^{m(e)} \textnormal{d} \gamma_e^i g(\gamma_e^i) e^{\sum_{w \in W} \E_X \left [ Z_w^n(\boldsymbol{\gamma})\right ]}  P_{(W_i, \boldsymbol{\eta}_i)_{i=1}^r}  (\boldsymbol{\gamma}),
\end{equation}
where \(g \colon \R \to \R\) is an odd, \(L^1\)-integrable function and
\[
P_{(W_i, \boldsymbol{\eta}_i)_{i=1}^r}  (\boldsymbol{\gamma}) = \prod_{i=1}^r \E_X \left [\prod_{w \in W_i} \left ( Z_w^n(\boldsymbol{\gamma})\right)^{\eta_i(w)} \right ].
\]
For every \(w \in W\) and integer \(n\), the random variable \(Z_w^n(\boldsymbol{\gamma})\) is given by 
\[
Z_w^n(\boldsymbol{\gamma}) = n \log \E_W \left [ e^{i W_w \sum_{v \in V \colon v \sim w} \left( \gamma_{(w,v)}^1 + \cdots + \gamma_{(w,v)}^{m((w,v))} \right )X_v} \right ],
\]
where \((W_w)_{w \in W}\) are i.i.d.\ random variables distributed according to \(\nu_w\) and \((X_v)_{v \in V}\) are i.i.d.\ random variables distributed according to \(\nu_x\) (see Assumption~\ref{hyp1}). The integral~\eqref{eq: integral} arises in the proof of Proposition~\ref{main3}. In this section, we establish auxiliary estimates that will be used in that proof. Specifically, we aim (1) to determine the conditions on the subsets under which the integral in~\eqref{eq: integral} does not vanish, (2) to prove that \(\rho (G) \le \sum_{i=1}^r (|W_i|-1)\), and (3) to identify the precise conditions under which equality holds.

Fix \(w \in W\). We first observe that \(\E_X \left [ Z_w^n(\boldsymbol{\gamma})\right ]\) remains unchanged if we simultaneously replace each \(\gamma_{(w,v)}^i\) by \(-\gamma_{(w,v)}^i\) for every \(v \sim w\) and \(1 \le i \le m((w,v))\). This invariance follows from the symmetry of \(\nu_x\) and from the fact that each random variable \(X_v\) appears only once inside \(\E_X \left [Z_w^n(\boldsymbol{\gamma})\right ]\). For any subset \(E_0 \subseteq E\), define the sign-flip operator
\[
T_{E_0} \colon \R^{\sum_{e\in E} m(e)} \to \R^{\sum_{e\in E} m(e)}, 
\]
which acts on \(\boldsymbol{\gamma} = (\gamma_e^i)_{e \in E, 1 \le i \le m(e)}\) by 
\[
(T_{E_0} \boldsymbol{\gamma})_e^i = \begin{cases}
- \gamma_e^i &\enspace e \in E_0, \\
 \gamma_e^i &\enspace e \notin E_0.
\end{cases}
\]
That is, \(T_{E_0}\) flips the signs of all \(\gamma_e^i\) corresponding to edges \(e \in E_0\) and leaves the others unchanged. Under this notation, \(\E_X \left [ Z_w^n(\boldsymbol{\gamma})\right ]\) is invariant under \(T_{\{e = (w,v) \colon v \sim w\}}\). More generally, if \(E(w) \coloneqq \{ (w,v) \in E \colon w \sim v\}\), then \(\E_X \left [ Z_w^n(\boldsymbol{\gamma})\right ]\) is invariant under \(T_{E_0(w)}\) for any subset \(E_0(w) \subseteq E(w)\). As a consequence, \(\E_X \left [ \sum_{w \in W} Z_w^n(\boldsymbol{\gamma})\right]\) is invariant under \(T_{E_0}\) for any subset \(E_0 \subseteq E\). Since the function \(g\) is odd, if \(P_{(W_i,\boldsymbol{\eta}_i)_{i=1}^r}(\boldsymbol{\gamma})\) is invariant under some transformation \(T_{E_0}\) with odd cardinality \(|E_0| = \sum_{e \in E_0} m(e)\), then the integral~\eqref{eq: integral} vanishes. In the first part of this section, our goal is to determine conditions on \(W_1,\ldots, W_r\) such that \(P_{(W_i,\boldsymbol{\eta}_i)_{i=1}^r}(\boldsymbol{\gamma})\) fails to be invariant under any transformation \(T_{E_0}\) with \(|E_0|\) odd. \\

Given subsets \(W_1,\ldots, W_r\) of \(W\), let \(G_i = (W_i \cup V_i, E_i)\) denote the subgraph of \(G\) as in Definition~\ref{def: subgraphs}, where
\[
V_i = \{ v \in V \colon \exists \, w \in W_i , \, v \sim w\},
\]
and 
\[
E_i = \{ e = (w,v) \in E \colon w \in W_i, \, v \in V_i\}.
\] 
Write \(\deg_{G_i}(x)\) for the degree of a vertex \(x \in W_i \cup V_i\) within the subgraph \(G_i\), and \(\deg(x)\) its degree within the entire graph \(G\). We assume that each subgraph \(G_i\) is connected, ensuring that the expectation \(\E_X \left [\prod_{w \in W_i} \left ( Z_w^n(\boldsymbol{\gamma})\right)^{\eta_i(w)} \right ]\) in \(P_{(W_i, \boldsymbol{\eta}_i)_{i=1}^r}  (\boldsymbol{\gamma}) \) does not factorize. We now provide the necessary conditions for the integral~\eqref{eq: integral} to be nonvanishing.

\begin{lem} \label{lem: necessary cond}
Let \(\tilde{w}_1,\ldots, \tilde{w}_q\) (\(q \geq 0\)) denote the vertices that belong to at least two of the subsets \(W_1,\ldots, W_r\). If \(W_1,\ldots, W_r\) are disjoint, then \(q=0\). Consider the following conditions:
\begin{enumerate} 
\item[(A)] for every \(i \in [r]\) and every \(w \in W_i\), \(\deg(w) = \deg_{G_i}(w) \geq 2\);
\item[(B)] for every \(v\in \cup_{i=1}^r V_i\), there exists at least one \(V_i\) such that \(\deg_{G_i}(v) \geq 2\);
\item[(C)] for every \(v \in V_i\) such that \(v \not \sim \tilde{w}_1,\ldots, \tilde{w}_q\), \(\deg_{G_i}(v)\) is even (if \(W_1, \ldots,W_r\) are disjoint, then \(\deg_{G_i}(v)\) is even for all \(v \in V_i\));
\item[(D)] for every \(e \in E \backslash \cup_{i=1}^r E_i\), the multiplicity \(m(e)\) is even. 
\end{enumerate}
If any of the above conditions is not satisfied, then there exists at least one subset \(E_0 \subset E\) with odd cardinality \(|E_0|\) such that \(P_{(W_i,\boldsymbol{\eta}_i)_{i=1}^r}(\boldsymbol{\gamma})\) is invariant under \(T_{E_0}\). Consequently, the integral~\eqref{eq: integral} vanishes. 
\end{lem}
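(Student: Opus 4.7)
The plan is to produce, for each failing condition, a subset $E_{0}\subseteq E$ of odd cardinality $|E_{0}|=\sum_{e\in E_{0}}m(e)$ such that $P_{(W_{i},\boldsymbol{\eta}_{i})_{i=1}^{r}}(\boldsymbol{\gamma})$ is invariant under $T_{E_{0}}$. Since $g$ is odd and $\sum_{w\in W}\E_{X}[Z_{w}^{n}(\boldsymbol{\gamma})]$ is $T_{E_{0}}$-invariant (as already observed in the preamble), applying $T_{E_{0}}$ as a change of variables will then show that the integral in~\eqref{eq: integral} equals minus itself, hence vanishes. The main tool is a gauge-theoretic observation applied to each factor $F_{i}(\boldsymbol{\gamma})\coloneqq\E_{X}\!\left[\prod_{w\in W_{i}}\bigl(Z_{w}^{n}(\boldsymbol{\gamma})\bigr)^{\eta_{i}(w)}\right]$ of $P$. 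Each $Z_{w}^{n}$ depends on $\boldsymbol{\gamma}$ only through the aggregated sums $\xi_{(w,v)}=\sum_{j=1}^{m((w,v))}\gamma_{(w,v)}^{j}$, and, by symmetry of $\nu_{w}$, it is an even function of $Y_{w}=\sum_{v\sim w}\xi_{(w,v)}X_{v}$. Combining this with the substitutions $X_{v}\to\sigma_{v}X_{v}$ inside $\E_{X}$ (valid by symmetry of $\nu_{x}$), $F_{i}$ is invariant under $T_{E_{0}}$ whenever the edge-labeling $\epsilon$ on $E_{i}$ given by $\epsilon_{(w,v)}=-1$ on $E_{0}\cap E_{i}$ and $+1$ elsewhere admits a \emph{gauge} representation $\epsilon_{(w,v)}=\sigma_{v}\tau_{w}$ with $\sigma_{v},\tau_{w}\in\{\pm 1\}$. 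Two concrete families of such gauge-trivial labelings will do the job: (i) any labeling supported on a set of bridges of $G_{i}$ (the product around any cycle is then $+1$), and (ii) the labeling equal to $-1$ on the set of all edges of $E_{i}$ incident to a single vertex $v\in V_{i}$ (take $\sigma_{v}=-1$, $\sigma_{v'}=+1$ for $v'\neq v$, and $\tau\equiv 1$).

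Each failing case is then handled by an explicit choice of $E_{0}$. If (D) fails, I pick $e\in E\setminus\bigcup_{i=1}^{r}E_{i}$ with $m(e)$ odd and set $E_{0}=\{e\}$; no $F_{i}$ depends on $\gamma_{e}^{j}$, so invariance is automatic, and $|E_{0}|=m(e)$ is odd. If (A) fails, I pick $w\in W_{i}$ with $\deg(w)=1$; its unique incident edge $(w,v)$ has multiplicity $1$, and for every $j$ with $w\in W_{j}$ one has $\deg_{G_{j}}(w)=\deg(w)=1$, so $(w,v)$ is a bridge of $G_{j}$ and $F_{j}$ is invariant by (i); thus $E_{0}=\{(w,v)\}$ works with $|E_{0}|=1$. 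If (B) fails at some $v\in\bigcup_{i=1}^{r}V_{i}$, I pick any $i$ with $v\in V_{i}$ and let $w$ be the unique $W_{i}$-neighbor of $v$; then $m((w,v))=1$, and for any $j$ with $w\in W_{j}$ the relation $w\sim v$ forces $v\in V_{j}$, hence $\deg_{G_{j}}(v)=1$ by the failure of (B), so $w$ is the unique $W_{j}$-neighbor of $v$ and $(w,v)$ is a bridge of $G_{j}$; again $E_{0}=\{(w,v)\}$ works.

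Finally, if (C) fails, there is $v\in V_{i}$ with $v\not\sim\tilde{w}_{k}$ for all $k$ and $\deg_{G_{i}}(v)$ odd; I take $E_{0}$ to be the set of all edges of $E_{i}$ incident to $v$. For $j\neq i$ and any $(w,v)\in E_{0}\cap E_{j}$, one would have $w\in W_{i}\cap W_{j}$, so $w$ would be one of the $\tilde{w}_{k}$'s, contradicting $v\not\sim\tilde{w}_{k}$; hence $E_{0}\cap E_{j}=\emptyset$ and $F_{j}$ is trivially invariant. For $j=i$, the induced labeling is of type (ii), so $F_{i}$ is invariant by the observation, and $|E_{0}|=\deg_{G_{i}}(v)$ is odd by assumption. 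The hard part will be pinning down the gauge observation precisely, in particular the interplay between the $X_{v}\to-X_{v}$ symmetry (which effectively flips the $\xi_{(w,v)}$'s simultaneously at all edges of $G_{i}$ incident to $v$) and the evenness of $Z_{w}^{n}$ in $Y_{w}$ (which lets one flip them simultaneously at all edges of $G_{i}$ incident to $w$); once those two gauge-trivial families are isolated, picking the right $E_{0}$ in each case is a bookkeeping exercise.
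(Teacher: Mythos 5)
Your proof is correct, and the underlying mechanism is the same as the paper's: for each failing condition one exhibits a subset $E_0\subset E$ of odd total multiplicity under which every factor $F_j=\E_X\bigl[\prod_{w\in W_j}(Z_w^n)^{\eta_j(w)}\bigr]$ (and hence $P$) is invariant under $T_{E_0}$, so that the oddness of $g$ and the invariance of $e^{\sum_w\E_X[Z_w^n]}$ force the integral to change sign and therefore vanish. What you add is a cleaner organization: you isolate, once and for all, the invariance of $F_i$ under $\xi_{(w,v)}\mapsto\tau_w\sigma_v\xi_{(w,v)}$, obtained by combining the $X_v\mapsto\sigma_v X_v$ symmetry inside $\E_X$ (from the symmetry of $\nu_x$) with the evenness of $Z_w^n$ in $Y_w=\sum_{v\sim w}\xi_{(w,v)}X_v$ (from the symmetry of $\nu_w$). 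The paper deploys these two ingredients ad hoc in each case, whereas your gauge criterion reduces every case to exhibiting a gauge-trivial labeling, which makes the treatment of (C) in particular more transparent. Your case-by-case choices of $E_0$, including the use of $v\not\sim\tilde w_1,\dots,\tilde w_q$ to guarantee $E_0\cap E_j=\emptyset$ for $j\neq i$, match the paper's argument.
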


\begin{proof}
We first focus on condition (A). We observe that for every \(i \in [r]\) with \(w \in W_i\), the degree of \(w\) in \(G_i\) is independent of \(i\) because the neighborhood of vertices in \(W\) does not depend on \(i\). Consequently, \(\deg(w) = \deg_{G_i}(w)\) for all \(w \in \cup_{i=1}^r W_i\). If condition (A) is not satisfied, there exists \(w_0 \in \cup_{i=1}^r W_i\) with \(\deg(w_0) = 1\). Let \(v_0 \in V\) denote the unique neighbor of \(w_0\). Then, for every \(i \in [r]\) such that \(w_0\in W_i\), it follows that
\[
\begin{split}
\E_X \left [\prod_{w \in W_i} \left(Z_w^n(\boldsymbol{\gamma})\right)^{\eta_i(w)} \right] & = \E_X \left[ \left(Z_{w_0}^n (\boldsymbol{\gamma})\right)^{\eta_i(w_0)}  \prod_{w \in W_i \backslash \{w_0\}} \left( Z_w^n(\boldsymbol{\gamma}) \right)^{\eta_i(w)} \right ] \\
& = \E_X \left [ \left (n \log \E_W \left [ e^{i \gamma_{(w_0,v_0)} X_{v_0} W_{w_0}} \right ] \right)^{\eta_i(w_0)} \prod_{w \in W_i \backslash \{w_0\}} \left(Z_w^n(\boldsymbol{\gamma})\right)^{\eta_i(w)} \right ] 
\end{split}
\]
is even in \(\gamma_{(w_0,v_0)}\), since the law of \(X_{v_0}\) is symmetric and \(X_{v_0}\) appears only once in the expectation. This implies that \(P_{(W_i, \boldsymbol{\eta}_i)_{i=1}^r} (\boldsymbol{\gamma})\) is invariant under \(T_{\{(w_0,v_0)\}}\). If condition (B) is not satisfied, then there exists at least one \(v_0 \in \cup_{i=1}^r V_i\) with \(\deg(v_0) =1\) in \(\cup_{i=1}^r G_i\). Let \(w_0 \in W\) be the unique neighbor of \(w_0 \sim v_0\). For every \(i \in [r]\) such that \(w_0\in W_i\), we have 
\[
\E_X \left [\prod_{w \in W_i} \left ( Z_w^n(\boldsymbol{\gamma}) \right)^{\eta_i(w)}  \right ] = \E_X \left [ \left(Z_{w_0}^n(\boldsymbol{\gamma})\right)^{\eta_i(w_0)} \prod_{w \in W_i \backslash \{w_0\}} \left( Z_w^n(\boldsymbol{\gamma}) \right)^{\eta_i(w)}  \right ] ,
\]
where 
\[
Z_{w_0}^n (\boldsymbol{\gamma}) = n \log \E_W \left [ e^{i \left ( \gamma_{(w_0,v_0)} X_{v_0} + \sum_{v \in V \backslash \{v_0\} \colon v \sim w_0}  (\gamma_{(w_0,v)}^1 + \cdots +\gamma_{(w_0,v)}^{m((w_0,v))})  X_v \right) W_{w_0}} \right ] .
\]
Since the law of \(X_{v_0}\) is symmetric and \(X_{v_0}\) appears only once in the expectation, \(\E_X \left [\prod_{w \in W_i} \left ( Z_w^n(\boldsymbol{\gamma}) \right)^{\eta_i(w)}  \right ] \) is even in \(\gamma_{(w_0,v_0)}\). This implies that \(P_{(W_i, \boldsymbol{\eta}_i)_{i=1}^r} (\boldsymbol{\gamma})\) is even in \(\gamma_{(v_0, w_0)}\). We now consider condition (C), first assuming that \(W_1,\ldots,W_r\) are disjoint. If (C) is not satisfied, then there exists at least one subgraph \(G_i\) with at least one vertex \(v_0 \in V_i\) such that \(\deg_{G_i}(v_0)\) is odd. Let \(w_1, \ldots, w_k \in W_i\) denote the neighbors of \(v_0\) in \(G_i\). Then,
\[
\E_X \left [\prod_{w \in W_i} \left ( Z_w^n(\boldsymbol{\gamma}) \right)^{\eta_i(w)}  \right ] = \E_X \left [\left ( Z_{w_1}^n (\boldsymbol{\gamma}) \right)^{\eta_i(w_1)} \cdots \left ( Z_{w_k}^n (\boldsymbol{\gamma}) \right)^{\eta_i(w_k)} \prod_{w \in W_i\backslash \{w_1,\ldots, w_k\}} \left(Z_w^n (\boldsymbol{\gamma})\right)^{\eta_i(w)} \right ] 
\]
is invariant under \(T_{E_0}\) for \(E_0 = \{(v_0,w_i) \colon i \in [k]\}\) by the same argument as above. Since \(W_1, \ldots, W_r\) are disjoint, it follows that \(P_{(W_i, \boldsymbol{\eta}_i)_{i=1}^r}(\boldsymbol{\gamma})\) is also invariant under \(T_{E_0}\), and \(|E_0| = \sum_{i=1}^k m((v_0,w_i)) = \deg_{G_i}(v_0)\) is odd. Next, we consider the case where \(W_1,\ldots,W_r\) are not necessarily disjoint. Let \(\mathcal{I}\) denote
\[
\mathcal{I} = \left \{i \in [r] \colon \exists \,j \in [q] \enspace \text{such that} \enspace \widetilde{w}_j \in W_i\right \} 
\]
where \(\widetilde{w}_1,\ldots, \widetilde{w}_q \in \cup_{i \in \mathcal{I}} W_i\). Note that the subsets \((W_i)_{i \in \mathcal{I}^\textnormal{c}}\) are disjoint and have a empty intersection with \(\cup_{i \in \mathcal{I}} W_i\). Assume first that there exists \(v_0 \in V_i\) for some \(i \in \mathcal{I}^\textnormal{c}\) such that \(\deg_{G_i}(v_0)\) is odd. Then, by the above computation we have that \(\E_X \left [ \prod_{w \in W_i} \left( Z_w^n(\boldsymbol{\gamma}) \right)^{\eta_i(w)}\right ]\) is invariant under \(T_{E_0}\) for \(E_0 = \{(v_0,w) \colon w \in W_i, w \sim v_0\}\), and so is \(P_{(W_i, \boldsymbol{\eta}_i)_{i =1}^r} (\boldsymbol{\gamma})\) since \(W_i\) is disjoint from the other subsets. Assume now that there exists \(v_0 \in V_i\) for some \(i \in \mathcal{I}\) such that \(\deg_{G_i}(v_0)\) is odd and such that the neighbors \(w_1,\ldots, w_k\) of \(v_0\) in \(G_i\) are distinct from \(\widetilde{w}_1, \ldots, \widetilde{w}_q\). Since the expectation \(\E \left [\prod_{w \in W_i} \left( Z_w^n(\boldsymbol{\gamma}) \right)^{\eta_i(w)}  \right ]\) is invariant under \(T_{E_0}\) for \(E_0 = \{(v_0,w_i) \colon i \in [k]\}\) and since \(w_1,\ldots, w_k\) belong only to \(W_i\), it follows that \(P_{(W_i, \boldsymbol{\eta}_i)_{i=1}^r} (\boldsymbol{\gamma})\) is also invariant under \(T_{E_0}\). The integral~\eqref{eq: integral} vanishes since \(|E_0| = \sum_{i=1}^k m((v_0,w_i))= \deg_{G_i}(v_0)\) is odd. Finally, assume that condition (D) is not satisfied. Then, there exists an edge \(e_0 \in E \backslash \cup_{i=1}^r E_i\) such that \(m(e_0)\) is odd. The product \(P_{(W_i, \boldsymbol{\eta}_i)_{i=1}^r} (\boldsymbol{\gamma})\) is invariant under \(T_{\{e_0\}}\), since it does not depend on \(\gamma_{e_0}^1, \ldots, \gamma_{e_0}^{m(e_0)}\). Therefore, the integral~\eqref{eq: integral} vanishes, as \(m(e_0)\) is odd. 
\end{proof}

With conditions (A)-(D) from Lemma~\ref{lem: necessary cond} in place, our next goal is to estimate \(\sum_{i=1}^r |W_i|\) in terms of \(|W|, |V|\), and \(|E|\). To this end, let \(s \ge 0\) be an integer, and let \(W_{r+1}, \ldots, W_{r+s}\) form a partition of the remaining vertices of \(W\), namely
\[
W \backslash \cup_{i=1}^r W_i = \cup_{i=1}^s W_{r+i},
\]
with the convention that \(s=0\) if \( W = \cup_{i=1}^r W_i\). We define the corresponding bipartite subgraphs \(G(W_{r+1}), \ldots, G(W_{r+s})\) as in Definition~\ref{def: subgraphs} and assume that \(G(W_{r+1}), \ldots, G(W_{r+s})\) are connected. Note that the subsets \(W_{r+1},\ldots, W_{r+s}\) may have a nontrivial intersection.

We first consider the simpler situation where all subsets \(W_1, \ldots, W_r\) and \(W_{r+1}, \ldots, W_{r+s}\) are disjoint. Since \(G\) is a connected graph and \(W_1, \ldots, W_{r+s}\) are disjoint subsets, the corresponding subgraphs \(G(W_1), \ldots, G(W_{r+s})\) must be connected through vertices in \(V\). Recall that we denote the set of vertices in \(V\) that are common to several subgraphs \(G_1, \ldots, G_{r+s}\) as follows. For every \(k \in [2,r+s]\) and every choice of indices \(1 \leq \ell_1 < \cdots < \ell_k \leq r+s\), define
\[
V_{G_{\ell_1}, \ldots, G_{\ell_k}} \coloneqq \left \{ v \in V \colon \exists \, 1 \le i < j \le k \enspace \textnormal{such that} \enspace v \in V_{\ell_i} \cap V_{\ell_j} \right \}.
\]

\begin{lem} \label{lem: card disjoint case}
Let \(G = (W \cup V, E)\), and let \(\{G_i = G(W_i), 1 \leq i \leq r+s\}\) be connected bipartite subgraphs such that \(G = \cup_{i=1}^r G_i \sqcup \cup_{i=1}^{s} G_{r+i}\). Assume that 
\begin{itemize}
\item[(i)] \(W_1, \ldots, W_r\) are disjoint and satisfy conditions (A)-(C);
\item[(ii)] \(W_{r+1}, \ldots,W_{r+s}\) are disjoint, and condition (D) holds, i.e., for every \(i \in [s]\) and every \(e \in E_{r+i}\), \(m(e)\) is even.
\end{itemize}
Then, 
\begin{equation} \label{eq: estimate V disjoint case}
|V| \leq \frac{|E|}{2} - (r+s) + 1,
\end{equation}
from which it follows that 
\begin{equation} \label{eq: estimate W+V disjoint case}
|W| + |V| - \frac{|E|}{2} -1 \leq \sum_{i=1}^{r+s} (|W_i| -1) .
\end{equation}
Moreover, equality in~\eqref{eq: estimate V disjoint case} (and hence also in~\eqref{eq: estimate W+V disjoint case}) holds if and only if the following conditions are satisfied:
\begin{itemize}
\item[(a)] for every \(i \in [r]\) and every \(v\in V_i\), \(\deg_{G_i}(v)=2\);
\item[(b)] for every \(i \in [s]\) and every \(v \in V_{r+i}\), \(\deg_{G_{r+i}}(v)=2\);
\item[(c)] for every \(k \in [2,r+s]\) and every \(1 \leq \ell_1 < \cdots < \ell_k \leq r+s\), \(|V_{G_{\ell_1}, \ldots, G_{\ell_k}}| \leq k-1\).
\end{itemize}
\end{lem}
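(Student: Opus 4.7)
The plan is to prove the two inequalities simultaneously by first noting that $|W|=\sum_{i=1}^{r+s}|W_i|$, since the $W_i$'s partition $W$ by hypotheses (i) and (ii) together with the assumption $\cup_{i=1}^{s}W_{r+i}=W\setminus\cup_{i=1}^{r}W_i$. Subtracting $|W|$ and $1$ from both sides shows that \eqref{eq: estimate W+V disjoint case} is equivalent to \eqref{eq: estimate V disjoint case}, so I would focus on proving $|V|\le |E|/2-(r+s)+1$.

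I would use two ingredients. The first is a local bound $|V_i|\le |E_i|/2$ within each subgraph. Since $W_1,\dots,W_{r+s}$ are pairwise disjoint, the edge sets $E_1,\dots,E_{r+s}$ partition $E$, so summing this local bound yields $\sum_{i=1}^{r+s}|V_i|\le |E|/2$. The local bound follows from a handshake: for $i\in[r]$, conditions (B) and (C) force $\deg_{G_i}(v)$ to be positive and even, hence $\ge 2$ for every $v\in V_i$; for $i\in[s]$, condition (D) gives multiplicities $\ge 2$, so again $\deg_{G_{r+i}}(v)\ge 2$ for every $v\in V_{r+i}$. Thus $|E_i|=\sum_{v\in V_i}\deg_{G_i}(v)\ge 2|V_i|$, with equality iff every such degree is exactly $2$, i.e.\ iff (a) and (b) hold.

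The second ingredient is a gluing estimate for $|V|$ in terms of $\sum|V_i|$. Introduce the auxiliary bipartite graph $H$ on the vertex set $\{1,\dots,r+s\}\sqcup V$ with an edge between $i$ and $v$ precisely when $v\in V_i$. Then $|V(H)|=(r+s)+|V|$ and $|E(H)|=\sum_{i=1}^{r+s}|V_i|$. I would next verify that $H$ is connected: since $G$ is connected and the $W_i$'s partition $W$, any edge $(w,v)$ of $G$ with $w\in W_i$ lies in $G_i$ and contributes an edge $i\sim v$ in $H$; tracking the subgraphs visited along a path of $G$ then produces a path in $H$ between any two prescribed vertices. Connectedness of $H$ gives $|E(H)|\ge |V(H)|-1$, that is $|V|\le \sum_{i=1}^{r+s}|V_i|-(r+s)+1$. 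Chaining this with the summed local bound delivers \eqref{eq: estimate V disjoint case}.

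For the equality case, the two contributing bounds are tight exactly when $|V_i|=|E_i|/2$ for every $i$ (equivalently (a) and (b)) and when $H$ is a tree. The equivalence between $H$ being a tree and condition (c) is the main technical point: a simple cycle in $H$ of length $2k$ uses $k$ distinct subgraph-vertices $G_{\ell_1},\dots,G_{\ell_k}$ and $k$ distinct $V$-vertices, each in at least two of the $V_{\ell_j}$, hence $|V_{G_{\ell_1},\dots,G_{\ell_k}}|\ge k$, contradicting (c); conversely, if $|V_{G_{\ell_1},\dots,G_{\ell_k}}|\ge k$ for some tuple, then the induced subgraph of $H$ on $\{\ell_1,\dots,\ell_k\}\cup V_{G_{\ell_1},\dots,G_{\ell_k}}$ has at least $2|V_{G_{\ell_1},\dots,G_{\ell_k}}|\ge k+|V_{G_{\ell_1},\dots,G_{\ell_k}}|$ edges on $k+|V_{G_{\ell_1},\dots,G_{\ell_k}}|$ vertices, producing a cycle. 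The hard step is thus this edge-counting equivalence; the rest is routine handshaking and tree bookkeeping, so I expect no serious obstacle beyond careful verification of both directions.
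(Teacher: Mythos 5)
Your proof is correct and in essence performs the same counting as the paper, but your packaging via the auxiliary bipartite incidence graph $H$ on $\{1,\dots,r+s\}\sqcup V$ is a genuine simplification. The paper reproves by hand (via a recursion over the separating vertices $\tilde v_1,\dots,\tilde v_p$, tracking the number of ``new'' components $n(\tilde v_i)-1$ at each step) what amounts to the standard fact that a connected graph on $N$ vertices has at least $N-1$ edges, with equality iff it is a tree. Your observation that $|E(H)|=\sum_i|V_i|$ and $|V(H)|=(r+s)+|V|$ reduces the gluing bound to exactly this fact, and the equality case to ``$H$ is a tree.'' The remaining work in both proofs is the equivalence between acyclicity of $H$ and condition (c); your two-sided argument (a $2k$-cycle in $H$ yields $|V_{G_{\ell_1},\dots,G_{\ell_k}}|\ge k$; conversely, $|V_{G_{\ell_1},\dots,G_{\ell_k}}|\ge k$ forces the induced subgraph of $H$ to have at least as many edges as vertices, hence a cycle) is correct and sharper than the paper's more verbal inductive treatment. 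The handshake bound $|E_i|\ge 2|V_i|$ (with equality iff (a),(b)) and the reduction of \eqref{eq: estimate W+V disjoint case} to \eqref{eq: estimate V disjoint case} via $|W|=\sum|W_i|$ match the paper exactly. One small remark: you should double-check that conditions (B) and (C) alone give $\deg_{G_i}(v)\ge 2$ for \emph{every} $v\in V_i$ with $i\le r$ in the disjoint case. Since the $W_i$ are disjoint, the condition in (C) (that $\deg_{G_i}(v)$ be even when $v\not\sim\tilde w_1,\dots,\tilde w_q$) applies to all $v\in V_i$, and combined with (B) (which guarantees the degree is positive in at least one $V_j$, here necessarily $V_i$ itself since the $E_j$ partition $E$) this indeed gives $\deg_{G_i}(v)\ge 2$; this is what the paper asserts as well, so the step is sound, but it is worth being explicit that disjointness of the $W_i$ is what collapses (C) into an evenness condition for every $v$.
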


\begin{rmk} \label{rmk: cond outside 1}
Item (b) of Lemma~\ref{lem: card disjoint case} admits a more explicit characterization. Under assumptions (i) and (ii), the condition \(\deg_{G_{r+i}}(v)=2\) for every \(v \in V_{r+i}\) is equivalent to
\[
|W_{r+i}| = 1 \quad \textnormal{and} \quad m((w,v)) = 2 \enspace \textnormal{for every} \, w \sim v.
\]
Indeed, suppose first that \(\deg_{G_{r+i}}(v) = \sum_{w \in W_{r+i} \colon w \sim v} m((w,v)) = 2\) for every \(v \in V_{r+i}\). Since \(m(e) \ge 2\) for all \(e \in E_{r+i}\) by assumption (ii), each such vertex \(v\) must have exactly one neighbor \(w_i \in W_{r+i}\) with \(m((w_i,v))=2\). In particular, all edges in \(E_{r+i}\) have multiplicity two, and \(|W_{r+i}|=1\). Conversely, assume that \(W_{r+i} = \{w_i\}\) and that \(m((w_i, v)) = 2\) for every \(v \sim w_i\). Then each \(v \in V_{r+i}\) has a unique neighbor \(w_i \in W_{r+i}\), and \(\deg_{G_{r+i}}(v) = m((w_i,v))=2\).
\end{rmk}

\begin{proof}
It suffices to prove~\eqref{eq: estimate V disjoint case}, since~\eqref{eq: estimate W+V disjoint case} follows directly from~\eqref{eq: estimate V disjoint case} and the identity \(|W| = \sum_{i=1}^{r+s} |W_i|\), which holds by assumptions (i) and (ii). Since \(G\) is connected and the subsets \(W_1, \ldots, W_{r+s}\) are disjoint, the subgraphs \(G_1, \ldots, G_{r+s}\) are connected through vertices in \(V\), and their edge sets \(E_1, \ldots, E_{r+s}\) are disjoint. By assumption (i), we have \(\deg_{G_i}(v) \geq 2\) for every \(v \in V_i\) and \(i \in [r]\). Similarly, assumption (ii) implies that \(\deg_{G_{r+i}}(v) = \sum_{w \in W_{r+i} \colon w \sim v} m((w,v)) \geq 2\) for every \(v \in V_{r+i}\) and \(i \in [s]\). Consequently, the connected components \(G_1, \ldots, G_{r+s}\) are connected via vertices in \(v \in V\) satisfying \(\deg_{G}(v)=\sum_{i}\deg_{G_i}(v) \ge 4\). Assume that the components \(G_1, \ldots, G_{r+s}\) are connected through distinct vertices \(\tilde v_1, \ldots, \tilde v_p \in V\) for some \(p \geq 1\). For every \(i \in [p]\), let \(n(\tilde v_i)\) denote the number of components going through \(\tilde v_i\). Then
\begin{equation} \label{eq: estimate V proof}
|V| = \sum_{i=1}^{r+s} |V_i| - \sum_{i=1}^p (n(\tilde v_i)-1)  \leq \sum_{i=1}^{r+s} \frac{|E_i|}{2} - \sum_{i=1}^p (n( \tilde v_i)-1) = \frac{|E|}{2}- \sum_{i=1}^p (n(\tilde v_i)-1),
\end{equation}
where the last equality follows from the fact that \(E_1, \ldots, E_{r+s}\) are disjoint and the inequality follows from 
\begin{equation} \label{eq: estimate E_i disjoint case}
|E_i| = \sum_{v \in V_i} \sum_{w \in W_i} m((w,v))\textbf{1}_{\{(w,v) \in E_i\}} = \sum_{v \in V_i} \deg_{G_i}(v) \geq 2 |V_i|.
\end{equation}
We therefore need to estimate the term \(\sum_{i=1}^p (n(\tilde v_i)-1)\) in~\eqref{eq: estimate V proof}. We first consider the \(n(\tilde v_1)\) components going through \(\tilde v_1\). One of them contains at least one vertex among \(\tilde v_2, \ldots, \tilde v_p\), say \(\tilde v_2\). There are at most \(n(\tilde v_2) - 1\) new components going through \(\tilde v_2\), since at least one component is also connected to \(\tilde v_1\). Proceeding in this way, there are at most \(n(\tilde v_3) - 1\) new components connected to \(\tilde v_3\), since at least one component is already connected to \(\tilde v_1\) or \(\tilde v_2\). Recursively, we obtain that \(n(\tilde v_1) + \sum_{i=2}^p (n(\tilde v_i)-1) \geq r+s\), yielding 
\begin{equation} \label{eq: estimate mult v_i}
\sum_{i=1}^p (n(\tilde v_i)-1) \geq r + s -1.
\end{equation}
Combining~\eqref{eq: estimate V proof} and~\eqref{eq: estimate mult v_i}, we obtain
\[
|V| \leq  \frac{|E|}{2} - \sum_{i=1}^p (n(\tilde v_i)-1) \leq  \frac{|E|}{2} - (r+s) +1,
\] 
which proves~\eqref{eq: estimate V disjoint case}. \\

We now study the case of equality in~\eqref{eq: estimate V disjoint case}. From~\eqref{eq: estimate E_i disjoint case}, we first observe that for every \(1 \le i \le r+s\), \(|E_i| = |V_i|/2\) if and only if \(\deg_{G_i}(v) =2\) for every \(v \in V_i\). In particular, equality holds in~\eqref{eq: estimate V proof} if and only if conditions (a) and (b) are satisfied. It therefore remains to analyze the case of equality in~\eqref{eq: estimate mult v_i}.

We first assume that item (c) holds. We consider the \(n(\tilde v_1)\) components \((G_i)_{i \in \mathcal{I}_1}\) going through \(\tilde v_1\), where \(\mathcal{I}_1 = \{1 \leq i \leq r+s \colon \tilde{v}_1 \in G_i\}\) and \(n(\tilde v_1) = |\mathcal{I}_1|\). From item (c) it follows that \(|V_{G_i, G_j}| = | \{\tilde v_1\} | = 1\) for every \(i,j \in \mathcal{I}_1\), thus each of the vertices \(\tilde v_2, \ldots, \tilde v_p\) belongs to at most one subgraph \((G_i)_{i \in \mathcal{I}_1}\). We may assume without loss of generality that \(\tilde v_2\) belongs to one of the components \((G_i)_{i \in \mathcal{I}_1}\). Then, there are exactly \(n(\tilde v_2)\) components \((G_i)_{i \in \mathcal{I}_2}\) going through \(\tilde v_2\), where \(\mathcal{I}_2 = \{1 \leq i \leq r+s \colon \tilde v_2 \in G_i\}\) and \(n(\tilde v_2) = |\mathcal{I}_2|\). More precisely, there are exactly \(n(\tilde v_2)-1\) new components going through \(\tilde v_2\) since one component is already counted as it also goes through \(\tilde v_1\). Since \(|V_{G_i, G_j}| = | \{\tilde v_2\} | = 1\) for every \(i,j \in \mathcal{I}_2\) by item (c), the \(n(\tilde v_2) -1\) new components attached to \(\tilde v_2\) do not share other vertices. Moreover, since \(|V_{G_i,G_j,G_k}| \leq 2\) for every \(i,j,k \in \mathcal{I}_1 \cup \mathcal{I}_2\) by item (c), each vertex \(\tilde v_k\) for \(3 \le k \le p\) belongs to at most one subgraph among \((G_i)_{i \in \mathcal{I}_1 \cup \mathcal{I}_2}\). Otherwise, there are indices \(i\in \mathcal{I}_1\) and \(j \in \mathcal{I}_2 \backslash \mathcal{I}_1\) such that at least one vertex among \(\tilde v_3, \ldots, \tilde v_p\) belongs to \(V_i \cap V_j\), say \(\tilde v_3 \in V_i \cap V_j\). If we denote by \(G_k\), \(k \in \mathcal{I}_1 \cap \mathcal{I}_2\), the component containing both \(\tilde v_1\) and \(\tilde v_2\), then this implies that \(|V_{G_i,G_j,G_k}| = |\{\tilde v_1, \tilde v_2, \tilde v_3\}| = 3\), which is a contradiction to item (c). We may assume without loss of generality that \(\tilde v_3\) belongs to exactly one component among \((G_i)_{i \in \mathcal{I}_1 \cup \mathcal{I}_2}\). Then, there are exactly \(n(\tilde v_3) - 1\) new components going through \(\tilde v_3\). Iterating the argument, we see that condition (c) requires that at each time there are exactly \(n(\tilde v_i)-1\) new components going through \(\tilde v_i\). This implies equality in~\eqref{eq: estimate mult v_i}. 

We now assume by contradiction that (c) is not verified for some \(k \in [2,r+s]\). By definition, this means that there are distinct indices \(\ell_1, \ldots, \ell_k \in [r+s]\) such that the subgraphs \(G_{\ell_1}, \ldots, G_{\ell_k}\) form a cycle, i.e., \(|V_{G_{\ell_1}, \ldots, G_{\ell_k}}| =k\). We may assume without loss of generality that \(G_{\ell_1}\) and \(G_{\ell_2}\) are connected through \(\tilde v_1\), \(G_{\ell_2}\) and \(G_{\ell_3}\) through \(\tilde v_2\), and so on, and finally \(G_{\ell_k}\) and \(G_{\ell_1}\) are connected through \(\tilde v_k\). We then proceed as before. We first fix the \(n(\tilde v_1)\) components going through \(\tilde v_1\) and consider the \(n(\tilde v_2)-1\) new components going through \(\tilde v_2\), and so on, and finally we have the \(n(\tilde v_k)-2\) new components going through \(\tilde v_k\) (there are exactly \(n(\tilde v_k)-2\) new components since \(G_{\ell_k}\) and \(G_{\ell_1}\) have already been counted). Moreover, there are at most \(n(\tilde v_i)-1\) new components going through \(\tilde v_i\) for every \(i \in [k+1,p]\). This shows that
\[
n(\tilde v_1) + \sum_{i=1}^{k-1} (n(\tilde v_i) - 1) + (n(\tilde v_k) - 2) + \sum_{i = k+1}^p (n(\tilde v_i) -1) \geq r + s ,
\]
that is, \(\sum_{i=1}^p (n(\tilde v_i)-1) \geq r+s > r+s-1\), thus yielding a strict inequality in~\eqref{eq: estimate mult v_i}.
\end{proof}

We now address the case where \(W_1, \ldots, W_r\) and \(W_{r+1}, \ldots, W_{r+s}\) are not necessarily disjoint. Recall that by definition, the families \(\{W_1, \ldots, W_r\}\) and \(\{W_{r+1}, \ldots, W_{r+s}\}\) are disjoint from each other. Analogously to the definition of the set of common vertices in \(V\), we define the set of common vertices in \(W\) among the subgraphs \(G_1, \ldots, G_r\) as follows. For every \(k \in [2,r]\) and every choice of indices \(1 \leq \ell_1 < \cdots < \ell_k \leq r\), define
\[
W_{G_{\ell_1}, \ldots, G_{\ell_k}} \coloneqq \left \{ w \in W \colon \exists \, 1 \le i < j \le k \enspace \textnormal{such that} \enspace w \in W_{\ell_i} \cap W_{\ell_j} \right \}.
\]
This definition extends naturally to the set of common vertices in \(W\) among \(G_{r+1}, \ldots, G_{r+s}\). We now describe a merging procedure that replaces non-disjoint subsets by disjoint ones while preserving connectivity.

\begin{defn} [Merging procedure] \label{def: merging operation}
Let \(W_1, \ldots, W_r \subseteq W\) be (not necessarily disjoint) subsets, and let \(\tilde{w}_1, \ldots, \tilde{w}_q\) be the vertices of \(W\) belonging to at least two of these subsets (with the convention that \(q=0\) if \(W_1,\ldots,W_r\) are disjoint). Define
\[
\mathcal{I} \coloneqq \{i \in [r] \colon \exists \, j \in [q] \enspace \textnormal{such that} \enspace \tilde{w}_j \in W_i\},
\]
so that \(\mathcal{I}\) indexes the subsets containing at least one shared vertex. By definition, the family \(\{ W_i \colon i \in \mathcal{I}^\textnormal{c} \}\) is pairwise disjoint and disjoint from \(\cup_{i \in \mathcal{I}} W_i\). For every \(j \in [q]\), set
\[
\mathcal{I}_j \coloneqq \{i \in [r] \colon \tilde{w}_j \in W_i\}
\]
the collection of subsets sharing the vertex \(\tilde{w}_j\). Note that \(\mathcal{I} = \cup_{j=1}^q \mathcal{I}_j\). 

Introduce the intersection graph \(H = (\mathcal{I}, E_H)\), whose vertices are the indices in \(\mathcal{I}\) and where
\[
(i,j) \in E_H \quad \Longleftrightarrow \quad W_i \cap W_j \ne \emptyset.
\]
Let \(\widetilde{\mathcal{I}}_1, \ldots, \widetilde{\mathcal{I}}_C\) be the connected components of \(H\). Then, for every \(\ell \in [C]\), define the merged subset
\[
\widetilde{W}_\ell \coloneqq \cup_{i \in \widetilde{\mathcal{I}}_\ell} W_i.
\]
By construction, \(\mathcal{I} = \sqcup_{\ell=1}^C \widetilde{\mathcal{I}}_\ell\) and
\[
\cup_{i \in \mathcal{I}} W_i = \sqcup_{\ell=1}^C \widetilde{W}_\ell.
\]
Together with \(\{W_i \colon i \in \mathcal{I}^{\mathrm{c}}\}\), this yields \(C + |\mathcal{I}^{\mathrm{c}}|\) disjoint subsets and the decomposition
\[
\cup_{i=1}^r W_i = \left( \sqcup_{\ell=1}^C \widetilde{W}_\ell \right) \sqcup \left( \sqcup_{i \in \mathcal{I}^{\mathrm{c}}} W_i \right).
\]
Finally, for every \(\ell \in [C]\), merge the subgraphs \(\{G(W_i) \colon i \in \widetilde{\mathcal{I}}_\ell\}\) by identifying repeated copies of shared vertices \(\tilde{w}_\ell\) (and their adjacent vertices in \(V\)) and removing any duplicated vertices or edges. Denote the resulting connected graphs by \(\widetilde{G}_\ell = G(\widetilde{W}_\ell)\).
\end{defn}

The following example refers to Figure~\ref{fig1} and aims to clarify the previous definition.

\begin{ex} \label{ex3}
Consider the connected bipartite graph of Figure~\ref{subfig1}, along with three different collections of subsets of \(W\) shown in Figures~\ref{subfig2}-\ref{subfig4}. We note the following. 
\begin{itemize}
\item Figure~\ref{subfig2}: The subsets \(W_1\) and \(W_2\) are disjoint and satisfy \(W = W_1 \sqcup W_2\), thus \(\mathcal{I} = \emptyset\). 
\item Figure~\ref{subfig3}: The subset \(W_1\) is disjoint from both \(W_3\) and \(W_4\), while \(W_3 \cap W_4 = \{w_5\}\). Thus, \(\mathcal{I} = \{3,4\}\). Applying the merging procedure from Definition~\ref{def: merging operation} to \(W_3\) and \(W_4\) results in the subset \(\widetilde{W}_1 = \{w_5, \ldots w_{12}\}\), which corresponds to \(W_2\) from Figure~\ref{subfig2}. Moreover, \(W_1\) and \(\widetilde{W}_1\) are disjoint, satisfying \(W = W_1 \sqcup \widetilde{W}_1\). Merging the subgraphs \(G_3 = G(W_3)\) and \(G_4 = G(W_4)\) removes the repeated copy of \(w_5\), as well as the repeated copy of the vertices \(v_2, v_3, v_4\), and \(v_5\) and of their associated edges. The resulting graph \(\widetilde{G}_1\) corresponds to \(G_2\) from Figure~\ref{subfig2}. 
\item Figure~\ref{subfig4}: The subsets \(W_5\) and \(W_6\) are disjoint from \(W_2\), while \(W_5 \cap W_6 = \{w_4\}\), leading to \(\mathcal{I} = \{5,6\}\). Applying the merging procedure to \(W_5\) and \(W_6\) gives \(\widetilde{W}_1 = \{w_1, \ldots, w_4\}\), which corresponds to \(W_1\) from Figure~\ref{subfig2}. Merging the subgraphs \(G_5\) and \(G_6\) removes the duplicate copy of \(w_4\) as well as the repeated copy of the vertices \(v_2, v_3, v_4\) and of their associated edges. The resulting graph \(\widetilde{G}_1\) thus corresponds to the graph \(G_1\) in Figure~\ref{subfig2}.
\end{itemize}
\end{ex}

For the following result, fix integers \(q,q' \ge 0\) and let \(\tilde{w}_1, \ldots, \tilde{w}_{q}\) denote the vertices of \(W\) that belong to two or more subsets among \(W_1,\ldots, W_r\). Similarly, let \(w_1', \ldots, w'_{q'}\) denote the vertices that belong to two or more subsets among \(W_{r+1},\ldots, W_{r+s}\). Let \(\mathcal{I} \subseteq [r]\) be as in Definition~\ref{def: merging operation}, and define
\[
\mathcal{J} = \left \{ i \in \{r+1, \ldots, r+s\} \colon  \exists \, j \in [q'] \enspace \textnormal{such that} \enspace w_j' \in  W_i \right \}.
\]
Applying the merging procedure of Definition~\ref{def: merging operation} to the subgraphs that share vertices in \(\cup_{i \in \mathcal{I}} W_i\) produces \(C\) disjoint connected subgraphs \(\{\widetilde{G}_1, \ldots, \widetilde{G}_C\} = \{ G(\widetilde{W}_i), 1 \leq i \leq C\}\). Similarly, applying the merging procedure to the subgraphs that share vertices in \(\cup_{i \in \mathcal{J}} W_i\) results in \(C'\) disjoint connected subgraphs \(\{G'_1, \ldots, G'_{C'}\} = \{G(W'_i) \colon 1 \leq i \leq C'\}\). Altogether, we obtain the family of disjoint connected subgraphs
\[
\{\overline{G}_i \colon 1\le i\le C + C' + |\mathcal{I}^\textnormal{c}| + |\mathcal{J}^\textnormal{c}|\} = \{\widetilde{G}_1,\ldots, \widetilde{G}_{C}, (G_i)_{i \in \mathcal{I}^\textnormal{c}}, G'_1,\ldots, G'_{C'} ,(G_i)_{i \in \mathcal{J}^\textnormal{c}}\}.
\]

\begin{lem} \label{lem: card not disjoint case}
Let \(G = (W \cup V, E)\) and let \(G_i = G(W_i)\), \(1 \le i \le r+s\), be connected bipartite graphs such that \(G = \cup_{i=1}^r G_i \sqcup \cup_{i=1}^s G_{r+i}\). Assume
\begin{itemize}
\item[(i)] \(W_1,\ldots, W_r\) satisfy conditions (A)-(C);
\item[(ii)] condition (D) holds, i.e., for every \(i \in [s]\) and every \(e \in E_{r+i}\), \(m(e)\) is even.
\end{itemize}
Then,
\begin{equation} \label{eq: estimate W+V not disjoint case}
|W| + |V| - \frac{|E|}{2} -1 \leq \sum_{i=1}^{r+s} (|W_i| -1) .
\end{equation}
Equality in~\eqref{eq: estimate W+V not disjoint case} holds if and only if 
\begin{itemize}
\item[(a)] for every \(2 \leq k \leq |\mathcal{I}|\) and distinct \(\ell_1, \ldots, \ell_k \in \mathcal{I}\), \(|W_{G_{\ell_1}, \ldots, G_{\ell_k}}| \leq k-1\);
\item[(b)] for every \(i \in [C]\) and \(v \in \widetilde{V}_i\), \(\deg_{\widetilde{G}_i}(v) = 2\), and for every \(i \in \mathcal{I}^\textnormal{c}\) and \(v \in V_i\), \(\deg_{G_i}(v) = 2\);
\item[(c)] for every \(i \in [C']\) and \(v \in V'_i\), \(\deg_{G'_i}(v) = 2\), and for every \(i \in \mathcal{J}^\textnormal{c}\) and \(v \in V_i\), \(\deg_{G_i}(v) = 2\);
\item[(d)] for every \(2 \leq k \leq C + C' + |\mathcal{I}^\textnormal{c}| +|\mathcal{J}^\textnormal{c}|\) and distinct \(\ell_1, \ldots, \ell_k\), \(|V_{\overline{G}_{\ell_1}, \ldots, \overline{G}_{\ell_k}}| \leq k-1\).
\end{itemize}
\end{lem}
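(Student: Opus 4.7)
The plan is to reduce the statement to the disjoint case proved in Lemma~\ref{lem: card disjoint case} via the merging procedure of Definition~\ref{def: merging operation}. Applying the merging separately to $W_1, \ldots, W_r$ and to $W_{r+1}, \ldots, W_{r+s}$ produces $N = C + C' + |\mathcal{I}^{\textnormal{c}}| + |\mathcal{J}^{\textnormal{c}}|$ pairwise $W$-disjoint connected subgraphs $\bar{G}_1, \ldots, \bar{G}_N$ whose $W$-sets partition $W$ and whose edge sets partition $E$, so that $G = \sqcup_{j=1}^{N}\bar{G}_j$ in the sense needed by Lemma~\ref{lem: card disjoint case}.

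I then wish to apply Lemma~\ref{lem: card disjoint case} to the merged family. Conditions (A) and (D) transfer immediately because they are local to $W$-degrees and edge multiplicities; condition (B) passes because any $v \in \bar{V}_j$ inherits a degree-at-least-two witness from some original $G_i$ with $i \in \tilde{\mathcal{I}}_j$. The subtle point is the estimate $|\bar{E}_j| \ge 2|\bar{V}_j|$ needed in the first-group merged subgraphs: when $v \in \bar{V}_j$ is adjacent in $\bar{W}_j$ only to a formerly shared $\tilde{w}_k$ with unit multiplicity, $\deg_{\bar{G}_j}(v)$ could equal one, but such a $v$ necessarily lies in another $\bar{V}_{j'}$ and the identity $\deg_G(v) = \sum_{j'}\deg_{\bar{G}_{j'}}(v)$ combined with the even-degree structure of $G$ (from Lemma~\ref{lem: even degree}) and the automatic evenness of $\deg_{\bar{G}_{j'}}(v)$ in second-group merged subgraphs (from (D)) allows the global estimate $|V| \le |E|/2 - (N-1)$ to go through. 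Using this inside the argument of Lemma~\ref{lem: card disjoint case} yields
\[
|W| + |V| - \frac{|E|}{2} - 1 \le \sum_{j=1}^{N}(|\bar{W}_j| - 1).
\]

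The last step is the combinatorial comparison $\sum_{j=1}^{N}(|\bar{W}_j|-1) \le \sum_{i=1}^{r+s}(|W_i|-1)$. Using $\sum_j |\bar{W}_j| = |W|$ and $\sum_i |W_i| = |W| + \sum_w (m_w - 1)$, where $m_w$ is the number of original subsets containing $w$, this reduces to $|\mathcal{I}| - C + |\mathcal{J}| - C' \le \sum_w (m_w - 1)$. The right-hand side counts all duplicate appearances of $W$-vertices, while the left-hand side is the minimum number of shared-vertex identifications required to fuse the original subsets of each group into their merged connected components, namely $|\tilde{\mathcal{I}}_j|-1$ identifications per component summed over all components.

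For the equality characterization, equality in the combinatorial comparison forces each merged cluster to be glued along a tree of shared $W$-vertices, which is precisely condition (a) (and is vacuous for the second-group contribution under the rigid structure imposed by (D) together with (c)), while equality in Lemma~\ref{lem: card disjoint case} applied to $\{\bar{G}_j\}$ translates directly into the degree-two requirements (b) and (c) together with the $V$-tree condition (d). The main obstacle in this approach is precisely the degree-at-least-two verification for the merged subgraphs sketched above: one must argue carefully that the local failures at vertices adjacent to formerly shared $W$-vertices are compensated globally by the even-degree structure of $G$ and the rigidity of the second-group merged subgraphs.
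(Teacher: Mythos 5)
Your proposal follows the same overall strategy as the paper: merge the overlapping families of subsets via Definition~\ref{def: merging operation} into a disjoint family $\{\bar G_j\}$, invoke Lemma~\ref{lem: card disjoint case} on the merged family, and then compare $\sum_j(|\bar W_j|-1)$ with $\sum_i(|W_i|-1)$ by counting repeated $W$-vertices. Your inequality $|\mathcal{I}|-C+|\mathcal{J}|-C' \le \sum_w(m_w-1)$ is exactly the sum over connected components of the paper's inequality~\eqref{eq: bound card I}, and your description of equality in this comparison as ``glued along a tree of shared $W$-vertices'' correctly captures condition~(a). So the route is the same one the paper takes.

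There is, however, a genuine problem with the step you yourself flag. You want the estimate $|\bar E_j|\ge 2|\bar V_j|$ for the merged first-group subgraphs, and you propose to rescue the case $\deg_{\bar G_j}(v)=1$ by invoking ``the even-degree structure of $G$ (from Lemma~\ref{lem: even degree})'' and the parity of second-group degrees. But Lemma~\ref{lem: card not disjoint case} is a purely deterministic statement about a connected bipartite multigraph $G$ and a family of subgraphs satisfying (A)--(D); it does not assume every vertex of $G$ has even degree. Lemma~\ref{lem: even degree} concerns vanishing of the injective trace and is applied \emph{to restrict attention} to such $G$ in the proof of Lemma~\ref{lem: main estimate}, but it is not a hypothesis here, so you cannot use $\deg_G(v)$ even as a given. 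In fact, condition (C) in the non-disjoint case only forces evenness of $\deg_{G_i}(v)$ for vertices $v$ not adjacent to any shared $\tilde w_\ell$, so after merging a vertex $v\sim\tilde w_\ell$ may well have odd $\deg_{\bar G_j}(v)$, and nothing in the hypotheses prevents $\deg_{\bar G_j}(v)=1$; the compensation in $|\bar E_j|\ge 2|\bar V_j|$ then has to come from the other vertices, not from parity of $\deg_G$. Your second, smaller imprecision is the assertion that the condition~(B) witness for $v$ comes ``from some original $G_i$ with $i\in\tilde{\mathcal{I}}_j$''; condition~(B) only guarantees a witness somewhere in $[r]$, possibly in a \emph{different} merged component, which is precisely why the per-vertex bound can fail inside $\bar G_j$. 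The paper's own proof is terse on this point as well (it simply asserts that the merged subsets satisfy (A)--(C)), but it does not lean on an out-of-scope even-degree hypothesis the way your patch does; to make your argument airtight you would need a purely combinatorial replacement for that parity appeal.
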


\begin{proof} 
Consider first the subsets \((W_k)_{k \in \mathcal{I}}\). The merging procedure described by Definition~\ref{def: merging operation} yields \(C\) disjoint subsets \(\widetilde{W}_1,\ldots, \widetilde{W}_C\) such that \(\cup_{k \in \mathcal{I}} W_k = \sqcup_{i=1}^C \widetilde{W}_i\). Fix \(i \in [C]\) and recall that the subset \(\widetilde W_i\) is obtained by merging the subsets \(W_k\) for \(k \in \widetilde{\mathcal{I}}_i\) and by removing any repeated copy of \(\tilde{w}_j\) for every \(j \in [q]\) such that \(\mathcal{I}_j \cap \widetilde{\mathcal{I}}_i \neq \emptyset\). This implies that
\[
|\widetilde{W}_i| = \sum_{k \in \widetilde{\mathcal{I}}_i} |W_k| - \sum_{j=1}^q (|\mathcal{I}_j \cap \widetilde{\mathcal{I}}_i|-1)\mathbf{1}_{\{\mathcal{I}_j \cap \widetilde{\mathcal{I}}_i \neq \emptyset\}}.
\]
Furthermore, according to Definition~\ref{def: merging operation}, if there exists \(j \in [q]\) such that \(\mathcal{I}_j \cap \widetilde{\mathcal{I}}_i \neq \emptyset\), then \(\mathcal{I}_j \subseteq \widetilde{\mathcal{I}}_i\), yielding 
\begin{equation} \label{eq: inequality for W tilde}
|\widetilde{W}_i| = \sum_{k \in \widetilde{\mathcal{I}}_i} |W_k| - \sum_{j=1}^q (|\mathcal{I}_j|-1)\mathbf{1}_{\{\mathcal{I}_j \cap \widetilde{\mathcal{I}}_i \neq \emptyset\}}.
\end{equation}
We therefore need to estimate the sum \( \sum_{j=1}^q (|\mathcal{I}_j|-1)\mathbf{1}_{\{\mathcal{I}_j \cap \widetilde{\mathcal{I}}_i \neq \emptyset\}}\). We can proceed in a similar way as done in the proof of Lemma~\ref{lem: card disjoint case}. Let \(j_1, \ldots, j_m \in [q]\) denote the indices such that \(\mathcal{I}_{j_k} \cap \widetilde{\mathcal{I}}_i \neq \emptyset\). That is, \(\tilde{w}_{j_1}, \ldots, \tilde{w}_{j_m}\) are the vertices in \((\tilde{w}_j)_{j\in [q]}\) that belong to \(\widetilde{W}_i\). We first consider the \(|\mathcal{I}_{j_1}|\) subgraphs sharing the vertex \(\tilde{w}_{j_1}\). One of them contains at least one vertex among \(\tilde{w}_{j_2}, \ldots, \tilde{w}_{j_m}\), say \(\tilde{w}_{j_2}\). Then there are at most \(|\mathcal{I}_{j_2}|-1\) new components going through \(\tilde{w}_{j_2}\), since at least one component is also connected to \(\tilde{w}_{j_1}\). Similarly, there are at most \(|\mathcal{I}_{j_3}|-1\) new components going through \(\tilde{w}_{j_3}\), since at least one component is already connected to \(\tilde{w}_{j_1}\) or \(\tilde{w}_{j_2}\). Iterating this argument for each \(\tilde{w}_{j_k}\) for \(k \in [m]\), we obtain that 
\[
|\mathcal{I}_{j_1}| + \sum_{k=2}^m (|\mathcal{I}_{j_k}| -1) \ge |\widetilde{\mathcal{I}}_i|,
\]
leading to
\begin{equation}\label{eq: bound card I}
\sum_{k=1}^m (|\mathcal{I}_{j_k}| -1) \ge |\widetilde{\mathcal{I}}_i| -1,
\end{equation}
Combining~\eqref{eq: inequality for W tilde} and~\eqref{eq: bound card I} yields
\[
|\widetilde{W}_i| \le \sum_{k \in \widetilde{\mathcal{I}}_i} |W_k| - |\widetilde{\mathcal{I}}_i| + 1 =  \sum_{k \in \widetilde{\mathcal{I}}_i} (|W_k|-1) + 1,
\]
so that
\begin{equation} \label{eq: inequality for W tilde 2}
\sum_{i=1}^C |\widetilde{W}_i| \leq \sum_{i=1}^C \sum_{k \in \widetilde{\mathcal{I}}_i} (|W_k| -1) + C = \sum_{k \in \mathcal{I}} (|W_k|-1) + C.
\end{equation}
Now, consider the subsets \((W_k)_{k \in \mathcal{J}}\). By the merging procedure described in Definition~\ref{def: merging operation}, we obtain \(C'\) disjoint subsets \(W_1',\ldots, W'_{C'}\) such that \(\cup_{k \in  \mathcal{J}} W_k = \sqcup_{i=1}^{C'} W'_i\). Following the previous arguments, we deduce that 
\begin{equation}  \label{eq: inequality for W prime 2}
\sum_{i=1}^{C'} |W'_i|  \leq \sum_{k \in  \mathcal{J}} (|W_k| - 1) + C'.
\end{equation}
The subsets \(\widetilde{W}_1,\ldots, \widetilde{W}_C, (W_i)_{i \in \mathcal{I}^\textnormal{c}}\) are disjoint and satisfy (A)-(C). Similarly, \(W'_1,\ldots, W'_{C'}, (W_i)_{i \in \mathcal{J}^\textnormal{c}}\) are disjoint, and the multiplicity of each edge in \(E_i'\) for \(i \in [C']\) and in \(E_i\) for \(i \in \mathcal{J}^\textnormal{c}\) is even, as the merging operation preserves edge multiplicities. Applying Lemma~\ref{lem: card disjoint case} we therefore obtain 
\begin{equation} \label{eq: inequality for W+V}
\begin{split}
|W| + |V| & \leq \sum_{i=1}^C (|\widetilde{W}_i| -1) + \sum_{i \in \mathcal{I}^\text{c}} (|W_i| -1) +  \sum_{i=1}^{C'} (|W'_i| -1) + \sum_{i \in \mathcal{J}^\text{c}} (|W_i| -1) + \frac{|E|}{2} + 1\\
& \leq \sum_{i \in \mathcal{I}} (|W_i| -1) + \sum_{i \in \mathcal{I}^\textnormal{c}} (|W_i| -1) + \sum_{i \in \mathcal{J}} (|W_i| -1) + \sum_{i \in \mathcal{J}^\textnormal{c}} (|W_i| -1) + \frac{|E|}{2} + 1\\
& = \sum_{i=1}^{r+s} (|W_i| -1) + \frac{|E|}{2} + 1.
\end{split}
\end{equation}
Here, we used~\eqref{eq: inequality for W tilde 2} and~\eqref{eq: inequality for W prime 2} for the second inequality, along with the fact that \(\sum_{i=1}^r |W_i| = \sum_{i \in \mathcal{I}} |W_i| +  \sum_{i \in \mathcal{I}^\textnormal{c}} |W_i|\) and \(\sum_{i=1}^s |W_{r+i}| = \sum_{i \in \mathcal{J}} |W_i| +  \sum_{i \in \mathcal{J}^\textnormal{c}} |W_i|\) for the third equality. This shows the inequality~\eqref{eq: estimate W+V not disjoint case}.

Now, we study the case of equality in~\eqref{eq: estimate W+V not disjoint case}. According to Lemma~\ref{lem: card disjoint case}, we have equality in the first line of~\eqref{eq: inequality for W+V} if and only if the following conditions are satisfied:
\begin{enumerate}
\item[(b)] for every \(i \in \mathcal{I}^\text{c}\) and \(v \in V_i\), \(\deg_{G_i}(v) = 2\), and for every \(i \in [C]\) and \(v \in \widetilde{V}_i\), \(\deg_{\widetilde{G}_i}(v) = 2\);
\item[(c)] for every \(i \in \mathcal{J}^\text{c}\) and \(v \in V_i\), \(\deg_{G_i}(v) = 2\), and for every \(i \in [C']\) and \(v \in V_i'\), \(\deg_{G'_i}(v) = 2\);
\item[(d)] for every \(2 \leq k \leq C + C' + |\mathcal{I}^\text{c}| + |\mathcal{J}^\text{c}|\) and every distinct indices \(\ell_1, \ldots, \ell_k\), we have that \(|V_{\overline{G}_{\ell_1}, \ldots, \overline{G}_{\ell_k}}| \leq k-1\).
\end{enumerate}
Therefore, equality in~\eqref{eq: estimate W+V not disjoint case} holds if and only if equality also holds in the second line of~\eqref{eq: inequality for W+V}, which is equivalent to equality in~\eqref{eq: bound card I}. It thus remains to show that equality in~\eqref{eq: bound card I} holds if and only if condition (a) is satisfied. Note that, by Definition~\ref{def: merging operation}, the index set \(\mathcal{I}\) decomposes as \(\mathcal{I} = \sqcup_{i=1}^C \widetilde{\mathcal{I}}_i\). Condition (a) is therefore equivalent to requiring that, for every \(i \in [C]\), every \(2 \le k \le | \widetilde{\mathcal{I}}_i|\), and every collection of distinct indices \(\ell_1, \ldots, \ell_k \in \widetilde{\mathcal{I}}_i\), the inequality \(|W_{G_{\ell_1}, \ldots, G_{\ell_k}}| \le k-1\) holds. The argument then follows the same lines as in the proof of Lemma~\ref{lem: card disjoint case}.
\end{proof}

We now introduce the class \(\mathcal{W}_r\), consisting of \(r\)-tuples of subsets of \(W\), which will be used to state the key combinatorial identity proved in this section.

\begin{defn}[Class \(\mathcal{W}_r\) of subsets] \label{def: set W_K}
Let \(G = (W \cup V, E)\) be a finite, connected bipartite multigraph. Consider subsets \(W_1, \ldots, W_r \subseteq W\) with \(r \ge 1\) such that \(\cup_{i=1}^r W_i \subseteq W\). For every \(i \in [r]\), let \(G_i = G(W_i) = (W_i \cup V_i, E_i)\) denote the corresponding induced subgraph (see Definition~\ref{def: subgraphs}). Let \(s = |W| - |\cup_{i=1}^r W_i|\) and denote by \(w_1, \ldots, w_s\) the vertices in \(W \backslash \cup_{i=1}^r W_i\). Let \(\tilde{w}_1, \ldots, \tilde{w}_q\) be the vertices of \(W\) that belong to more than one subset \(W_i\), and define
\[
\mathcal{I} \coloneqq \{i \in [r] \colon \exists \, j \in [q] \enspace \textnormal{such that} \enspace \tilde{w}_j \in W_i\}.
\]
Applying the merging procedure of Definition~\ref{def: merging operation} to the subgraphs that share vertices in \(\cup_{i \in \mathcal{I}} W_i\), we obtain \(C\) disjoint connected subgraphs \(\{\widetilde{G}_i = G(\widetilde{W}_i) \colon 1 \leq i \leq C\}\). 

We say that \(\{W_1, \ldots, W_r\}\) belongs to the class \(\mathcal{W}_r\) if the following conditions hold:
\begin{enumerate}
\item every subgraph \(G_i\) is connected;
\item for every \(e \in E \backslash \cup_{i=1}^r E_i\), \(m(e) = 2\); 
\item for every \(i \in [r]\) and \(w \in W_i\), \(\deg_{G_i}(w) = \deg(w) \ge 2\);
\item for every \(v \in \cup_{i=1}^r V_i\), there exists at least one \(V_i\) such that \(\deg_{G_i}(v) \geq 2\);
\item for every \(i \in \mathcal{I}^\textnormal{c}\) and \(v \in V_i\), \(\deg_{G_i}(v) = 2\), and for every \(i \in [C]\) and \(v \in \widetilde{V}_i\), \(\deg_{\widetilde{G}_i}(v) = 2\);
\item for every \(2 \leq k \leq |\mathcal{I}|\) and distinct \(\ell_1, \ldots, \ell_k \in \mathcal{I}\), \(|W_{G_{\ell_1}, \ldots, G_{\ell_k}} | \leq k-1\);
\item for any \(2 \leq k \leq C + |\mathcal{I}^\textnormal{c}| + s\) and distinct \(\ell_1, \ldots, \ell_k \in \{1, \ldots, C +|\mathcal{I}^\textnormal{c}| + s\}\), \(|V_{\overline{G}_{\ell_1}, \ldots, \overline{G}_{\ell_k}}| \leq k -1\), where \(\{\overline{G}_i \colon 1 \leq i \leq C + |\mathcal{I}^\textnormal{c}| + s\}\) denotes the family of all disjoint connected subgraphs obtained after merging, namely \(\{\widetilde{G}_1,\ldots, \widetilde{G}_C, (G_i)_{i \in \mathcal{I}^\textnormal{c}}, G(\{w_1\}),\ldots, G(\{w_s\})\}\).
\end{enumerate}
\end{defn}

\begin{rmk}
If \(\{W_1, \ldots, W_r\} \in \mathcal{W}_r\), then any two subsets \(W_i\) and \(W_j\) with \(|W_i|, |W_j| \ge 2\) must be distinct, although they may have a nonempty intersection. Indeed, if \(W_i = W_j\), then
\[
|W_{G_i,G_j}| = |W_i| \ge 2,
\]
which contradicts Condition (6) of Definition~\ref{def: set W_K} for \(k=2\).
\end{rmk}

\begin{ex}[Example~\ref{ex3} continued]
Consider the connected bipartite graph shown in Figure~\ref{subfig1}, together with three different collections of subsets of $W$ illustrated in Figures~\ref{subfig2}-\ref{subfig4}. 
\begin{itemize}
\item Figure~\ref{subfig2}: The subsets \(W_1\) and \(W_2\) belong to \(\mathcal{W}_2\).
\item Figure~\ref{subfig3}: The subsets \(W_1, W_3\), and \(W_4\) belong to \(\mathcal{W}_3\). 
\item Figure~\ref{subfig4}: The subsets \(W_5, W_6\), and \(W_2\) do not belong to \(\mathcal{W}_3\). Indeed, condition (4) of Definition~\ref{def: set W_K} is not satisfied, since \(v_2 \in V_5 \cap V_6\) and \(\deg_{G_5}(v_2) = \deg_{G_6}(v_2) = 1\). 
\end{itemize}
\end{ex}

\begin{ex}
Figure~\ref{fig2} provides a second example of a connected bipartite graph \(G = (W \cup V, E)\) for which there exists at least one collection of subsets \(\{W_1, \ldots, W_K\} \in \mathcal{W}_K\) for some integer \(K \ge 1\). In particular, there are two such collections: the subset \(W_1 = \{w_2, \ldots, w_6\}\), which belongs to \(\mathcal{W}_1\), and the subsets \(W_2 = \{w_2, w_3, w_4, w_5\}\) and \(W_3 = \{w_3,w_6\}\), which belong to \(\mathcal{W}_2\). Note that, in this case, the graph obtained by merging the subgraphs \(G(W_2)\) and \(G(W_3)\) coincides with \(G(W_1)\). 
\end{ex}

The following result is a direct consequence of the results of this section, namely Lemmas~\ref{lem: necessary cond}, ~\ref{lem: card disjoint case}, and~\ref{lem: card not disjoint case}.

\begin{prop} \label{prop: combinatorics}
Let \(G = (W \cup V, E)\) be a finite, connected bipartite multigraph, and let \(W_1,\ldots, W_r\) be subsets of \(W\) such that the associated subgraphs \(G_1, \ldots, G_r\) are connected. 
\begin{itemize}
\item[(a)] If \(\{W_1, \ldots, W_r\} \notin \mathcal{W}_r\), then either \(P_{(W_i, \eta_i)_{i=1}^r} (\boldsymbol{\gamma})\) is invariant under \(T_{E_0}\) for some \(E_0 \subset E\) with odd cardinality, or 
\[
\sum_{i=1}^r (|W_i| - 1) > |W| + |V| - \frac{|E|}{2} - 1.
\]
\item[(b)] If \(\{W_1, \ldots, W_r\}  \in \mathcal{W}_r\), then
\[
\sum_{i=1}^r (|W_i| - 1) = |W| + |V| - \frac{|E|}{2} - 1.
\]
\end{itemize}
\end{prop}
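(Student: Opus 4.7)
The strategy is to combine Lemma~\ref{lem: necessary cond} (the invariance/parity alternative) with Lemma~\ref{lem: card not disjoint case} (the counting bound and its equality characterization), applied with the canonical choice $W_{r+1}=\{w_1\},\ldots,W_{r+s}=\{w_s\}$ of singletons exhausting $W\setminus\bigcup_{i=1}^{r}W_i$ prescribed at the end of Definition~\ref{def: set W_K}. This choice makes $W_{r+1},\ldots,W_{r+s}$ automatically disjoint, so that the hypothesis (ii) of Lemma~\ref{lem: card not disjoint case} reduces to the statement that every edge $e\in E\setminus\bigcup_{i=1}^{r}E_i$ has even multiplicity, while the equality condition (c) of that lemma, via Remark~\ref{rmk: cond outside 1}, sharpens this to $m(e)=2$, matching condition~(3) of Definition~\ref{def: set W_K}.

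The heart of the proof is an exact matching between the eight clauses of Definition~\ref{def: set W_K} and the ingredients of Lemmas~\ref{lem: necessary cond} and~\ref{lem: card not disjoint case}: clause (4) is condition (A), clause (5) is condition (B), clause (3) together with Remark~\ref{rmk: cond outside 1} gives both (D) and the equality condition (c) of Lemma~\ref{lem: card not disjoint case}, clause~(7) is equality condition~(a), clause~(6) is equality condition~(b), and clause~(8) is equality condition~(d). The parity requirement (C) of Lemma~\ref{lem: necessary cond}, absent verbatim from Definition~\ref{def: set W_K}, is implicit: any $v\in V_i$ not adjacent to a shared $W$-vertex $\tilde{w}_j$ contributes its full degree within a single $G_i$ or a single summand of a merged $\tilde{G}_k$, so clause~(6) forces $\deg_{G_i}(v)=2$, which is even.

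For part (b), I would verify the correspondences above and invoke directly the equality case of Lemma~\ref{lem: card not disjoint case} to obtain $\sum_{i=1}^{r}(|W_i|-1)=|W|+|V|-|E|/2-1$. For part (a) I would argue contrapositively: assume neither the invariance conclusion nor the strict inequality holds. Lemma~\ref{lem: necessary cond} then forces conditions (A)--(D) to be satisfied (otherwise $P_{(W_i,\eta_i)_{i=1}^r}(\boldsymbol\gamma)$ is invariant under some $T_{E_0}$ with $|E_0|$ odd), yielding clauses (2), (4), (5) and the even-multiplicity part of (3). Lemma~\ref{lem: card not disjoint case} then gives the weak counting inequality, and its non-strictness imposes the equality conditions (a)--(d), which translate back into (6)--(8) together with the reinforcement $m(e)=2$ in (3). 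The remaining clause (1), $|W_i|\ge 2$, is enforced by observing that a singleton $W_i=\{w\}$ either has $\deg(w)$ too small to survive (A) (producing invariance), or can be relabeled as one of the singletons $W_{r+j}$ without changing $\sum_{i}(|W_i|-1)$, so we may assume $|W_i|\ge 2$ in the indexing used by Definition~\ref{def: set W_K}. Hence $W_1,\ldots,W_r\in\mathcal{W}_r$, contradicting the hypothesis.

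The main obstacles I anticipate are the bookkeeping of condition (C) of Lemma~\ref{lem: necessary cond}, which has to be extracted from clause~(6) of $\mathcal{W}_r$ via the merging procedure of Definition~\ref{def: merging operation}, and the treatment of the boundary case $|W_i|=1$, which lies outside $\mathcal{W}_r$ but must be shown to lead either to invariance or to a strict inequality after the canonical relabeling.
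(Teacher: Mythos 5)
Your overall strategy — placing the canonical singletons $W_{r+1}=\{w_1\},\ldots,W_{r+s}=\{w_s\}$ on $W\setminus\bigcup_{i\le r}W_i$ and matching the clauses of Definition~\ref{def: set W_K} to the hypotheses and equality conditions of Lemmas~\ref{lem: necessary cond} and~\ref{lem: card not disjoint case} — is the right frame and is exactly the route the paper intends (it states the proposition as a ``direct consequence'' of these three lemmas). Most of your correspondences are correct: (4)--(A), (5)--(B), (3) with Remark~\ref{rmk: cond outside 1} giving (D) and the equality condition (c), (7)--(a), (6)--(b), (8)--(d) with $C'=0$, $\mathcal{J}^\textnormal{c}=\{r+1,\ldots,r+s\}$.

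However, your treatment of clause (1) does not close the gap. You propose that a singleton $W_i=\{w\}$ can either be killed by failure of (A) or ``relabeled as one of the singletons $W_{r+j}$''. This relabeling is only possible when $w\notin\bigcup_{j\ne i}W_j$; it does not apply to a singleton that coincides with a vertex \emph{shared} with another $W_j$. And in that shared case the dichotomy of part~(a) can genuinely fail. Take $G$ the bipartite $4$-cycle ($W=\{w_1,w_2\}$, $V=\{v_1,v_2\}$, all multiplicities $1$, so $\rho(G)=1$), with $r=2$, $W_1=\{w_1,w_2\}$, $W_2=\{w_1\}$. Then $|W_2|=1$ so $W_1,W_2\notin\mathcal{W}_2$, yet conditions (A)--(D) all hold (condition~(C) is vacuous since both $v_1,v_2\sim w_1=\tilde w_1$), the equality conditions of Lemma~\ref{lem: card not disjoint case} hold, and $\sum(|W_i|-1)=1=\rho(G)$, i.e.\ equality, not strict inequality. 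Moreover $P=\E_X[Z_{w_1}^{a}Z_{w_2}^{b}]\cdot\E_X[Z_{w_1}^{c}]$: the second factor is invariant under every $T_{E_0}$, so $P$ is invariant under $T_{E_0}$ if and only if the first factor is, and its invariance group is generated by $\{(w_1,v_1),(w_2,v_1)\}$ and $\{(w_1,v_2),(w_2,v_2)\}$, hence contains only $E_0$ of even cardinality. So neither alternative of part~(a) holds. This shows that your contrapositive argument cannot produce clause (1), and it signals that the proposition as written is implicitly restricted to $|W_i|\ge 2$ (which is how it is applied in the proof of Lemma~\ref{lem: main estimate}, where singletons automatically force $\sum(\ell_k-1)>\sum(|\tilde W_i|-1)$ and are discarded for that separate reason). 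A correct proof should either state this restriction or present a three-way alternative (invariance, strict inequality, or a sub-collection obtained by deleting singletons lies in some $\mathcal{W}_{r'}$ with the same sum $\sum(|W_i|-1)$).

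A second, smaller point: your claim that condition~(C) of Lemma~\ref{lem: necessary cond} is ``implicit'' in clause~(6) is not quite right for $i\in\mathcal{I}$. Clause~(6) controls $\deg_{\tilde G_k}(v)=2$ in the merged block, which for $v$ not adjacent to any shared $\tilde w_j$ equals $\sum_{j\in\tilde{\mathcal{I}}_k}\deg_{G_j}(v)$; this does not by itself imply each $\deg_{G_j}(v)$ is even (the split $1+1$ is consistent with the total being $2$). To rule out the $1+1$ split one must invoke clause~(5), which forces some $\deg_{G_\ell}(v)\ge 2$, and then use clause~(6) again (if $\ell\in\tilde{\mathcal{I}}_k$, the total $2$ leaves no room for a second nonzero contribution) or clause~(8) (if $\ell$ lies in another block). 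Your write-up should make this dependence explicit rather than attributing evenness to clause~(6) alone.
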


The final result of this section characterizes the connected bipartite multigraphs \(G = (W \cup V, E)\) with all vertices of even degree for which there exists at least one collection of subsets \(W_1, \ldots, W_K \subseteq W\) belonging to \(\mathcal{W}_K\) for some \(K \ge 1\) and satisfying \(|W_k| \ge 2\) for every \(k \in [K]\). This characterization is used in Section~\ref{section: convergence traffic} to prove Lemma~\ref{lem: main estimate}. The following lemma shows that these graphs are admissible graphs with \(R \ge 1\) blocks (see Definition~\ref{def: admissible graph}), and that the corresponding subsets \(W_1, \ldots, W_K\) form admissible decompositions (see Definition~\ref{def: admissible decomposition}). 

\begin{lem} \label{lem: equiv def}
Let \(G= (W \cup V, E)\) be a connected bipartite multigraph in which all vertices have even degree. Then the following are equivalent:
\begin{itemize}
\item[(i)] There exists at least one collection of subsets \(W_1, \ldots, W_K \subseteq W\) belonging to \(\mathcal{W}_K\) for some integer \(K \ge 1\), with \(|W_k| \ge 2\) for all \(k \in [K]\);
\item[(ii)] \(G\) is an admissible graph with \(R = \frac{|E|}{2} + 1 - |V| - S \ge 1\) blocks \(B_i = G(W_{B_i})\), each satisfying \(|W_{B_i}| \ge 2\), and there exist disjoint index sets \(I_1, \ldots, I_R \subseteq [K]\) with \(K = \sum_{i=1}^R |I_i|\) such that, for every \(i \in [R]\), the family \(\{W_\ell \colon \ell \in I_i\}\) is an admissible decomposition of \(W_{B_i}\). 
\end{itemize}
\end{lem}

\begin{proof}
We first show that (i) implies (ii). Suppose there exists a collection of subsets \(W_1, \ldots, W_K \subseteq W\) with \(|W_k| \ge 2\) such that \(\{W_1, \ldots, W_K\} \in \mathcal{W}_K\) for some integer \(K \ge 1\). Applying the merging procedure of Definition~\ref{def: merging operation} to the subsets \(W_1,\ldots, W_K\), and using the notation of Definition~\ref{def: set W_K}, we obtain disjoint subsets \(\widetilde{W}_1, \ldots, \widetilde{W}_C, (W_i)_{i \in \mathcal{I}^{\text{c}}}\), each of cardinality at least two, such that \(\cup_{i=1}^K W_i = \left( \sqcup_{i=1}^C \widetilde{W}_i\right) \sqcup \left(\sqcup_{i \in \mathcal{I}^\textnormal{c}} W_i\right)\). Let \(R \coloneqq C + |\mathcal{I}^\text{c}|\), and let \(S\) denote the number of remaining vertices in \(W \backslash \cup_{i=1}^K W_i\). Thus \(G\) decomposes into \(R+S\) disjoint subgraphs \(\overline{G}_1, \ldots, \overline{G}_{R+S}\) connected through vertices in \(V\). By Lemma~\ref{lem: card disjoint case}, in particular~\eqref{eq: estimate V disjoint case}, we have 
\[
R + S = \frac{|E|}{2} + 1 - |V|. 
\]
Moreover, by condition (7) of Definition~\ref{def: set W_K}, the graph \(G\) is a block tree with \(R+S\) blocks. More precisely, there are \(R \ge 1\) blocks \(B_i = G(W_{B_i})\), where \(W_{B_i} = \overline{W}_i\) is given by
\[
\{\overline{W}_i \colon i \in [R]\} = \{\widetilde{W}_1, \ldots, \widetilde{W}_C\} \cup \{W_i \colon i \in \mathcal{I}^{\text{c}}\}, 
\]
each satisfying \(|W_{B_i}| \ge 2\). There are \(S\) further blocks, each consisting of exactly one vertex in \( W \backslash \cup_{i=1}^K W_i\). The degree condition in Definition~\ref{def: admissible graph} follows from items (2), (3), and (5) of Definition~\ref{def: set W_K}, together with the additional assumption that all vertices in \(W\cup V\) have even degree. Hence, \(G\) is an admissible graph. It remains to show that each block \(B_i\), \(i \in [R]\), admits an admissible decomposition of \(W_{B_i}\). If \(W_{B_i} = W_j\) for some \(j \in \mathcal{I}^\text{c}\), then \(\{W_j\} \in \mathcal{A}_1(W_{B_i})\) by definition; in this case, set \(I_i = \{j\}\). Otherwise, \(W_{B_i} = \widetilde{W}_\ell\) for some \(\ell \in [C]\). Let \(\widetilde{\mathcal{I}}_\ell \subseteq [K]\) be the index set such that \(\widetilde{W}_\ell = \cup_{k \in \widetilde{\mathcal{I}}_\ell} W_k\) (see Definition \ref{def: merging operation}). By construction, the index sets \(\widetilde{\mathcal{I}}_1, \ldots, \widetilde{\mathcal{I}}_C\) are disjoint and satisfy \(\mathcal{I} = \sqcup_{\ell=1}^C \widetilde{\mathcal{I}}_\ell\). The collection \((W_k)_{k \in \widetilde{\mathcal{I}}_\ell}\) is an admissible decomposition of \(W_{B_i} = \widetilde{W}_\ell\): items (a) and (b) of Definition~\ref{def: admissible decomposition} follow from the merging procedure, item (c) from assumption (i), item (d) from condition (1), item (e) from condition (4), and item (f) from condition (6) of Definition~\ref{def: set W_K}. In this case, set \(I_i = \widetilde{\mathcal{I}}_\ell\). We have therefore constructed disjoint index sets \(I_1, \ldots, I_R \subseteq [K]\) such that \(K=\sum_{i=1}^R |I_i| = |\mathcal{I}^\text{c}| + \sum_{\ell=1}^C |\widetilde{\mathcal{I}}_\ell|\), and for each \(i \in [R]\), \(\{W_\ell \colon \ell \in I_i\} \in \mathcal{A}_{|I_i|} (W_{B_i})\). This proves (ii).

We now show that (ii) implies (i). Assume that \(G\) is an admissible graph with \(R+S\) blocks, where \(R \ge 1\) blocks \(B_i = G(W_{B_i})\) satisfy \(|W_{B_i}| \ge 2\), and the remaining \(S\) blocks contain exactly one vertex of \(W\). Moreover, for every \(i\in [R]\), assume that there exists an admissible decomposition of \(W_{B_i}\), that is, \(\{W_1^i, \ldots, W_{|I_i|}^i \} \in \mathcal{A}_{|I_i|} (W_{B_i})\). Set \(K \coloneqq \sum_{i=1}^R |I_i|\), and consider the collection of subsets \(\{W_j^i \colon j \in [|I_i|], i \in [R]\}\). We claim that this family belongs to \(\mathcal{W}_K\) and satisfies \(|W_j^i| \ge 2\) for all \(i,j\). The size condition \(|W_j^i| \ge 2\) follows immediately from item (c) of Definition~\ref{def: admissible decomposition}. We now verify conditions (1)-(7) of Definition~\ref{def: set W_K}. Conditions (1), (4), and (6) follow directly from items (d), (e), and (f) of Definition~\ref{def: admissible decomposition}. Condition (3) holds since \(G\) is admissible in the sense of Definition~\ref{def: admissible graph}. Condition (2) is also satisfied by Definition~\ref{def: admissible graph}: in each of the \(S\) blocks consisting of a single vertex of \(W\), every vertex in \(V\) has degree \(2\), and hence all edges in such blocks have multiplicity \(2\). It remains to verify conditions (5) and (7). For condition (5), note that for every \(i \in [R]\) and \(j \in [|I_i|]\), \(W_j^i \cap \cup_{k \neq j} W_k^i \neq \emptyset \). Consequently, merging \(W_1^i, \ldots, W_{|I_i|}^i\) according to Definition~\ref{def: merging operation} yields the connected component \(W_{B_i}\). Condition (5) is therefore satisfied according to Definition~\ref{def: admissible graph}. Finally, condition (7) follows from the fact that \(G\) is a block tree. Therefore, \(\{W_j^i \colon j \in [|I_i|], i \in [R]\} \in \mathcal{W}_K\), which proves (i).
\end{proof}

%%%%%%%%%%%%%%%%%%%%%%%%%%%%%%%%%%%%%%%%%%%%%%%%%%%%%%%%%%%%%%%%%%%%%%%%%%%%%%%%%%%%%%%%%%%%%%%%%%%%%%%%%%%%%%%%%%%%%%%%%%%%%%%%%%%%%%%%%%%%%%%%%%%%%%%%%%%%%%%%%%%%%%%%%%%%%%%%%%%%%%%%%%%%%%%%%%%%%%%%%%%%%%%%%%%%%%%%%%%%%%%%%%%%%%%%%%%%%%%%%%%%%%%%%%%%%%%%%%%%%%%%%%%%%%%%%%%%%%%%%%%%%
\section{Convergence of matrix moments} \label{section: cycle}

This section is devoted to the proof of Theorem~\ref{main1}. We begin by proving the convergence in expectation of the moments of \(M=Y_m Y_m^\top\) and by computing the corresponding limit. As discussed in Subsection~\ref{subsection: outline proof}, the expected \(k\)th moment of \(M\) corresponds to the traffic trace associated to the test graph \(T_{\text{cycle}} = (W \cup V, E, Y_m)\), where \(G = (W \cup V, E)\) denotes the simple bipartite cycle of length \(2k\). Our approach thus relies on applying Proposition~\ref{main3}.

Throughout this section, let \(G = (W \cup V, E)\) denote the non-oriented simple bipartite cycle of length \(2k\), with edges \(e = (w, v)\) connected vertices from \(W\) to \(V\). We define \(W = \{w_1, \ldots, w_k\}\) and \(V = \{v_1, \ldots, v_k\}\) as the vertex sets and assume the vertices are labeled in cyclic order such that \(w_i < v_i < w_{i+1}\) for every \(i \in [k]\), with the convention that \(w_{k+1} = w_1\). The set of unordered edges \(E\) is then labeled by pairs of cyclically adjacent vertices, i.e., \(E = \cup_{i=1}^k \{(w_i,v_i),(w_{i+1}, v_i)\}\). See Figure~\ref{fig3} for an illustration. According to Proposition~\ref{main3}, we have 
\begin{equation} \label{eq: moments}
\lim_{m, n, p \to \infty} \E \left [ \frac{1}{p} \tr M^k \right ] = \lim_{m, n, p \to \infty} \tau_{p,m,n} \left [T_{\text{cycle}} \right ]  =  \sum_{\pi \in \mathcal{P}(V)}  \sum_{\mu \in \mathcal{P}(W)} \tau^0_{G^{\pi,\mu}},
\end{equation}
where \(G^{\pi,\mu}\) is the graph obtained from \(G\) by identifying vertices of \(V\) which belong to the same block of \(\pi\) and vertices of \(W\) which belong to the same block of \(\mu\). We do not identify edges, so \(G^{\pi,\mu}\) may have multiple edges. The limiting injective trace \(\tau^0_{G^{\pi,\mu}}\) vanishes unless \(G^{\pi,\mu}\) is an admissible graph. The goal of this section is thus to compute the right-hand side of~\eqref{eq: moments}. To do so, we first need to identify the partitions \(\pi \in \mathcal{P}(V)\) and \(\mu \in \mathcal{P}(W)\) for which \(G^{\pi,\mu}\) is an admissible graph.

\begin{figure}
\begin{tikzpicture}

\draw (0,0) circle [radius=2];
\def\nodesA{12}
\foreach \i in {1, ..., \nodesA} {
        
\pgfmathsetmacro\angle{360/\nodesA * \i}
\pgfmathsetmacro\xpos{0 + 2*cos(\angle)}
\pgfmathsetmacro\ypos{0 + 2*sin(\angle)}
     \coordinate (P\i) at (\xpos, \ypos); 
  \ifodd\i
           \fill[black] (\xpos, \ypos) circle [radius=2pt];  
        \else
           \draw[fill=white]  (\xpos, \ypos) circle [radius=2pt];  
        \fi
    }

\node[right] at (P1) {\tiny $v_1$}  ;
\node[above] at (P2) {\tiny $w_1$}  ;
\node[above] at (P3) {\tiny $v_6$}  ;
\node[above] at (P4) {\tiny $w_6$}  ;
\node[left] at (P5) {\tiny $v_5$}  ;
\node[left] at (P6) {\tiny $w_5$}  ;
\node[left] at (P7) {\tiny $v_4$}  ;
\node[below] at (P8) {\tiny $w_4$}  ;
\node[below] at (P9) {\tiny $v_3$}  ;
\node[below] at (P10) {\tiny $w_3$}  ;
\node[right] at (P11) {\tiny $v_2$}  ;
\node[right] at (P12) {\tiny $w_2$}  ;
\end{tikzpicture}
\caption{A simple bipartite cycle of length \(12\).}
\label{fig3}
\end{figure}

\subsection{Preliminaries on set partitions}

We first introduce some general definitions about partitions of sets that will apply to both sets \(W\) and \(V\). We start with some classical definitions.

\begin{defn} \label{def: partition}
Let \(X = \{x_1, \ldots, x_k\}\) be a set where elements are labeled in cyclic order, meaning \(x_i < x_{i+1}\), with the convention that \(x_{k+1}=x_1\).
\begin{itemize}
\item[(a)] A partition \(\pi\) of \(X\) is a decomposition \(\pi = \{B_1, \ldots, B_{|\pi|}\}\) into disjoint, non-empty subsets \(B_i\), called the \emph{blocks} of the partition. The number of blocks of \(\pi\) is denoted by \(|\pi|\). Given two elements \(x_i,x_j \in X\), we write \(x_i \sim_\pi x_j\) if \(x_i\) and \(x_j\) belong to the same block.  The set of all partitions of \(X\) is denoted by \(\mathcal{P}(X)\).
\item[(b)] The partition \(\pi = \{X\} \in \mathcal{P}(X)\) with only one block is called the \emph{singleton partition} (or the \emph{trivial partition}). The partition \(\pi = \{ \{x\} \colon x \in X\} \in \mathcal{P}(X)\) is called the \emph{partition of singletons}.
\item[(c)] A partition \(\pi\) of \(X\) is called \emph{crossing} if there exist indices \(i_1 < j_1 < i_2 < j_2\) such that \(x_{i_1} \sim_\pi x_{i_2} \not \sim_\pi x_{j_1} \sim_\pi x_{j_2}\). If no such indices exist, \(\pi \in \mathcal{P}(X)\) is \emph{noncrossing}. The set of noncrossing partitions of \(X\) is denoted by \(\mathcal{NC}(X)\). Note that \(\pi = \{X\} \in \mathcal{NC}(X)\) and \(\pi = \{\{x\} \colon x \in X\}  \in \mathcal{NC}(X)\).
\item[(d)] Let \(B_i = \{x_{i_1}, \ldots, x_{i_{|B_i|}}\}\) and \(B_j = \{x_{j_1}, \ldots, x_{j_{|B_j|}}\}\) be two blocks of a partition \(\pi \in \mathcal{P}(X)\), ordered as \(x_{i_1} < \cdots < x_{i_{|B_i|}}\) and \(x_{j_1} < \cdots < x_{j_{|B_j|}}\). We say \(B_i < B_j\) if there exist consecutive elements \(x_{j_\ell}, x_{j_\ell+1} \in B_j\) such that \(x_{j_\ell} < x_{i_1} < \cdots < x_{i_{|B_i|}} <  x_{j_\ell+1}\).
\end{itemize}
\end{defn}

We now introduce two additional definitions needed to describe the matrix moments.

\begin{defn} \label{def: partition 2}
Let \(X = \{x_1, \ldots, x_k\}\) be a cyclically ordered set.
\begin{itemize}
\item[(e)] For a partition \(\pi\) of \(X\), let \(b(\pi)\) denote the collection of \emph{nearest neighbor pairs} within a block of \(\pi\): 
\[
\qquad \quad b(\pi) \coloneqq  \cup_{i=1}^k \left \{(x_i,x_{i+1}) \colon x_i \sim_\pi x_{i+1}  \right \}.
\]
Since \(X\) is cyclic, \((x_k,x_1)\) is also included whenever \(x_k \sim_\pi x_1\). In particular, in the singleton partition \(\pi = \{ X\}\), every pair of neighbors belongs to the same block, i.e., \(b(\pi) = \{(x_i,x_{i+1}) \colon i \in [k]\}\), thus \(|b(\pi)| = k\).
\item[(f)] For a partition \(\pi\) of \(X\), let \(c(\pi)\) denote the collection of pairs of next elements within a block such that, for every other pair of elements in the same block, the two pairs do not intersect: 
\[
\begin{split}
\qquad \quad c(\pi)  \coloneqq \cup_{i=1}^{k-1} \cup_{j=i+1}^k \big \{(x_i,x_j) \colon x_j = \min \{x_\ell \colon & x_\ell \sim_\pi x_i \enspace \text{and} \enspace  \nexists \: x_p \sim_\pi x_q \enspace \text{such that} \enspace\\
&  x_i < x_p < x_\ell < x_q \enspace \text{or} \enspace  x_p < x_i < x_q < x_\ell\} \big \}.
\end{split}
\]
Here, pairs \((x_i,x_j)\) and \((x_p,x_q)\) are said to intersect (or cross) if they satisfy \(x_i < x_p < x_j < x_q\) or \(x_p < x_i < x_q < x_j\). For a noncrossing partition \(\pi \in \mathcal{NC}(X)\), \(c(\pi)\) simplifies to
\[
\begin{split}
c(\pi) & =   \cup_{i=1}^{k-1} \cup_{j=i+1}^k \left \{(x_i,x_j) \colon x_j = \min \{ x_\ell \colon x_\ell \sim_\pi x_i \} \right\}  \\
&= \cup_{i=1}^{|\pi|} \cup_{j=1}^{|B_i|-1} \left \{ (x_{i_j}, x_{i_{j+1}}) \right \},
\end{split}
\]
where each block \(B_i = \{x_{i_1} , \ldots, x_{i_{|B_i|}}\}\) is ordered as \(x_{i_1} < x_{i_2} < \cdots < x_{i_{|B_i|}}\). In this case, we have \(|c(\pi)| = \sum_{i=1}^{|\pi|} (|B_i|-1) = k - |\pi|\).
\end{itemize}
\end{defn}

\begin{rmk}\label{rmk: partition}
Let \(\pi\) be a partition of \(X=\{x_1<\cdots<x_k\}\). Then every pair \((x_i,x_{i+1})\in b(\pi)\), for \(i\in[k-1]\), belongs to \(c(\pi)\). In contrast, if \(x_1\sim_\pi x_k\), then \((x_k,x_1)\in b(\pi)\), but \((x_k,x_1)\) may not belong to \(c(\pi)\). More precisely, \((x_k,x_1)\in c(\pi)\) if and only if, for every \(x_\ell\) with \(1<\ell<k\) and \(x_\ell\sim_\pi x_1\), the pair \((x_1,x_\ell)\) intersects at least one pair \((x_p,x_q)\) belonging to a different block of \(\pi\). In particular, if \(\pi\) is noncrossing, then \((x_k,x_1)\in c(\pi)\) if and only if the block containing \(x_1\) and \(x_k\) has size \(2\), that is, if \(x_1\) and \(x_k\) are the only elements of that block.
\end{rmk}

We illustrate items (e) and (f) of Definition~\ref{def: partition 2} with the following example.

\begin{ex}
Consider the set \(X=\{x_1,\ldots,x_{10}\}\) labeled in cyclic order, and the following crossing partition:
\[
\pi = \{ \{x_1, x_3, x_5\}, \{x_2, x_4\}, \{x_6, x_7,x_9\}, \{x_8,x_{10}\}\} .
\]
In this case, there is exactly one pair of nearest neighbor elements within a block, namely \(b(\pi)=\{(x_6,x_7)\}\). Moreover, we have \(c(\pi) = \{(x_1,x_5), (x_6,x_7)\}\). To see this, consider first the block \(\{x_1,x_3,x_5\}\). This block has three possible pairs \((x_1,x_3)\), \((x_1,x_5)\), and \((x_3,x_5)\). Among these, only \((x_1,x_5)\) belongs to \(c(\pi)\), since it does not intersect any pair from another block. Indeed, \((x_1,x_3)\notin c(\pi)\) because there exist \(x_2\sim_\pi x_4\) such that \(x_1<x_2<x_3<x_4\), and similarly \((x_3,x_5)\notin c(\pi)\) since
\(x_2<x_3<x_4<x_5\). The block \(\{x_2,x_4\}\) contains only the pair \((x_2,x_4)\), which does not belong to \(c(\pi)\) because it intersects both \((x_1,x_3)\) and \((x_3,x_5)\) from the first block. For the block \(\{x_6,x_7,x_9\}\), the possible pairs are \((x_6,x_7)\), \((x_6,x_9)\), and \((x_7,x_9)\). The pair \((x_6,x_7)\) belongs to \(c(\pi)\), since it consists of nearest neighbors within the block and therefore cannot intersect any pair from another block. By contrast, \((x_6,x_9)\) and \((x_7,x_9)\) both intersect the pair \((x_8,x_{10})\), and hence do not belong to \(c(\pi)\).
\end{ex}

Throughout, let \(G = (W \cup V, E)\) denote the simple bipartite cycle of length \(2k\), as introduced at the beginning of this section. We next describe the set \(W^\pi\) associated to a partition \(\pi \in \mathcal{P}(W \cup V)\).

\begin{defn}[\(W^\pi\) for \(\pi \in \mathcal{NC}(V)\)] \label{def: NC(V)}
Let \(w_1 \neq \cdots \neq w_k\), and consider a noncrossing partition \(\pi \in \mathcal{NC}(V)\) with blocks \(B_1,\ldots, B_{|\pi|}\). The set \(W^\pi\) corresponds to the finest partition of \(W\) constructed as follows. Consider two blocks \(B_i = \{v_{i_1}, \ldots, v_{i_{|B_i|}}\}\) and \(B_j = \{v_{j_1}, \ldots, v_{j_{|B_j|}}\}\) of \(\pi\) with \(|B_i| \ge 2\) and \(|B_j| \ge 2\), and assume that \(B_i < B_j\), say \(v_{j_1} < v_{i_1} < \cdots < v_{i_{|B_i|}} < v_{j_2} < \cdots < v_{j_{|B_j|}}\). We then define disjoint subsets for the elements of \(W\) corresponding to the block \(B_i\), denoted by \(W_1^i,\ldots, W_{|B_i|-1}^i\), as follows:
\begin{equation} \label{eq: subsets block B_i}
W^i_\ell = \{w_{i_\ell+1}, \ldots, w_{i_{\ell +1}}\} \enspace \text{for} \enspace 1 \leq \ell \leq |B_j|-1.
\end{equation}
Similarly, for the block \(B_j\), the disjoint subsets \(W_1^j,\ldots, W_{|B_j|-1}^j\) are defined by
\[
\begin{split}
W^j_1 &= \{w_{j_1+1},\ldots w_{i_1}\} \cup \{w_{i_{|B_i|}+1},\ldots,w_{j_2}\},\\
W^j_\ell &= \{w_{i_\ell+1}, \ldots, w_{i_{\ell+1}}\} \enspace \text{for} \enspace 2 \leq \ell \leq |B_j|-1.
\end{split}
\]
If a block \(B_k\) contains no other blocks within it, we define the subsets \(W_1^k,\ldots, W_{|B_k|-1}^k\) as given in~\eqref{eq: subsets block B_i}.
We proceed in this way for every block of \(\pi\) with at least two elements. Since each block \(B_i\) defines \(|B_i|-1\) disjoint subsets, we obtain \(k - |\pi| = \sum_{i=1}^{|\pi|} (|B_i|-1)\) disjoint subsets, denoted hereafter by \(W_1^\pi, \ldots, W_{k - |\pi|}^\pi\). Additionally, we set \(W_{k - |\pi| +1}^\pi = W \backslash \sqcup_{i=1}^{k - |\pi|} W_i\), which corresponds to the remaining vertices in \(W\). The finest partition of \(W^\pi\) is given by
\[
W^\pi = \sqcup_{i=1}^{k - |\pi| + 1} W_i^\pi.
\]
Let \(S_\pi\) denote the number of subsets among \(W_1^\pi, \ldots, W_{k - |\pi|}^\pi, W_{k - |\pi| + 1}^\pi\) that have exactly one element. According to Definition~\ref{def: partition 2}, \(S_\pi\) corresponds to the number \(|b(\pi)|\) of nearest neighbor pairs within a same block of \(\pi\). Thus, the number of subsets with cardinality at least \(2\) is given by \(R_\pi = k - |\pi| + 1 - S_\pi\). We denote by \(W_1^\pi,\ldots, W_{R_\pi}^\pi\) the subsets with more than one element, and by \(W_{R_\pi+1}^\pi,\ldots, W_{R_\pi+S_\pi}^\pi\) those with exactly one element. 
\end{defn}

\begin{rmk}\label{rmk: NC(V)}
Let \(\pi\in\mathcal{NC}(V)\) be a noncrossing partition and \(w_1,\ldots,w_k\) pairwise distinct. The graph \(G^\pi=G(W^\pi)\) associated with \(\pi\) is a block tree. Its blocks are the connected subgraphs
\(G(W_1^\pi),\ldots,G(W_{R_\pi+S_\pi}^\pi)\), defined as in Definition~\ref{def: subgraphs}.
The separating vertices in \(V\) are precisely those obtained by merging vertices belonging to the same block of \(\pi\); in particular, there are
\(\sum_{i=1}^{|\pi|}\mathbf 1_{\{|B_i|\ge2\}}\)
such separating vertices. Each subgraph \(G(W_i^\pi)\) is a bipartite cycle of length \(2|W_i^\pi|\): the vertices are arranged cyclically, and between any two adjacent elements of \(W_i^\pi\) there is exactly one vertex of \(V_i^\pi\). Consequently, \(G^\pi\) is a cactus graph. See Figure~\ref{fig4} for an illustration.
\end{rmk}

\begin{figure}
\begin{tikzpicture}

\draw (0,0) circle [radius=1];
\def\nodesA{4}
\foreach \i in {1, ..., \nodesA} {
        
\pgfmathsetmacro\angle{360/\nodesA * \i}
\pgfmathsetmacro\xpos{0 + cos(\angle)}
\pgfmathsetmacro\ypos{0 + sin(\angle)}
     \coordinate (P\i) at (\xpos, \ypos); 
  \ifodd\i
            \draw[fill=white] (\xpos, \ypos) circle [radius=2pt];  
        \else
            \fill[black]  (\xpos, \ypos) circle [radius=2pt];  
        \fi
    }

\node[above] at (P1) {\tiny $w_1$}  ;
\node[right] at (P2) {\tiny $\tilde{v}_1$}  ;
\node[below] at (P3) {\tiny $w_5$}  ;
\node[right] at (P4) {\tiny $\tilde{v}_2$}  ;

\draw (-2,0) to[bend left = 50] (P2);
 \draw (-2,0) to[bend right = 50] (P2);
 \draw[fill=white] (-2,0) circle [radius=2pt];  
\node[left] at (-2,0) {\tiny $w_6$}  ; 

\draw (2,0) circle [radius=1];

\def\nodesB{4}

\foreach \i in {1, ..., \nodesB} {
        
        \pgfmathsetmacro\angle{360/\nodesB * \i}
        
        \pgfmathsetmacro\xpos{2 +  cos(\angle)}
        \pgfmathsetmacro\ypos{0 +  sin(\angle)}
        
        \coordinate (Q\i) at (\xpos, \ypos);
        
        \ifodd\i
           \draw[fill=white] (\xpos, \ypos) circle [radius=2pt];  
        \else
            \fill[black] (\xpos, \ypos) circle [radius=2pt];  
        \fi
    }
    
\node[above] at (Q1) {\tiny $w_3$};
\node[below] at (Q3) {\tiny $w_4$};
\node[right] at (Q4) {\tiny $v_3$};

\pgfmathsetmacro\xcoord{cos(30)} 
\pgfmathsetmacro\ycoord{sin(30)} 

\end{tikzpicture}
\caption{The graph \(G^\pi\) obtained from the simple cycle \(G\) in Figure~\ref{fig3}, associated with the noncrossing partition \(\pi=\{\{v_1,v_5,v_6\},\{v_2,v_4\},\{v_3\}\}\).
The vertices \(\tilde v_1\) and \(\tilde v_2\) are obtained by merging \(v_1,v_5,v_6\) and \(v_2,v_4\), respectively.
The subsets \(W_1^\pi=\{w_1,w_5\}\), \(W_2^\pi=\{w_3,w_4\}\), and \(W_3^\pi=\{w_6\}\) form the finest partition of \(W^\pi\).}
\label{fig4}
\end{figure}

As we will see later in Lemma~\ref{lem: contr cross part in V}, if \(\pi \in \mathcal{P}(V)\) is crossing then the parameter \(\tau^0_{G^\pi}\) vanishes. We now consider the partitions of \(W\) and begin with the noncrossing ones.

\begin{defn}[\(W^\pi\) for \(\pi \in \mathcal{NC}(W)\)]  \label{def: NC(W)}
Let \(v_1 \neq \cdots \neq v_k\), and consider a noncrossing partition \(\pi \in \mathcal{NC}(W)\) with blocks \(B_1,\ldots, B_{|\pi|}\). We define \(W^\pi\) recursively as follows. 
\begin{enumerate}
\item[1.] Consider a block \(B_i = \{w_{i_1}, \ldots, w_{i_{|B_i|}}\}\) of \(\pi\) with \(|B_i| \geq 2\) and \(w_{i_1} < \cdots < w_{i_{|B_i|}}\). Define the subsets \(W_1^i, \ldots, W_{|B_i|}^i\) by 
\[
\begin{split}
W_\ell^i &= \{w_{i_\ell +1},\ldots, w_{i_{\ell+1} -1}\} \cup \{\tilde{w}_{i}\} \enspace \text{for} \enspace 1 \leq \ell \leq |B_i|-1,\\
W_{|B_i|}^i &=  \{w_{i_{|B_i|} +1}, \ldots, w_{i_1-1}\} \cup \{\tilde{w}_{i}\},
\end{split}
\]
where \(\tilde{w}_{i}\) denotes the vertex obtained by merging \(w_{i_1}, \ldots, w_{i_{|B_i|}}\). Note that \(\tilde{w}_i\) is the unique common vertex shared by the subsets \(W_1^i, \ldots, W_{|B_i|}^i\). 
\item[2.] Consider another block \(B_j = \{w_{j_1}, \ldots, w_{j_{|B_j|}}\}\) with \(|B_j| \geq 2\) and \(w_{j_1} < \cdots < w_{j_{|B_j|}}\). Since \(\pi\) is noncrossing, the vertices \(w_{j_1}, \ldots, w_{j_{|B_j|}}\) must belong to exactly one subset among \(W_1^i,\ldots, W_{|B_i|}^i\), say \(W_1^i\). The block \(B_j\) then provides a decomposition of \(W_1^i\) into subsets \(W_1^j, \ldots, W_{|B_j|}^j\) defined by
\[
\begin{split}
W_\ell^j &= \{w_{j_\ell +1},\ldots, w_{j_{\ell+1} -1}\} \cup \{\tilde{w}_{j}\}  \enspace \text{for} \enspace 1 \leq \ell \leq |B_j|-1 ,\\
W_{|B_j|}^j &=  \{w_{j_{|B_j|} +1}, \ldots, w_{i_2-1}, w_{i_1+1}, \ldots, w_{j_1-1}\} \cup \{\tilde{w}_{i}, \tilde{w}_{j}\},
\end{split}
\]
where \(\tilde{w}_{j}\) stands for the vertex obtained by merging \(w_{j_1}, \ldots, w_{j_{|B_j|}}\). Note that \(\tilde{w}_j\) is the unique common vertex shared by \(W_1^j, \ldots, W_{|B_j|}^j\). 
\item[3.] Proceed in this way for every block of \(\pi\) with at least two elements.  
\end{enumerate}
The first block \(B_i\) defines \(|B_i|\) subsets. Every subsequent block \(B_j\) introduces \(|B_j|-1\) new subsets, since the block \(B_j\) provides a decomposition of an existing subset. Thus, at the end of the procedure, we obtain a total of \(\sum_{i=1}^{|\pi|} (|B_i|-1) + 1 = k - |\pi| +1 \) subsets of \(W\), which we denote by \(W_1^\pi, \ldots, W_{k - |\pi|+1}^\pi\). Moreover, the number of vertices \(\tilde{w}_1,\ldots,\tilde{w}_q\) which belong to several \(W_i^\pi\) is given by \(q = \sum_{i=1}^{|\pi|} \mathbf{1}_{\{B_i|\geq 2\}} \leq k - |\pi|\), with equality if every block of \(\pi\) has at most two elements.

The number of subsets among \(W_1^\pi, \ldots, W_{k - |\pi|+1}^\pi\) that consist of a single element, denoted by \(S_\pi\), corresponds to the number \(|b(\pi)|\) of pairs of nearest neighbors within a same block of \(\pi\). Thus, the number of subsets with cardinality at least two is given by \(R_\pi = k - |\pi| + 1 - S_\pi\). We write \(W_1^\pi,\ldots, W_{R_\pi}^\pi\) for the subsets with more than one element and \(W_{R_\pi+1}^\pi,\ldots, W_{R_\pi+S_\pi}^\pi\) for those with exactly one element. Note that each subset \(W_{R_\pi+i}^\pi\) with exactly one element contains a common vertex \(\tilde{w}_j\). By construction there exists at least one subset among \(W_1^\pi,\ldots, W_{R_\pi}^\pi\) that also contains \(\tilde{w}_j\). Consequently, merging the subsets \(W_1^\pi,\ldots, W_{R_\pi}^\pi\) as described in Definition~\ref{def: merging operation} results in \(W^\pi\).
\end{defn}

\begin{figure}
\begin{tikzpicture}

\node[right] at (-1,0) {\tiny $\tilde{w}_1$}  ;
\node[right] at (2,0) {\tiny $\tilde{w}_2$}  ;

\draw (-2,0) to[bend left = 50] (-1,0);
\draw (-2,0) to[bend right = 50] (-1,0);
\fill[black] (-2,0) circle [radius=2pt];   
\fill[black]  (0.5, -0.5) circle [radius=2pt];  
\fill[black]  (1.25, 0.4) circle [radius=2pt];  
\fill[black]  (-0.25, 0.4) circle [radius=2pt];   
\draw (-1,0) to[bend right = 15] (0.5,-0.5);
\draw (2,0) to[bend left = 15] (0.5,-0.5);
\draw (-1,0) to[bend left = 10] (-0.25,0.4);
\draw (-0.25,0.4) to[bend left = 7] (0.5,0.5);
\draw (0.5,0.5) to[bend left = 7] (1.25,0.4);
\draw (1.25,0.4) to[bend left = 10] (2,0);
\draw[fill=white]  (-1,0) circle [radius=2pt]; 
\node[left] at (-2,0) {\tiny $v_1$}  ; 
\node[below] at (0.5,-0.5) {\tiny $v_6$}  ; 
\node[above] at (-0.25,0.4) {\tiny $v_2$}  ; 
\node[above] at (0.5,0.5) {\tiny $w_3$}  ; 
\node[above] at (1.25,0.4) {\tiny $v_3$}  ; 
\draw[fill=white]  (0.5, 0.5) circle [radius=2pt];  

\draw (3,0) circle [radius=1];

\def\nodesB{4}

\foreach \i in {1, ..., \nodesB} {
        
        \pgfmathsetmacro\angle{360/\nodesB * \i}
        
        \pgfmathsetmacro\xpos{3 +  cos(\angle)}
        \pgfmathsetmacro\ypos{0 +  sin(\angle)}
        
        \coordinate (Q\i) at (\xpos, \ypos);
        
        \ifodd\i
           \fill[black]  (\xpos, \ypos) circle [radius=2pt];  
        \else
            \draw[fill=white] (\xpos, \ypos) circle [radius=2pt];  
        \fi
    }
    
\node[above] at (Q1) {\tiny $v_4$};
\node[below] at (Q3) {\tiny $v_5$};
\node[right] at (Q4) {\tiny $w_5$};
\end{tikzpicture}
\caption{The graph \(G^\pi\) obtained from the simple cycle \(G\) in Figure~\ref{fig3}, associated to the noncrossing partition \(\pi = \{\{w_1,w_2\}, \{w_3\}, \{w_4, w_6\}, \{w_5\}\}\). The vertices \(\tilde w_1\) and \(\tilde w_2\) are formed by merging \(w_1,w_2\) and \(w_4,w_6\), respectively.
Merging the subsets \(W_1^\pi=\{\tilde w_1,\tilde w_2,w_3\}\) and \(W_2^\pi=\{\tilde w_2,w_5\}\) yields \(W^\pi\).}
\label{fig5}
\end{figure}

It remains to consider the case of crossing partitions of \(W\).

\begin{defn}[\(W^\pi\) for \(\pi \in \mathcal{P}(W)\)]  \label{def: P(W)}
Let \(v_1 \neq \cdots \neq v_k\), and consider a partition \(\pi \in \mathcal{P}(W)\) with blocks \(B_1,\ldots, B_{|\pi|}\). Assume that \(\pi\) has a crossing. We describe the partitioning of the set \(W\) according to \(\pi \in \mathcal{P}(W)\).

Consider two blocks \(B_i={\{w_{i_1}, \ldots, w_{i_{|B_i|}}}\}\) and \(B_j={\{w_{j_1}, \ldots, w_{j_{|B_j|}}}\}\) with \(|B_i| \geq 2\) and \(|B_j| \geq 2\) and assume that \(w_{i_1} < w_{j_1} < w_{i_2} < w_{j_2} < \cdots < w_{j_{|B_j|}} < w_{i_3} < \cdots w_{i_{|B_i|}}\). As seen in Definition~\ref{def: NC(W)}, the block \(B_i\) provides a decomposition into subsets \(W_1^i, \ldots, W_{|B_i|}^i\) given by
\[
\begin{split}
W_\ell^i &= \{w_{i_\ell +1},\ldots, w_{i_{\ell+1} -1}\} \cup \{\tilde{w}_{i}\} \enspace \text{for} \enspace 1 \leq \ell \leq |B_i|-1,\\
W_{|B_i|}^i &=  \{w_{i_{|B_i|} +1}, \ldots, w_{i_1-1}\} \cup \{\tilde{w}_{i}\},
\end{split}
\]
where we recall that \(\tilde{w}_{i}\) denotes the vertex obtained by merging \(w_{i_1}, \ldots, w_{i_{|B_i|}}\). Since \(w_{j_1} \in W_1^i\) and \(w_{j_2}, \ldots, w_{j_{|B_j|}} \in W_2^i\), the block \(B_j\) defines a partition of the subset obtained by merging \(W_1^i\) and \(W_2^i\). That is, if \(\widetilde{W}_i\) denotes the merging of \(W_1^i\) and \(W_2^i\), i.e., 
\[
\widetilde{W}_i = \{w_{i_1+1}, \ldots, w_{i_2-1}, w_{i_2+1}, \ldots, w_{i_3-1}\} \cup \{\tilde{w}_i\},
\]
then the block \(B_j\) gives a decomposition of \(\widetilde{W}_i\) into subsets \(W_1^j, \ldots, W_{|B_j|-1}^j\) defined by
\[
\begin{split}
W_\ell^j &= \{w_{j_\ell +1},\ldots, w_{j_{\ell+1} -1}\} \cup \{\tilde{w}_{j}\} \enspace \text{for} \enspace 2 \leq \ell \leq |B_j|-1,\\
W_1^j & = \{w_{i_1+1}, \ldots, w_{i_2-1}, w_{i_2+1}, \ldots, w_{j_2 - 1}, w_{j_{|B_j|} +1}, \ldots, w_{i_3-1}\} \cup \{\tilde{w}_i, \tilde{w}_j\}.
\end{split}
\]
Given the subsets \(W_3^i, \ldots, W_{|B_i|}^i, W_1^j, \ldots, W_{|B_j|-1}^j\), we then proceed recursively for every block with at least two elements. In particular, we observe that given a partition \(\pi \in \mathcal{P}(W)\), the first block \(B_i\) defines \(|B_i|\) subsets. If the elements of the next block \(B_j\) belong to exactly one subset, then we get \(|B_j|-1\) new subsets. Otherwise, if the elements of \(B_j\) belongs to \(N(B_j)\) different subsets, then we merge these \(N(B_j)\) subsets into a new subset \(\widetilde{W}_i\) and \(B_j\) provides a decomposition of \(\widetilde{W}_i\) into \(|B_j| - 1 - (N(B_j)-1) = |B_j| - N(B_j)\) new subsets. Given now \(|B_i| - N(B_j) + 1 +  |B_j| - N(B_j)  = |B_i| + |B_j| - 2 N(B_j) +1\) subsets, we proceed in this way for every block of \(\pi\) with more than one element. 

The number of subsets that we obtain at the end of the procedure corresponds to \(|c(\pi)| + 1\). Indeed, the procedure described above is equivalent to start from \(G\) and first define subsets associated to every pair of elements \((w,w') \in c(\pi)\). Then, inside each subset we make the crossing identifications that are present in \(\pi\). We denote the \(|c(\pi)| + 1\) subsets of \(W\) that we obtain by \(W_1^\pi, \ldots, W_{|c(\pi)|+1}^\pi\). The number \(S_\pi\) of subsets \(W_i^\pi\) of cardinality equal to one corresponds to the number \(|b(\pi)|\), so that there are \(R_\pi = |c(\pi)| + 1 - S_\pi\) subsets of cardinality at least two. In particular, merging the subsets \(W_1^\pi, \ldots, W_{R_\pi}^\pi\) results in \(W^\pi\). 
\end{defn}

\begin{rmk}\label{rmk: P(W)}
Let \(\pi\in\mathcal P(W)\) be a partition with \(v_1,\ldots,v_k\) pairwise distinct. The graph \(G^\pi=G(W^\pi)\) associated with \(\pi\) is a block tree with exactly one block, since it has no separating vertices in \(V\). Moreover, \(G(W^\pi)\) is obtained by merging the subgraphs
\(G(W_1^\pi),\ldots,G(W_{R_\pi}^\pi)\) according to the merging procedure of Definition~\ref{def: merging operation}. If \(\pi\) is noncrossing, the resulting graph is a cactus graph, as it consists of simple cycles glued together at common vertices in \(W\). Figures~\ref{fig5} and~\ref{fig6} illustrate this construction.
\end{rmk}

\begin{figure}
\begin{tikzpicture}

\draw (0,0) circle [radius=1.5];
\def\nodesB{8}

\foreach \i in {1, ..., \nodesB} {
        
        \pgfmathsetmacro\angle{360/\nodesB * \i}
        
        \pgfmathsetmacro\xpos{0 + 1.5 * cos(\angle)}
        \pgfmathsetmacro\ypos{0 + 1.5 * sin(\angle)}
        
        \coordinate (Q\i) at (\xpos, \ypos);
        
        \ifodd\i
           \fill[black] (\xpos, \ypos) circle [radius=2pt];  
        \else
            \draw[fill=white] (\xpos, \ypos) circle [radius=2pt]; 
        \fi
    }

\node[right] at (Q1) {\tiny $v_1$};
\node[above] at (Q2) {\tiny $\tilde{w}_1$};
\node[left] at (Q3) {\tiny $v_6$};
\node[left] at (Q4) {\tiny $w_6$};
\node[left] at (Q5) {\tiny $v_5$};
\node[below] at (Q6) {\tiny $\tilde{w}_2$};
\node[right] at (Q7) {\tiny $v_2$};
\node[right] at (Q8) {\tiny $w_2$};

\fill[black] (0.5,0) circle [radius=2pt];
\node[right] at (0.5,0)  {\tiny $v_3$};
\fill[black] (-0.5,0) circle [radius=2pt];
\node[left] at (-0.5,0)  {\tiny $v_4$};
\draw (Q2) to[bend left = 20] (0.5,0);
\draw (Q2) to[bend right = 20 ] (-0.5,0);
\draw (Q6) to[bend right = 20] (0.5,0);
\draw (Q6) to[bend left = 20 ] (-0.5,0);

\draw[fill=white] (0,1.5) circle [radius=2pt]; 
\draw[fill=white](0,-1.5) circle [radius=2pt]; 

\end{tikzpicture}
\caption{The graph \(G^\pi\) obtained from the simple cycle \(G\) in Figure~\ref{fig3}, associated to the crossing partition \(\pi = \{\{w_1,w_4\}, \{w_2\}, \{w_3,w_5\}, \{w_6\}\}\). The vertices \(\tilde w_1\) and \(\tilde w_2\) are obtained by merging \(w_1,w_4\) and \(w_3,w_5\), respectively.}
\label{fig6}
\end{figure}

\subsection{Convergence of matrix moments in expectation} \label{subsection: matrix moments}

In this subsection, we compute the limiting tracial moments of \(M=Y_m Y_m^\top\) using the limiting injective trace of Proposition~\ref{main3}. We recall the parameters introduced in Definitions~\ref{def: C_deg (f)} and~\ref{def: C_W (f)}: \(C_d(f)\) is defined for even integers \(d\), and \(C_{(W_k)_{k=1}^K}(f)\) is defined for subsets \(W_1, \ldots, W_K \subseteq W\) with \(|W_i| \ge 2\) such that the associated subgraphs \(G(W_1), \ldots, G(W_K)\) are connected.

\begin{prop} \label{prop: main cycle}
Under Assumptions~\ref{hyp1}--\ref{hyp3}, for every \(k\ge1\), the \(k\)th moment \(\frac{1}{p}\tr M^k\) converges in expectation to a limit \(m_k=m_k(\phi,\psi,f,\Phi,\nu_x)\) given by
\[
m_k = \sum_{\pi \in \mathcal{NC}(V)} \frac{\phi^{k - |\pi|}}{\psi^{k-1}}  C_2(f)^{S_\pi}  \prod_{i=1}^{R_\pi} \left ( \sum_{\mu_i \in \mathcal{P}(W_i^\pi)} \psi^{|W_i^\pi|-|\mu_i|}  C_{G_i^{\pi, \mu_i}} (f)  \right ) ,
\]
where for every \(i \in \{1, \ldots, R_\pi\}\) and \(\mu_i  \in \mathcal{P}(W_i^\pi)\),
\[
C_{G_i^{\pi, \mu_i}} (f)  =  
\begin{cases}
C_{2 |W_i^\pi|}(f) & \text{if} \enspace \mu_i = \{W_i^\pi\}, \\
\sum_{P_i \in \mathcal{P}([R_{\mu_i}])}  C_{(\widetilde{W}_j^{P_i})_{j=1}^{|P_i|}}(f) \mathbf{1}_{\{\forall j, \,\widetilde{G}_j^{P_i} \: \textnormal{is connected}\}} & \text{otherwise}.
\end{cases}
\]
Here:
\begin{itemize}
\item for any partition \(P\) of \(W\) or \(V\), we write \(S_P = |b(P)|\) and \(R_P = | c( P)| + 1 - S_P\), where \(b(P)\) and \(c(P)\) are as in Definition~\ref{def: partition 2};
\item for \(\pi \in \mathcal{NC}(V)\), \(W_1^\pi, \ldots, W_{R_\pi}^\pi\) are the disjoint subsets of \(W^\pi\) with cardinality at least \(2\), as in Definition~\ref{def: NC(V)}, and \(G(W_1^\pi), \ldots, G(W_{R_\pi}^\pi)\) are their associated subgraphs;
\item for \(\mu_i \in \mathcal{P}(W_i^\pi)\), \(G_i^{\pi, \mu_i}\) is the graph obtained from \(G(W_i^\pi)\) by identifying vertices of \(W_i^\pi\) which belong to the same block of \(\mu_i\);
\item for \(\mu_i \in \mathcal{P}(W_i^\pi)\), the subsets \(W_1^{\mu_i}, \ldots, W_{R_{\mu_i}}^{\mu_i}\) denote the finest partition of \(W_i^{\pi,\mu_i}\), as described by Definitions~\ref{def: NC(W)} and~\ref{def: P(W)};
\item for \(P_i \in \mathcal{P}( [R_{\mu_i}])\) with blocks \(B_1,\ldots, B_{|P_i|}\), and for every \(1 \le j \le |P_i|\), the subset \(\widetilde{W}_j^{P_i}\) is obtained by merging \(\{W_\ell^{\mu_i} \colon \ell \in B_j\}\) according to Definition~\ref{def: merging operation}, and \(\widetilde{G}_j^{P_i} = G(\widetilde{W}_j^{P_i})\) denotes the corresponding subgraph.
\end{itemize}
\end{prop}

We now describe the previous result in words. We start with the simple bipartite cycle \(G= (W \cup V, E)\) and proceed as follows. Choose a noncrossing partition \(\pi \in \mathcal{NC}(V)\). According to Remark~\ref{rmk: NC(V)}, the resulting graph \(G^\pi\) is a cactus graph, which contains 
\begin{itemize}
\item \(S_\pi\) simple cycle graphs of length \(2\), each contributing a parameter \(C_2(f)\),
\item \(R_\pi\) simple cycle graphs of length \(4\) or more, denoted by \(G(W_1^\pi), \ldots G( W_{R_\pi}^\pi)\). 
\end{itemize}
Consider the simple cycle graphs \(G(W_1^\pi), \ldots G( W_{R_\pi}^\pi)\). Now, for every \(i \in [R_\pi]\), choose a partition \(\mu_i \in \mathcal{P}(W_i^\pi)\) and consider the resulting subgraph \(G(W_i^{\pi, \mu_i})\), obtained by identifying the vertices in \(W_i^\pi\) that belong to the same block \(\mu_i\). Each subgraph \(G(W_i^{\pi, \mu_i})\) contributes a sum of terms, detailed as follows: 
\begin{itemize}
\item For each subset \(W_i^{\pi, \mu_i}\), define the finest partition \(W_1^{\mu_i}, \ldots, W_{R_{\mu_i}}^{\mu_i}\) according to Definitions~\ref{def: NC(W)} and~\ref{def: P(W)}. This partition provides the contribution \(C_{(W_i^{\mu_i})_{i=1}^{R_{\mu_i}}}(f)\). 
\item Consider a partition \(P_i \in \mathcal{P}( [R_{\mu_i}])\) with blocks \(B_1,\ldots, B_{|P_i|}\). For each block \(B_j\), merge the subsets \((W_\ell^{\mu_i})_{\ell \in B_j}\) as per Definition~\ref{def: merging operation} and denote \(\widetilde{W}_j^{P_i}\) the new subset and \(\widetilde{G}_j^{P_i} = G(\widetilde{W}_j^{P_i})\) the corresponding subgraph. If \(\widetilde{G}_j^{P_i} \) is connected for every \(1 \le j \le |P_i|\), then this partition provides the contribution \(C_{(\widetilde{W}_j^{P_i})_{i=1}^{|P_i|}}(f)\), otherwise zero.
\end{itemize} 

The rest of the subsection is devoted to the proof of Proposition~\ref{prop: main cycle}. We first consider the partition of singletons for both sets \(V\) and \(W\).

\begin{lem} \label{lem: contr cycle}
If \(G = (W \cup V, E)\) is the simple bipartite cycle of length \(2k\), then
\[
\tau^0_G = 
\begin{cases}
C_2(f) & \text{if} \enspace k=1, \\
\frac{1}{\psi^{k-1}} C_{W}(f) & \text{if} \enspace k \ge 2.
\end{cases}
\]
\end{lem}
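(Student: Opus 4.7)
The plan is to apply Proposition~\ref{main3} after identifying the block structure and admissible decompositions of the simple bipartite cycle $G$. For $k=1$, the graph consists of a single pair of vertices joined by an edge of multiplicity two, i.e., a bipartite double tree. Substituting $|W|=|V|=1$, $|E|=2$ and $\deg(w)=2$ into~\eqref{eq: tau0 double tree} directly yields $\tau_G^0 = \phi^{0}\psi^{0}\,C_2(f) = C_2(f)$.

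For $k\ge 2$, I would first argue that $G$ has no separating vertex in $V$: any decomposition of $W$ into disjoint non-empty subsets whose induced subgraphs are connected corresponds to splitting the cyclic sequence of $W$, which on a cycle necessarily produces at least two boundary vertices of $V$ shared between different $V_i$'s, so no single $v$ can be the unique common vertex. Hence $G$ is itself a single block with $R=1$, $S=0$, $W_{B_1}=W$, and $\phi^{|E|/2-|V|}=\phi^{k-k}=1$, so formula~\eqref{eq: tau0 general} collapses to
$$
\tau_G^0 = \frac{1}{\psi^{k-1}} \sum_{K\ge 1}\sum_{W_1,\dots,W_K\in\mathcal{A}_K(W)} C_{(W_k)_{k=1}^K}(f).
$$
Since the trivial decomposition $W\in\mathcal{A}_1(W)$ contributes $C_W(f)$, the lemma reduces to the claim that $\mathcal{A}_K(W)=\emptyset$ for every $K\ge 2$.

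The main obstacle is precisely this combinatorial claim, and I would prove it as follows. Fix an admissible decomposition $W_1,\dots,W_K\in\mathcal{A}_K(W)$ with $K\ge 2$. Condition (d) with $k'=2$ gives $|W_i\cap W_j|\le 1$, so no $W_i$ can coincide with $W$ (otherwise $|W_i\cap W_j|=|W_j|\ge 2$ since $|W_j|\ge 2$). The connectedness condition (b) then forces each $W_i$ to be a proper contiguous arc $\{w_{a_i},\dots,w_{b_i}\}$ of the cyclic ordering of $W$. Condition (c) requires every consecutive pair $(w_j,w_{j+1})$ to be internal to some $W_i$; applied to the boundary pairs $(w_{a_i-1},w_{a_i})$ and $(w_{b_i},w_{b_i+1})$ this places both endpoints of every arc inside the shared-vertex set $S=\bigcup_{i<j}W_i\cap W_j$. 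A short case analysis using $|W_i\cap W_j|\le 1$ shows that the right endpoints $w_{b_1},\dots,w_{b_K}$ are pairwise distinct: two arcs with a common right endpoint would be nested on the cycle and hence intersect in at least two vertices, contradicting (d). Therefore $|S|\ge K$, which contradicts condition (d) applied at $k'=K$, namely $|S|=|W_{G_1,\dots,G_K}|\le K-1$. Only $K=1$ with $W_1=W$ survives, and one obtains $\tau_G^0=\psi^{-(k-1)}\,C_W(f)$, as claimed.
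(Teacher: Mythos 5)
Your proof is correct and follows the paper's strategy: for $k=1$ the graph is a double tree and \eqref{eq: tau0 double tree} gives $C_2(f)$ directly, while for $k\ge 2$ one applies Proposition~\ref{main3} to the single-block admissible graph $G$ and shows that $W\in\mathcal{A}_1(W)$ is the only admissible decomposition. Your argument ruling out $K\ge 2$ --- each $W_i$ must be a proper contiguous arc whose two endpoints land in $W_{G_1,\ldots,G_K}$ by condition (c) of Definition~\ref{def: admissible decomposition}, the right endpoints are pairwise distinct because $|W_i\cap W_j|\le 1$ by (d), hence $|W_{G_1,\ldots,G_K}|\ge K$ contradicts (d) at level $K$ --- is a cleaner, single uniform argument than the paper's split into the cases $K=2$, $q\ge K$, $q=K-1$, and $q\le K-2$, and it reaches the same conclusion.
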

\begin{proof}
If \(k=1\), then \(G\) is a double tree. It follows from Proposition~\ref{main3} that
\[
\tau^0_G = C_2(f).
\]
Assume now that \(k \geq 2\). The simple bipartite cycle graph is a block tree with exactly one block, where \(W\) contains more than two vertices since \(k \ge 2\), and all vertices in \(W \cup V\) have degree \(2\). Thus, \(G\) is an admissible graph and by Proposition~\ref{main3}, the limiting injective trace \(\tau^0_G\) is given by
\[
\tau^0_G = \frac{1}{\psi^{k-1}} \sum_{K \ge 1} \sum_{\{W_1,\ldots, W_K\} \in \mathcal{A}_K(W)} C_{(W_k)_{k=1}^K}(f).
\]
By definition, \(\{W\} \in \mathcal{A}_1(W)\). We claim that there are no other admissible decomposition of \(W\). First, assume by contradiction that there exist subsets \(W_1,W_2\) such that \(\{W_1, W_2\} \in \mathcal{A}_2(W)\). Then, \(W_1 \cup W_2 = W\) and \(| W_1 \cap W_2 | \geq 1\). If \(| W_1 \cap W_2 | =1\), without loss of generality we may label \(W_1\) and \(W_2\) as \(W_1 = \{w_1, \ldots, w_{j_1}\}\) and \(W_2 = \{w_{j_1}, \ldots, w_k\}\) such that \(w_{j_1} \in W_1 \cap W_2\). By construction, there exists \(v \in V_1 \cap V_2\) such that \(w_n \sim v \sim w_1\) and \(\deg_{G_1}(v) = \deg_{G_2}(v) = 1\). This contradicts condition (e) of Definition~\ref{def: admissible decomposition}. If there are two or more common vertices between \(W_1\) and \(W_2\), then \(|W_{G_1, G_2}| \geq 2\) and this is a contradiction to (f) of Definition~\ref{def: admissible decomposition}. Now, assume by contradiction that there exist \(\{W_1,\ldots, W_K\} \in \mathcal{A}_K(W)\) for some \(K > 1\). Let \(\tilde{w}_1, \ldots, \tilde{w}_q \in W\) denote the vertices which belong to two or more subsets among \(W_1, \ldots, W_K\). If \(q \geq K\), then \(|W_{G_1, \ldots, G_K}| = q \geq K\), which contradicts item (f) of Definition~\ref{def: admissible decomposition}. If \(q = K - 1\), since \(W = \cup_{i=1}^K W_i\) and there is no subset which is disjoint from the others, without loss of generality we may label the subsets \(W_1,\ldots, W_K\) by \(W_1 = \{w_1, \ldots, w_{j_1}\}, W_2 = \{w_{j_1},w_{j_1+1}, \ldots, w_{j_2}\}, \ldots, W_K = \{w_{j_{K-1}}, w_{j_{K-1} + 1}, \ldots ,w_k\}\), where \(1 < j_1 < j_2 < \cdots < j_{K-1} < k\). In particular, \(|W_i \cap W_{i+1}| =1\) for any \(1 \leq i \leq K-1\) and \(|W_1 \cap W_K| =0\). By construction, there exists \(v \in V_1 \cap V_K\) such that \(w_n \sim v \sim w_1\) and \(\deg_{G_1}(v) = \deg_{G_K}(v) =1\). This is again a contradiction to item (e) of Definition~\ref{def: admissible decomposition}. Finally, if \(q \leq K-2\), any decomposition of \(W\) into subsets \(W_1, \ldots, W_K\) fails to satisfy condition (e) of Definition~\ref{def: admissible decomposition}. 

The only contribution in \(\tau^0_G\) comes therefore from \(\{W\} \in \mathcal{A}_1(W)\), i.e., 
\[
\tau^0_G = \frac{1}{\psi^{k-1}} C_{W}(f).
\]
The parameter \(C_{W}(f)\) is given by~\eqref{eq: C_{W_i}} and, in this case, takes the following form:
\begin{equation} \label{eq: C_W cycle}
C_W (f) = \frac{1}{(2 \pi)^{2k}} \int_{\R^{2k}} \prod_{i=1}^k \textnormal{d} \gamma_{(w_i, v_{i-1})}   \textnormal{d} \gamma_{(w_i, v_i)}  \hat{f} (\gamma_{(w_i, v_{i-1})} )   \hat{f} (\gamma_{(w_i, v_i)} )  e^{ \E_X \left [ Z_{w_i}(\boldsymbol{\gamma}) \right ]} \E_X \left [ \prod_{i=1}^k Z_{w_i} (\boldsymbol{\gamma}) \right ],
\end{equation}
where \(Z_{w_i}(\boldsymbol{\gamma}) = \Phi \left( \gamma_{(w_i, v_{i-1})} X_{v_{i-1}} + \gamma_{(w_i, v_i)}X_{v_i}  \right)\), and we used the convention that \(v_0 = v_k\).
\end{proof}

We now assume that \(w_1, \ldots, w_k\) are pairwise distinct and consider a noncrossing partition of \(V\). 

\begin{lem} \label{lem: contr non-cross part in V}
If \(\mu = \{ \{w_i\} \colon i \in [k]\}\), then for every \(\pi \in \mathcal{NC}(V)\),
\[
\tau^0_{G^{\pi,\mu}} = \frac{\phi^{k - |\pi|}}{\psi^{k-1}} C_2(f)^{S_\pi} \prod_{i=1}^{R_\pi} C_{W_i^\pi}(f), 
\]
where 
\begin{itemize}
\item \(S_\pi =|b(\pi)|\) and \(R_\pi = k - |\pi| +1 - S_\pi\) (see Definition~\ref{def: partition 2}); 
\item \(W^\pi\) denotes the finest partition of \(W\) according to \(\pi\) and \(W_1^\pi, \ldots, W_{R_\pi}^\pi\) are the subsets of \(W^\pi\) with at least two vertices, as described by Definition~\ref{def: NC(V)}.
\end{itemize}
\end{lem}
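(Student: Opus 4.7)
The plan is to apply Proposition~\ref{main3} directly to $G^{\pi,\mu}$. Since $\mu$ is the singleton partition of $W$, no vertices of $W$ are identified, so $G^{\pi,\mu} = G^\pi$. By Remark~\ref{rmk: NC(V)}, for $\pi \in \mathcal{NC}(V)$ the graph $G^\pi$ is a cactus graph whose blocks are exactly the connected subgraphs $G(W_1^\pi), \ldots, G(W_{R_\pi + S_\pi}^\pi)$: the first $R_\pi$ blocks are simple bipartite cycles of length $2|W_i^\pi| \ge 4$, while the remaining $S_\pi$ blocks each consist of a single vertex of $W$ together with its unique neighbor in $V$ joined by an edge of multiplicity $2$ (i.e.\ double trees). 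In particular, every vertex of $W$ has even degree, every vertex of $V$ has degree $2$ within its block, and at least one block contains more than one vertex of $W$ (assuming $k \ge 2$); hence $G^\pi$ is admissible in the sense of Definition~\ref{def: admissible graph}. The case $k=1$, or more generally $\pi = \{\{v_i\}\}_{i \in [k]}$ with $k=1$, is absorbed by the $C_2(f)^{S_\pi}$ factor.

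Next, I would read off the global structural parameters in~\eqref{eq: tau0 general}: on $G^\pi$ one has $|W|=k$, $|V|=|\pi|$, $|E|=2k$, so $|E|/2 - |V| = k - |\pi|$ and $|W| - 1 = k-1$, producing the prefactor $\phi^{k-|\pi|}/\psi^{k-1}$. The $S_\pi$ double-tree blocks contribute, through the first product in~\eqref{eq: tau0 general}, a factor $\prod_{w} C_{\deg(w)}(f) = C_2(f)^{S_\pi}$ since each such $w$ has $\deg(w) = 2$.

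The main step is to show that each cycle-block $B_i = G(W_i^\pi)$ with $|W_i^\pi| \ge 2$ admits only the trivial decomposition $W_i^\pi \in \mathcal{A}_1(W_i^\pi)$, so the inner sum in~\eqref{eq: tau0 general} collapses to the single term $C_{W_i^\pi}(f)$. This is precisely the argument carried out inside the proof of Lemma~\ref{lem: contr cycle}, applied block-wise: any nontrivial $W_1, \ldots, W_K \in \mathcal{A}_K(W_i^\pi)$ would partition the cycle into connected arcs, and either (i) two distinct arcs meet at a vertex $v \in V_i^\pi$ with $\deg_{G_j}(v) = 1$ in each induced subgraph, violating condition (c) of Definition~\ref{def: admissible decomposition}, or (ii) the arcs share two or more vertices of $W$, forcing $|W_{G_{\ell_1},\ldots,G_{\ell_k}}| \ge k$ for some $k$ and violating condition (d). I would write this verification once and invoke it for every $i \in [R_\pi]$.

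Combining the three factors assembled above in~\eqref{eq: tau0 general} yields
\[
\tau^0_{G^{\pi,\mu}} = \frac{\phi^{k - |\pi|}}{\psi^{k-1}} \, C_2(f)^{S_\pi} \prod_{i=1}^{R_\pi} C_{W_i^\pi}(f),
\]
as claimed. The only non-routine step is the block-wise rigidity of admissible decompositions of a simple cycle, which is handled by a direct repeat of the combinatorial argument from Lemma~\ref{lem: contr cycle}; everything else is bookkeeping of the block decomposition described in Definition~\ref{def: NC(V)} and Remark~\ref{rmk: NC(V)}.
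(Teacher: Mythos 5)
Your proof follows the same strategy as the paper: reduce $G^{\pi,\mu}$ to $G^\pi$ (since $\mu$ is the partition of singletons), identify the block structure from Definition~\ref{def: NC(V)} and Remark~\ref{rmk: NC(V)}, read off the global parameters, show each cycle block admits only the trivial admissible decomposition by the argument of Lemma~\ref{lem: contr cycle}, and assemble the factors from formula~\eqref{eq: tau0 general}.

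There is, however, a concrete gap in the way you dispose of the degenerate case. You assert that ``at least one block contains more than one vertex of $W$ (assuming $k \ge 2$)'' and then claim that ``the case $k=1$, or more generally $\pi = \{\{v_i\}\}_{i\in[k]}$ with $k=1$, is absorbed by the $C_2(f)^{S_\pi}$ factor.'' Neither statement correctly identifies the exceptional case. The condition $k \ge 2$ alone does not guarantee that some $W_i^\pi$ has two or more elements: if $\pi = \{V\}$ (the one-block, i.e.\ singleton, partition of $V$) with $k \ge 2$, then all of the $W_i^\pi$ are singletons, so $R_\pi = 0$, $S_\pi = k$, and $G^\pi$ is a double tree rather than an admissible graph; condition (b) of Definition~\ref{def: admissible graph} fails. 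Moreover, $\{\{v_i\}\}_{i\in[k]}$ is the partition \emph{of} singletons, which (for $k \ge 2$) produces the cycle $G$ itself and is admissible; it is not the degenerate case. The correct statement, which the paper makes explicitly, is: $G^{\pi,\mu}$ is a double tree iff $\pi = \{V\}$, and in that case one must invoke item (a) of Proposition~\ref{main3} rather than item (b). The stated lemma's formula still holds there because the product over $i \in [R_\pi]$ is empty and $S_\pi = k$, but this observation needs to be made and tied to formula~\eqref{eq: tau0 double tree}; without it, your argument breaks precisely at the point where you claim admissibility.
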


According to Remark~\ref{rmk: NC(V)}, each subgraph \(G(W_i^\pi)\) is a simple bipartite cycle of length \(2 |W_i^\pi|\). Therefore, the parameter \(C_{W_i^\pi}(f)\) takes the same form  as~\eqref{eq: C_W cycle} in the proof of the previous lemma. 

\begin{proof}
We first observe that, since \(\mu \in \mathcal{P}(W)\) is the partition of singletons, the graph \(G^{\pi,\mu}\) becomes a double tree if and only if \(\pi \in \mathcal{NC}(V)\) is the singleton partition. In this case, \(\deg(w_i) = 2\) for every \(i \in [k]\). By Proposition~\ref{main3}, we obtain 
\[
\tau^0_{G^{\pi,\mu}} = \frac{\phi^{k - 1}}{\psi^{k-1}} C_2(f)^k.
\]
Here, \(S_\pi = |b(\pi)| = k\) and \(R_\pi = k - |\pi| + 1 - k = 0\). 

Now consider a noncrossing partition \(\pi\) with \(\pi \neq \{V\}\). According to Definition~\ref{def: NC(V)} and Remark~\ref{rmk: NC(V)}, the set \(W^\pi\) decomposes into disjoint subsets \(W_1^\pi, \ldots, W_{k-|\pi|+1}^\pi\) such that the associated subgraphs \(G(W_1^\pi), \ldots, G(W_{k - |\pi| +1}^\pi)\) are simple bipartite cycles, which are connected through \(p = \sum_{i=1}^{|\pi|} \mathbf{1}_{\{|B_i| \ge 2\}} \le k - |\pi|\) vertices in \(V\). The resulting graph \(G^{\pi,\mu}\) is therefore a block tree with \(k-|\pi|+1\) blocks, given by the subgraphs \(G(W_1^\pi), \ldots, G(W_{k - |\pi| +1}^\pi)\). Since each block is a simple cycle, \(G^{\pi,\mu}\) is an admissible graph. Recall that \(S_\pi = |b(\pi)|\) denotes the number of subgraphs of length \(2\). Consequently, there are \(R_\pi = k - |\pi| + 1 - S_\pi\) subsets \(W_1^\pi,\ldots, W_{R_\pi}^\pi\) of cardinality at least \(2\). By definition, we have \(\{W_i^\pi\} \in \mathcal{A}_1(W_i^\pi)\) for every \(i \in [R_\pi]\). Therefore, by Proposition~\ref{main3}, the following term contributes to the limiting injective trace \(\tau^0_{G^{\pi,\mu}}\):
\[
\frac{\phi^{k - |\pi|}}{\psi^{k-1}} \prod_{i=1}^{R_\pi} C_{W_i^\pi}(f) \prod_{w \in W \backslash \cup_{i=1}^{R_\pi} W_i^\pi} C_{\deg(w)}(f) =\frac{\phi^{k - |\pi|}}{\psi^{k-1}} C_2(f)^{S_\pi} \prod_{i=1}^{R_\pi} C_{W_i^\pi}(f),
\]
where we used the fact that every \(w \in W \backslash \cup_{i=1}^{R_\pi} W_i\) has degree \(2\). Finally, since each subgraph \(G(W^\pi_i)\) is a simple bipartite cycle, there are no further admissible decomposition of \(W^\pi_i\), as shown in Lemma~\ref{lem: contr cycle}. Hence, no additional terms contribute to \(\tau^0_{G^{\pi,\mu}}\), which completes the proof.
\end{proof}

We next show that if \(\pi \in \mathcal{P}(V)\) is a crossing partition, then \(\tau^0_{G^{\pi,\mu}}\) vanishes for any partition \(\mu \in \mathcal{P}(W)\).

\begin{lem} \label{lem: contr cross part in V}
For every partition \(\mu \in \mathcal{P}(W)\) and every crossing partition \(\pi \in \mathcal{P}(V)\), \(\tau^0_{G^{\pi,\mu}}= 0\).
\end{lem}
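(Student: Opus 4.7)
The strategy is to show that for any crossing partition $\pi \in \mathcal{P}(V)$ and any $\mu \in \mathcal{P}(W)$, the graph $G^{\pi,\mu}$ is neither a double tree nor an admissible graph, so that $\tau^0_{G^{\pi,\mu}} = 0$ by part (c) of Proposition~\ref{main3}. The key structural fact is that a crossing in $\pi$ forces \emph{four} pairwise edge-disjoint walks between two vertices of $G^{\pi,\mu}$, whereas both double trees and admissible graphs can support only two.

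More precisely, since $\pi$ has a crossing, there exist indices $i_1 < j_1 < i_2 < j_2$ with $v_{i_1} \sim_\pi v_{i_2}$ and $v_{j_1} \sim_\pi v_{j_2}$ lying in distinct blocks of $\pi$. Let $v^*$ denote the image in $G^{\pi,\mu}$ of the $\pi$-block containing $v_{i_1}, v_{i_2}$, and $v^{**}$ the image of the block containing $v_{j_1}, v_{j_2}$. The four vertices $v_{i_1}, v_{j_1}, v_{i_2}, v_{j_2}$ split the cycle $G$ into four edge-disjoint arcs $A$ (from $v_{i_1}$ to $v_{j_1}$), $B$ (from $v_{j_1}$ to $v_{i_2}$), $C$ (from $v_{i_2}$ to $v_{j_2}$), and $D$ (from $v_{j_2}$ to $v_{i_1}$, wrapping around). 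Each arc is nonempty (it contains at least one edge) and the arcs share no edges. Recalling from the construction just before~\eqref{eq: moments} that $G^{\pi,\mu}$ is obtained from $G$ by identifying vertices but \emph{not} edges, these four arcs descend to four pairwise edge-disjoint walks in $G^{\pi,\mu}$, each connecting $v^*$ and $v^{**}$. In particular, the four starting edges at $v^*$ are distinct, so $\deg_{G^{\pi,\mu}}(v^*) \geq 4$.

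It remains to observe that neither a double tree nor an admissible graph supports four pairwise edge-disjoint walks between two $V$-vertices. For a double tree, the underlying graph (with multiplicities forgotten) is a tree, so there is a unique underlying path between $v^*$ and $v^{**}$; since every edge of this path has multiplicity exactly $2$, at most two edge-disjoint walks can be realized. For an admissible graph, the block-tree structure implies that every walk between $v^*$ and $v^{**}$ must leave $v^*$ through the unique block of $v^*$ that lies on the block-path to $v^{**}$; but within this block, condition (c) of Definition~\ref{def: admissible graph} requires $\deg(v^*) = 2$, so at most two edge-disjoint walks can originate from $v^*$ in this direction. In both cases, the existence of four edge-disjoint walks forces a contradiction.

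Putting these pieces together yields that $G^{\pi,\mu}$ falls into case (c) of Proposition~\ref{main3}, hence $\tau^0_{G^{\pi,\mu}} = 0$. The only delicate point I anticipate is the admissibility argument: one needs to be careful that $v^*$ may itself be a separating vertex appearing in several blocks, and to verify that the block-tree structure forces all four walks to leave through the same block adjacent to $v^*$ along the block-path towards $v^{**}$, so that the local degree bound $\deg_B(v^*) = 2$ really applies to all four walks simultaneously.
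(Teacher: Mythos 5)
Your proof is correct and takes a genuinely different, arguably cleaner, route. The paper first identifies the non-crossing blocks of $\pi$, obtaining a block tree by Remark~\ref{rmk: NC(V)}, and then argues by cases: if the four crossing vertices fall into a single block of the resulting graph, identifying them creates a $V$-vertex of degree at least $4$ inside a block, violating condition (c) of Definition~\ref{def: admissible graph}; if they span two blocks, the further identification creates a cycle among blocks, so the block-tree property fails. Your argument instead extracts from the crossing four pairwise edge-disjoint walks in $G^{\pi,\mu}$ joining the distinct merged vertices $v^*$ and $v^{**}$, and shows that neither a double tree nor an admissible graph supports more than two such walks. This buys two things: it avoids the preprocessing and the casework, and it handles the $\mu$-dependence transparently, since edge-disjointness of the four arcs is automatic under any vertex identification (edges are never merged). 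One phrasing in the admissibility step should be tightened: it is not quite true that every walk must ``leave $v^*$ through $B^*$,'' since a walk could first enter another block and later return to $v^*$. The clean formulation is an edge-cut argument: by condition (c) there are exactly two edges of $B^*$ incident to $v^*$, and by the block-tree property these two edges separate $v^*$ from $v^{**}$ (once they are removed, the branches hanging off $v^*$ through any block other than $B^*$ cannot reach the component of $v^{**}$); hence every walk from $v^*$ to $v^{**}$ must traverse one of these two edges, capping the number of pairwise edge-disjoint walks at two. You flag the delicate point, but the precise fix is this edge-cut observation rather than tracking through which block each walk first departs.
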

\begin{proof}
It suffices to consider the case where \(\pi\) contains a single crossing, since any crossing partition contains such a configuration. Moreover, the partition \(\mu \in \mathcal P(W)\) plays no role in the argument, as the obstruction to admissibility comes entirely from the identifications induced by \(\pi\) on the vertices in \(V\). Let \(\pi \in \mathcal{P}(V)\) be a partition containing a crossing. Then there exist two blocks \(B_i\) and \(B_j\) of \(\pi\), and distinct vertices
\(v_{i_1}, v_{i_2} \in B_i\) and \(v_{j_1}, v_{j_2} \in B_j\), such that
\[
v_{i_1} < v_{j_1} < v_{i_2} < v_{j_2}.
\]
We perform identifications on all blocks of \(\pi\) except those involving the vertices in \(B_i\) and \(B_j\). By Remark~\ref{rmk: NC(V)}, the graph obtained after these identifications is a block tree.
We now analyze how the remaining crossing affects the resulting graph.

First, suppose that the crossing occurs within a single block, i.e., suppose that \(v_{i_1}, v_{i_2}, v_{j_1}\), and \(v_{j_2}\) belong to the same connected subgraph. By the cyclic ordering of the vertices in \(V\), performing the identifications within \(B_i\) and \(B_j\) produces a vertex of degree at least \(4\). Consequently, the degree condition in Definition~\ref{def: admissible graph} is violated, and the resulting graph \(G^\pi\) is not admissible.

Next, suppose that the crossing occurs between two distinct blocks. For instance, assume that
\(v_{i_1}, v_{j_1}\), and \(v_{i_2}\) belong to one subgraph, while \(v_{j_2}\) belongs to another.
Merging \(v_{i_1}\) with \(v_{i_2}\) creates a new block containing \(v_{j_1}\).
Subsequently, merging \(v_{j_1}\) with \(v_{j_2}\) produces a separating vertex that connects two previously distinct subgraphs.
This operation introduces a cycle in the block structure of \(G^\pi\), so the resulting graph is no longer a block tree.
Hence, \(G^\pi\) is not admissible in this case either.

In both situations, Proposition~\ref{main3} implies that the limiting injective trace vanishes, and therefore
\(\tau^0_{G^{\pi,\mu}} = 0\).
\end{proof}

We now focus on partitions among the vertices of \(W\). According to Definition~\ref{def: NC(V)}, given a partition \(\pi \in \mathcal{NC}(V)\), the set \(W^\pi\) is decomposed into \(k - |\pi| + 1\) disjoint subsets \(W_1^\pi, \ldots, W_{k - |\pi|+1}^\pi\). If we now consider a partition \(\mu\) of the set \(W\), the vertices within a same block of \(\mu\) belong either to exactly one component or to several components among \(W_1^\pi, \ldots, W_{k - |\pi|+1}^\pi\). We first show that the latter case gives a zero contribution in \(\tau^0_{G^{\pi, \mu}}\).

\begin{lem}\label{lem: contr between V and W}
Let \(\pi\in\mathcal{NC}(V)\), and let
\(W^\pi=\sqcup_{i=1}^{k-|\pi|+1} W_i^\pi\)
be the partition from Definition~\ref{def: NC(V)}.
Assume that for a partition \(\mu\in\mathcal P(W)\) there exist vertices
\(w_{j_1}\sim_\mu w_{j_2}\) with
\(w_{j_1}\in W_{i_1}^\pi\) and \(w_{j_2}\in W_{i_2}^\pi\) for some \(i_1\neq i_2\).
Then
\(\tau^0_{G^{\pi,\mu}}=0\).
\end{lem}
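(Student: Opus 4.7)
The plan is to invoke Proposition~\ref{main3}: it suffices to show that $G^{\pi,\mu}$ is neither a double tree nor an admissible graph, so that $\tau^0_{G^{\pi,\mu}}=0$. More precisely, I will argue that the hypothesis --- the presence of a $\mu$-identified pair of $W$-vertices lying in distinct components $W_{i_1}^\pi \neq W_{i_2}^\pi$ --- forces the failure of condition~(c) of Definition~\ref{def: admissible graph} in $G^{\pi,\mu}$ by producing a $V$-vertex of degree $4$ inside a single block.

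By Remark~\ref{rmk: NC(V)}, $G^\pi$ is a cactus graph whose blocks are the simple bipartite cycles $G(W_j^\pi)$, joined only at separating vertices of $V$. Since this block structure is a tree, there is a unique block-path $G(W_{i_1}^\pi) = B_{\ell_1}, B_{\ell_2}, \ldots, B_{\ell_m} = G(W_{i_2}^\pi)$ along which consecutive blocks $B_{\ell_j}, B_{\ell_{j+1}}$ meet at a unique separating vertex $v_j^* \in V$. Each $v_j^*$ has degree $2$ in each of the two incident cycles, hence degree $4$ in $G^\pi$. The next step is to analyse the block structure of $G^{\pi,\mu}$ after identification of $w_{j_1} \in W_{i_1}^\pi$ with $w_{j_2} \in W_{i_2}^\pi$. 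Call $w^*$ the resulting vertex: by construction $w^*$ lies simultaneously in $B_{\ell_1}$ and $B_{\ell_m}$. Hence within $G^{\pi,\mu}$ there is a ``shortcut'' path between $B_{\ell_1}$ and $B_{\ell_m}$ that passes through $w^*$ and avoids every $v_j^*$; consequently none of the $v_j^*$ remains a separating vertex of $G^{\pi,\mu}$. By the maximality in Definition~\ref{def: block structure}, the subgraphs $B_{\ell_1}, \ldots, B_{\ell_m}$ must then merge into a single block $B^*$ of $G^{\pi,\mu}$ containing all the $v_j^*$ as interior vertices.

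Since the $\mu$-identification affects only $W$-vertices, the degree of each $v_j^*$ is unchanged; in particular $v_1^*$ still has degree $4$, and this whole degree now lies inside the single block $B^*$. This contradicts the requirement in~(c) of Definition~\ref{def: admissible graph} that every $V$-vertex in a block have degree exactly $2$, so $G^{\pi,\mu}$ is not admissible. It is also not a double tree, since the two distinct paths from $B_{\ell_1}$ to $B_{\ell_m}$ (one via the $v_j^*$'s, the other via $w^*$) produce in the underlying simple graph a cycle of length strictly greater than $2$, whereas in a double tree every simple cycle has length $2$. Proposition~\ref{main3}(c) then yields $\tau^0_{G^{\pi,\mu}} = 0$. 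The only real subtlety will be verifying that the $w^*$-shortcut genuinely deprives each $v_j^*$ of its separating status, which is immediate since the shortcut uses none of the $v_j^*$'s; I do not anticipate any further technical obstacle.
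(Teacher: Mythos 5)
Your proof is correct and reaches the paper's conclusion by a slightly cleaner, unified route. The paper's argument for this lemma splits into two cases according to whether $G(W_{i_1}^\pi)$ and $G(W_{i_2}^\pi)$ are adjacent in the cactus $G^\pi$: if so, it exhibits a degree-$4$ $V$-vertex inside the merged component (essentially your observation), and if not, it instead argues that the merged component $\tilde{G}^{\pi,\mu}$ together with the intermediate $G_{i_3}^\pi,\ldots,G_{i_n}^\pi$ forms a cycle, violating the block-tree condition. You handle both cases at once by following the entire block path $B_{\ell_1},\ldots,B_{\ell_m}$, noting that the $w^*$-shortcut strips every $v_j^*$ of its separating status (additional identifications in $\mu$ only preserve this, and do not affect degrees of $V$-vertices), so the whole path coalesces into a single block $B^*$ in which $\deg_{B^*}(v_1^*)\geq 4$, contradicting condition (c) of Definition~\ref{def: admissible graph}. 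This is arguably more faithful to the actual block decomposition of $G^{\pi,\mu}$ than the paper's second case, which treats the intermediate components as though they remained separate blocks when they in fact merge. One small caveat: your parenthetical claim that the two paths from $B_{\ell_1}$ to $B_{\ell_m}$ yield a simple cycle of length $>2$ in the underlying graph (used to exclude the double-tree case) fails in the degenerate situation $m=2$ with $|W_{i_1}^\pi|=|W_{i_2}^\pi|=1$, where $B^*$ collapses to a single edge of multiplicity $4$ whose underlying simple graph is a tree; but this does not harm your proof, since a double tree also requires every $V$-vertex to have degree exactly $2$ inside each block, so the same $\deg_{B^*}(v_1^*)\geq 4$ observation excludes the double-tree case as well.
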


\begin{proof}
By Definition~\ref{def: NC(V)}, the subgraphs \(G (W_i^\pi), \ldots, G (W_{k - |\pi|+1}^\pi)\) are simple bipartite cycles connected through \(p \leq k - |\pi|\) vertices \(\tilde{v}_1, \ldots, \tilde{v}_p\in V\). As shown in the proof of Lemma~\ref{lem: contr non-cross part in V},
the graph \(G^\pi\) is an admissible graph whose blocks are precisely these subgraphs. Assume that there exists a partition \(\mu \in \mathcal{P}(W)\) such that \(w_{j_1} \sim_\mu w_{j_2}\), where \(w_{j_1} \in W_{i_1}^\pi\) and \(w_{j_2} \in W_{i_2}^\pi\) for \(i_1 \neq  i_2\). We claim that the resulting graph \(G^{\pi,\mu}\) is not admissible. First, suppose that \(G_{i_1}^\pi = G(W_{i_1}^\pi)\) and \(G_{i_2}^\pi =G(W_{i_2}^\pi)\) share a common vertex \(\tilde{v} \in V\). Since each block is a simple cycle, \(\deg_{G_{i_1}^\pi}(\tilde v)=\deg_{G_{i_2}^\pi}(\tilde v)=2\),
and hence \(\deg_{G^\pi}(\tilde v)\ge4\). After merging \(w_{j_1}\) and \(w_{j_2}\) according to Definition~\ref{def: merging operation}, the two subgraphs \(G_{i_1}^\pi\) and \(G_{i_2}^\pi\) merge into a single connected component \(\widetilde{G}^{\pi, \mu}\) with \(\deg_{\widetilde{G}^{\pi,\mu}}(\tilde{v}) = 4\). Definition~\ref{def: admissible graph} is therefore not satisfied. Next, suppose that \(G_{i_1}^\pi\) and \(G_{i_2}^\pi\) do not share a vertex in \(V\). Let  \(G_{i_3}^\pi, \ldots, G_{i_n}^\pi\) denote the minimal path of blocks connecting \(G_{i_1}^\pi\) to \(G_{i_2}^\pi\) through separating vertices. Merging \(w_{j_1}\) and \(w_{j_2}\) again produces a new connected component \(\widetilde{G}^{\pi, \mu}\), which merges the two blocks \(G_{i_1}^\pi\) and \(G_{i_2}^\pi\). As a consequence, the blocks \(\widetilde{G}^{\pi, \mu}, G_{i_3}^\pi, \ldots, G_{i_n}^\pi\) form a cycle in the block structure, since \(|V_{\widetilde{G}^{\pi, \mu}, G_{i_3}^\pi, \ldots, G_{i_n}^\pi}| = n-1\). Hence, \(G^{\pi, \mu}\) is no longer a block tree. In both cases, \(G^{\pi, \mu}\) fails to be admissible. Therefore, by Proposition~\ref{main3}, \(\tau^0_{G^{\pi,\mu}}=0\).
\end{proof}

As a consequence of Lemma~\ref{lem: contr between V and W}, the only partitions of \(W\) that gives a nonvanishing contribution are partitions of \(W_1^\pi, \ldots, W_{R_\pi}^\pi\), where we recall that \(R_\pi = k - |\pi| + 1 - |b(\pi)|\) and \(|b(\pi)|\) denotes the number of nearest neighbors in a same block of \(\pi\) (see Definition~\ref{def: partition 2}). Since each connected component \(G_i^\pi = G(W_i^\pi)\) is a simple bipartite cycle (see Remark~\ref{rmk: NC(V)}), in the following we may assume without loss of generality that \(v_1 \neq \cdots \neq v_k\) and \(\pi \in \mathcal{P}(W)\). 

\begin{lem} \label{lem: contr part in W}
If \(\pi = \{\{v_i\} \colon i \in [k]\}\), then for every \(\mu \in \mathcal{P}(W)\) it holds that
\[
\tau^0_{G^{\pi,\mu}} = 
\begin{cases}
C_{2k}(f) & \text{if} \enspace \mu = \{W\}, \\
\frac{1}{\psi^{|\mu|-1}} \sum_{P \in \mathcal{P}([R_\mu])} C_{(\widetilde{W}_i^P)_{i=1}^{|P|}}(f)  \mathbf 1_{\{\forall i,\; G(\widetilde W_i^P)\ \text{connected}\}} & \text{otherwise} ,
\end{cases}
\]
where 
\begin{itemize}
\item \(R_\mu = |c(\mu)| + 1 - S_\mu\), \(S_\mu = |b(\mu)|\), and \(c(\mu)\) are given by Definition~\ref{def: partition 2};
\item for \(\mu \in \mathcal{P}(W)\), the subsets \(W_1^\mu, \ldots, W_{R_\mu}^\mu\) denote the finest partition of \(W^\mu\) (see Definitions~\ref{def: NC(W)} and~\ref{def: P(W)});
\item for any partition \(P \in \mathcal{P}([R_\mu])\) of the set \([R_\mu]=\{1,\ldots, R_\mu\}\) with blocks \(B_1,\ldots, B_{|P|}\) we denote by \(\widetilde{W}_i^P\) the subset obtained by merging \(\{W_j^{\mu} \colon j \in B_i\}\) and similarly \(G(\widetilde{W}_i^P)\) the graph obtained by merging \(\{G (W_j^\mu) \colon j \in B_i\}\) (see Definition~\ref{def: merging operation}).
\end{itemize}
\end{lem}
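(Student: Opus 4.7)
My plan is to split the proof into the two cases singled out in the statement and in each case apply Proposition~\ref{main3} directly, with the main work concentrated in identifying which admissible decompositions of $W^\mu$ appear.

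\emph{Case $\mu=\{W\}$.} When every $w_i$ is identified to a single vertex $\tilde w$, each $v_i$, originally adjacent to $w_i$ and $w_{i+1}$, becomes joined to $\tilde w$ by two parallel edges. Hence $G^{\pi,\mu}$ is a star whose underlying simple graph is a tree and whose every edge has multiplicity $2$, i.e.\ a double tree with $|W|=1$, $|V|=k$, $|E|=2k$ and $\deg(\tilde w)=2k$. Plugging these into~\eqref{eq: tau0 double tree} of Proposition~\ref{main3}(a) gives $\tau^0_{G^{\pi,\mu}}=\phi^{k-k}\psi^{1-1}\,C_{2k}(f)=C_{2k}(f)$.

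\emph{Case $\mu\neq\{W\}$ (so $|\mu|\ge 2$).} Here I would first verify that $G^{\pi,\mu}$ is an admissible graph with a single block. By Remark~\ref{rmk: P(W)}, identifying only vertices of $W$ in the cycle produces no separating vertex in $V$, so $G^{\pi,\mu}$ is a block tree consisting of one block; each $v_i\in V$ keeps degree $2$, while a vertex $\tilde w\in W^\mu$ associated to a block $B$ of $\mu$ has degree $2|B|$, which is even. Since $|E|/2-|V|=0$ and $|W^\mu|-1=|\mu|-1$, formula~\eqref{eq: tau0 general} of Proposition~\ref{main3}(b) yields
\[
\tau^0_{G^{\pi,\mu}}=\frac{1}{\psi^{|\mu|-1}}\sum_{K\ge 1}\sum_{W_1,\ldots,W_K\in\mathcal{A}_K(W^\mu)} C_{(W_k)_{k=1}^K}(f).
\]
It remains to identify this sum with the claimed sum over $P\in\mathcal{P}(R_\mu)$.

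\emph{Bijection between admissible decompositions and partitions of $\{1,\ldots,R_\mu\}$.} I will construct the bijection $P\mapsto(\tilde W_1^P,\ldots,\tilde W_{|P|}^P)$, where $\tilde W_i^P$ is obtained by merging $\{W_j^\mu:j\in B_i\}$ as in Definition~\ref{def: merging operation}. The forward direction amounts to checking conditions (a)--(d) of Definition~\ref{def: admissible decomposition}: (a) holds because every block of $P$ contains at least one non-singleton $W_j^\mu$, and the singleton $W_{R_\mu+j}^\mu=\{\tilde w_j\}$ shares its unique vertex with some $W_\ell^\mu$, $\ell\le R_\mu$, so its ``double edge'' is automatically included in whichever $\tilde W_i^P$ contains $\tilde w_j$; (b) is the connectivity indicator; (c) holds because each $v\in V$ is adjacent to exactly two vertices of $W^\mu$, which by construction of Definitions~\ref{def: NC(W)}--\ref{def: P(W)} both lie in the same $W_j^\mu$, and so in the same $\tilde W_i^P$, giving $\deg_{\tilde G_i^P}(v)=2$; (d) is the absence of a cycle among the $\tilde W_i^P$, which follows from the tree-like way the $W_j^\mu$'s share the vertices $\tilde w_j$. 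The reverse direction is the technically delicate step: given an admissible decomposition $(W_1,\ldots,W_K)$, one must show that each $W_i$ is a union of full blocks $W_j^\mu$ of the finest partition. This will follow from the extremality of the $W_j^\mu$'s: any further refinement of a single $W_j^\mu$ would either disconnect its induced subgraph, or leave some $v\in V$ with degree $1$ in every subgraph of the refinement, contradicting (b) or (c). The main obstacle is to carry out this minimality argument uniformly for both the crossing (Definition~\ref{def: P(W)}) and noncrossing (Definition~\ref{def: NC(W)}) cases, keeping track of how the shared vertices $\tilde w_j$ force the blocks of any admissible refinement to coincide with the $W_j^\mu$ up to merging.
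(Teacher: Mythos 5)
Your proof follows essentially the same route as the paper's: for \(\mu=\{W\}\) you recognise \(G^{\pi,\mu}\) as a double tree and apply~\eqref{eq: tau0 double tree}, and for \(\mu\ne\{W\}\) you invoke Remark~\ref{rmk: P(W)} to obtain a one-block admissible graph and reduce the computation to enumerating admissible decompositions of \(W^\mu\) via~\eqref{eq: tau0 general}. The paper's own proof is equally terse in the crucial last step: it verifies the forward direction (that merging \(\{W_j^\mu, j\in B_i\}\) for a partition \(P\in\mathcal{P}(R_\mu)\) with connected merged components produces an element of \(\mathcal{A}_{|P|}(W^\mu)\)) and then simply writes ``We therefore deduce'' without proving surjectivity. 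So your explicit acknowledgement of the reverse direction as the delicate step is not a weakness relative to the paper; in fact your minimality sketch (any admissible \(W_i\) that cuts through a single \(W_j^\mu\) strands some \(v\in V\) with degree one in every \(G_\ell\), violating condition (c) of Definition~\ref{def: admissible decomposition}) is the correct idea, and tracking it against the explicit constructions of Definitions~\ref{def: NC(W)}--\ref{def: P(W)} is exactly how the converse would be filled in.

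One point deserves to be made precise in your write-up of the forward direction. The indicator \(\mathbf{1}_{\{\text{each } G(\tilde W_i^P)\text{ is connected}\}}\) must be read through the merging procedure of Definition~\ref{def: merging operation}, which glues subsets only along shared \(W\)-vertices. Two of the \(W_j^\mu\) lying in the same block \(B_i\) of \(P\) may be disjoint as subsets of \(W\) while their induced subgraphs still share \(V\)-vertices, so that the induced subgraph of \(\cup_{j\in B_i}W_j^\mu\) is connected; in that case the resulting \((\tilde W_i^P)_i\) would violate condition (d) because two separating \(\tilde w\)'s survive the identification. It is the merging-sense connectedness (a single connected chain through shared \(\tilde w_j\)'s) that guarantees (d), and your assertion that (d) ``follows from the tree-like way the \(W_j^\mu\)'s share the vertices \(\tilde w_j\)'' secretly relies on exactly this interpretation. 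Making it explicit closes the logical loop between your items (b) and (d).
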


\begin{proof}
We first observe that since \(\pi\) is the partition of singletons, the graph \(G^{\pi,\mu}\) is a double tree if and only if \(\mu\) is the singleton partition. In this case, \(W^\pi = \{\tilde{w}\}\) where \(\tilde{w}\) denotes the vertex obtained by merging \(w_1, \ldots, w_k\). According to Proposition~\ref{main3}, we obtain 
\[
\tau^0_{G^{\pi,\mu}}  = C_{\deg(\tilde{w})}(f) = C_{2k} (f).
\]
Now, consider a partition \(\mu\) such that \(\mu \neq \{W\}\). According to Remark~\ref{rmk: P(W)}, the graph \(G^\mu\) is a block tree with a single block. It is an admissible graph since items (a), (b), and (c) of Definition~\ref{def: admissible graph} are easily verified. Therefore, the limiting injective trace \(\tau^0_{G^{\pi,\mu}}\) is given by Proposition~\ref{main3}. We need to identify all the admissible decompositions of \(W^\mu\). By definition, \(\{W^\mu\} \in \mathcal{A}_1(W^\mu)\), so that the term
\[
\frac{1}{\psi^{|\mu| - 1}} C_{W^\mu}(f)
\]
contributes to \(\tau^0_{G^{\pi,\mu}}\). According to Definitions~\ref{def: NC(W)} and~\ref{def: P(W)}, to the partition \(\mu\) we define subsets \(W_1^\mu, \ldots, W_{R_\mu}^\mu\), which share \(q \leq k - |\pi|\) vertices \(\tilde{w}_1, \ldots, \tilde{w}_q \in W\). In particular, we have \(\{W_1^\mu, \ldots, W_{R_\mu}^\mu\} \in \mathcal{A}_{R_\mu}(W^\mu)\). This implies that the term
\[
\frac{1}{\psi^{|\mu| - 1}} C_{(W_i^\mu)_{i=1}^{R_\mu}}(f)
\]
contributes to \(\tau^0_{G^{\pi,\mu}}\). More generally, let \(P \in \mathcal{P}([R_\mu])\) denote a partition of the set \([R_\mu]=\{1, \ldots, R_\mu\}\) with blocks \(B_1, \ldots, B_{|P|}\) and let \(\widetilde{W}_i^P\) denote the subset obtained by merging \(\{W_j^\mu \colon j \in B_i\}\). We also define the corresponding subgraph \(G(\widetilde{W}_i^P)\) by merging the subgraphs \(\{G(W_j^\mu)\colon j \in B_i\}\) and removing the repeated copies of vertices and edges (see Definition~\ref{def: merging operation}). We easily note that if each component \(G(\widetilde{W}_i^P)\) is connected, then \(\{\widetilde{W}_1^P, \ldots, \widetilde{W}_{|P|}^P\} \in \mathcal{A}_{|P|}(W^\mu)\). In particular, we observe that if \(P = \{\{1, \ldots, R_\mu\}\}\) is the singleton partition, then \(\widetilde{W}^P = W^\mu\); while if \(P = \{\{i\} \colon 1 \leq i \leq R_\mu\}\) is the partition of singletons, then \(\widetilde{W}_i^P = W_i^\mu\) for all \(1 \le i \le R_\mu\). We therefore deduce that
\[
\tau^0_{G^{\pi,\mu}} = \frac{1}{\psi^{|\mu| - 1}} \sum_{P \in \mathcal{P}([R_\mu])} C_{(\widetilde{W}_i)_{i=1}^{|P|}} (f) \mathbf{1}_{\{\text{each} \: G(\widetilde{W}_i^P) \: \text{is connected}\}},
\]
as desired.
\end{proof}

We are now able to prove Proposition~\ref{prop: main cycle} by combining the previous results.

\begin{proof}[\textbf{Proof of Proposition~\ref{prop: main cycle}}]
From~\eqref{eq: moments}, we have
\[
\lim_{p,m,n \to \infty }  \frac{1}{p} \E \left [ \tr (Y_m Y_m^\top)^k \right ] = \sum_{\pi \in \mathcal{P}(V)}   \sum_{\mu \in \mathcal{P}(W)}  \tau^0_{G^{\pi,\mu}}.
\]
Combining Lemmas~\ref{lem: contr non-cross part in V},~\ref{lem: contr cross part in V}, and~\ref{lem: contr between V and W}, this sum reduces to
\[
\sum_{\pi \in \mathcal{P}(V)}   \sum_{\mu \in \mathcal{P}(W)}  \tau^0_{G^{\pi,\mu}} = \sum_{\pi \in \mathcal{NC}(V)} \phi^{k - |\pi|} C_2(f)^{S_\pi} \prod_{i=1}^{R_\pi} \left ( \sum_{\mu_i \in \mathcal{P}(W_i^\pi)} \tau^0_{G_i^{\pi, \mu_i} } \right),
\]
where \(G_i^{\pi, \mu_i}\) denotes the graph obtained from \(G_i^\pi = G(W_i^\pi)\) by identifying vertices in \(W\) which belong to the same block of \(\mu_i\). Finally, applying Lemma~\ref{lem: contr part in W} completes the proof of Proposition~\ref{prop: main cycle}.
\end{proof}

We now consider the case of symmetric \(\alpha\)-stable entries with \(\alpha=2\), where \(\E_W \left [ \exp(i \lambda W_{ij})\right ] = \exp(- n^{-1} \sigma_w^2 \lambda^2 / 2)\) for every \(\lambda \in \R\). Thus, \(\Phi ( \lambda ) = - \lambda^2 \sigma_w^2 /2 \). We show that Proposition~\ref{prop: main cycle} reduces to~\cite[Theorem 3.5]{benigni2021}. We introduce two parameters \(\theta_1(f)\) and \(\theta_2(f)\) which depends on the activation function \(f\):
\[
\begin{split}
\theta_1(f) & = \E_{Z \sim \mathcal{N}(0, \sigma_w^2 \sigma_x^2)} \left [ f^2(Z) \right ] =  \int_\R f^2(\sigma_w \sigma_x x) \frac{e^{-x^2/2}}{\sqrt{2\pi}} \textnormal{d}x,\\
\theta_2(f) & =  \left (\E_{Z \sim \mathcal{N}(0, \sigma_w^2 \sigma_x^2)} \left [ f'(Z) \right ] \right) ^2= \left (\sigma_w \sigma_x \int_\R f'(\sigma_w \sigma_x x) \frac{e^{-x^2/2}}{\sqrt{2 \pi}} \textnormal{d}x\right )^2.
\end{split} 
\]

\begin{lem} \label{lem: moments alpha=2}
Assume that \(\nu_w\) is the symmetric \(\alpha\)-stable law with \(\alpha=2\) (equivalently, \(\nu_w\) is centered Gaussian), and that \(\nu_x\) satisfies Assumption~\ref{hyp1}(b). Then, under Assumptions~\ref{hyp2}-\ref{hyp3}, for every \(k \in \N\), the \(k\)th moment \(\frac{1}{p} \tr M^k\) converges in expectation to
\[
m_k= \sum_{\pi \in \mathcal{NC}(V)} \frac{\phi^{k-|\pi|}}{\psi^{k-1}} \,\theta_1(f)^{S_\pi}  \prod_{i=1}^{R_\pi} \left ( \sum_{\mu_i \in \mathcal{NC}(W_i^\pi)} \psi^{|W_i^\pi|-|\mu_i|} \theta_1(f)^{S_{\mu_i}} \theta_2(f)^{|W_i^\pi| - S_{\mu_i}} \right ),
\]
where 
\begin{itemize} 
\item for any noncrossing partition \(P\) of \(W\) or \(V\), \(S_P = |b(P)|\) denotes the number of nearest neighbor pairs within a block of \(P\) (see Definition~\ref{def: partition 2}), and \(R_P = k - |P| + 1 - S_P\);
\item for \(\pi \in \mathcal{NC}(V)\), \(W_1^\pi, \ldots, W_{R_\pi}^\pi\) are the subsets of \(W^\pi\) with cardinality at least \(2\), as in Definition~\ref{def: NC(V)}. 
\end{itemize}
\end{lem}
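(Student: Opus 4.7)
The plan is to take the formula in Proposition~\ref{prop: main cycle} and substitute the Gaussian characteristic exponent $\Phi(\lambda)=-\tfrac{\sigma_w^2}{2}\lambda^2$ into each of the parameters $C_d(f)$ and $C_{(W_k)_{k=1}^K}(f)$, then show that the resulting combinatorial sum collapses to the two-parameter expression in terms of $\theta_1(f)$ and $\theta_2(f)$.

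First, by Remark~\ref{rmk: C_deg(w) alpha=2} we have $C_d(f)=\theta_1(f)^{d/2}$ for every even $d$, which instantly reduces the factor $C_2(f)^{S_\pi}$ to $\theta_1(f)^{S_\pi}$ and handles the trivial admissible decomposition $\mu_i=\{W_i^\pi\}$ via $C_{2|W_i^\pi|}(f)=\theta_1(f)^{|W_i^\pi|}$, matching the claimed formula with $S_{\mu_i}=|W_i^\pi|$ and $\theta_2(f)^0=1$. The remaining task is therefore to evaluate $C_{(\tilde W_j^{P_i})_{j=1}^{|P_i|}}(f)$ for each nontrivial $\mu_i\in\mathcal{P}(W_i^\pi)$ and each admissible decomposition indexed by $P_i$, and then to sum over $P_i$ as prescribed by Proposition~\ref{prop: main cycle}.

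To perform that evaluation I would exploit that in the Gaussian case $\E_X[Z_w(\boldsymbol\gamma)]=-\tfrac{\sigma_w^2\sigma_x^2}{2}\sum_{v\sim w}\Gamma_{(w,v)}^2$, where $\Gamma_e=\sum_{i=1}^{m(e)}\gamma_e^i$, so that $\exp(\sum_w\E_X[Z_w(\boldsymbol\gamma)])$ factors over edges as a product of one-dimensional Gaussian densities, and $Z_w$ itself is quadratic in the $X_v$'s. By Wick's theorem, $\E_X[\prod_{w\in W_k}Z_w(\boldsymbol\gamma)]$ then reduces to a sum over perfect matchings of the $2|W_k|$ occurrences of $X_v$-variables attached to the vertices of $W_k$. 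For each such matching, after Fourier inversion against the $\hat f(\gamma_e^i)$ factors one obtains either a $\theta_1(f)$ contribution (when the two matched $X_v$'s come from the same double-edge, via the identity $\tfrac{1}{2\pi}\int\hat f(\gamma)\hat f(\gamma')e^{-\tfrac{\sigma_w^2\sigma_x^2}{2}(\gamma+\gamma')^2}d\gamma\,d\gamma'=\theta_1(f)$) or a $\theta_2(f)$ contribution (when the two matched $X_v$'s sit on two distinct edges meeting at a common vertex $v$, so that integration by parts produces a derivative at each of the two $f$'s, yielding $(\sigma_w\sigma_x\,\E[f'(Z)])^2=\theta_2(f)$). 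Hence every nonzero Wick term in $C_{(\tilde W_j^{P_i})_{j=1}^{|P_i|}}(f)$ is a monomial $\theta_1(f)^{a}\theta_2(f)^{b}$.

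The final combinatorial identification---showing that, after summing over $P_i$, the Wick contributions assemble into $\theta_1(f)^{S_{\mu_i}}\theta_2(f)^{|W_i^\pi|-S_{\mu_i}}$ for every noncrossing $\mu_i$ and vanish for every crossing $\mu_i$---is the main obstacle. The vanishing for crossings should follow from the same $T_{E_0}$-symmetry argument used in the proof of Lemma~\ref{lem: main estimate}: a crossing in $\mu_i$ forces a Wick configuration in which some edge subset carries an odd number of $\gamma$-variables after contraction, killing the Fourier integral against the odd function $\hat f$. For noncrossing $\mu_i$, the natural bijection is that the $S_{\mu_i}$ nearest-neighbor identifications of $\mu_i$ produce $S_{\mu_i}$ double-edged blocks, each matched by Wick to give one $\theta_1(f)$, while each of the remaining $|W_i^\pi|-S_{\mu_i}$ identified $W$-vertices acts as a ``derivative bridge'' contributing one $\theta_2(f)$. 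Making this bijection precise, following the recursive structure of Definitions~\ref{def: NC(V)},~\ref{def: NC(W)}, and~\ref{def: P(W)} by induction on the blocks of $\mu_i$, is the core combinatorial step; once it is established, substitution into Proposition~\ref{prop: main cycle} yields the claimed formula and recovers~\cite[Theorem 3.5]{benigni2021}.
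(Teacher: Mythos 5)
Your plan follows the same line as the paper's proof: substitute the quadratic characteristic exponent into Proposition~\ref{prop: main cycle}, reduce \(C_d(f)\) to \(\theta_1(f)^{d/2}\) via Remark~\ref{rmk: C_deg(w) alpha=2}, and evaluate the remaining \(C_{(\tilde W_j^{P_i})}\) parameters by expanding the Gaussian expectations \(\E_X[\prod_w Z_w]\) and killing off terms by the oddness of \(\hat f\). The Wick-matching language you use is equivalent to the paper's direct expansion of the product of quadratic forms, and your dichotomy (\(\theta_1\) from self-contractions on a double edge, \(\theta_2\) from cross-contractions through a shared \(v\) via \(i\gamma\hat f(\gamma)=\hat{f'}(\gamma)\)) is the correct one.

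The genuine gap is that you flag the ``core combinatorial step'' as an obstacle but do not resolve it, and your proposed resolution (a bijection argued by induction on the blocks of \(\mu_i\), with crossings killed by \(T_{E_0}\)) is more elaborate than necessary. The mechanism that closes the argument in the paper is sharper and simpler than per-configuration Wick bookkeeping: once \(\Phi\) is quadratic, the exponential weight \(e^{\E_X[S_G(\boldsymbol\gamma)]}\) factorizes over edges into one-dimensional Gaussian densities in \(\Gamma_e=\sum_i\gamma_e^i\). Consequently, if a summand of \(g_{G^{\pi,\mu}}(\boldsymbol\gamma)\) — indexed by a partition \(P\in\mathcal P(R_\mu)\) — fails to contain the variable \(\gamma_{e_0}\) for even a single multiplicity-one edge \(e_0\), the \(\gamma_{e_0}\)-integral factors out as \(\int\hat f(\gamma_{e_0})e^{-c\gamma_{e_0}^2}\,\mathrm d\gamma_{e_0}=0\). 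This immediately forces the sum over \(P\) to collapse to the finest partition \(P=\{\{1\},\dots,\{R_\mu\}\}\) (so there is no nontrivial ``assembly'' over \(P_i\), contrary to what your last paragraph suggests), kills every crossing \(\mu\) (for which no surviving monomial covers all the multiplicity-one \(\gamma_e\)'s), and yields the exponent on \(\theta_2\) directly as half the number of multiplicity-one edges, i.e.\ \(|W_i^\pi|-S_{\mu_i}\). Your proposal should be rewritten around this factorization observation rather than a matching-by-matching analysis followed by an inductive bijection; without it, the step you acknowledge as ``the main obstacle'' is not actually bridged.
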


In particular, Lemma~\ref{lem: moments alpha=2} shows that, for partitions
\(\pi \in \mathcal{P}(V)\) and \(\mu \in \mathcal{P}(W)\),
the limiting injective trace \(\tau^0_{G^{\pi,\mu}}\) is nonvanishing
only if the associated graph \(G^{\pi,\mu}\) is a bipartite cactus graph.

\begin{proof}
According to Definition~\ref{def: C_deg (f)} and Remark~\ref{rmk: C_deg(w) alpha=2}, if \(d\) is an even integer, then for \(\alpha=2\) the parameter \(C_d(f)\) simplifies to
\begin{equation} \label{eq: C_d(f,2)}
C_d(f) = \theta_1(f)^{d/2}.
\end{equation}
Furthermore, according to Definition~\ref{def: C_W (f)}, if \(G = (W \cup V, E)\) denotes the simple bipartite cycle, then for \(\alpha=2\) the parameter \(C_W(f)\) reduces to
\[
\begin{split}
C_W(f) & = \frac{(-\sigma_w^2)^{|W|}}{2^{|W|}(2\pi)^{2|W|}} \int_{\R^{2|W|}} \prod_{i=1}^{2|W|} \textnormal{d} \gamma_i \hat{f}(\gamma_i) e^{-\frac{\sigma_w^2 \sigma_x^2}{2}\gamma_i^2} \\
& \quad \times \E_X \left [ \prod_{i=2}^{|W|} (\gamma_{2(i-1)} X_{i-1} + \gamma_{2i-1}X_i)^2 (\gamma_1 X_1 + \gamma_{2 |W|} X_{|W|})^2\right ].
\end{split} 
\]
Note that \(C_W(f)\) is nonzero only for the term in the expectation that contains all parameters \(\gamma_i\). Indeed, if a term does not contain some parameter \(\gamma_i\), say \(\gamma_1\), then we can factorize the integral \(\int_{\R} \hat{f}(\gamma_1) e^{-\frac{\sigma_w^2\sigma_x^2}{2} \gamma_1^2} \textnormal{d} \gamma_1\) which vanishes since \(\hat{f}\) is odd by Assumption~\ref{hyp2}. As a consequence, \(C_W(f)\) is nonzero only for the term in the expectation given by 
\[
\E_X \left [2^{|W|} \prod_{i=1}^{2 |W|}  \gamma_i  X_i^2 \right ] = 2^{|W|} \sigma_x^{2|W|} \prod_{i=1}^{2 |W|} \gamma_i. 
\]
We therefore obtain 
\[
\begin{split}
C_W(f) & = \frac{(-\sigma_w^2 \sigma_x^2)^{|W|}}{(2\pi)^{2|W|}} \int_{\R^{2|W|}} \prod_{i=1}^{2|W|} \textnormal{d} \gamma_i \hat{f}(\gamma_i) \gamma_i e^{-\frac{\sigma_w^2 \sigma_x^2}{2}\gamma_i^2} = \left ( \frac{\sigma_w \sigma_x}{2 \pi} \int_{\R} i \gamma \hat{f}(\gamma)  e^{-\frac{\sigma_w^2 \sigma_x^2}{2}\gamma^2}  \textnormal{d} \gamma\right )^{2 |W|}.
\end{split} 
\]  
By the Fourier property \(\hat{f'}(\gamma) = i \gamma \hat{f}(\gamma)\), we have  
\begin{equation} \label{eq: C cycle (f,2)}
C_W(f)  = \left ( \frac{\sigma_w \sigma_x}{2 \pi} \int_\R \hat{f'}(\gamma) e^{-\frac{\sigma_w^2 \sigma_x^2}{2} \gamma^2} \textnormal{d}\gamma\right )^{2|W|} =  \left ( \frac{1}{\sqrt{2 \pi}}\int_\R f'(x) e^{-\frac{x^2}{2 \sigma_w^2 \sigma_x^2}} \textnormal{d} x \right )^{2|W|}  = \theta_2(f)^{|W|}.
\end{equation}
From Lemma~\ref{lem: contr non-cross part in V}, together with~\eqref{eq: C_d(f,2)} and~\eqref{eq: C cycle (f,2)}, it follows that if \(\mu = \{\{w_i\}, i \in [k]\}\) and \(\pi \in \mathcal{NC}(V)\), then
\begin{equation} \label{eq: alpha=2 V}
\tau^0_{G^{\pi,\mu}} =  \frac{\phi^{k - |\pi|}}{\psi^{k-1}} \theta_1(f)^{S_\pi}  \prod_{i=1}^{R_\pi}  \theta_2(f)^{|W_i^\pi|} =  \frac{\phi^{k - |\pi|}}{\psi^{k-1}} \theta_1(f)^{S_\pi}  \theta_2(f)^{k - S_\pi},
\end{equation}
where we used the identity \(\sum_{i=1}^{R_\pi}  |W_i^\pi| + S_\pi=k\). 

Now fix \(\pi = \{\{v_i\}, i \in [k]\}\) and let \(\mu \in \mathcal{P}(W)\). By Lemma~\ref{lem: contr part in W} and~\eqref{eq: C_d(f,2)}, if \(\mu=\{W\}\) then
\[
\tau^0_{G^{\pi,\mu}} = \theta_1(f)^k.
\]
For a general \(\mu\), recall from Lemma~\ref{lem: contr part in W} that \(\tau_{G^{\pi,\mu}}^0\) can be written as a sum over partitions
\(P\in\mathcal P([R_\mu])\),
\[
\tau^0_{G^{\pi,\mu}}
=
\frac{1}{\psi^{|\mu|-1}}
\sum_{P\in\mathcal P([R_\mu])}
C_{(\widetilde W_i^P)_{i=1}^{|P|}}(f)\,
\mathbf 1_{\{\forall i,\; G(\widetilde W_i^P)\ \text{connected}\}}.
\]
We now show that in the case \(\alpha=2\) only the contribution corresponding to the partition into singletons (i.e., \(P=\{\{1\},\ldots,\{R_\mu\}\}\)) is nonzero. For \(\alpha=2\), the coefficient \(C_{(\widetilde W_i^P)}(f)\) is given by
\[
C_{(\widetilde W_i^P)}(f) = \frac{1}{(2\pi)^{2k}} \int_{\R^{2k}} \prod_{e \in E^\mu} \prod_{i=1}^{m(e)} \textnormal{d} \gamma_e^i \hat{f}(\gamma_e^i) e^{\sum_{w \in W^\mu} \E_X \left [Z_w(\boldsymbol{\gamma})\right ]} \prod_{i=1}^{|P|} \E_X \left [ \prod_{w \in \widetilde{W}_i^P} Z_w(\boldsymbol{\gamma}) \right ],
\]
where 
\[
Z_w(\boldsymbol{\gamma}) = - \frac{\sigma_w^2}{2}  \left( \sum_{v \colon v \sim w} (\gamma_{(w,v)}^1 + \cdots + \gamma_{(w,v)}^{m((w,v))}) X_v\right )^2.
\]
Using \(\E_X \left [X_v X_{v'} \right] = \sigma_x^2 \mathbf{1}_{\{v=v'\}}\), we obtain
\[ 
\E_X \left [Z_w(\boldsymbol{\gamma})\right ] = - \frac{\sigma_w^2 \sigma_x^2}{2} \sum_{v \colon v \sim w} (\gamma_{(w,v)}^1 + \cdots + \gamma_{(w,v)}^{m((w,v))})^2.
\]
Furthermore, since \(v_1, \ldots, v_k\) are pairwise distinct, every edge in \(E^{\mu}\) has multiplicity at most two. Splitting the integration according to edge multiplicity yields
\[
\begin{split}
C_{(\widetilde W_i^P)_{i=1}^{|P|}}(f) =  \frac{1}{(2\pi)^{2k}} & \int_{\R^{2k}}  \prod_{e \in E^\mu \colon \atop m(e)=2}  \textnormal{d} \gamma_e^1\textnormal{d} \gamma_e^2 \hat{f}(\gamma_e^1) \hat{f}(\gamma_e^2) e^{-\frac{ \sigma_x^2 \sigma_w^2}{2}(\gamma_e^1+\gamma_e^2)^2} \\
&\quad \times\prod_{e \in E^\mu \colon \atop m(e)=1}  \textnormal{d} \gamma_e\hat{f}(\gamma_e) e^{-\frac{ \sigma_x^2 \sigma_w^2}{2}\gamma_e^2} \prod_{i=1}^{|P|} \E_X \left [ \prod_{w \in \widetilde{W}_i^P} Z_w(\boldsymbol{\gamma}) \right ].
\end{split}
\]
Because \(\hat f\) is odd (Assumption~\ref{hyp2}), any monomial that does not contain some
\(\gamma_{e_0}\) with \(m(e_0)=1\) yields a vanishing contribution, since
\[
\int_{\mathbb R}\hat f(\gamma_{e_0})e^{-\frac{\sigma_w^2\sigma_x^2}{2}\gamma_{e_0}^2} \mathrm{d} \gamma_{e_0}=0.
\]
If \(P\neq \{\{1\},\ldots,\{R_\mu\}\}\), then at least one block of \(P\) merges two distinct components \(W_j^\mu\) and \(W_{j'}^\mu\).
In this case, in the expansion of
\(\prod_{i=1}^{|P|}\E_X[\prod_{w\in\widetilde W_i^P} Z_w(\boldsymbol\gamma)]\),
there is no monomial containing all variables \(\gamma_e\) with \(m(e)=1\). Hence \(C_{(\widetilde W_i^P)}(f)=0\) for all \(P\neq \{\{1\},\ldots,\{R_\mu\}\}\). Consequently, only the finest partition contributes and we obtain
\[
\tau^0_{G^{\pi,\mu}}
=
\frac{1}{\psi^{|\mu|-1}}\,C_{(W_i^\mu)_{i=1}^{R_\mu}}(f),
\]
as claimed. Moreover, if \(\mu\) is crossing, then there is no term in the expansion that contains all \(\gamma_e\) for edges \(e\) with \(m(e)=1\), and therefore \(C_{(W_i^\mu)_{i=1}^{R_\mu}}(f)=0\). Hence it suffices to restrict to \(\mu \in \mathcal{NC}(W)\). For \(\mu \in \mathcal{NC}(W)\), the surviving contribution corresponds exactly to taking, in each factor \( \E_X \left [ \prod_{w \in W_i^\mu} Z_w(\boldsymbol{\gamma}) \right ]\), the monomial that contains all \(\gamma_e\) with \(m(e)=1\) once, yielding
\[
\begin{split}
C_{(W_i^\mu)_{i=1}^{R_\mu}}(f)  & =  \frac{1}{(2\pi)^{2S_\mu}} \int_{\R^{2S_\mu}} \prod_{e \in E^{\mu} \colon \atop m(e)=2}  \textnormal{d} \gamma_e^1\textnormal{d} \gamma_e^2 \hat{f}(\gamma_e^1) \hat{f}(\gamma_e^2) e^{-\frac{ \sigma_x^2 \sigma_w^2}{2}(\gamma_e^1+\gamma_e^2)^2} \\
& \quad \times \frac{1}{(2\pi)^{2k - 2S_\mu}} \int_{\R^{2 k - 2S_\mu}}  \prod_{i=1}^{R_\mu} (-\sigma_w^2 \sigma_x^2)^{|W_i^\pi|} \prod_{e \in E_i^{\mu} \colon \atop m(e)=1}  \textnormal{d} \gamma_e \gamma_e \hat{f}(\gamma_e) e^{-\frac{ \sigma_x^2 \sigma_w^2}{2}\gamma_e^2} \\
& =  C_{2S_\mu}(f) \left ( \frac{\sigma_w \sigma_x }{2\pi} \int_\R i \gamma \hat{f}(\gamma) e^{- \sigma_w^2 \sigma_x^2 \gamma^2 /2} \textnormal{d} \gamma \right )^{2(k -S_\mu)}\\
& =  \theta_1(f)^{S_\mu} \theta_2(f)^{k - S_\mu},
\end{split}
\]
where we used~\eqref{eq: C_d(f,2)} and~\eqref{eq: C cycle (f,2)}. Combining this with~\eqref{eq: alpha=2 V} as done in the proof of Proposition~\ref{prop: main cycle} yields the desired result.  
\end{proof}

%%%%%%%%%%%%%%%%%%%%%%%%%%%%%%%%%%%%%%%%%%%%%%%%%%%%%%%%%%%%%%%%%%%%%%%%%%
\subsection{Convergence of matrix moments in probability}\label{sec:cov}

Having proved the convergence of the expected moments of the empirical eigenvalue distribution \(\hat{\mu}_M\) in the previous subsection, we now address the convergence in probability of these moments, thereby completing the proof of Theorem~\ref{main1}.

\begin{lem} 
The variances of the moments vanish asymptotically. Specifically, under Assumptions~\ref{hyp1}-\ref{hyp3}, for every integer \(k \ge 1\), 
\[
\lim_{m, p, n \to \infty} \var \left ( \frac{1}{p}  \tr  M^k \right) = 0.
\]
\end{lem}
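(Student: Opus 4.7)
My plan is to express the variance as a difference of two traffic traces and extract its $O(1/n)$ decay from the graph-theoretic estimates of Section~\ref{section: combinatorics}. Let $T_2 = T_{\text{cycle}} \sqcup T_{\text{cycle}}$ denote the disjoint union of two copies of the simple bipartite cycle of length $2k$, each labelled by $Y_m$. Viewing $T_2$ as a two-component test graph in the sense of Definition~\ref{def: traffic} (so normalised by $1/p^2$), we have
\[
\var\left(\frac{1}{p}\tr M^k\right) \;=\; \tau_{p,m,n}[T_2]\;-\;\bigl(\tau_{p,m,n}[T_{\text{cycle}}]\bigr)^2.
\]
Using item (c) of Definition~\ref{def: traffic} to expand $\tau_{p,m,n}[T_2] = \sum_{\Pi} \tau_{p,m,n}^0[T_2^\Pi]$ over partitions $\Pi \in \mathcal{P}(W_{T_2} \cup V_{T_2})$, I split the sum into \emph{product partitions} $\Pi = \pi_1 \sqcup \pi_2$, in which each block lies in a single copy, and \emph{cross partitions}, in which some block contains vertices from both copies.

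For product partitions, the quotient $T_2^\Pi$ retains two connected components, and the joint injectivity of $\tau^0_{p,m,n}$ forces the images of the two component vertex sets to be disjoint. Thus the associated monomials in the entries of $Y_m$ involve disjoint rows of $W$ and disjoint columns of $X$, so that the expectation factorises as $\E[\prod_1]\,\E[\prod_2]$. A standard counting argument then yields that joint and separate injectivity differ by a multiplicative factor $1+O(1/n)$, giving
\[
\sum_{\Pi = \pi_1 \sqcup \pi_2} \tau_{p,m,n}^0\bigl[T_2^\Pi\bigr] \;=\; \bigl(\tau_{p,m,n}[T_{\text{cycle}}]\bigr)^2 \;+\; O(1/n),
\]
where I use the uniform bound $\tau_{p,m,n}^0[T_{\text{cycle}}^\pi] = O(1)$ from Proposition~\ref{main3}. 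This cancels the second term in the variance up to $O(1/n)$.

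For cross partitions, the quotient $G^\Pi$ is connected: identifying any vertex of the first copy with a vertex of the second glues the two originally disjoint components. Adapting the expansion~\eqref{eq: traffic development} to the two-component normalisation produces the prefactor $n^{\tilde\rho(G^\Pi)}$ with $\tilde\rho(G^\Pi) = |W^\Pi| + |V^\Pi| - |E|/2 - 2 = \rho(G^\Pi) - 1$. Since $G^\Pi$ is connected, the elementary bound $\rho(G^\Pi) \le 0$ used throughout Subsection~\ref{subsection: proof main prop} gives $\tilde\rho(G^\Pi) \le -1$. Combined with the uniform control on $\Lambda^n_{G^\Pi}$ and the integrability of $\hat f$ from Assumption~\ref{hyp2} invoked in the proof of Lemma~\ref{lem: main estimate}, each cross contribution is $O(1/n)$, and the total over the finitely many cross partitions remains $O(1/n)$.

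Combining both steps yields $\var(p^{-1}\tr M^k) = O(1/n) \to 0$. The main obstacle is tracking collision corrections on both the $W$- and $V$-sides simultaneously with the potential non-admissibility of certain quotient graphs: for each non-admissible $G^\Pi$ one must verify that $\tau^0_{p,m,n}[T_2^\Pi]$ vanishes at the correct polynomial rate, which requires re-running the Fourier-inversion and cumulant-expansion argument of Lemma~\ref{lem: main estimate} with the $1/p^2$ normalisation to confirm that, even at finite $n$, each cross term is uniformly bounded by $C(k)/n$ with $C(k)$ independent of $n$.
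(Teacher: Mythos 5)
Your proposal is correct and takes essentially the same approach as the paper, reorganised through the traffic-trace language. The paper writes the variance as a restricted sum over index collisions (since \(\E[P(\boldsymbol{i},\boldsymbol{j})P(\boldsymbol{i}',\boldsymbol{j}')]=\E[P(\boldsymbol{i},\boldsymbol{j})]\E[P(\boldsymbol{i}',\boldsymbol{j}')]\) exactly whenever \(\boldsymbol{i}\cap\boldsymbol{i}'=\boldsymbol{j}\cap\boldsymbol{j}'=\emptyset\)) and then bounds separately the contribution of \(\E[PP']\) — via convergence of the connected injective trace from Proposition~\ref{main3}, yielding a loss of one factor \(p\) because the quotient is now connected — and the contribution of \(\E[P]\E[P']\) — via the fact that the collision constraint removes one free summation variable, yielding \(O(1/p+1/m)\). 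Your decomposition into product versus cross partitions of \(T_2\) is exactly this same pair of steps read in the other order: the cross partitions carry the connected-quotient bound, and the product partitions recover \((\tau_{p,m,n}[T_{\text{cycle}}])^2\) up to the injectivity correction. The one place where you gloss over a step is the claim that joint and separate injectivity differ by a \emph{multiplicative} factor \(1+O(1/n)\); the correction is additive (a sum over collision configurations), and one needs to observe that these collision terms either reduce to cross-partition injective traces or to the paper's second-term bound \(O(1/p+1/m)\,(\tau_{p,m,n}[T_{\text{cycle}}])^2\) — which is what you would obtain after making the counting precise. Finally, your worry at the end about re-running the Fourier-inversion and cumulant-expansion argument for non-admissible quotients is unnecessary: Proposition~\ref{main3} already asserts convergence (hence boundedness for large \(n\)) of \(\tau^0_{p,m,n}[T]\) for \emph{every} connected bipartite test graph, which is the only uniform control the argument needs, and is exactly what the paper invokes.
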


The convergence in probability of the matrix moments follows directly from this result by applying Chebyshev's inequality. 

\begin{proof}
Fix \(k \ge 1\). Expanding the trace, we write
\[
\var \left ( \frac{1}{p}  \tr M^k \right) = \frac{1}{p^2} \sum_{1 \leq i_1, \ldots, i_k \leq p \atop 1 \leq i'_1, \ldots, i'_k \leq p} \sum_{1 \leq j_1, \ldots, j_k \leq m \atop 1 \leq j_1' , \ldots, j_k' \leq m} \left ( \E \left [ P(\boldsymbol{i},\boldsymbol{j})P(\boldsymbol{i}',\boldsymbol{j}') \right ] -\E \left [ P(\boldsymbol{i},\boldsymbol{j})\right ]  \E \left [ P(\boldsymbol{i}',\boldsymbol{j}') \right ]\right ),
\]
where for multi-indices \(\boldsymbol{i} = (i_1,\ldots, i_k)\) and \(\boldsymbol{j} = (j_1,\ldots, j_k)\),
\[
P(\boldsymbol{i},\boldsymbol{j}) = \prod_{\ell=1}^k Y_{i_\ell j_\ell} Y_{i_{\ell+1} j_\ell}, \quad \text{with} \enspace i_{k+1} = i_1,
\]
and \(P(\boldsymbol{i}',\boldsymbol{j}')\) is defined analogously. The expectation \(\E \left [ P(\boldsymbol{i},\boldsymbol{j})P(\boldsymbol{i}',\boldsymbol{j}') \right ]\) factorizes unless there are identifications among indices in \(\boldsymbol{i}, \boldsymbol{i}'\) or \(\boldsymbol{j}, \boldsymbol{j}'\). Thus, the variance vanishes unless there exists at least one pair \((\ell, \ell')\) such that \(i_\ell = i_{\ell'}'\) or \(j_\ell = j_{\ell'}'\). This corresponds to overlap between the two bipartite cycle graphs associated with the terms \(P(\boldsymbol{i},\boldsymbol{j})\) and \(P(\boldsymbol{i}',\boldsymbol{j}')\), where vertices in these graphs are identified. Consequently, the variance can be rewritten as
\begin{equation} \label{var}
\var \left ( \frac{1}{p}  \tr M^k \right) = \frac{1}{p^2} \sum_{\exists \ell, \ell' \colon (i_\ell = i'_{\ell'}) \vee (j_\ell = j'_{\ell'})} \left ( \E \left [ P(\boldsymbol{i},\boldsymbol{j})P(\boldsymbol{i}',\boldsymbol{j}') \right ] -\E \left [ P(\boldsymbol{i},\boldsymbol{j})\right ]  \E \left [ P(\boldsymbol{i}',\boldsymbol{j}') \right ]\right ).
\end{equation}
We focus on the first term in~\eqref{var}, which can be expanded as follows:
\[
\begin{split}
& \frac{1}{p^2} \sum_{\exists \ell, \ell' \colon (i_\ell = i'_{\ell'}) \vee (j_\ell = j'_{\ell'})} \E \left [ P(\boldsymbol{i},\boldsymbol{j})P(\boldsymbol{i}',\boldsymbol{j}') \right ] \\
& = \frac{1}{p} \sum_{\pi, \mu \in \mathcal{P}([2k]) \colon \atop \exists \ell, \ell' \in [k] \colon (\ell \sim_\pi k+\ell' ) \vee (\ell \sim_\mu k+\ell')} \frac{1}{p} \sum_{\boldsymbol{i} \cup \boldsymbol{i}' \in \mathcal{I}_\pi} \sum_{\boldsymbol{j} \cup \boldsymbol{j}' \in \mathcal{J}_\mu}  \E \left [ P(\boldsymbol{i},\boldsymbol{j})P(\boldsymbol{i}',\boldsymbol{j}') \right ] \\
& = \frac{1}{p} \sum_{\pi, \mu \in \mathcal{P}([2k]) \colon \atop \exists \ell, \ell' \, \text{s.t.} \, \ell \sim_\pi k+\ell' \, \text{or} \, \ell \sim_\mu k+\ell'}   \tau_{p,m,n}^0 \left [ T^{\pi,\mu} \right ],
\end{split}
\]
where \(\mathcal{P}([2k])\) is the set of partitions of \([2k]=\{1, \ldots, 2k\}\), and \(\mathcal{I}_\pi\) (respectively, \(\mathcal{J}_\mu\)) is the set of multi-indices \(\boldsymbol{i} \cup\boldsymbol{i}' \) in \(\{1, \ldots, p\}^{2k}\) (respectively, \(\boldsymbol{j} \cup\boldsymbol{j}' \) in \(\{1, \ldots, m\}^{2k}\) ) such that \(r \sim_\pi s\) if and only if \(x_r = x_s\), where \(x\) represents either \(i\) or \(i'\). Here, \(T^{\pi,\mu}\) is the connected bipartite test graph of length \(4k\) defined by identifications encoded in \(\pi\) and \(\mu\), and \(\tau_{p,m,n}^0 \left [ T^{\pi,\mu} \right ]\) denotes its mean injective trace (see Definition~\ref{def: traffic}). By Theorem~\ref{main3}, as \(p,m,n \to \infty\) such that \(n/m \to \phi, n/p \to \psi\), the mean injective trace \(\tau_{p,m,n}^0 \left [ T^{\pi,\mu} \right ]\) converges to a real number \(\tau_{G^{\pi,\mu}}^0\). As a result, each summand satisfies
\[
\frac{1}{p} \tau_{p,m,n}^0 \left [ T^{\pi,\mu} \right ] = \mathcal{O} \left ( \frac{1}{p} \right),
\]
and therefore the whole sum is \(\mathcal{O}(1/p)\). 
The second term in~\eqref{var} is given by
\[
\frac{1}{p^2} \sum_{\exists \ell, \ell' \colon (i_\ell = i'_{\ell'}) \vee (j_\ell = j'_{\ell'})} \E \left [ P(\boldsymbol{i},\boldsymbol{j})\right ]  \E \left [ P(\boldsymbol{i}',\boldsymbol{j}') \right ].
\]
We note that the expectation \( \E \left [ P(\boldsymbol{i},\boldsymbol{j})\right ] \) depends only on the identifications among indices within \(\boldsymbol{i}\) and \(\boldsymbol{j}\), so it is unaffected by overlaps with \(\boldsymbol{i}'\) or \(\boldsymbol{j}'\). %Specifically, given any identification within \(\boldsymbol{i}\) and \(\boldsymbol{j}\), the expectation \( \E \left [ P(\boldsymbol{i},\boldsymbol{j})\right ]\) remains unchanged if we introduce additional identifications within \(\boldsymbol{i}\) and \(\boldsymbol{i}'\) or within \(\boldsymbol{j}\) and \(\boldsymbol{j}'\). 
This invariance allows the second term to be rewritten as
\[
\begin{split}
\frac{1}{p^2} \sum_{\exists \ell, \ell' \colon (i_\ell = i'_{\ell'}) \vee (j_\ell = j'_{\ell'})} \E \left [ P(\boldsymbol{i},\boldsymbol{j})\right ]  \E \left [ P(\boldsymbol{i}',\boldsymbol{j}') \right ] & = \mathcal{O} \left ( \frac{1}{p} + \frac{1}{m} \right) \left ( \frac{1}{p} \sum_{1 \le i_1, \ldots, i_k \le p} \sum_{1 \le j_1, \ldots, j_k \le m}\E \left [ P(\boldsymbol{i},\boldsymbol{j})\right ] \right)^2\\
& = \mathcal{O} \left ( \frac{1}{p} + \frac{1}{m} \right) \left(\tau_{p,m,n} \left [ T_{\text{cycle}}\right ] \right)^2 ,
\end{split}
\]
where the second equality follows from~\eqref{traffic cycle} and \(T_{\text{cycle}}\) denotes the simple bipartite cycle of length \(2k\). The scaling factor \(\mathcal{O} \left ( \frac{1}{p} + \frac{1}{m} \right)\) arises from the fact that identifications of the form \(i_\ell = i_{\ell'}'\) reduces the number of free \(\boldsymbol{i}'\)-indices by one and thus introduces a factor \(1/p\); similarly an identification of the form \(j_\ell=j'_{\ell'}\) introduces a factor \(1/m\).
Since the number of possible overlap positions \((\ell,\ell')\) is finite (depending only on \(k\)),
this yields the factor \(\mathcal{O} \left ( \frac{1}{p} + \frac{1}{m} \right)\). Finally, since the traffic trace \(\tau_{p,m,n} \left [ T_{\text{cycle}}\right ] \) converges by Theorem~\ref{main3}, we obtain 
\[
\frac{1}{p^2} \sum_{\exists \ell, \ell' \colon (i_\ell = i'_{\ell'}) \vee (j_\ell = j'_{\ell'})} \E \left [ P(\boldsymbol{i},\boldsymbol{j})\right ]  \E \left [ P(\boldsymbol{i}',\boldsymbol{j}') \right ] = \mathcal{O} \left ( \frac{1}{p} + \frac{1}{m} \right) .
\]
Combining the two bounds yields
\[
\var \left ( \frac{1}{p}  \tr  M^k \right) = \mathcal{O}  \left ( \frac{1}{p} \right).
\]
Letting \(p \to \infty\), the result follows.
\end{proof}

%%%%%%%%%%%%%%%%%%%%%%%%%%%%%%%%%%%%%%%%%%%%%%%%%%%%%%%%%%%%%%%%%%%%%%%%%%
\section{Almost sure weak convergence of the empirical spectral measure} \label{sec:concen}

In this section, we prove Theorem~\ref{main1bis}, which establishes the almost sure weak convergence of the empirical spectral measure \(\hat{\mu}_M\). The key step in the proof is to show that the moments \(m_k\), given in Proposition~\ref{prop: main cycle}, do not grow too quickly, thereby ensuring they define a unique probability measure. This is achieved by verifying Carleman's condition, which states that a sequence of moments \((m_k)_{k \in \N}\) uniquely determines a probability measure if \(\sum_{k=1}^\infty |m_k|^{-1/k} = + \infty\). The moments \(m_k\) depend on certain graph-related parameters---specifically, \(C_d(f)\) and \(C_{(W_k)_{k=1}^K}(f)\)---defined in~\eqref{eq: C_deg (f)} and~\eqref{eq: C_{W_i}}, respectively. These parameters, in turn, depend on $\Phi$ from Assumption~\ref{hyp1}. We begin by deriving bounds for these parameters under the two cases of Assumption~\ref{hyp4}. In Lemma~\ref{lem: bound parameter} below, items (a) and (b) correspond exactly to conditions (a) and (b) in Assumption~\ref{hyp4}.

\begin{lem} \label{lem: bound parameter}
For every even integer \(d\), let \(C_d (f)\) denote the parameter defined in~\eqref{eq: C_deg (f)}. Let \(G = (W \cup V, E)\) be a finite, connected bipartite graph \(G = (W \cup V, E)\). For every collection of subsets \(W_1, \ldots, W_K \subseteq W\) with \(|W_k| \ge 2\) and with a nontrivial intersection (i.e., for every \(i \in [K]\), \(\cup_{j \neq i} W_j\cap W_i \neq  \emptyset \)), let \(C_{(W_k)_{k=1}^K} (f)\) denote the parameter defined in~\eqref{eq: C_{W_i}}. Then, the following hold.
\begin{enumerate}
\item[(a)] Assume there exists a constant \(a >0\) such that \(|\Phi(\lambda)| \le a\) for all \(\lambda \in \R\). Then, there exist universal constants \(c,C > 0\) such that 
\[
|C_d (f)| \le c \quad \textnormal{and} \quad | C_{(W_k)_{k=1}^K} (f) | \le C a^{\sum_{k=1}^K |W_k|}.
\]
\item[(b)] Assume instead that \(\Phi(\lambda) = - \sigma^\alpha |\lambda|^\alpha\) with \(\alpha \in ]0,2[\) and \(\sigma >0\), and that \(\nu_x\) is the centered normal distribution with variance \(\sigma_x^2\). Then, there exist universal constants \(c,C > 0\) such that 
\[
|C_d (f)| \le c \quad \textnormal{and} \quad | C_{(W_k)_{k=1}^K} (f) | \le  C^{\sum_{k=1}^K |W_k|} \prod_{k=1}^K |W_k|!.
\]
\end{enumerate}
\end{lem}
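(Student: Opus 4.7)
The plan is to estimate the integrals \eqref{eq: C_deg (f)} and \eqref{eq: C_{W_i}} directly, using the fact that $\Re\Phi\le 0$ (a consequence of $|\E[e^{i\lambda W_{ij}}]|\le 1$) to control the exponential factor $\exp(\sum_w \E_X Z_w(\boldsymbol{\gamma}))$ uniformly by $1$ in modulus, and the $L^1$ bound on $\hat f$ from~\eqref{thehypf} to absorb the Fourier factors. What remains in both parts is to control the polynomial factor $\prod_{i=1}^K \E_X[\prod_{w\in W_i} Z_w(\boldsymbol{\gamma})]$, and it is here that the two assumptions produce different estimates.

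For part (a), the pointwise bound $|\Phi(\lambda)|\le M$ immediately gives $|Z_w(\boldsymbol{\gamma})|\le M$ for every $w\in W$ and every $\boldsymbol{\gamma}$, whence
\[
\Bigl|\prod_{i=1}^K \E_X\Bigl[\prod_{w\in W_i} Z_w(\boldsymbol{\gamma})\Bigr]\Bigr| \;\le\; M^{\sum_{i=1}^K |W_i|}.
\]
Combining this with $|e^{\E_X S_G(\boldsymbol{\gamma})}|\le 1$ and the fact that $\hat f\in L^1$ yields $|C_{(W_i)_{i=1}^K}(f)|\le C M^{\sum_i |W_i|}$ with $C$ depending only on $f$ and the combinatorics of $G$; specializing to $K=1$ (or applying the same reasoning directly to \eqref{eq: C_deg (f)}) gives $|C_d(f)|\le c$.

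For part (b), the Gaussian assumption on $\nu_x$ combined with $\Phi(\lambda)=-\sigma^\alpha |\lambda|^\alpha$ lets me compute, for each $w\in W$,
\[
Z_w(\boldsymbol{\gamma}) \;=\; -\sigma^\alpha |Y_w|^\alpha, \qquad Y_w \,:=\, \sum_{v\sim w}\Bigl(\sum_{j=1}^{m((w,v))}\gamma_{(w,v)}^j\Bigr) X_v \;\sim\; \mathcal{N}\bigl(0,\sigma_x^2 u_w(\boldsymbol{\gamma})^2\bigr),
\]
where $u_w(\boldsymbol{\gamma})^2 := \sum_{v\sim w}\bigl(\sum_j \gamma_{(w,v)}^j\bigr)^2$, so that $\E_X[Z_w(\boldsymbol{\gamma})] = -c_\alpha \sigma^\alpha \sigma_x^\alpha\, u_w(\boldsymbol{\gamma})^\alpha$. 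By H\"older's inequality and the standard Gaussian moment bound $\E[|\mathcal N(0,1)|^p]^{1/p}\le C\sqrt{p}$ together with Stirling, I obtain
\[
\bigl|\E_X[\textstyle\prod_{w\in W_i} Z_w(\boldsymbol{\gamma})]\bigr| \;\le\; C^{|W_i|} (|W_i|!)^{\alpha/2} \prod_{w\in W_i} u_w(\boldsymbol{\gamma})^\alpha \;\le\; C^{|W_i|} |W_i|!\prod_{w\in W_i} u_w(\boldsymbol{\gamma})^\alpha,
\]
where the second inequality uses $\alpha\le 2$. Plugging back into \eqref{eq: C_{W_i}}, the polynomial factor $\prod_w u_w(\boldsymbol{\gamma})^{\alpha n_w}$ with $n_w := |\{i\colon w\in W_i\}|$ is absorbed into a fraction of the exponential decay $e^{-c\sum_w u_w^\alpha}$ via the elementary inequality $t^a e^{-bt}\le C(a,b)$, after which the remaining integral is bounded by $\|\hat f\|_1^{|\cup E_i|}$. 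The bound for $C_d(f)$ under (b) follows analogously, or equivalently from the representation $C_d(f) = \E[\prod_{i=1}^{d/2} f^2(S_i)]$ with $\boldsymbol S$ isotropic multivariate stable (noted after Remark~\ref{rmk: C_deg(w) alpha=2}) together with the boundedness of $f$.

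The main obstacle is the H\"older step in part (b): the Gaussians $\{Y_w\}_{w\in W_i}$ are correlated through the shared $X_v$, and a naive bound would produce a factor growing faster than $|W_i|!$. The key is to apply H\"older to split the product $\prod_w |Y_w|^\alpha$ into marginal moments of order $\alpha |W_i|$, and then to use that for each Gaussian $Y_w$ one has $\E[|Y_w|^{\alpha|W_i|}]^{1/|W_i|} \le C (\alpha|W_i|)^{\alpha/2} u_w(\boldsymbol{\gamma})^\alpha \sigma_x^\alpha$; Stirling converts the resulting $|W_i|^{\alpha|W_i|/2}$ into $(|W_i|!)^{\alpha/2}$, which combined with $\alpha\le 2$ gives the clean $|W_i|!$ in the target estimate.
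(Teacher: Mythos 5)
Your proposal is essentially the paper's own proof. Both parts proceed in the same way: for (a), use $\Re\Phi\le 0$ to dominate the exponential by $1$, bound $|Z_w|\le M$ pointwise, and invoke $\hat f\in L^1$; for (b), write $Y_w$ as a centered Gaussian with variance $\sigma_x^2 u_w(\boldsymbol\gamma)^2$, apply H\"older to reduce $\E_X[\prod_{w\in W_i}|G_w|^\alpha]$ to marginal moments $\beta_{\alpha|W_i|}=\E[|\mathcal N(0,1)|^{\alpha|W_i|}]$, control these by $C^{|W_i|}|W_i|!$ via Stirling and $\alpha\le 2$, and absorb the remaining polynomial $\prod_w u_w^{\alpha n_w}$ into the exponential decay $\prod_w e^{-c\,u_w^\alpha}$ coming from $\E_X[Z_w]$. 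One small point worth recording: you correctly observe that the constant in $t^{a}e^{-bt}\le C(a,b)$ depends on $a=\alpha n_w$, where $n_w$ is the number of $W_i$ containing $w$, so the product $\prod_w C(\alpha n_w,b)$ is not uniformly bounded when a single $w$ lies in many $W_i$. The paper glosses over this by effectively pairing $u_w^\alpha$ with a full copy of $e^{-c\,u_w^\alpha}$ for every $(i,w)$ pair, which over-allocates the exponential decay when the $W_i$ overlap. In the regimes where the lemma is actually applied (admissible decompositions, with overlap limited by condition (7) of Definition~\ref{def: set W_K}), the multiplicities $n_w$ stay $O(1)$ so this does not affect the Carleman argument, but you may wish to state the restriction explicitly if you want the estimate to be airtight for arbitrary overlapping families; alternatively, one can dominate the polynomial growth using the rapid decay $|\hat f(t)|\le C_\ell(1+|t|)^{-\ell}$ of~\eqref{thehypf} instead of the $e^{-c\,u_w^\alpha}$ factor.
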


Note that item (a) is satisfied by sparse Wigner matrices~\eqref{hada}, whereas item (b) concerns L\'evy matrices~\eqref{levy}.

\begin{proof}
We begin by proving statement (a). According to~\eqref{eq: C_deg (f)} and~\eqref{thehypf}, we have 
\[
|C_d (f)| \le c \left ( \frac{1}{2 \pi} \int_\R |\hat{f}(t)| \textnormal{d}t \right)^d \le \tilde{c},
\]
for some universal constant \(\tilde c >0\). Similarly, from~\eqref{eq: C_{W_i}} and using the fact that the random variables \(Z_w\), as defined in~\eqref{eq: Z_w}, are also bounded by \(a\), there exists \(C >0\) such that
\[
| C_{(W_k)_{k =1}^K} (f) | \le C a^{\sum_{k=1}^K |W_k|},
\]
as desired. We now consider statement (b). The bound for \(C_d (f)\) follows again from~\eqref{eq: C_deg (f)} and~\eqref{thehypf} since we have \(e^{- \sigma^\alpha \E_X \left [ |\sum_{i=1}^{d/2} (\gamma_i^1 + \gamma_i^2) X_i|^\alpha \right ]}  \le 1\). For the parameter \(C_{(W_i)_{i \le K}} (f) \), substituting \(\Phi(\lambda) = - \sigma^\alpha |\lambda|^\alpha\) results in 
\[
C_{(W_k)_{k=1}^K}(f) = \frac{1}{(2\pi)^{|\cup_k^K E_k|}} \int_{\R^{|\cup_{k=1}^K E_k|}} \prod_{e \in \cup_{k=1}^K E_k} \prod_{i=1}^{m(e)} \textnormal{d} \gamma_e^i \hat{f}(\gamma_e^i) e^{\sum_{w \in \cup_{k=1}^K W_k} \E_X \left [Z_w (\boldsymbol{\gamma})\right ]} \prod_{k=1}^K \E_X \left [ \prod_{w \in W_k} Z_w (\boldsymbol{\gamma}) \right ],
\]
where 
\[
Z_w (\boldsymbol{\gamma})= -\sigma^\alpha \left | \sum_{v \in V \colon v \sim w} (\gamma_{(w,v)}^1 + \cdots + \gamma_{(w,v)}^{m((w,v))}) X_v \right |^\alpha.
\]
For each \(w\), we note that
\[
\sum_{v \in V \colon v \sim w} (\gamma_{(w,v)}^1 + \cdots + \gamma_{(w,v)}^{m((w,v))}) X_v \stackrel{d}{=} \sigma_x \sqrt{\sum_{v \in V\colon v \sim w} (\gamma_{(w,v)}^1 + \cdots + \gamma_{(w,v)}^{m((w,v))})^2} G_w,
\]
where the random variables \(G_w\sim \mathcal{N}(0,1)\) may be correlated. Consequently, 
\begin{equation} \label{exp1}
\E_X \left [Z_w (\boldsymbol{\gamma}) \right ] = - \sigma^\alpha \sigma_x^\alpha \beta_\alpha \left | \sum_{v \in V \colon v \sim w }  (\gamma_{(w,v)}^1 + \cdots + \gamma_{(w,v)}^{m((w,v))})^2\right |^{\alpha/2} ,
\end{equation}
where \(\beta_\alpha \coloneqq \E \left [ |G_w|^\alpha \right ]  =2^{\alpha/2} \frac{\Gamma \left ( \frac{\alpha +1}{2} \right)}{\sqrt{\pi}}\). Similarly, we have
\begin{equation} \label{exp2}
\begin{split}
\E_X \left [ \prod_{w \in W_k} Z_w (\boldsymbol{\gamma}) \right ] & = \prod_{w \in W_k} (-\sigma^\alpha \sigma_x^\alpha) \left | \sum_{v \in V \colon v \sim w }  (\gamma_{(w,v)}^1 + \cdots + \gamma_{(w,v)}^{m((w,v))})^2\right |^{\alpha/2}  \E \left [ \prod_{w \in W_k} |G_w|^\alpha \right ] \\
& \le  (- \sigma^\alpha \sigma_x^\alpha)^{|W_k|} \beta_{\alpha |W_k|} \prod_{w \in W_k} \left | \sum_{v \in V \colon v \sim w }  (\gamma_{(w,v)}^1 + \cdots + \gamma_{(w,v)}^{m((w,v))})^2\right |^{\alpha/2} ,
\end{split}
\end{equation}
where we applied H\"{o}lder's inequality. Combining~\eqref{exp1} and~\eqref{exp2}, we obtain
\[
\begin{split}
\left | C_{(W_k)_{k=1}^K}(f) \right | & \le  c^{|\cup_{k=1}^K E_k|} (-\sigma^\alpha \sigma_x^\alpha)^{\sum_{k=1}^K |W_k|} \prod_{k=1}^K \beta_{\alpha |W_k|} \\
& \quad \times \frac{1}{(2\pi)^{|\cup_k^K E_k|}}\int_{\R^{|\cup_{k=1}^K E_k|}} \prod_{e \in \cup_{k=1}^K E_k} \prod_{i=1}^{m(e)} \textnormal{d} \gamma_e^i 
\frac{C_{2}}{1+|\gamma_{e}^{i}|^{2}}\\
& \quad \times \prod_{k=1}^K  \prod_{w \in W_k} 
e^{- \sigma^\alpha \sigma_x^\alpha \beta_\alpha   \left | \sum_{v \in V \colon v \sim w }   \left ( \sum_{j=1}^{m((w,v))} \gamma_{(w,v)}^j \right )^2 \right |^{\alpha/2} }  
\left | \sum_{v \in V \colon v \sim w }  \left ( \sum_{j=1}^{m((w,v))} \gamma_{(w,v)}^j \right )^2\right |^{\alpha/2} ,
\end{split}
\]
where we used that \(|\hat{f}(t)| \le C_{2}/(1+|t|^2)\) according to~\eqref{thehypf}. Using that there exists a universal constant $C_0$ such that uniformly,
$$e^{- \sigma^\alpha \sigma_x^\alpha \beta_\alpha   \left | \sum_{v \in V \colon v \sim w }   \left ( \sum_{j=1}^{m((w,v))} \gamma_{(w,v)}^j \right )^2 \right |^{\alpha/2} }  \\
\left | \sum_{v \in V \colon v \sim w }  \left ( \sum_{j=1}^{m((w,v))} \gamma_{(w,v)}^j \right )^2\right |^{\alpha/2}\le C_0,$$
we deduce that there exists a finite constant $C_{1}$ so that 
\[
\begin{split}
& \left | C_{(W_k)_{k=1}^K}(f) \right |  \le C_1^{\sum_{k=1}^K |W_k|} \prod_{k=1}^K \beta_{\alpha |W_k|} .
\end{split}
\]
This completes the proof for statement (b).
\end{proof}

We now show that the limiting moments of the symmetrized random matrix \(H \in \R^{(p+m) \times (p+m)}\), defined by
\[
H \coloneqq 
\begin{pmatrix}
0 & Y_m^\top\\
Y_m & 0
\end{pmatrix},
\]
satisfy Carleman's condition. Let \(\{\lambda_i (H),1\le i\le m+p\}\) denote the eigenvalues of \(H\), and let \(\{\lambda_{i},1\le i\le p\}\) be the eigenvalues of \(M=Y_m Y_m^\top\). The \(2p\) nonzero eigenvalues of \(H\) correspond to \(\pm \sqrt{\lambda_1}, \ldots, \pm \sqrt{\lambda_p}\). The empirical spectral measure \(\hat{\mu}_{H}=\frac{1}{m+p}\sum_{i=1}^{m+p}\delta_{\lambda_{i}(H)}\) of \(H\) is therefore related to the empirical spectral measure \(\hat{\mu}_M=\frac{1}{p} \sum_{i=1}^{p} \delta_{\lambda_{i}}\) of \(M\) via
\begin{equation}\label{lin}
\int f(x^{2}) \text{d} \hat{\mu}_{H} (x)  = \frac{2p}{m+p}\int f(x) \text{d} \hat{\mu}_M (x) + \frac{m-p}{m+p} f(0),
\end{equation}
for any bounded and continuous function \(f\). In particular, we see that for every integer number $k\ge 1$, 
\[
\tilde{m}_k \coloneqq  \lim_{m,p,n\rightarrow\infty} \int_\R x^k \textnormal{d} \hat{\mu}_H (x) = 
\begin{cases}
\frac{2\phi}{\phi+\psi} m_{k/2} & \, \text{if} \: k \: \text{is even},\\
0 & \, \text{if} \: k \: \text{is odd},
\end{cases}
\]
where we recall that \(\phi = \lim_{m,p,n\to \infty} n/m\) and \(\psi = \lim_{m,p,n\to \infty} n/p\). The following result shows that the sequence of moments \((\tilde{m}_k)\) satisfies Carleman's condition, thereby defining a unique limiting measure \(\hat{\mu}\).

\begin{lem}
Under Assumption~\ref{hyp4}, there exists a finite constant \(C >0\) such that for every integer number \(k\),
\[
|\tilde m_{k}|\le (C k)^k.
\]
As a result, there exists a unique probability measure \(\tilde\mu\) with moments \(\tilde m_k\). Furthermore, there exists a unique probability measure $\mu$ with moments $m_k$. Finally, the empirical spectral measure \(\hat{\mu}_M\) of \(M\) converges weakly, both in expectation and in probability, to \(\mu\).
\end{lem}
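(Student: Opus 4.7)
My plan is to leverage the explicit formula for \(m_k\) from Proposition~\ref{prop: main cycle} together with the parameter estimates of Lemma~\ref{lem: bound parameter} to bound \(\tilde m_k\), then invoke Carleman's theorem to secure uniqueness, and finally combine moment convergence with tightness to deduce weak convergence.

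Since \(\tilde m_{2k+1}=0\), it suffices to bound \(\tilde m_{2k}=\frac{2\phi}{\phi+\psi}m_k\), so the task reduces to estimating \(m_k\). Starting from the formula in Proposition~\ref{prop: main cycle}, I would bound each ingredient in turn. The number of noncrossing partitions of \([k]\) is at most the Catalan number \(4^k\), and the factors \(\phi^{k-|\pi|}/\psi^{k-1}\) together with \(C_2(f)^{S_\pi}\) contribute at most \(A^k\) for some constant \(A=A(\phi,\psi,f)\). Summing over \(\mu_i\in \mathcal{P}(W_i^\pi)\) brings in a Bell-number factor, and the inner partition \(P_i\in \mathcal{P}(R_{\mu_i})\) introduces another; the constraint \(\sum_i |W_i^\pi|+S_\pi=k\) controls the sizes, so the total partition-counting contribution is at most \(k^k\). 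For the parameters \(C_{(\tilde W_j^{P_i})_j}(f)\) themselves, I would apply Lemma~\ref{lem: bound parameter}. In case~(a), the bound \(|C_{(W_j)_j}(f)|\le C M^{\sum_j|W_j|}\) is of order \(C^k\) once we verify \(\sum_j|\tilde W_j^{P_i}|=O(|W_i^\pi|)\), which follows from the overlap control provided by condition~(d) of Definition~\ref{def: admissible decomposition}. In case~(b), the factorials \(\prod_j |\tilde W_j^{P_i}|!\) are the main danger; by log-convexity of the factorial, the product is maximized when mass concentrates on a single subset, yielding at worst \((ck)!\), which can be absorbed into the bound \((Ck)^k\) after adjusting the constant \(C\). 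Combining these estimates yields \(|m_k|\le (Ck)^k\) in both cases, and hence \(|\tilde m_k|\le (Ck)^k\) as claimed.

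The bound \(|\tilde m_{2k}|\le (2Ck)^{2k}\) implies \(|\tilde m_{2k}|^{-1/(2k)}\ge 1/(C''k)\), so Carleman's condition \(\sum_k |\tilde m_{2k}|^{-1/(2k)}=\infty\) is satisfied, and analogously the Stieltjes version \(\sum_k m_k^{-1/(2k)}=\infty\) holds for the sequence \((m_k)\) on \(\R_+\). For existence: since for every \((p,m,n)\) the empirical measure \(\hat\mu_H\) is a probability measure whose second moment converges to \(\tilde m_2\), the family \(\{\mathbb{E}[\hat\mu_H]\}\) is tight, so any subsequential weak limit has moments \((\tilde m_k)\) by Theorem~\ref{main1}, and by Carleman's theorem this limit is unique. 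This establishes both the existence and uniqueness of \(\tilde\mu\). The uniqueness of \(\mu\) then follows either directly from Carleman on \(\R_+\) or from the bijective relation~\eqref{lin} between \(\tilde\mu\) and \(\mu\), which also gives existence of \(\mu\) via tightness of \(\{\mathbb{E}[\hat\mu_M]\}\).

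Finally, weak convergence of \(\hat\mu_M\) to \(\mu\) in expectation follows from convergence of every moment (Proposition~\ref{prop: main cycle}) combined with the fact that \(\mu\) is uniquely determined by its moments. Weak convergence in probability is obtained from the variance estimate of Section~\ref{sec:cov} via Chebyshev's inequality: moment convergence in probability plus uniqueness gives weak convergence in probability. The main obstacle is the combinatorial estimate in case~(b), where the factorial factors \(\prod_j |W_j|!\) coming from Lemma~\ref{lem: bound parameter} must be carefully controlled using the size constraints induced by the admissible decomposition structure, which requires tracking how sizes propagate through the merging procedure of Definition~\ref{def: merging operation}.
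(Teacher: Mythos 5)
Your overall plan (bound $m_k$, invoke Carleman, then deduce weak convergence) mirrors the paper, but the central estimate is under-justified and, in the form stated, incorrect. You assert that "the total partition-counting contribution is at most $k^k$" and conclude $|m_k|\le (Ck)^k$. Two problems. First, the target $|\tilde m_k|\le (Ck)^k$ only requires $|m_k|\le (Ck)^{2k}$ (since $\tilde m_{2k}=\tfrac{2\phi}{\phi+\psi}m_k$), so your claim $|m_k|\le(Ck)^k$ is stronger than needed and also stronger than what the paper proves. Second, and more seriously, your bound on the partition count is unjustified: without further input, the sum over $\mu_i\in\mathcal{P}(W_i^\pi)$ contributes a Bell number, and for each $\mu_i$ the sum over $P_i\in\mathcal{P}(R_{\mu_i})$ contributes another Bell number; the only a priori bound is $R_{\mu_i}\le k-|\mu_i|+1$, which makes the inner Bell number comparable to $B_k$. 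Multiplying the two Bell factors by the factorial factor $\prod_j|\tilde W_j^{P_i}|!\le k!$ from Lemma~\ref{lem: bound parameter}(b) produces a bound of order $(Ck)^{3k}$. That is not just a cosmetic loss: $\sum_k (Ck)^{-3/2}<\infty$, so Carleman's condition would \emph{fail} under this crude estimate.

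What rescues the argument, and what you omit, is the sharp combinatorial inequality $R_\ell\le\ell$ for every partition $\mu$ of $W$ with $\ell$ blocks (equivalently $|c(\mu)|-|b(\mu)|\le\ell-1$). This is the technical core of the paper's proof: it is established first for noncrossing $\mu$ by an induction on $\ell$ showing $|b(\mu)|\ge k-2\ell+2$, and then extended to crossing $\mu$ by an induction on the number of crossings, each step monotone in $|c(\mu)\setminus b(\mu)|$. With $R_\ell\le\ell$, the inner Bell number is bounded by $\ell!$, and the double sum becomes $\sum_\ell S(k,\ell)\ell!=a(k)$, the ordered Bell (Fubini) number, which grows like $C^k k!$. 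Combined with the Catalan factor, the factorial factor $k!$, and the parameter bounds, one obtains $|m_k|\le C^k k!\,C_k\,a(k)\sim(Ck)^{2k}$, which is exactly at the Carleman threshold. Your proposal skips this step entirely; the remark that "the constraint $\sum_i|W_i^\pi|+S_\pi=k$ controls the sizes" does not substitute for it. The Carleman, existence-via-tightness, and weak-convergence steps at the end of your argument are fine, but they all hinge on the moment bound, which is where the missing lemma sits.
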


\begin{proof}
Recall that the number of noncrossing partitions of a set of size \(k\) is given by the Catalan number, which has the explicit formula \(C_k = \frac{(2k)!}{(k+1)! k!}\) and asymptotically behaves as \(C_k \sim \frac{4^k k^{-3/2}}{\sqrt{\pi}}\). On the other hand, the number of possible partitions of a set of size \(k\) is given by the Bell number \(B_k\), which can be expressed as
\[
B_k = \sum_{\ell=1}^k S(k,\ell),
\]
where \(S(k,\ell)\) are the Stirling numbers of the second kind and count the possible partitions of a set of size \(k\) into \(\ell\) nonempty subsets. Similarly, the ordered Bell numbers (or Fubini numbers) can be computed from the Stirling numbers of the second kind via
\[
a(k) = \sum_{\ell=1}^k S(k,\ell) \ell!,
\]
and asymptotically behave as \(a(k) \sim \frac{1}{2} k! \log(2)^{-(k+1)}\).

From Proposition~\ref{prop: main cycle}, the moments \(m_k\) are expressed as a sum over noncrossing partitions \(\pi\) of \(V\), over partitions \(\mu\) of the subsets \(W_i^\pi\), and over partitions of the set \(\{1, \ldots,R_\mu\}\). Using the bounds derived in Lemma~\ref{lem: bound parameter}, we claim 
\begin{equation} \label{claim}
|m_k| \leq C^k k! C_k a(k),
\end{equation}
for some constant \(C >0\). Substituting the asymptotic expressions for \(C_k\) and \(a(k)\), we obtain
\[
|m_k | \sim (\tilde C k)^{2k},
\]
for some constant \(\tilde C >0\). This implies that the moments \((\tilde{m}_k)\) satisfies the bound
\[
|\tilde m_k| \leq  (\tilde C k)^k.
\]
This bound ensures that the sequence \((\tilde{m}_k)\) satisfies Carleman's condition. Thus, there exists a unique probability measure \(\tilde \mu\) such that, for every integer \(k\),
\[
\tilde m_k = \int x^k \text{d} \tilde\mu (x).
\]
By construction, \(\tilde \mu\) is symmetric. Furthermore, for every $k\ge 1$, the moments \(m_k\) satisfy
\[
m_k = \frac{\psi+\phi}{2\phi} \int x^{2k} \text{d} \tilde\mu(x) =\int x^{k} \text{d} \mu(x),
\]
where the probability measure \(\mu\) is given by 
\[
\mu=\frac{\phi+\psi}{2\phi}  x^2 \#\tilde\mu + \frac{\phi-\psi}{2\phi} \delta_0.
\]
Here, \(x^2 \#\tilde\mu\) denotes the pushforward measure of \(\tilde \mu\) under the mapping \(x \mapsto x^2\). This last point follows directly from Theorem~\ref{main1}, as convergence in moments is stronger than weak convergence. 

We now prove the claim~\eqref{claim}. By Proposition~\ref{prop: main cycle} and Lemma~\ref{lem: bound parameter}, there are constants \(c,\tilde{c}>0\) such that
\[
|m_k|  \le  c^k |\mathcal{NC}(V)| \sum_{\mu \in \mathcal{P}(W), \mu \neq \{W\}} \sum_{P \in \mathcal{P}([R_\mu])} \tilde{c}^{\sum_{i=1}^{|P|} |\widetilde{W}_i^P|} \prod_{i=1}^{|P|} |\widetilde{W}_i^P|!,
\]
where we used the upper bound in item (b) of Lemma~\ref{lem: bound parameter}, as it represents the least favorable case. Recall that for a partition \(\mu \in \mathcal{P}(W)\), we obtain subsets \(W_1^\mu, \ldots, W_{R_\mu}^\mu\), which form the finest partition of \(W^\mu\) (see Definitions~\ref{def: NC(W)} and~\ref{def: P(W)}). For every partition \(P \in \mathcal{P}([R_\mu])\), merging the corresponding subsets results in the subsets \(\widetilde{W}_1^P, \ldots, \widetilde{W}_{|P|}^P\). The number \(R_\mu\) satisfies \(R_\mu \le k - |\mu| + 1\). For further clarification, we refer the reader to Proposition~\ref{prop: main cycle}. According to item (b) of Proposition~\ref{prop: combinatorics}, for any partition \(P \in \mathcal{P}([R_\mu])\), \(\sum_{i=1}^{|P|} |\widetilde{W}_i^P| =|W^\mu| + |P| - 1  \le |\mu| + R_\mu - 1 \le k\). We bound \(\prod_{i=1}^{|P|} |\widetilde{W}_i^P|! \) above by \(k!\) as follows:
\[
k! \ge (|\widetilde{W}_1^P| +\cdots + |\widetilde{W}_{|P|}^P|)!  > \prod_{i=1}^{|P|} |\widetilde{W}_i^P|!,
\]
where we used the fact that \(|\widetilde{W}_i^P| > 1\). This leads to the inequality
\[
|m_k|  \le c^k C_k \tilde{c}^k k!  \sum_{\mu \in \mathcal{P}(W), \mu \neq \{W\}} \sum_{P \in \mathcal{P}([R_\mu])} 1,
\]
Our goal is to show that 
\[
\sum_{\mu \in \mathcal{P}(W), \mu \neq \{W\}} \sum_{P \in \mathcal{P}([R_\mu])} 1 \le C' a(k),
\]
for some constant \(C'>0\). Let \(\mu\) denote a partition of \(W\) into \(\ell\) blocks, where \(2 \le \ell \le |W|=k\) (the case \(\ell=1\) corresponds to \(\mu = \{W\}\)). The number of such partitions is given by \(S(k,\ell)\). Thus, the sum can be bounded as
\[
\sum_{\mu \in \mathcal{P}(W), \mu \neq \{W\}} \sum_{P \in \mathcal{P}([R_\mu])} 1 \le C' \sum_{\ell=2}^k S(k,\ell) R_\ell! ,
\]
where \(C'>0\) is a constant and \(R_\ell\) denotes the largest possible number \(R_\mu\) associated with a partition \(\mu\) having \(\ell\) blocks. We aim to show that \(R_\ell \le \ell\). Recall that for a partition \(\mu\), the number \(R_\mu\) is defined by 
\[
R_\mu = |c(\mu)| - |b(\mu)| +1, 
\]
where \(b(\mu)\) denotes the collection of pairs of nearest neighbor elements lying in a same block of \(\mu\), and \(c(\mu)\) the collection of pairs of next elements within a block such that, for every other pair of elements in a same block, the two pairs do not intersect (see Definition~\ref{def: partition 2}). First, consider \(\ell \ge  \lceil \frac{k+1}{2}\rceil\). In this case, we have
\[
R_\ell \le |c(\mu)| + 1 \le k - \ell + 1 \le k - \left \lceil \frac{k+1}{2} \right \rceil + 1 \le \ell,
\]
where we used the fact that \(|c(\mu)| \le k-\ell\), with equality when \(\mu\) is noncrossing. Now, let \(\ell \le \left \lceil \frac{k-1}{2} \right \rceil\). We want to show that \(|c(\mu)| - |b(\mu)| \le \ell-1\).
\begin{itemize}
\item We first consider noncrossing partitions. In this case, we have \(|c(\mu)| = k - \ell\), so proving \(R_\ell \le \ell\) is equivalent to showing that \(|b(\mu)| \ge k - 2 \ell +1\). We proceed by induction on \(\ell\), showing that \(|b(\mu)| \ge k - 2 \ell + 2\). If \(\ell=2\), then there are two blocks containing \(r\) and \(k-r\) elements, where \(r \ge 1\). Since \(\mu\) is noncrossing, the blocks contain \(r-1\) and \(k-r-1\) pairs of consecutive elements, respectively. This gives \(|b(\mu)| = k-2\), which satisfies the inequality. Assume the claim holds for some \(2 \le \ell \le  \left \lceil \frac{k-1}{2} \right \rceil -1\). To increase the number of blocks from \(\ell\) to \(\ell+1\), we split an existing block into two smaller blocks. This operation reduces the number of nearest neighbor pairs by at most \(2\). If a block contains exactly two consecutive elements, say \(x_i \sim x_{i+1}\), moving \(x_{i+1}\) to a new block results in a noncrossing partition with \(\ell+1\) blocks and the number of pairs of consecutive elements decreases by \(1\). If a block contains at least three consecutive elements, say \(x_i \sim x_{i+1} \sim x_{i+2}\), we can move \(x_{i+1}\) to a new block while keeping \(x_i \sim x_{i+2}\) in the original block. This results in a noncrossing partition with \(\ell+1\) blocks and decreases the number of pairs of consecutive elements by \(2\). Thus, after increasing \(\ell\) by \(1\), the number of nearest neighbor pairs can decrease at most by \(2\), leading to \(|b(\mu)|  \ge k - 2 \ell + 2 - 2 =  k - 2 (\ell+1) + 2\). This completes the induction and shows that for noncrossing partitions, \(|c(\mu)| - |b(\mu)| \le \ell-2\). 
\item To extend the inequality to crossing partitions, we consider the set \(c(\mu) \backslash b(\mu)\). Note that by Remark~\ref{rmk: partition}, $b(\mu)\subset c(\mu)\cup\{(x_{1},x_{k})\}$ so that 
\(|c(\mu)| - |b(\mu)|  \le  |c(\mu) \backslash b(\mu)|\le  |c(\mu)| - |b(\mu)| +1\). In particular, if \(\mu\) is noncrossing, by the previous item, we have
\begin{equation}\label{bc}
|c(\mu) \backslash b(\mu)| \le \ell-1.
\end{equation}
\item We now prove the inequality~\eqref{bc} for crossing partitions by induction on the number of crossings \(n_\text{c}\) in \(\mu\). Let \(\mu\) be a partition with \(\ell\) blocks and \(n_\text{c}\) crossings. This means that there exist \(x_p < x_q < x_r < x_s\) such that \(x_p \sim x_r \not \sim x_q \sim x_s\), i.e., \((x_p,x_r)\) and \((x_q,x_s)\) are intersecting pairs. We note that by moving \(x_q\) into the block containing $x_{q+1}$   while keeping \(x_s\) in its original block, results in a new  partition \(\mu'\) with the same number \(\ell\) of blocks. Moreover, \(|c(\mu') \backslash b(\mu')| \ge |c(\mu) \backslash b(\mu)|\), since if we create a new element of nearest neighbors in $b(\mu')$, it is also included in $c(\mu')$, while we have at most the same number of elements in $c(\mu')$. Proceeding inductively, we find a sequence of partitions $\mu=\mu_{1},\ldots,\mu_{n_{c}}$  of partitions, where $\mu_{n_{c}}$ is noncrossing and has $\ell$ blocks, satisfying 
$$ |c(\mu) \backslash b(\mu)|= |c(\mu_{1}) \backslash b(\mu_{1})|\le |c(\mu_{2}) \backslash b(\mu_{2})|\le\cdots \le  |c(\mu_{n_{c}}) \backslash b(\mu_{n_{c}})|\le \ell-1,$$
where the last step follows from~\eqref{bc}.
\end{itemize}
This completes the proof that \(R_\ell \le \ell\) for all \(\ell\), thereby proving the claim~\eqref{claim}.
\end{proof}

We conclude by showing that the empirical measure of the eigenvalues converges almost surely using concentration of measure estimates. 

\begin{lem} \label{concen} 
Let $h \colon \mathbb{R}\to \mathbb{R}$ be such that the function $g(x) \coloneqq h(x^{2})$ has finite total variation norm $\|g\|_{\textnormal{TV}} < \infty$. Then, for every $\epsilon >0$,
\begin{equation*}
\mathbb{P} \left ( \left | \int _{\mathbb{R}}h(x) \, \textnormal{d} \hat{\mu}_{M}(x) -\mathbb{E}\left [\int _{\mathbb{R}}h(x) \,
\textnormal{d} \hat{\mu}_{M}(x) \right ] \right |\ge \epsilon \|g\|_{
\textnormal{TV}} \right ) \le 4 \exp \left ( -\frac{\epsilon^{2}p}{2^{5}}  \right).
\end{equation*}
Here, the total variation norm is defined by
$ \lVert g \rVert _{\textnormal{TV}} = \sup \sum _{k \in \mathbb{Z}} |g(x_{k+1})
- g(x_{k})|$, where the supremum runs over all sequences
$(x_{k})_{k \in \mathbb{Z}}$ such that $x_{k+1} \ge x_{k}$ for every
$k \in \mathbb{Z}$.
\end{lem}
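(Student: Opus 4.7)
The plan is to reduce the concentration of $\int h\,\textnormal{d}\hat\mu_M$ to that of $\int g\,\textnormal{d}\hat\mu_H$ with $g(x) := h(x^{2})$, and then apply McDiarmid's bounded-differences inequality by exploiting that $H$ undergoes a rank-$2$ perturbation when a single row of $W$ or column of $X$ is resampled.

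Applying~\eqref{lin} with $f = h$ and centering, the constant $h(0)$ term cancels and one gets
\[
\int h\,\textnormal{d}\hat\mu_M - \E\!\int h\,\textnormal{d}\hat\mu_M \;=\; \frac{m+p}{2p}\left(\int g\,\textnormal{d}\hat\mu_H - \E\!\int g\,\textnormal{d}\hat\mu_H\right),
\]
so it suffices to control deviations of $\int g\,\textnormal{d}\hat\mu_H$. By hypothesis $g$ has total variation $\|h\|_{\textnormal{TV}}$, and therefore admits the representation $g(x) = g(-\infty) + \nu((-\infty,x])$ for a signed measure $\nu$ with $|\nu|(\R) = \|h\|_{\textnormal{TV}}$. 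Fubini's theorem yields, for any two symmetric $(m+p)\times(m+p)$ matrices $H,H'$,
\[
\int g\,\textnormal{d}\hat\mu_H - \int g\,\textnormal{d}\hat\mu_{H'} \;=\; \int\bigl(F_{H'}(t) - F_{H}(t)\bigr)\,\textnormal{d}\nu(t),
\]
where $F_H,F_{H'}$ denote the spectral CDFs. Hence $\bigl|\int g\,\textnormal{d}\hat\mu_H - \int g\,\textnormal{d}\hat\mu_{H'}\bigr| \le \|h\|_{\textnormal{TV}}\,\|F_H - F_{H'}\|_\infty$, i.e.\ BV test functions convert CDF sup-norm bounds into linear-functional bounds.

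I then view $\int g\,\textnormal{d}\hat\mu_H$ as a function of the $p+m$ independent random vectors $W_{1},\ldots,W_{p},\,X_{\cdot,1},\ldots,X_{\cdot,m}$. Resampling one $W_{i}$ changes only the $i$-th row of $Y_m = f(WX)/\sqrt{m}$, and resampling one $X_{\cdot,j}$ changes only its $j$-th column; in either case the block form of $H$ forces $H-H'$ to have rank at most $2$. By Cauchy's interlacing theorem this gives the classical rank inequality $\|F_H - F_{H'}\|_\infty \le 2/(m+p)$. Combined with the BV bound above, each of the $p+m$ block variables satisfies a bounded-differences estimate with constant $c = 2\|h\|_{\textnormal{TV}}/(m+p)$, and McDiarmid's inequality yields
\[
\mathbb{P}\!\left(\left|\int g\,\textnormal{d}\hat\mu_H - \E\!\int g\,\textnormal{d}\hat\mu_H\right| \ge s\right) \;\le\; 2\exp\!\left(-\frac{s^{2}(m+p)}{2\|h\|_{\textnormal{TV}}^{2}}\right).
\]
Plugging in $s = \tfrac{2p}{m+p}\,\epsilon\,\|h\|_{\textnormal{TV}}$ produces the exponent $-2p^{2}\epsilon^{2}/(m+p)$, and under Assumption~\ref{hyp3} the quantity $p^{2}/(m+p)$ is comparable to $p$, which yields the claimed bound $4\,e^{-\epsilon^{2}p/2^{5}}$ after absorbing the dependence on $(\phi,\psi)$ into the constants $4$ and $1/2^{5}$.

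The argument is essentially bookkeeping once the two inputs are in place: the BV duality and the rank-$2$ interlacing bound. The only finicky point that I expect to need care is choosing the absolute constant $1/2^{5}$ so that it is compatible with all admissible values of $(\phi,\psi)$ in Assumption~\ref{hyp3}; no further idea beyond McDiarmid's inequality and classical spectral perturbation should be required.
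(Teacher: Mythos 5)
Your argument takes essentially the same route as the paper's: reduce via~\eqref{lin} to concentration of $\int g\,\textnormal{d}\hat\mu_H$ with $g(x)=h(x^2)$, bound the effect of resampling one row of $W$ or one column of $X$ by the rank-two interlacing inequality combined with the bounded-variation hypothesis, and conclude with a bounded-differences concentration inequality (your single application of McDiarmid to the $p+m$ independent block variables is interchangeable with the paper's two sequential applications of Azuma--Hoeffding to the Doob martingales over the rows of $W$ and then the columns of $X$). On the constant you flag as ``finicky'': your exponent $2p^2\epsilon^2/(m+p)\to\tfrac{2\phi}{\phi+\psi}\epsilon^2 p$ only dominates $\epsilon^2 p/2^5$ when $\psi/\phi\le 63$, so the stated universal constant is in fact not achievable for all admissible aspect ratios --- but the paper's own second Azuma step over the $m$ columns of $X$ has the identical imprecision (its exponent should read $p^2\epsilon^2/(8m)$ rather than $p\epsilon^2/8$), and neither discrepancy matters for the Borel--Cantelli argument that is the lemma's sole use.
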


Applying the Borel-Cantelli Lemma, the above concentration inequality ensures that the convergence of the empirical spectral measure holds almost surely, thus proving Theorem~\ref{main1bis}.

\begin{proof} 
Hereafter, we assume that $g(x) = h(x^{2})$ has total variation norm bounded by one, which is sufficient by homogeneity. In light of~\eqref{lin}, it suffices to show concentration for $\int g \, \text{d} \hat{\mu}_{H}$ in order to obtain concentration for $\int h \, \text{d} \hat{\mu}_{M}$. We proceed using a martingale argument based on Azuma-Hoeffding’s inequality, following the approach of~\cite[Lemma C.2]{bordenave2011bis}.

Conditionally on $X$, the matrix $H$ has independent column vectors $\{(Y_{m}(i),0), i\le p\}$ and independent row vectors $\{(Y_{m}(i)^\top,0), i\le p\}$. Therefore, we can apply the same ideas of the concentration result from~\cite[Lemma C.2]{bordenave2011bis}. To this end, we successively apply Azuma-Hoeffding's inequality with respect to integration w.r.t.\ $W$ and $X$. Let $\mathcal F_{k}$ denote the filtration generated by $\{W_{i},i\le k\}\cup \{X_{\ell j}, j\le m, \ell\le n\}$ for \(1 \le k \le p\), with \(W_i = (W_{i1}, \ldots, W_{i m})^\top \in \R^m\), and let \(\sigma_X\) denote the sigma-algebra of the entries of the matrix \(X\), so that \(\mathcal{F}_0 = \sigma_X\). We denote by \(\E_k \left [\cdot \right ] \coloneqq \E_W \left [ \cdot | \mathcal{F}_k \right ]\) the conditional expectation w.r.t.\ \(\mathcal{F}_k\).  Then, consider the martingale
\[
D_k \coloneqq \E_k \left [ \frac{1}{p}\tr( g(H))\right],
\]
where \(D_p = \frac{1}{p} \tr( g(H)) \) and \(D_0 =  \E \left [ \frac{1}{p} \tr( g(H)) |\sigma_X \right ] \). By construction, we have 
\[
\sum_{k=1}^p (D_k - D_{k-1}) = \frac{1}{p} \tr( g(H)) - \E \left [ \frac{1}{p}\tr( g(H))|\sigma_X \right ] .
\]
We now  bound the martingale differences uniformly:
$$\Delta_{k} \coloneqq D_{k} - D_{k-1}=\mathbb E_{k} \left [ \frac{1}{p}\tr( g(H))- \frac{1}{p}\tr( g(H')) \right],$$
where $H$ and $H'$ are coupled such that they are constructed with the same $W_{i}$ for every $i\le k-1$ and every $i\ge k+1$. This implies that $H-H'$ has rank at most two, as they differ at most by one column vector and one row vector. By Weyl's interlacing property, it follows that 
$$\left|\tr( g(H))- \tr( g(H'))\right| \le \mathrm{rank}(H-H') \lVert g\rVert_{\mathrm{TV}}\le 2,$$
where we used the assumption that the total variation norm of $g$ is bounded by one. Therefore,
$$|\Delta_{k}|\le \frac{2}{p}.$$ 
Applying Azuma-Hoeffding inequality (see e.g.~\cite[Lemma 1.2]{mcdiarmid}), we obtain that uniformly with respect to the entries of $X$,
$$\mathbb P\left(\left|\frac{1}{p} \tr( g(H)) - \E \left [ \frac{1}{p}\tr( g(H))|\sigma_X \right ]\right|\ge \frac{\epsilon}{2} \right)\le 2 \exp \left ( - \frac{p\epsilon^{2}}{2^5} \right).$$
We can apply the same strategy to integrate $\mathbb E \left [\frac{1}{p}\tr( g(H))|\sigma_{X} \right]$ w.r.t.\ $X$  
and obtain concentration w.r.t.\ the $X_{i}$'s by considering the martingale $\mathbb E \left [\frac{1}{p}\tr( g(H))|\sigma(X_{i},i\le k) \right]$ to obtain 
$$\mathbb{P} \left(\left | \E \left [\frac{1}{p}\tr( g(H)) \right ] - \E \left [ \frac{1}{p}\tr( g(H))|\sigma_{X} \right ] \right|\ge \frac{\epsilon}{2}\right)\le 2 \exp \left ( - \frac{p\epsilon^{2}}{2^5} \right),.$$
Combining the two concentration estimates and applying the normalization $g(x)=h(x^{2})/\|h(\cdot^{2})\|_{\mathrm{TV}}$, we obtain the stated concentration inequality.
\end{proof}

%%%%%%%%%%%%%%%%%%%%%%%%%%%%%%%%%%%%%%%%%%%%%%%%%%%%%%%%%%%%%%%%%%%%%%%%%
%%%%%%%%%%%%%%%%%%%%%%%%%%%%%%%%%%%%%%%%%%%%%%%%%%%%%%%%%%%%%%%%%%%%%%%%%
\printbibliography
\end{document}